\documentclass[12pt]{article}
\title{$C$-minimal fields have the exchange property}
\author{Will Johnson}

\usepackage{amsmath, amssymb, amsthm}    	
\usepackage{fullpage} 	
\usepackage{amscd}
\usepackage{hyperref}
\usepackage[all]{xy}
\usepackage[T1]{fontenc}
\usepackage{lmodern}
\usepackage{centernot}
\usepackage[shortlabels]{enumitem}

\DeclareMathOperator*{\ind}{\raise0.2ex\hbox{\ooalign{\hidewidth$\vert$\hidewidth\cr\raise-0.9ex\hbox{$\smile$}}}}

\newcommand{\Shape}{\operatorname{Shape}}

\newcommand{\RV}{\mathrm{RV}}
\newcommand{\rv}{\operatorname{rv}}

\newcommand{\End}{\operatorname{End}}

\newcommand{\characteristic}{\operatorname{char}}

\newcommand{\res}{\operatorname{res}}

\newcommand{\id}{\operatorname{id}}

\newcommand{\acl}{\operatorname{acl}}

\newcommand{\dom}{\operatorname{dom}}
\newcommand{\im}{\operatorname{im}}

\newcommand{\Stab}{\operatorname{Stab}}

\newcommand{\Mov}{\operatorname{Mov}}

\newcommand{\dpr}{\operatorname{dp-rk}}

\newtheorem{theorem}{Theorem}[section] 
\newtheorem{slogan}{Slogan}

\newtheorem{lemma}[theorem]{Lemma}

\newtheorem{corollary}[theorem]{Corollary}
\newtheorem{fact}[theorem]{Fact}

\newtheorem{question}[theorem]{Question}
\newtheorem{proposition}[theorem]{Proposition}
\newtheorem{proposition-eh}[theorem]{Proposition(?)}
\newtheorem*{theorem-star}{Theorem}
\newtheorem*{conjecture-star}{Conjecture}
\newtheorem*{lemma-star}{Lemma}
\newtheorem{claim}[theorem]{Claim}
\newtheorem{observation}[theorem]{Observation}

\theoremstyle{definition}
\newtheorem{definition}[theorem]{Definition}
\newtheorem{example}[theorem]{Example}

\newtheorem{remark}[theorem]{Remark}

\theoremstyle{remark}

\newtheorem*{acknowledgment}{Acknowledgments}

\newcommand{\Qq}{\mathbb{Q}}

\newcommand{\err}{\mathrm{err}}

\newcommand{\Zz}{\mathbb{Z}}

\newcommand{\Mm}{\mathbb{M}}

\newcommand{\Ll}{\mathcal{L}}
\newcommand{\Oo}{\mathcal{O}}
\newcommand{\mm}{\mathfrak{m}}

\newcommand{\Lr}{\Ll_{rings}}
\newcommand{\kx}{{k^\times}}

\newcommand{\bc}{{\bar{c}}}

\newenvironment{claimproof}[1][\proofname]
               {
                 \proof[#1]
                 
               }
               {
                 \endproof
               }

\begin{document}

\maketitle

\begin{abstract}
  We show that $C$-minimal fields (i.e., $C$-minimal expansions of ACVF)
  have the exchange property, answering a question of Haskell and
  Macpherson \cite{cminfields}.  Additionally, we strengthen some
  theorems of Cubides Kovacsics and Delon \cite{CK-D} on $C$-minimal
  fields.  First, we show that definably complete $C$-minimal fields of
  characteristic 0 have generic differentiability.  Second, we show
  that if the induced structure on the residue field is a pure ACF,
  then polynomial boundedness holds.  In fact, polynomial boundedness
  can only fail if there are unexpected definable automorphisms of the
  multiplicative group of the residue field.
\end{abstract}

\section{Introduction}
Recall from \cite{c-source,cminfields} that a \emph{$C$-minimal field}
is a structure $(K,\Oo,+,\cdot,\ldots)$ expanding a non-trivial valued
field $(K,\Oo)$, such that
\begin{itemize}
\item Every unary definable set $D \subseteq K^1$ is a finite boolean
  combination of balls and points.
\item This property continues to hold in elementary extensions $K'
  \succeq K$.
\end{itemize}
For example, the theory ACVF of algebraically closed valued fields is
$C$-minimal \cite[Theorem~4.11]{c-source}.  Conversely, Haskell and
Macpherson showed that every $C$-minimal field is an expansion of ACVF
\cite[Theorem~C]{cminfields}.  Macpherson and Steinhorn consider a more general notion of
\emph{$C$-structures} and \emph{$C$-minimal theories}, which we will not
precisely define here.

$C$-minimality is meant to be an analog of o-minimality.  One of the key properties of (dense) o-minimal theories is the \emph{exchange property}
\begin{equation*}
  c \in \acl(Ab) \setminus \acl(A) \implies b \in \acl(Ac) \text{ for } A \subseteq M, ~ b,c \in M.
\end{equation*}
The exchange property ensures that o-minimal theories are ``geometric
theories'' in the sense of Hrushovski and Pillay
\cite[Section~2]{udi-anand-group-field}, and are therefore endowed
with an excellent dimension theory.

In contrast to the o-minimal situation, the exchange property can fail
in a general $C$-minimal theory \cite[Example~3.2]{c-source}.  Haskell
and Macpherson asked whether the exchange property holds in $C$-minimal
fields, i.e., $C$-minimal expansions of ACVF
\cite[Problem~6.4]{cminfields}.  We give a positive answer to this
question:
\begin{theorem-star}[= Theorem~\ref{big-thm}]
  Let $T$ be a $C$-minimal expansion of ACVF.  Then $T$ has the exchange
  property.
\end{theorem-star}
Consequently, $C$-minimal fields are geometric and have an excellent
dimension theory.

The technique used to prove Theorem~\ref{big-thm} has some other
applications to $C$-minimal fields, allowing us to strengthen some of
the main results of Cubides Kovacsics and Delon \cite{CK-D}.

The first of these applications concerns polynomial boundedness.
Recall that $K$ is \emph{polynomially bounded} if for any definable
function $f : K \to K$, there is an integer $n$ such that for all $x$
in a neighborhood of $\infty$,
\begin{equation*}
  v(f(x)) \ge n \cdot v(x) \qquad \left(\text{or in multiplicative notation,
  } |f(x)| \le |x|^n\right).
\end{equation*}
It is an open question whether all $C$-minimal fields are polynomially
bounded \cite[Question~1]{CK-D}.

Let $\Gamma$ and $k$ be the value group and residue field of $K$.  Say
that $\Gamma$ has an \emph{exotic automorphism} if there is a
definable automorphism of $(\Gamma,+)$ other than the automorphisms
$f(x) = q \cdot x$ for $q \in \Qq^\times$.  If the induced structure on $(\Gamma,+)$ is a pure divisible ordered abelian group (DOAG), then $\Gamma$ has no exotic automorphisms.

Similarly, say that the multiplicative group $\kx$ has an \emph{exotic
automorphism} if there is a definable automorphism of $\kx$ other than
the automorphisms $f(x) = x^{\pm p^n}$, where $p$ is the
characteristic exponent of $k$.  If the induced structure on $k$ is a
pure ACF, then $\kx$ has no exotic automorphisms.
\begin{theorem-star}[= Theorem~\ref{pbound}]
  Let $K$ be a $C$-minimal field.
  \begin{enumerate}
  \item If $\Gamma$ has no exotic automorphisms, then $K$ is
    polynomially bounded.
  \item If $\kx$ has no exotic automorphisms, then $K$ is polynomially
    bounded.
  \end{enumerate}
  In particular, if the induced structure on $\Gamma$ is a pure DOAG,
  or the induced structure on $k$ is a pure ACF, then $K$ is
  polynomially bounded.
\end{theorem-star}
Part (1) was previously proved by Cubides Kovacsics and Delon
\cite[Theorems~4.3+4.7]{CK-D}, but (2) is new.  It is unknown whether
exotic automorphisms of $\kx$ can exist.

There are also some consequences for definable completeness.  Recall
that $K$ is \emph{definably complete} if whenever $\mathcal{C}$ is a
definable chain of balls, and the radii of the balls in $\mathcal{C}$
approach $\infty$ (in multiplicative notation: approach $0$), then
$\bigcap \mathcal{C}$ is non-empty.  Most $C$-minimal theories arising
in practice are definably complete, though Delon
\cite[Th\'eor\`eme~5.4]{honneur} has constructed examples of $C$-minimal
fields which are not.
\begin{theorem-star}[= Theorem~\ref{limits-exist}]
  Let $K$ be a definably complete $C$-minimal field.  If $f : X \to K$
  is a definable function and $X$ is a punctured neighborhood of 0,
  then $\lim_{x \to 0} f(x)$ exists in $K \cup \{\infty\}$.
\end{theorem-star}
This strengthens \cite[Lemma~5.3]{CK-D}, removing their assumption
that definable functions $f : \Gamma \to \Gamma$ are eventually
linear.
\begin{theorem-star}[= Corollary~\ref{asymptotics-ckd}]
  Let $K$ be a definably complete $C$-minimal field of characteristic 0.
  Suppose that $\Gamma$ has no exotic automorphisms, or $\kx$ has no
  exotic automorphisms.  If $f : X \to K$ is a definable function on a
  punctured neighborhood of 0, then $f(x)$ has an asymptotic expansion
  \begin{equation*}
    f(x) \sim \sum_{n = 1}^\infty c_n x^{m_n} \text{ as } x \to 0,
  \end{equation*}
  for some $c_1, c_2, \ldots \in K$ and integers $m_1 < m_2 <
  \cdots$.
\end{theorem-star}
This strengthens \cite[Theorem~6.1]{CK-D}, weakening their assumption
that definable functions $f : \Gamma \to \Gamma$ are eventually
$\Qq$-linear.
\begin{theorem-star}[= Theorem~\ref{diff-thm}]
  Let $K$ be a definably complete $C$-minimal field of characteristic 0.
  If $f : K \to K$ is definable, then $f$ is differentiable at all but
  finitely many points.
\end{theorem-star}
This strengthens (part of) \cite[Theorem~6.3]{CK-D}, removing their
assumption that definable functions $f : \Gamma \to \Gamma$ are
eventually $\Qq$-linear.
With more work, one can also get generic differentiability in several
variables (Theorem~\ref{diff-thm2}) and an implicit/inverse function
theorem (Theorem~\ref{inverse}).

\subsection{Proof strategy}
The core technique is an analysis of definable subsets of $\RV \times
\RV$ in Section~\ref{sec-heart}.  In
Theorems~\ref{key-thm}--\ref{dpr1}, we show that if $D \subseteq \RV
\times \RV$ is definable and has dp-rank 1, then the image of $D$ in
$\Gamma \times \Gamma$ is a finite union of line segments (graphs of
linear functions).

In the applications, we usually apply this to the following setting.
Let $f : K \to \Gamma \times \Gamma$ be a definable function.
Suppose that $f$ lifts to a map $\tilde{f} : K \to \RV \times \RV$:
\begin{equation*}
  \xymatrix{K \ar@{-->}[r]^-{\tilde{f}} \ar[dr]_-f & \RV \times \RV \ar[d] \\ & \Gamma \times \Gamma}
\end{equation*}
Then $\im(\tilde{f})$ has dp-rank 1, so $\im(f)$ is a finite union of
line segments.  Many natural functions from $K$ to $\Gamma$ lift to
$\RV$ (for example, see \cite[Section 5]{CK-D}).  Consequently, many
of the definable functions $\Gamma \to \Gamma$ appearing in
\cite{CK-D} are automatically linear or eventually linear.  This
allows us to drop ``eventual linearity'' assumptions from many of the
results in \cite{CK-D}.

The connection to the exchange property is as follows.  Suppose the
exchange property fails.  By a theorem of Haskell and Macpherson
\cite[Proposition~6.1]{cminfields}, there must be a definable
bijection $f$ from an open set in $K$ to an antichain in the tree of
closed balls.  One can arrange for $f$ to land in the antichain
$K/\Oo$ of closed balls of valuative radius 0.  The set $K/\Oo$ is
itself a dense ultrametric space with ``value set'' $\Gamma_{< 0}$.
Restricting $f$, one can arrange for $f$ to be a homeomorphism between
an open set in $K$ and an open set in $K/\Oo$.  By something that is
morally the ``$\Gamma$-factorization II'' of \cite[Theorem~3.3]{CK-D}, one
can shrink $\dom(f)$ and arrange that
\begin{equation*}
  v(f(x) - f(y)) = g(v(x-y))
\end{equation*}
for some definable function $g : \Gamma \to \Gamma_{<0}$.  Since $f$ is a
homeomorphism, we must have $x \approx y \implies f(x) \approx f(y)$,
which means that
\begin{equation*}
  \lim_{\gamma \to +\infty} g(\gamma) = \sup \Gamma_{< 0} = 0. \tag{$\ast$}
\end{equation*}
But $g$ can be lifted to the RV-sorts (compare with
\cite[Theorem~5.8]{CK-D}), so $g$ is eventually linear by the results of
Section~\ref{sec-heart}.  No linear function $g$ can satisfy ($\ast$), unless $g$ is the constant 0, contradicting the fact that $g$'s codomain is $\Gamma_{<0}$.

The technical analysis of $\RV \times \RV$ in Section~\ref{sec-heart}
is based on the tension between the following two properties of the
short exact sequence $1 \to \kx \to \RV \to \Gamma \to 1$:
\begin{itemize}
\item The two groups $\kx$ and $\Gamma$ are orthogonal to each other.
\item All three groups have dp-rank 1, so the sequence does not
  definably split.
\end{itemize}
The other key ingredients are the fact that $\kx$ and $\Gamma$ are
stably embedded and eliminate imaginaries, $\kx$ is strongly minimal,
and $\Gamma$ is o-minimal.  Using all these ingredients, we show that
the induced structure on $\RV$ is highly constrained.

\subsection{Hensel minimality and its relatives}
The \emph{Hensel minimal} fields of Cluckers, Halupczok, and
Rideau-Kikuchi \cite{hens-min1,hens-min2} satisfy generic
differentiability and the exchange property \cite[Lemma~5.3.5,
  Corollary~3.2.7]{hens-min1}.  There is some overlap between hensel
minimality and $C$-minimality.  For example, ACVF$_0$ is hensel
minimal.  It is natural to ask whether the exchange property and
generic differentiability results (Theorems~\ref{big-thm},
\ref{diff-thm}) follow from the results on hensel minimality.  For
example,
\begin{quote}
  If $K$ is a definably complete $C$-minimal field, is $K$ hensel
  minimal?
\end{quote}
Hensel minimality is only defined on fields of characteristic zero, so
we should restrict to the case where $K$ has characteristic 0.  Even
then, the answer to this question is \textsc{no}.
\begin{example}
  Suppose $K$ has residue characteristic 0, so we are in the setting
  of \cite{hens-min1} rather than \cite{hens-min2}.  There are several
  forms of hensel minimality.  The weakest form, 0-h-minimality,
  implies for expansions of algebraically closed valued fields that
  every 0-definable closed ball $B \subseteq K$ intersects
  $\acl(\varnothing)$.\footnote{Apply the definition of 0-h-minimality
  \cite[Definition~1.2.3]{hens-min1} with $\lambda = 1$, $A =
  \varnothing$, and $X = B$.  There should be a finite
  $\varnothing$-definable set $C \subseteq K$ which ``1-prepares'' $B$
  in the sense of \cite[Definition~1.2.2]{hens-min1}.  Under the
  assumption that the value group is densely ordered and the residue
  field is infinite, one can check that a closed ball $B$ is
  1-prepared by a finite set $C$ if and only if $C$ intersects $B$.}
  We can produce a $C$-minimal expansion of ACVF$_{0,0}$ in which this
  property fails by adding a unary predicate symbol to name a random
  ball $B$.  If $B$ is chosen correctly, $B$ will not intersect
  $\acl(\ulcorner B \urcorner)$.  The resulting expansion of ACVF will
  be $C$-minimal and definably complete, but not 0-h-minimal.
\end{example}
Another related concept is the \emph{V-minimality} of Hrushovski and
Kazhdan \cite[Section~3.4]{hk}.  In the V-minimal setting, Hrushovski
and Kazhdan prove generic differentiability and the exchange property
\cite[Corollary~5.17, Lemma~3.57]{hk}.  A V-minimal theory is a
definably complete $C$-minimal expansion of ACVF$_{0,0}$ satisfying some
additional assumptions, one of which is that $\RV$ has no induced
structure beyond what is present in ACVF$_{0,0}$.  Consequently,
V-minimal theories have no exotic automorphisms on $\Gamma$ or $\kx$,
and all the results above apply to them.  Our results generalize the
generic differentiability and exchange property for V-minimal
theories.

In some sense, our setting is orthogonal to V-minimality.  Most of our
work goes into analyzing the induced structure on the $\RV$ sort
(Section~\ref{sec-heart}).  But V-minimality assumes that the induced
structure on $\RV$ is pure, sidestepping this issue.

\section{Basic facts}
We review some fundamental facts which will be used in the more
technical Section~\ref{sec-heart}.  \emph{All} of these facts are
surely known to experts.
\subsection{Consequences of $C$-minimality}
Let $T$ be a complete $C$-minimal expansion of ACVF.  Assume that $T$
includes a constant symbol $t$ in the home sort such that $v(t) > 0$.
Work in a monster model $\Mm$ of $T$.  Let $k$ and $\Gamma$ be the
residue field and value group.
\begin{fact}[{= \cite[Proposition 4.4, Principe 1]{honneur}}] \label{inf-fact}
  Let $D \subseteq \Mm$ be definable.  Then there is a ball $B \ni 0$
  such that $D$ contains or is disjoint from $B \setminus \{0\}$.
\end{fact}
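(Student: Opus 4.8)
The plan is to invoke the defining property of $C$-minimality directly and then localize at $0$.

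First, by $C$-minimality — using the clause in the definition of a $C$-minimal field guaranteeing that the boolean-combination property persists in elementary extensions, hence holds in $\Mm$ — one writes $D$ as a finite boolean combination of balls $B_1,\ldots,B_n$ and singletons $\{p_1\},\ldots,\{p_m\}$, with parameters in $\Mm$. It then suffices to find a ball $B \ni 0$ which is contained in every $B_i$ that contains $0$, disjoint from every $B_i$ that does not contain $0$, and which avoids every $p_j$ different from $0$. Indeed, for such a $B$ the trace of $D$ on $B \setminus \{0\}$ is a boolean combination of the two sets $B \setminus \{0\}$ and $\varnothing$, hence equals one of them, which is precisely the assertion.

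To produce $B$: the balls among $B_1,\ldots,B_n$ that contain $0$ are linearly ordered by inclusion, by the ultrametric inequality, so their intersection is again a ball $B'$ containing $0$ (take $B' = \Mm$ if there are none), and $B' \subseteq B_i$ for each such $B_i$. Each $B_i$ not containing $0$ is clopen in the valuation topology, so its complement is an open neighborhood of $0$ and therefore contains a ball around $0$; and for each $p_j \neq 0$ the ball $\{x : v(x) > v(p_j)\} \ni 0$ does not contain $p_j$. Intersecting $B'$ with these finitely many balls around $0$ again yields a ball $B \ni 0$ — finitely many balls sharing the point $0$ being totally ordered by inclusion — and $B$ has all the required properties by construction.

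I do not expect any real obstacle here: the argument is entirely formal once the boolean-combination description is in hand, and the only ingredients are the standard facts that finitely many balls through a common point are nested (so their intersection is again a ball) and that balls are clopen in the valuation topology. The one conceptual point worth flagging is that the boolean-combination description is applied inside $\Mm$, which is exactly why the definition of a $C$-minimal field requires the property to survive passage to elementary extensions.
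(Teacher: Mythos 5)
Your proof is correct and is essentially the paper's argument in unwound form: the paper phrases it as a closure property (``good'' sets containing balls and points and closed under boolean combinations, then invoking $C$-minimality), while you apply the boolean-combination description of $D$ up front and build one small ball around $0$ handling all finitely many balls and points at once, using exactly the same nestedness and clopenness facts. No gap; the two proofs have the same content.
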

This is well-known---it is essentially the statement that there is a
unique non-zero infinitesimal type around 0.  Nevertheless, we sketch
the proof for completeness.
\begin{proof}[Proof sketch]
  Say that $D$ is ``good'' if $D$ satisfies the conclusion of
  Fact~\ref{inf-fact}.  It is straightforward to check that every ball
  or singleton is good, and that boolean combinations of good sets are
  good.  By $C$-minimality, every definable set is good.
\end{proof}
\begin{corollary} \label{inf-cor}
  Let $\{D_1,\ldots,D_n\}$ be a partition of $\Mm^\times$ into
  finitely many definable sets.  Then there is an $i \le n$ and a ball
  $B \ni 0$ such that $D_i \supseteq B \setminus \{0\}$.
\end{corollary}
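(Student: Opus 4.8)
The plan is to reduce to Fact~\ref{inf-fact} applied to each piece of the partition separately. First, for each $i \le n$, apply Fact~\ref{inf-fact} to $D_i$ (viewed as a definable subset of $\Mm$) to obtain a ball $B_i \ni 0$ such that $D_i$ either contains or is disjoint from $B_i \setminus \{0\}$. Since all the $B_i$ contain the common point $0$, they are linearly ordered by inclusion, so one of them, say $B := B_{i_0}$, is contained in all the others; equivalently $B = \bigcap_{i \le n} B_i$. Then for every $i \le n$ we have $B \setminus \{0\} \subseteq B_i \setminus \{0\}$, and hence $D_i$ contains or is disjoint from $B \setminus \{0\}$.

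It then remains to rule out the possibility that every $D_i$ is disjoint from $B \setminus \{0\}$. The set $B \setminus \{0\}$ is nonempty, because balls in models of ACVF are infinite. Since $B \setminus \{0\} \subseteq \Mm \setminus \{0\} = \Mm^\times = D_1 \cup \cdots \cup D_n$, at least one $D_i$ meets $B \setminus \{0\}$, and for that index the dichotomy above forces $D_i \supseteq B \setminus \{0\}$, which is the desired conclusion.

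I expect essentially no obstacle here: all the content is already in Fact~\ref{inf-fact}, and the remaining ingredients (two balls through a common point are nested, balls are infinite in models of ACVF, and a finite cover of a nonempty set has a member meeting it) are routine. The only point worth a moment's care is taking $B$ to be the \emph{smallest} of the finitely many $B_i$, so that a single ball $B$ works simultaneously for all pieces of the partition; finiteness of the partition is of course essential for this.
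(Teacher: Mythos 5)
Your argument is correct and is essentially the paper's proof: the paper runs the same reasoning contrapositively (if no $D_i$ contained a punctured ball, each would be disjoint from some ball $B_i \ni 0$, and the intersection $\bigcap_i B_i$ would be a ball around $0$ disjoint from $\Mm^\times$, which is impossible), while you phrase it directly by intersecting the $B_i$ from Fact~\ref{inf-fact} and noting that the nonempty punctured ball must meet some piece. The two are the same argument up to presentation.
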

\begin{proof}
  Otherwise, for each $i$ there is a ball $B_i \ni 0$ such that $D_i
  \cap B_i = \varnothing$.  The intersection $\bigcap_{i = 1}^n B_i$
  is then a ball around 0 disjoint from $\bigcup_{i = 1}^n D_i =
  \Mm^\times$.  No such ball exists.
\end{proof}
We next review some facts about the
induced structure on $k$ and $\Gamma$.  Many of these can be found in
\cite{cminfields,CK-D}.
\begin{fact} \phantomsection \label{gamma-induce}
  \begin{enumerate}
  \item \label{gi1} The value group $\Gamma$ is a divisible ordered
    abelian group.
  \item \label{gi2} The induced structure on $\Gamma$ is o-minimal.
  \item \label{gi3} $\Gamma$ is stably embedded---any definable set $X
    \subseteq \Gamma^n$ is definable with parameters in $\Gamma$.
  \item \label{gi4} $\Gamma$ eliminates imaginaries---if $X \subseteq
    \Gamma^n$ is definable, then the code $\ulcorner X \urcorner$ is
    interdefinable with a tuple in $\Gamma$.
  \end{enumerate}
\end{fact}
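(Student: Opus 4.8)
The plan is to treat the four clauses in the order stated. Clause~(1) is immediate from the definitions: the underlying valued field $(\Mm,\Oo)$ is a model of ACVF, so its value group $\Gamma$ is an ordered abelian group which is divisible (given $\gamma=v(a)$ and $n\ge 1$, pick $b$ with $b^n=a$, so $\gamma=n\cdot v(b)$) and nontrivial (the valuation is nontrivial by hypothesis, e.g.\ $v(t)>0$).

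The substantive clause is~(2). Let $S\subseteq\Gamma$ be definable with parameters in $\Mm$. The quotient map $v\colon\Mm^\times\to\Gamma$ is surjective, so $S=v(v^{-1}(S))$, and $D:=v^{-1}(S)\subseteq\Mm$ is definable with the same parameters; by the definition of $C$-minimality, $D$ is a finite boolean combination of balls and points. I would then exploit the tree structure of balls relative to $0$: a ball not containing $0$ has $v$ constant on it, hence lies inside a single fibre $v^{-1}(\gamma)$; a ball containing $0$ has the form $\{x:v(x)>\gamma\}$ or $\{x:v(x)\ge\gamma\}$, so the valuations occurring on it form a ray $(\gamma,+\infty)$ or $[\gamma,+\infty)$ in $\Gamma$. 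Let $I_1,\dots,I_p$ be the finitely many rays arising from the balls around $0$, and let $E$ be the finite set consisting of the constant valuations of the balls not containing $0$ together with the valuations of the named points. For $\gamma\notin E$, intersecting the boolean combination for $D$ with the fibre $v^{-1}(\gamma)$ kills every ``small'' ball and every named point, and each ball around $0$ meets $v^{-1}(\gamma)$ in all of it or in nothing according to whether $\gamma$ lies in its ray; hence $D\cap v^{-1}(\gamma)$ is $v^{-1}(\gamma)$ or $\varnothing$, and $\gamma\in S$ iff $\gamma$ realizes one of finitely many ``good'' incidence patterns with $I_1,\dots,I_p$. The set of $\gamma$ realizing a fixed pattern is a finite intersection of rays and complements of rays, so $S\setminus E$ is a finite union of points and intervals; adding back the finite set $S\cap E$ keeps $S$ a finite union of points and intervals. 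Since $\Gamma$ is densely ordered, this shows in particular that the induced structure on $\Gamma$ is o-minimal.

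For clauses~(3) and~(4) I would appeal to the general theory of $C$-minimal structures, citing \cite{cminfields} (and \cite{CK-D}) for the stable embeddedness of $\Gamma$. Given~(2) and~(3), elimination of imaginaries then reduces to the classical fact that an o-minimal expansion of a divisible ordered abelian group possessing a $\varnothing$-definable nonzero element --- here $v(t)$ --- eliminates imaginaries: a definable subset of $\Gamma^1$ is interdefinable with its finite list of endpoints together with finitely many bits encoded using $0$ and $v(t)$, and one passes to $\Gamma^n$ by o-minimal cell decomposition and induction on $n$. The main obstacle is clause~(3): unlike~(1),~(2), and~(4), stable embeddedness of $\Gamma$ is not a formal consequence of the one-variable analysis above, and a self-contained proof would require control of definable subsets of $(\Mm^\times)^n$ --- essentially a $C$-minimal cell decomposition --- so for that part I would lean on the existing literature rather than reprove it.
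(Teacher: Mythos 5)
Your proposal is correct, and for clauses (1), (2), and (4) it is essentially the paper's argument: the paper also gets (1) from ACVF, gets (2) from a one-variable analysis of $v^{-1}(S)$ via $C$-minimality (this is exactly Lemma~\ref{control} in the paper -- the statement that off a finite exceptional set of valuations, each annulus $v^{-1}(\gamma)$ lies entirely inside or outside a definable $D\subseteq\Mm$ -- which you re-derive by unwinding the boolean combination of balls and points directly rather than by the ``good sets are closed under boolean combinations'' induction), and gets (4) from (2)+(3) together with the classical fact that o-minimal expansions of DOAG with a $0$-definable positive element (here $v(t)$) eliminate imaginaries. The one place you diverge is clause (3), which you flag as the obstacle and outsource to \cite{cminfields} and \cite{CK-D}. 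The paper handles it more economically: stable embeddedness is \emph{not} taken from the valued-field literature but deduced from (2) alone, via the theorem of Hasson and Onshuus \cite{embedded-o} that a definable linearly ordered set whose induced structure is o-minimal is automatically stably embedded. So the self-contained control of definable subsets of $(\Mm^\times)^n$ that you anticipated needing is unnecessary; once your one-variable argument for (2) is in place, (3) is free, and (4) follows exactly as you say. In short: same proof for (1), (2), (4); for (3) the paper's route via \cite{embedded-o} closes the gap you explicitly left to the literature, and it does so using only o-minimality of the induced structure rather than any $C$-minimal cell decomposition.
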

Part (\ref{gi1}) holds because it holds in ACVF (see
\cite[Lemma~1.3.1(i)]{survey}).  Part (\ref{gi2}) is a consequence of
$C$-minimality.  For example, it follows easily from Lemma~\ref{control}
below.  Part (\ref{gi3}) follows from part (\ref{gi2}) by Hasson and
Onshuus's theorem that o-minimal sets are always stably embedded
\cite{embedded-o}.  Part (\ref{gi4}) follows from stable embeddedness
plus the well-known fact that o-minimal expansions of DOAG eliminate
imaginaries if at least one positive element is 0-definable, in this
case $v(t)$.
\begin{lemma} \label{control}
  Let $D \subseteq \Mm$ be definable.  Then there are two subsets $D^{\err},
  \tilde{D} \subseteq \Gamma$ such that the following things hold:
  \begin{itemize}
  \item $\tilde{D}$ is quantifier-free definable in $(\Gamma,+,\le)$,
    i.e., it is a finite union of points and intervals.
  \item $D^{\err}$ is finite.
  \item If $x \in \Mm^\times$ and $v(x) \notin D^{\err}$, then $x \in
    D \iff v(x) \in \tilde{D}$.
  \end{itemize}
\end{lemma}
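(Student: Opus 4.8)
The plan is to reduce everything to elementary ultrametric geometry via $C$-minimality, and to push the resulting description down to $\Gamma$ by hand (crucially, without invoking o-minimality of the induced structure on $\Gamma$, since Fact~\ref{gamma-induce}(\ref{gi2}) is itself deduced from this lemma). First I would apply $C$-minimality in $\Mm$ to write $D$ as a finite boolean combination of balls $B_1,\dots,B_m$ and points $p_1,\dots,p_\ell$; that is, $D$ is defined by a fixed propositional formula $F$ in the atoms ``$x\in B_i$'' and ``$x = p_j$''. The key dichotomy is whether a ball contains $0$. If $0\notin B_i$, then an elementary computation (using that $0\notin B_i$ forces the radius of $B_i$ to exceed the common valuation of its elements, whether $B_i$ is open or closed) shows that all elements of $B_i$ share a single valuation $\gamma_i\in\Gamma$; hence $B_i$ is disjoint from $v^{-1}(\gamma)$ for every $\gamma\ne\gamma_i$. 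If $0\in B_i$, then $B_i$ is a ball centred at $0$, so $B_i\cap\Mm^\times = v^{-1}(I_i)$ for a ray $I_i = \Gamma_{\ge\rho_i}$ or $\Gamma_{>\rho_i}$ (or $I_i = \Gamma$ if $B_i = \Mm$).

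Next I would set $D^{\err} := \{\gamma_i : 0\notin B_i\}\cup\{v(p_j) : p_j\ne 0\}$, a finite subset of $\Gamma$, and put $I_i := \varnothing$ when $0\notin B_i$. For any $x\in\Mm^\times$ with $\gamma := v(x)\notin D^{\err}$, each atom of $F$ evaluates in terms of $\gamma$ alone: ``$x\in B_i$'' holds iff $\gamma\in I_i$ (for $0\in B_i$ this is immediate; for $0\notin B_i$ both sides are false, since $\gamma\ne\gamma_i$), and ``$x = p_j$'' is always false (since either $p_j = 0$ or $v(p_j)\in D^{\err}$, so $v(p_j)\ne\gamma$). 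Feeding these values into $F$, we get $x\in D$ iff $\gamma$ lies in the corresponding boolean combination $\tilde D$ of the sets $I_1,\dots,I_m,\varnothing$. Since each $I_i$ is a ray of $\Gamma$ (or $\varnothing$ or all of $\Gamma$), any boolean combination of them is a finite union of points and intervals of $\Gamma$, hence quantifier-free definable in $(\Gamma,\le)$ and a fortiori in $(\Gamma,+,\le)$. This $\tilde D$ together with $D^{\err}$ satisfies the three required properties (for the third, note that $v^{-1}(\gamma)\ne\varnothing$ for all $\gamma\in\Gamma$, so ``$B_i$ disjoint from $v^{-1}(\gamma)$'' genuinely refutes ``$v^{-1}(\gamma)\subseteq B_i$'').

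The only real content is the ball bookkeeping in the two cases above, in particular keeping track of open versus closed balls; everything else is formal manipulation of the propositional formula $F$. A tidier packaging, parallel to the proof of Fact~\ref{inf-fact}, is to call a definable $D\subseteq\Mm$ \emph{good} if there is a finite $D^{\err}\subseteq\Gamma$ with $v^{-1}(\gamma)\subseteq D$ or $v^{-1}(\gamma)\cap D = \varnothing$ for all $\gamma\notin D^{\err}$, check that good sets form a boolean algebra containing every ball and every point (the ball case being exactly the dichotomy above, with the union of the error sets at each boolean step), and conclude by $C$-minimality that every definable $D$ is good; then extract $\tilde D$ from the good decomposition and argue as above that it is a finite union of points and intervals. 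Either way, I expect the open/closed ball computation in the case $0\notin B_i$ to be the main point to get right, and I would be careful not to shortcut it through o-minimality of $\Gamma$.
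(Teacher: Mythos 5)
Your argument is correct and is essentially the paper's proof: the paper packages the same content as a closure argument on ``good'' sets (the tidier packaging you describe at the end), with your case analysis of balls according to whether they contain $0$ being exactly what the paper leaves as ``by inspection.'' The only nit is that for an \emph{open} ball $B_i \not\ni 0$ the radius may equal, rather than exceed, the common valuation of its elements, but the fact you actually use---that all elements of such a $B_i$ share a single valuation $\gamma_i$---holds in either case, so nothing is affected.
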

\begin{proof}
  Say that a set $D \subseteq \Mm$ is ``good'' if it satisfies the
  conclusion of the lemma.  The class of good sets is closed under
  finite boolean combinations.  For example, if $\star$ is $\cap$ or
  $\cup$ (or any other 2-ary boolean operation), then we can take
  \begin{gather*}
    \widetilde{X \star Y} = \tilde{X} \star \tilde{Y} \\
    (X \star Y)^{\err} = X^{\err} \cup Y^{\err}.
  \end{gather*}
  Open balls, closed balls, and points are good by inspection.  By
  $C$-minimality, every definable set $D \subseteq \Mm$ is good.
\end{proof}
This implies o-minimality of $\Gamma$.  Indeed, let $X \subseteq
\Gamma$ be definable, and take $D = v^{-1}(X) = \{x \in \Mm^\times :
v(x) \in X\}$.  If $\tilde{D}$ is a quantifier-free definable set as
in the lemma, then
\begin{equation*}
  v(x) \in \tilde{D} \iff x \in D \iff v(x) \in X
\end{equation*}
except at finitely many values of $v(x)$.  Thus $\tilde{D}$ and $X$
differ at only finitely many points, and $X$ is quantifier-free
definable.

Aside from implying o-minimality of $\Gamma$, Lemma~\ref{control} also
gives a ``domination'' statement similar to compact domination
\cite[Definition~9.1]{goo} or stable domination \cite{HHM2}.
Specifically, if $D \subseteq \Mm$ is definable, then for all but
finitely many $\gamma \in \Gamma$, the annulus $v^{-1}(\gamma)$ lies
entirely inside or outside $D$.

There is an entirely analogous picture for the residue field $k$.  The
proofs for $k$ are similar to the proofs for $\Gamma$.
\begin{fact} \phantomsection \label{k-induce}
  \begin{enumerate}
  \item \label{ki1} The residue field $k$ is an algebraically closed
    field.
  \item \label{ki2} The induced structure on $k$ is strongly minimal.
  \item \label{ki3} $k$ is stably embedded---any definable set $X
    \subseteq k^n$ is definable with parameters in $k$.
  \item \label{ki4} $k$ eliminates imaginaries---if $D \subseteq
    k^n$ is definable, then the code $\ulcorner D \urcorner$ is
    interdefinable with a tuple in $k$.
  \end{enumerate}
\end{fact}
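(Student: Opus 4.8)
The plan is to transfer the analysis of $\Gamma$ (Fact~\ref{gamma-induce} and Lemma~\ref{control}) to the residue field; as noted just before the statement, the proofs parallel those for $\Gamma$. Part~(\ref{ki1}) is immediate: $\Mm$ expands ACVF by the theorem of Haskell and Macpherson, so its residue field is algebraically closed (see \cite[Lemma~1.3.1]{survey}). The substantive point is the following exact analogue of Lemma~\ref{control}, in which $\res \colon \Oo^\times \to \kx$ denotes the residue map.

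\emph{Claim.} For every definable $D \subseteq \Mm$ there are a finite set $D^{\err} \subseteq k$ and a set $\tilde D \subseteq k$ that is definable in the pure field $(k,+,\cdot)$ --- equivalently, $\tilde D$ is finite or cofinite --- such that $x \in D \iff \res(x) \in \tilde D$ for every $x \in \Oo^\times$ with $\res(x) \notin D^{\err}$.

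I would prove the Claim by the ``good set'' argument used for Lemma~\ref{control}: call $D$ \emph{good} if it satisfies the conclusion. Good sets are closed under finite Boolean combinations (taking $\widetilde{X \cap Y} = \tilde X \cap \tilde Y$, $(X \cap Y)^{\err} = X^{\err} \cup Y^{\err}$, and handling complements inside $k$), so by $C$-minimality it is enough to check that points and balls are good. A point $p$ is good with $\tilde p = \varnothing$ and $p^{\err} = \{\res(p)\}$ (or $\varnothing$, if $p \notin \Oo^\times$). For a ball $B$, a short case analysis shows that $B \cap \Oo^\times$ is empty, or is all of $\Oo^\times$ (this happens when the valuative radius is $< 0$, or is $0$ with $B$ closed, so that $\Oo \subseteq B$; then take $\tilde B = \kx$), or equals $B$ with $B$ contained in a single coset $a + \mm$ for some unit $a$ (valuative radius $> 0$, or $0$ with $B$ open, so that $\res \equiv \res(a)$ on $B$; then, exactly as for a point, take $\tilde B = \varnothing$ and $B^{\err} = \{\res(a)\}$). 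The only way this differs from the $\Gamma$ picture is that ``small'' balls here behave like points, contributing only to the finite error set --- the counterpart of the fact that in Lemma~\ref{control} a ball avoiding $0$ has constant valuation.

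Granting the Claim, part~(\ref{ki2}) follows just as o-minimality of $\Gamma$ did: given a definable $X \subseteq k$, apply the Claim to $D = \{x \in \Oo^\times : \res(x) \in X\}$; since $\res$ maps $\Oo^\times$ onto $\kx$, the sets $X \cap \kx$ and $\tilde D$ agree off the finite set $D^{\err}$, so $X$ is finite or cofinite, and since $k$ is infinite this makes the induced structure on $k$ strongly minimal (the argument is uniform over all models of $T$, which is what strong minimality requires). Finally, parts~(\ref{ki3}) and~(\ref{ki4}) follow from the previous parts exactly as parts~(\ref{gi3}) and~(\ref{gi4}) did for $\Gamma$, with ``strongly minimal'' in place of ``o-minimal'' and ``ACF'' in place of ``DOAG'': a set with stable (in particular, strongly minimal) induced structure is stably embedded, and the induced structure on $k$ --- a strongly minimal expansion of the algebraically closed field $k$ --- eliminates imaginaries (finite subsets of $k$ being coded by symmetric functions, as for pure ACF). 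All of this is also recorded in \cite{cminfields,CK-D}. The only real obstacle is the ball case-analysis, which is routine once one sees that small balls must go into the error set.
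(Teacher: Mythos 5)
Your treatment of parts (\ref{ki1}) and (\ref{ki2}) is correct and is essentially the paper's own route: your Claim is just Lemma~\ref{control2} (stated on $\Oo^\times$ rather than $\Oo$), proved by the same ``good sets'' argument as Lemma~\ref{control}, and your case analysis for balls is right; strong minimality then follows exactly as o-minimality of $\Gamma$ followed from Lemma~\ref{control}.

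The gap is at part (\ref{ki3}). The principle you invoke --- ``a set with stable (in particular, strongly minimal) induced structure is stably embedded'' --- is false as stated. For instance, take a two-sorted structure consisting of an infinite set $D$, a second sort $E$, and a random (generic) bipartite relation between them: by quantifier elimination, every $\Mm$-definable relation on powers of $D$ is a boolean combination of equalities, diagonals, and unary sets, so the induced structure on $D$ is a unary structure and in particular stable; yet the $E$-fibres are infinite, co-infinite subsets of $D$ not definable with parameters from $D$, so $D$ is not stably embedded. Strong minimality does rule out this particular example, but it only constrains externally definable subsets of $D$ in \emph{one} variable, whereas stable embeddedness is a statement about $D^n$ for all $n$; the passage from one variable to several is precisely the non-trivial content, and it is not a formal consequence of (\ref{ki2}). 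This is why, for $\Gamma$, the paper leans on the genuine theorem of Hasson and Onshuus \cite{embedded-o} that o-minimal sets are stably embedded; there is no analogous off-the-shelf principle for strongly minimal sets to quote here, and the intended justification of (\ref{ki3}) is the corresponding result for $C$-minimal fields proved directly in \cite{cminfields} (see also \cite{CK-D}). A related, smaller issue affects (\ref{ki4}): coding finite sets by symmetric functions is not by itself enough for elimination of imaginaries --- one also needs elimination down to (codes of) finite sets of real tuples, which again is not a formality for an arbitrary strongly minimal expansion of ACF --- so that part, too, should rest on stable embeddedness together with the cited facts rather than on the sketch given.
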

Let $\Oo$ be the valuation ring and $\mm$ be its maximal ideal, so
that $k = \Oo/\mm$.
\begin{lemma} \label{control2}
  If $D \subseteq \Oo$ is definable, then there are subsets
  $\tilde{D}, D^{\err} \subseteq k$ such that
  \begin{enumerate}
  \item $\tilde{D}$ is finite or cofinite in $k$.
  \item $D^{\err}$ is finite.
  \item If $x \in \Oo$ and $\res(x) \notin D^{\err}$, then $x \in D
    \iff \res(x) \in \tilde{D}$.
  \end{enumerate}
\end{lemma}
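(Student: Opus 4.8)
The plan is to imitate the proof of Lemma~\ref{control} as closely as possible, replacing the valuation $v : \Mm^\times \to \Gamma$ by the residue map $\res : \Oo \to k$. Say that a definable set $D \subseteq \Oo$ is \emph{good} if it satisfies the conclusion of the lemma: there exist a finite-or-cofinite $\tilde D \subseteq k$ and a finite $D^{\err} \subseteq k$ such that for $x \in \Oo$ with $\res(x) \notin D^{\err}$ we have $x \in D \iff \res(x) \in \tilde D$. As in Lemma~\ref{control}, the first step is to observe that the class of good sets is closed under finite boolean combinations: for a binary boolean operation $\star$ one takes $\widetilde{X \star Y} = \tilde X \star \tilde Y$ and $(X \star Y)^{\err} = X^{\err} \cup Y^{\err}$, using that the finite-or-cofinite subsets of $k$ form a boolean algebra (this is where strong minimality / $k$ being infinite enters). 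Complementation is the special case $\star = {}\setminus$ against $\Oo$.

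The second step is to check that the ``building blocks'' are good. Every definable subset of $\Oo$ is, by $C$-minimality, a finite boolean combination of balls and points intersected with $\Oo$, so by the first step it suffices to treat balls $B$ and singletons $\{a\}$ with $a \in \Oo$. A singleton $\{a\}$ is handled by $\tilde D = \varnothing$ (or $\{\res(a)\}$) and $D^{\err} = \{\res(a)\}$. For a ball $B$, the relevant dichotomy is whether $B$ has valuative radius $> 0$, $= 0$, or $< 0$ (i.e.\ whether $B \subseteq \mm + a$ for some $a$, or $B$ is a single residue class $\res^{-1}(c)$ up to the open/closed distinction, or $B$ is larger than a residue class). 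If $B$ is small (radius $>0$, or a proper subset of a residue disc) then $B \cap \Oo$ meets at most one residue class, so $\tilde D = \varnothing$ and $D^{\err}$ that one class works; if $B$ is exactly a closed residue disc $a + \mm$ then $\tilde D = \{\res(a)\}$, $D^{\err} = \varnothing$; if $B$ is large (radius $<0$) then $B \cap \Oo$ is a union of entire residue classes, and $\tilde D = \{c \in k : \res^{-1}(c) \subseteq B\}$ is either all of $k$ or, when $B$ does not contain $\Oo$, cofinite — in all cases finite-or-cofinite, with $D^{\err} = \varnothing$ (the boundary residue classes, if any, lie outside $B$). The open-ball cases are the same up to possibly moving one residue class into $D^{\err}$. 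Assembling these, every ball intersected with $\Oo$ is good, hence by induction on boolean complexity every definable $D \subseteq \Oo$ is good.

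I expect the only genuinely delicate point to be the third bullet in the case of a ball $B$ whose valuative radius is \emph{exactly} $0$: here one must be careful that an open ball of radius $0$ (which contains no closed residue disc, each residue class $\res^{-1}(c) = c + \mm$ being a \emph{closed} ball of radius $0$) versus a closed ball of radius $0$ behave slightly differently, and in the open case $B \cap \Oo$ may be empty or a proper cofinal piece of a single residue class — still accounted for by putting that one class into $D^{\err}$. The finitely-many-exceptions budget $D^{\err}$ only ever needs to absorb the residue classes of finitely many named points together with at most the one residue class at the ``critical radius'' of each ball in the boolean combination, so it stays finite. No use of $C$-minimality in elementary extensions is needed here beyond what is already packaged in the statement ``every definable subset of $\Mm^1$ is a finite boolean combination of balls and points.''
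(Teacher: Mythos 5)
Your proof is correct and is exactly the argument the paper intends: Lemma~\ref{control2} is stated without proof, with the remark that ``the proofs for $k$ are similar to the proofs for $\Gamma$,'' and your adaptation of the good-sets/boolean-combination argument of Lemma~\ref{control}, with the building-block check that points and balls intersected with $\Oo$ are good, is that proof. The only quibble is terminological: in a densely ordered value group a residue class $c+\mm=\{x: v(x-c)>0\}$ is an \emph{open} ball of valuative radius $0$, not a closed one, but your case analysis handles all the balls correctly regardless.
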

We are also interested in the interactions between $k$ and $\Gamma$.
\begin{definition} \label{def-ortho}
  Two definable sets $X, Y$ are \emph{orthogonal} if the boolean
  algebra of definable subsets of $X^n \times Y^m$ is generated by
  sets of the form $A \times B$ with $A \subseteq X^n$ and $B
  \subseteq Y^m$.
\end{definition}
If $D \subseteq X^n \times Y^m$ is definable and $a \in X^n$, let $D_a
= \{y \in Y^m : (x,y) \in D\}$.  Then $X$ and $Y$ are orthogonal if
and only if the following holds:
\begin{quote}
  For any definable set $D \subseteq X^n \times Y^m$, the family of
  slices $\{D_a : a \in X^n\}$ is finite.
\end{quote}
Indeed, if $D$ is a finite boolean combination of $\{A_1 \times B_1,
\ldots, A_n \times B_n\}$, then each slice $D_a$ belongs to the finite
boolean algebra generated by $\{B_1,\ldots,B_n\}$.  Conversely, if
$\{D_a : a \in X^n\}$ is a finite set $\{B_1,\ldots,B_n\}$, and we let
$A_i = \{a \in X^n : D_a = B_i\}$, then
\begin{equation*}
  D = \bigcup_{i = 1}^n (A_i \times B_i).
\end{equation*}
\begin{proposition} \phantomsection \label{prop-ortho}
  \begin{enumerate}
  \item If $D \subseteq k^n$ is definable and $f : D \to \Gamma^m$ is
    definable, then $f$ has finite image.
  \item \label{po2} $k$ and $\Gamma$ are orthogonal.
  \item \label{po3} If $D \subseteq \Gamma^m$ is definable and $f : D
    \to k^n$ is definable, then $f$ has finite image.
  \end{enumerate}
\end{proposition}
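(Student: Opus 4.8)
The plan is to prove the three parts in a way that bootstraps: (1) first, then (2) as a consequence, then (3) by a symmetric argument using (2). The underlying idea is to transfer definable data about $k$ and $\Gamma$ up to the home sort $\Mm$, where Lemma~\ref{control} and Lemma~\ref{control2} pin things down, and then exploit the fact that the ``domination'' statements they provide are governed by \emph{finite} exceptional sets.

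For part (1), I would argue by induction on $n$ (the arity of the domain in $k^n$), reducing to the case $n = m = 1$, i.e., a definable $f : D \to \Gamma$ with $D \subseteq k$. Pick a lift: choose $s : k \to \Oo$ a section of $\res$ (e.g. using the constant $t$ and strong minimality, or just work with an arbitrary definable-with-parameters right inverse), and consider the set $E = \{ x \in \Oo : v(x - s(\res(x))) = \text{something encoding } f(\res(x)) \}$ — more carefully, for each $\gamma$ in the image of $f$, let $E_\gamma = \{ x \in \Oo : \res(x) \in D, ~ f(\res(x)) = \gamma \}$, so that $\{E_\gamma\}$ partitions a subset of $\Oo$ into fibers indexed by $\img(f)$. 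Each $E_\gamma$ is a definable subset of $\Oo$, so by Lemma~\ref{control2} there is a finite $E_\gamma^{\err} \subseteq k$ and a finite-or-cofinite $\tilde{E}_\gamma \subseteq k$ controlling membership away from $E_\gamma^{\err}$. Now the key point: the sets $\{E_\gamma\}$ are pairwise disjoint and their union has residue image all of $D$; if $D$ is infinite, at most one of the $\tilde{E}_\gamma$ can be cofinite, and the rest are finite, while the $E_\gamma^{\err}$ are each finite — but there is no uniform bound being claimed yet, so I need compactness. Here I would instead run the argument in a single definable family: the set $\{(x, y) \in \Oo \times \Mm : \res(x) \in D, ~ y \in v^{-1}(f(\res(x)))\}$, or better, directly invoke stable embeddedness of $k$ (Fact~\ref{k-induce}(\ref{ki3})) and of $\Gamma$ (Fact~\ref{gamma-induce}(\ref{gi3})) together with strong minimality of $k$: the graph of $f$ is a definable subset of $D \times \Gamma$; for each $\gamma$, the fiber $f^{-1}(\gamma)$ is a definable subset of $k$, hence finite or cofinite; these fibers are disjoint, so all but at most one are finite. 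If the image were infinite we would have infinitely many disjoint nonempty finite subsets of $D$ together with one cofinite one — that is fine set-theoretically, so finiteness of $\img(f)$ must instead come from: by strong minimality applied in the family, $|f^{-1}(\gamma)|$ is bounded by some $N$ independent of $\gamma$ once we discard the at-most-one cofinite fiber, and by o-minimality/compactness on the $\Gamma$ side the ``at most one cofinite fiber'' exception is itself controlled — so $\bigcup_{\gamma} f^{-1}(\gamma) \subseteq k$ would be covered by one cofinite set plus the infinitely-many $\le N$-element sets, which is possible, so I must squeeze harder: apply Lemma~\ref{control2} to the single definable set $\{x \in \Oo : \res(x) \in D\}$ pulled back with the value data, concluding $f$ factors, modulo a finite exceptional set, through $\res$ composed with a \emph{$k$-internal} map to $\Gamma$, and a $k$-internal map to an o-minimal set has finite image because $k$ is stably embedded and $\Gamma$ has no infinite strongly minimal subsets. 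This last sentence is the real content, and I expect it to be the main obstacle — making precise that ``$k$-definable subsets of $\Gamma$ are finite'' which is exactly a baby orthogonality statement.

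Given part (1), part (2) — orthogonality of $k$ and $\Gamma$ — follows by the slice criterion recorded just before the Proposition: I must show that for any definable $D \subseteq k^n \times \Gamma^m$, the family of slices $\{D_a : a \in k^n\}$ is finite. By Fact~\ref{k-induce}(\ref{ki4}) ($k$ eliminates imaginaries) and Fact~\ref{gamma-induce}(\ref{gi4}) ($\Gamma$ eliminates imaginaries), the code $\ulcorner D_a \urcorner$ of each slice is interdefinable with a tuple in $\Gamma$ (since $D_a \subseteq \Gamma^m$), and the assignment $a \mapsto \ulcorner D_a \urcorner$ is a definable function from a subset of $k^n$ to $\Gamma^{m'}$ for some $m'$; by part (1) this function has finite image, so there are only finitely many slices. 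For part (3), I would run the mirror-image argument: a definable $f : D \to k^n$ with $D \subseteq \Gamma^m$ has graph a definable subset of $\Gamma^m \times k^n$, and by part (2) (orthogonality) this graph is a finite boolean combination of rectangles $A_i \times B_i$ with $A_i \subseteq \Gamma^m$, $B_i \subseteq k^n$; a function whose graph lives in such a finite boolean combination takes, on each of the finitely many ``cells'' cut out by the $A_i$, a value lying in a fixed finite boolean combination of the $B_i$ — but more directly, for fixed $a \in D$ the singleton $\{f(a)\}$ is one of the finitely many slices $(\text{graph})_a$, so $\img(f)$ is finite.

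The one genuinely load-bearing step is the reduction inside part (1) to the assertion that a definable (with parameters) subset of $\Gamma$ that is ``$k$-internal'' — i.e.\ arises as the image of a definable subset of some $k^n$ under a definable map — must be finite. I would extract this from Lemma~\ref{control}: pulling back a would-be infinite such subset $S \subseteq \Gamma$ along $v$ gives a definable $v^{-1}(S) \subseteq \Mm^\times$, which by Lemma~\ref{control} is, off a finite set, a finite union of intervals in $\Gamma$; combined with the fact that the parametrization comes from $k$ (where every definable unary set is finite or cofinite, Fact~\ref{k-induce}(\ref{ki2})), one gets a definable surjection from a finite-or-cofinite subset of $k$ onto an infinite union of $\Gamma$-intervals, and then counting fibers via strong minimality forces a contradiction: generic fibers have constant finite size $N$, so the source has size $N$ times an infinite set of the cardinality of a $\Gamma$-interval, incompatible with the source being a subset of the strongly minimal set $k$ (whose infinite definable subsets are all cofinite and hence ``the same size''). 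Everything else is bookkeeping with codes, the slice criterion, and boolean combinations, which I would not belabor.
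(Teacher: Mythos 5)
Your parts (2) and (3) are correct and essentially identical to the paper's proof: code the slices $D_a \subseteq \Gamma^m$ using elimination of imaginaries in $\Gamma$, get a definable map $k^n \to \Gamma^\ell$, apply part (1), and then obtain (3) from (2) applied to the graph of $f$. The problem is part (1), which is exactly the step everything else rests on, and which you yourself flag as ``the real content.'' Your final justification of it is a cardinality count: a finite-to-one definable surjection from a subset of the strongly minimal set $k$ onto an infinite union of $\Gamma$-intervals is declared ``incompatible'' because the source would have to be too big. This is not a contradiction. In the monster model, $k$ and any infinite interval of $\Gamma$ both have the cardinality of $\Mm$ (every infinite definable set in a saturated model does), so nothing about sizes rules out such a surjection; strong minimality constrains the definable subsets of $k$, not its cardinality. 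The intermediate steps are also shakier than bookkeeping: applying Lemma~\ref{control2} to $\{x \in \Oo : \res(x) \in D\}$ does not by itself produce the claimed factorization of $f$ through a ``$k$-internal'' map, and your earlier fiber-counting attempts are abandoned midstream, so the whole of part (1) hangs on the invalid counting argument.

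What actually closes the gap is a stability-theoretic fact, which is the paper's one-line proof: $D$ and hence $\im(f)$ are stable and stably embedded (they live in the strongly minimal, stably embedded sort $k$, respectively are definable images of it), while $\im(f) \subseteq \Gamma^m$ carries a definable linear order; a stable set with a definable linear order must be finite, since an infinite definable chain witnesses the order property. Nothing about Lemma~\ref{control} or Lemma~\ref{control2} is needed for this. If you want to avoid quoting stability, there is an elementary substitute you came close to but did not give: if $\im(f)$ were infinite it would contain an interval $I$ of $\Gamma$ by o-minimality, and the preimages $X_\gamma = f^{-1}(\{\delta \in I : \delta \le \gamma\})$ form a strictly increasing definable chain of subsets of $k^n$ indexed by the dense order $I$; in the case $n=1$ each $X_\gamma$ is finite or cofinite, so $\gamma \mapsto |X_\gamma|$ (or the cardinality of the complement, on the cofinite part) is a strictly monotone map from a dense infinite order into $\Nn$, which is absurd; the general $n$ follows by induction or by coding. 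Either way, the order-versus-stability tension, not cardinality, is what makes part (1) true.
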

Again, this is well-known, but we include a proof for completeness.
\begin{proof}
  \begin{enumerate}
  \item The set $D$ and its image $\im(f)$ are stable and stably
    embedded.  On the other hand, $\Gamma^m$ and its subset $\im(f)$
    admit a definable linear ordering.  A stable set admitting a
    definable linear ordering must be finite.
  \item Let $D \subseteq k^n \times \Gamma^m$ be definable.  It
    suffices to show that there are only finitely many possibilities
    for $D_a$ as $a$ ranges over $k^n$.  By the strong elimination of
    imaginaries in $\Gamma$, we can assign a code $\ulcorner D_a
    \urcorner \in \Gamma^\ell$ for each $a \in k^n$ in a uniform,
    definable way.  Then the map $f(a) = \ulcorner D_a \urcorner$ is
    definable, and has finite image by the previous point.  Therefore
    $\{D_a : a \in k^n\}$ is finite as desired.
  \item Apply the previous point to the graph of $\Gamma$. \qedhere
  \end{enumerate}
\end{proof}
The first part of Proposition~\ref{prop-ortho} follows formally from
the fact that $k$ is stable and $\Gamma$ is linearly ordered.  In
contrast, the other two points need the stronger fact that $\Gamma$ is
o-minimal.  For example, contrast the above situation with the
following example:
\begin{example}
  Let $(M,\le,\approx)$ be a generic set with a linear order and
  unrelated equivalence relation.  Let $k$ be the quotient
  $M/{\approx}$.  Then it is not hard to see that $k$ is strongly
  minimal and stably embedded.  However, the quotient map $M \to k$
  shows that the linearly ordered set $M$ and the strongly minimal set
  $k$ are not orthogonal in the sense above.
\end{example}
The home sort $\Mm$ and the value group $\Gamma$ are not orthogonal,
obviously.  Nevertheless, there are some limitations on maps from
$\Mm$ to $\Gamma$:
\begin{fact} \label{const-to-gamma}
  If $D \subseteq \Mm$ and $f : D \to \Gamma^n$ is definable, then $f$
  is locally constant at all but finitely many points of $D$.
\end{fact}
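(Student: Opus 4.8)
The plan is to reduce to $n = 1$ (a finite union of finite sets is finite), and then prove the contrapositive: supposing that
$E := \{x \in D : f \text{ is not locally constant at } x\}$
is infinite, I derive a contradiction. Since $E$ is definable, $C$-minimality makes it a finite boolean combination of balls and points, so being infinite it must contain a ball $B$. Replace $D$ by $B$ and $f$ by $f|_B$; shrinking test balls $B'$ down until $B' \subseteq B$ shows that $f|_B : B \to \Gamma$ is \emph{nowhere} locally constant. This forces every fiber $f^{-1}(\gamma) \cap B$ to be finite, since an infinite fiber would contain a ball on which $f$ is constant, and then $f$ would be locally constant at the points of that ball.

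Thus it suffices to show:
\begin{quote}
  there is no definable finite-to-one function from an infinite annulus $\{x : v(x - c) = \gamma_0\}$ to $\Gamma$
\end{quote}
(note $B$ contains such an annulus, and the restriction of $f|_B$ to it would be finite-to-one). To prove this, I would rescale the annulus onto $\Oo^\times$, so that we have a definable finite-to-one $g : \Oo^\times \to \Gamma$. The fibers of $\res : \Oo^\times \to k^\times$ are the cosets of $\mm$, each of which is an infinite ball. Consider the definable map $\phi : k^\times \to \Gamma^{\eq}$ sending $\bar c$ to a code for the definable set $\{g(x) : x \in \res^{-1}(\bar c)\} \subseteq \Gamma$; using elimination of imaginaries in $\Gamma$ (Fact~\ref{gamma-induce}), we may take $\phi$ valued in $\Gamma^{\ell}$. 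By orthogonality of $k$ and $\Gamma$, $\phi$ has finite image (Proposition~\ref{prop-ortho}(1)), and since $k$ is strongly minimal (Fact~\ref{k-induce}) there is a cofinite $U \subseteq k^\times$ on which $\phi$ is constant, with value a code for some fixed $J \subseteq \Gamma$; thus $g(\res^{-1}(\bar c)) = J$ for every $\bar c \in U$. Since $\res^{-1}(\bar c)$ is infinite while $g$ is finite-to-one, $J$ must be infinite. Fix $\beta \in J$: for each $\bar c \in U$ we get some $x_{\bar c} \in \res^{-1}(\bar c)$ with $g(x_{\bar c}) = \beta$, and since distinct cosets are disjoint, the $x_{\bar c}$ are distinct. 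Hence $g^{-1}(\beta)$ is infinite, contradicting that $g$ is finite-to-one.

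The crux is the displayed claim about finite-to-one maps into $\Gamma$, and the key conceptual point is that o-minimality of $\Gamma$ by itself would \emph{not} suffice: in an o-minimal expansion of a DOAG a definable function can blow up at a point, so $\im(f)$ could a priori be an infinite set pinched arbitrarily close to a single value. What rules this out is the orthogonality of $k$ and $\Gamma$ (Proposition~\ref{prop-ortho}), together with the fact that, after rescaling, each annulus is coordinatized by $k$ through the residue map -- it is precisely the residue map that makes the home sort genuinely different from $\Gamma$. The remaining ingredients -- that an infinite definable subset of $\Mm$ contains a ball, and that non-local-constancy passes to sub-balls -- are routine consequences of $C$-minimality and of working locally.
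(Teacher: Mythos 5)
Your proof is correct. There is nothing in the paper to compare it against: Fact~\ref{const-to-gamma} is quoted as a known result, attributed via \cite[Theorem~2.2(2)]{CK-D} to Haskell and Macpherson \cite{cminfields}, so you have supplied a self-contained argument where the paper only cites. Your argument uses only material that appears strictly before Fact~\ref{const-to-gamma} in Section~2, so there is no circularity: the reduction to $n=1$ and the passage from an infinite definable set of bad points to a ball need only $C$-minimality (an infinite definable subset of $\Mm$ contains a ball, a fact the paper itself uses when eliminating $\exists^\infty$); finiteness of the fibers of a nowhere locally constant $f$ on a ball is immediate; and the punchline---that there is no definable finite-to-one map from an annulus (rescaled to $\Oo^\times$) to $\Gamma$---uses exactly the tools the paper has already set up: orthogonality in the form of Proposition~\ref{prop-ortho}(1), stable embeddedness and elimination of imaginaries of $\Gamma$ (Fact~\ref{gamma-induce}) to code the sets $g(\res^{-1}(\bar{c}))$ uniformly by tuples in $\Gamma^{\ell}$ (the same uniform-coding move made inside the paper's proof of Proposition~\ref{prop-ortho}(2)), and strong minimality of $k$ (Fact~\ref{k-induce}) to get a cofinite set of residue classes with a common image $J$. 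Two minor remarks: you do not need $J$ to be infinite, only non-empty, since a single $\beta \in J$ already meets infinitely many pairwise disjoint residue classes and so has infinite $g$-preimage; and your diagnosis that orthogonality of $k$ and $\Gamma$, rather than o-minimality of $\Gamma$ alone, is the essential ingredient is consistent with how the paper deploys this fact, namely through stable embeddedness and elimination of imaginaries of $\Gamma$ in Corollary~\ref{corfam}.
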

Cubides Kovacsics and Delon \cite[Theorem 2.2(2)]{CK-D} attribute
Fact~\ref{const-to-gamma} to Haskell and Macpherson
\cite[Proposition~3.9?]{cminfields}.

Because $\Gamma$ is stably embedded with elimination of imaginaries,
Fact~\ref{const-to-gamma} implies the following:
\begin{corollary} \label{corfam}
  Let $X \subseteq \Mm$ be definable and let $\{Y_a\}_{a \in X}$ be a
  definable family of subsets of $\Gamma^n$.  Then the family
  $\{Y_a\}$ is locally constant at all but finitely many points of
  $X$.  That is, for all $a$ in a cofinite subset of $X$, there is a
  neighborhood $N \ni a$ such that the family $\{Y_a\}_{a \in X \cap
    N}$ is constant.
\end{corollary}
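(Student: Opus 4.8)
The plan is to reduce the statement to Fact~\ref{const-to-gamma} by replacing the family of sets $\{Y_a\}_{a \in X}$ with a family of \emph{codes}. Since the induced structure on $\Gamma$ is stably embedded and eliminates imaginaries (Fact~\ref{gamma-induce}(\ref{gi3}), (\ref{gi4})), each definable set $Y_a \subseteq \Gamma^n$ has a canonical code that is interdefinable with a finite tuple from $\Gamma$. Exactly as in the proof of Proposition~\ref{prop-ortho}(\ref{po2}), a compactness argument over the definable family $\{Y_a\}_{a \in X}$ produces a single $\ell < \omega$ and a definable function $f : X \to \Gamma^\ell$ with $f(a) = \ulcorner Y_a \urcorner$ for all $a \in X$, where the assignment is uniform and $\ulcorner Y_a \urcorner$ determines $Y_a$ (that is, $f(a) = f(b) \implies Y_a = Y_b$).

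Next I would apply Fact~\ref{const-to-gamma} to the definable function $f : X \to \Gamma^\ell$: there is a finite set $F \subseteq X$ such that $f$ is locally constant at every point of $X \setminus F$. Fix $a \in X \setminus F$ and choose a neighborhood $N \ni a$ on which $f$ is constant on $X \cap N$. Then $\ulcorner Y_b \urcorner = \ulcorner Y_a \urcorner$ for every $b \in X \cap N$, and since the code determines the set we get $Y_b = Y_a$ for every $b \in X \cap N$. Hence the family $\{Y_a\}_{a \in X}$ is constant on $X \cap N$, i.e.\ it is locally constant at $a$; as $a$ was an arbitrary point of the cofinite set $X \setminus F$, this is the desired conclusion.

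The only point requiring care is the first step: producing the uniform, bounded-length coding $a \mapsto \ulcorner Y_a \urcorner \in \Gamma^\ell$. This is where both stable embeddedness (to bring parameters into $\Gamma$) and strong elimination of imaginaries (to replace $\Gamma^{\eq}$-codes by genuine $\Gamma$-tuples) are used, together with a standard compactness argument to make $\ell$ uniform across the family. Since this is precisely the device already invoked for Proposition~\ref{prop-ortho}(\ref{po2}), I do not expect any genuinely new obstacle; once the coding is in place, the corollary is an immediate application of Fact~\ref{const-to-gamma}.
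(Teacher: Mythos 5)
Your proof is correct and is essentially the paper's own argument: the paper derives Corollary~\ref{corfam} from Fact~\ref{const-to-gamma} precisely by using stable embeddedness and elimination of imaginaries in $\Gamma$ to code each $Y_a$ by a tuple $\ulcorner Y_a \urcorner \in \Gamma^\ell$ (the same uniform coding device used in the proof of Proposition~\ref{prop-ortho}) and then applying Fact~\ref{const-to-gamma} to the resulting definable map $X \to \Gamma^\ell$. No differences worth noting.
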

Using this, we can quickly re-prove some facts from \cite{CK-D}.  This
is not a pointless exercise---we will need the proofs later.
\begin{theorem}[{$\approx$ \cite[Theorem~3.1]{CK-D}}] \label{copy-1}
  If $f : \Mm^\times \to \Mm$ is definable, then there is a definable
  function $g : \Gamma \to \Gamma_\infty$ such that
  \begin{equation*}
    v(f(x)) = g(v(x))
  \end{equation*}
  for all $x$ in a punctured neighborhood of 0.
\end{theorem}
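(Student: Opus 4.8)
The plan is to reduce the statement to an application of Corollary~\ref{corfam} (local constancy of definable families of subsets of $\Gamma^n$ parametrized by the home sort), exactly as in the proof of \cite[Theorem~3.1]{CK-D}. The key observation is that the quantity $v(f(x))$, for $x$ near $0$, is governed by a definable family of subsets of $\Gamma$ indexed by $v(x)$, and Fact~\ref{inf-fact} together with Corollary~\ref{inf-cor} let us pick out a single coherent ``generic'' value of that family on a punctured neighborhood of $0$.

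First I would consider the definable set $D = \{(x,\gamma) \in \Mm^\times \times \Gamma_\infty : v(f(x)) = \gamma\}$, i.e.\ the graph of the composite $x \mapsto v(f(x))$. For each $a \in \Gamma$, let $Y_a = \{v(f(x)) : v(x) = a\} \subseteq \Gamma_\infty$; since $\Gamma$ is stably embedded with elimination of imaginaries (Fact~\ref{gamma-induce}), $\{Y_a\}_{a \in \Gamma}$ is a definable family. Actually, it is cleaner to index by the home sort directly: set $Z_x = \{v(f(y)) : v(y) = v(x)\}$ for $x \in \Mm^\times$; this is a definable family of subsets of $\Gamma_\infty$ parametrized by $x \in \Mm^\times$, so by Corollary~\ref{corfam} it is locally constant at all but finitely many points of $\Mm^\times$. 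Applying Fact~\ref{inf-fact} (or Corollary~\ref{inf-cor} to the finitely many ``bad'' vs.\ ``good'' pieces), there is a ball $B \ni 0$ on which $x \mapsto Z_x$ is constant, say equal to some fixed set $Y \subseteq \Gamma_\infty$; shrinking, we may assume $B$ is a punctured neighborhood $B \setminus \{0\}$ on which this holds.

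Next I would argue that $Y$ is a singleton $\{g_0\}$, or more precisely that for the purpose of defining $g$ we may restrict to a smaller punctured neighborhood on which $v(f(x))$ depends only on $v(x)$ and takes exactly one value for each value of $v(x)$. The point is that within the fibre over a fixed annulus $v^{-1}(a)$, Fact~\ref{inf-fact} applied to the definable sets $\{x : v(f(x)) \in S\}$ (for the finitely many $S$ in the finite boolean algebra generated by the slices) shows that the annulus is generically homogeneous for $v \circ f$; combined with Lemma~\ref{control} applied to $v^{-1}(\{a\}) \cap \{x : v(f(x)) = \gamma\}$, one sees that on a cofinite set of annuli (hence, after another application of Fact~\ref{inf-fact}, on a punctured neighborhood of $0$) the value $v(f(x))$ is determined by $v(x)$. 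This defines $g : \Gamma \to \Gamma_\infty$ on a final segment of $\Gamma$ near $+\infty$ by $g(v(x)) = v(f(x))$; extend $g$ arbitrarily (say by $0$) on the rest of $\Gamma$, and the resulting $g$ is definable because its graph over that final segment is the image of a definable set under $(x,y) \mapsto (v(x),y)$.

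The main obstacle, as in \cite[Theorem~3.1]{CK-D}, is ruling out the possibility that $v(f(x))$ genuinely varies \emph{within} a single annulus $v^{-1}(a)$ as $x$ ranges over that annulus — i.e.\ showing that the family $\{Z_x\}$ being locally constant really does force $Y$ to be a singleton rather than a larger finite or infinite set. This is where the ultrametric/$C$-minimal structure is essential: one uses that an annulus cannot be split into two relatively clopen definable pieces in a way that survives passing to a smaller ball (Fact~\ref{inf-fact}), so that after shrinking the domain, $x \mapsto v(f(x))$ must be constant on each annulus. I expect this step to be a short argument once the reduction via Corollary~\ref{corfam} and Fact~\ref{inf-fact} is set up, but it is the one place where genuine $C$-minimality (not just orthogonality of $k$ and $\Gamma$) is used.
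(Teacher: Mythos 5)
There is a genuine gap, and it sits exactly at the step your last paragraph flags as the ``main obstacle.'' Your central claim---that after applying Corollary~\ref{corfam} to the family $Z_x = \{v(f(y)) : v(y) = v(x)\}$ one gets a punctured ball $B \setminus \{0\}$ on which $x \mapsto Z_x$ is \emph{constant}---is false in general: for $f(x) = x$ one has $Z_x = \{v(x)\}$, which takes a different value on every annulus, so it is not constant on any punctured neighborhood of $0$. The appeal to Corollary~\ref{corfam} is also vacuous here: $Z_x$ factors through $v$, and $v$ is locally constant on $\Mm^\times$, so the family is automatically locally constant everywhere; local constancy almost everywhere simply does not yield eventual constancy as $x \to 0$, nor does it yield that $Y$ is a singleton. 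The subsequent repair attempt has the same problem in a different guise: you speak of ``the finitely many $S$ in the finite boolean algebra generated by the slices,'' but there is no reason for the set of slices $\{Z_x\}$ (equivalently $\{Y_a\}_{a\in\Gamma}$) to be finite---that would amount to orthogonality of the home sort and $\Gamma$, which fails. Moreover, a single annulus genuinely need not be homogeneous for $v \circ f$ (take $f(y) = y - c$ on the annulus $v(y) = v(c)$, where $v(f(y))$ ranges over all values $\ge v(c)$), so what must be proved is that homogeneity, and indeed functional dependence of $v(f(x))$ on $v(x)$, holds for $x$ sufficiently close to $0$; none of the cited unary facts (Fact~\ref{inf-fact}, Lemma~\ref{control}, Corollary~\ref{corfam}) delivers this by themselves.

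The missing ingredient is a dimension bound on the image in $\Gamma^2$. The paper's proof sets $D = \{(v(x),v(f(x))) : x \in \Mm^\times\} \subseteq \Gamma_\infty^2$ and observes $\dim(D) = \dpr(D) \le \dpr(\Mm) = 1$, since $D$ is the image of the dp-minimal home sort under a definable map. O-minimal cell decomposition writes $D = \coprod_i C_i$ with each $C_i$ a point, a vertical segment, or the graph of a continuous function on an interval; pulling back gives a finite definable partition of $\Mm^\times$, and Corollary~\ref{inf-cor} produces a punctured ball $B \setminus \{0\}$ inside one piece $X_i$. Since $v$ is unbounded above on $B \setminus \{0\}$, the first-coordinate projection of $C_i$ is unbounded, which forces $C_i$ to be the graph of a continuous definable $g : (a,+\infty) \to \Gamma$; this is precisely what makes $v(f(x))$ a function of $v(x)$ on $B \setminus \{0\}$, including within each annulus near $0$. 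Without this (or some equivalent, such as the $\Gamma$-factorization machinery of Haskell--Macpherson or Cubides Kovacsics--Delon), your argument never rules out that $v(f(x))$ keeps varying inside annuli arbitrarily close to $0$, so the proposal as written does not establish the theorem.
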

Here, $\Gamma_\infty$ denotes $\Gamma \cup \{+\infty\}$.
\begin{proof}
  Let $D = \{(v(x),v(f(x))) : x \in \Mm^\times\} \subseteq
  \Gamma_\infty^2$.  Then $D$ is the image of $\Mm$ under a definable
  map, and so
  \begin{equation*}
    \dim(D) = \dpr(D) \le \dpr(\Mm) = 1.
  \end{equation*}
  By cell decomposition, $D$ is a disjoint union $\coprod_{i = 1}^n
  C_i$ where the $C_i$ are cells of dimension $\le 1$.  In particular,
  each cell is either a point, a vertical line segment, or the graph
  of a continuous definable function on an interval.  Let $X_i$ be the
  preimage of $C_i$:
  \begin{equation*}
    X_i = \{x \in \Mm^\times : (v(x),v(f(x))) \in C_i\}.
  \end{equation*}
  Then the $X_i$ are a partition of $\Mm^\times$.  By
  Corollary~\ref{inf-cor}, one of the $X_i$ contains a punctured ball
  $B \setminus \{0\}$.  As $x$ ranges over $B \setminus \{0\}$, the
  valuation $v(x)$ takes arbitrarily high values.  Therefore, the
  projection $\pi_1(C_i) \subseteq \Gamma$ has no upper bound.  By definition of ``cell,'' $C_i$ must be the graph of a continuous
  function $g : (a,+\infty) \to \Gamma$ for some $a$.  Then
  \begin{equation*}
    x \in B \setminus \{0\} \implies (v(x),v(f(x))) \in C_i \implies
    v(f(x)) = g(v(x)). \qedhere
  \end{equation*}
\end{proof}
\begin{theorem}[{$\approx$ \cite[Theorem~3.3]{CK-D}}] \label{copy-2}
  If $D \subseteq \Mm$ is definable and infinite and $f : D \to \Mm$
  is definable, then there is a ball $B \subseteq D$ and a definable
  function $g : \Gamma \to \Gamma_\infty$ such that
  \begin{equation*}
    v(f(x)-f(y)) = g(v(x-y)) \text{ for distinct } x,y \in B.
  \end{equation*}
\end{theorem}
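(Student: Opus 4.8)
The plan is to follow the proof of \cite[Theorem~3.3]{CK-D}: derive the two‑variable statement from the one‑variable Theorem~\ref{copy-1} by making the latter uniform in a parameter. First, since $D$ is infinite and definable, $C$-minimality guarantees that $D$ contains a ball, and I would replace $D$ by this ball, which I call $B_0$ and take to be open. For each fixed $y \in B_0$, the function $x \mapsto f(x)-f(y)$ is definable on the punctured neighborhood $B_0 \setminus \{y\}$ of $y$; extending it by $0$ outside $B_0$ and applying Theorem~\ref{copy-1} (which only examines behavior near the point in question) produces a definable $g_y : \Gamma \to \Gamma_\infty$ and a threshold $a_y \in \Gamma$ with $v(f(x)-f(y)) = g_y(v(x-y))$ whenever $x \in B_0$, $x \ne y$, and $v(x-y) > a_y$. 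After enlarging $a_y$ we may assume $a_y$ is at least the valuative radius of $B_0$, so that $v(z) > a_y$ already forces $y+z \in B_0$. The whole point will be that this data depends definably on $y$.

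To see the uniformity cleanly, I would not track the "tail cell" produced by Theorem~\ref{copy-1} directly; instead, define a partial function $G_y$ on $\Gamma$ with values in $\Gamma_\infty$ by declaring $G_y(\gamma) = \delta$ iff there is a $z$ with $v(z) = \gamma$ and $y+z \in B_0$, and \emph{every} such $z$ satisfies $v(f(y+z)-f(y)) = \delta$. Then $\{G_y\}_{y \in B_0}$ is a definable family, and from the definition $v(f(x)-f(y)) = G_y(v(x-y))$ whenever $v(x-y) \in \dom(G_y)$. The content of Theorem~\ref{copy-1}, applied as above, is exactly that $\dom(G_y) \supseteq (a_y, +\infty)$ for every $y$ (for $\gamma > a_y$, all $z$ with $v(z) = \gamma$ lie in the punctured neighborhood where $g_y$ governs things, so they agree). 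Now $y \mapsto a_y$ is a definable map $B_0 \to \Gamma$, hence locally constant off a finite set by Fact~\ref{const-to-gamma}, and the graph of $G_y$ — split into its finite part, a definable family of subsets of $\Gamma^2$, and its $\infty$-locus, a definable family of subsets of $\Gamma$ — is locally constant off a finite set by Corollary~\ref{corfam}. Choosing $y_0 \in B_0$ avoiding both finite sets, I can find a ball $B' \ni y_0$ with $B' \subseteq B_0$ on which $a_y \equiv a$ and $G_y \equiv G$ are constant; then $G$ computes $v(f(x)-f(y))$ from $v(x-y)$ for every $y \in B'$ and every $x \in B_0$ with $x \ne y$ and $v(x-y) > a$.

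It remains to shrink $B'$ so that \emph{all} pairs of its points are close enough, and this is precisely where the ultrametric does the work: take $B$ to be an open ball around $y_0$ of valuative radius $> a$ that is small enough to lie inside $B'$. For distinct $x,y \in B$ one has $v(x-y) \ge \min(v(x-y_0), v(y-y_0)) > a$, so $v(x-y) \in (a,+\infty) \subseteq \dom(G_y)$ and $v(f(x)-f(y)) = G(v(x-y))$; extending $G$ arbitrarily to a total function $g : \Gamma \to \Gamma_\infty$ finishes the argument. I expect the main obstacle to be the uniformity step in the middle paragraph — one has to be sure that Theorem~\ref{copy-1} can genuinely be applied uniformly in $y$, i.e., that the relevant tail of $\Gamma$ and the function on it are selected definably rather than by an ad hoc choice of cell; packaging everything through the partial function $G_y$, whose value at large $\gamma$ is \emph{forced} to be single‑valued, is what makes Corollary~\ref{corfam} and Fact~\ref{const-to-gamma} directly applicable.
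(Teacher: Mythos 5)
Your proposal is correct and is essentially the paper's own argument: the paper likewise makes Theorem~\ref{copy-1} uniform in the base point by applying Corollary~\ref{corfam} to the definable family $U_a = \{(v(x-a),v(f(x)-f(a))) : x \in D\setminus\{a\}\}$ (your canonical partial function $G_y$ is just a single-valued repackaging of this, and your separate treatment of the threshold $a_y$ via Fact~\ref{const-to-gamma} is harmless but unnecessary, since constancy of $G_y$ on a ball plus Theorem~\ref{copy-1} at the single point $y_0$ already supplies the tail). The final step---shrinking to a ball whose valuative radius exceeds the threshold and invoking the ultrametric inequality---is the same in both proofs.
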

\begin{proof}
  For each $a \in D$, let
  \begin{equation*}
    U_a = \{(v(x-a),v(f(x)-f(a))) : x \in D \setminus \{a\}\}.
  \end{equation*}
  By Corollary~\ref{corfam}, there is some point $a_0$ such that $U_a$
  is constant on an open ball $B \ni a_0$:
  \begin{equation*}
    a \in B \implies U_a = U_{a_0}.
  \end{equation*}
  By Theorem~\ref{copy-1}, there is an open ball $B' \ni a_0$ such
  that
  \begin{equation*}
    x \in B' \setminus \{a_0\} \implies v(f(x)-f(a_0)) = g(v(x-a_0)). \tag{$\ast$}
  \end{equation*}
  Replacing $B$ and $B'$ with their intersection, we may assume $B =
  B'$.  If $\gamma$ is the valuative radius of $B'$, then
  \begin{align*}
    &U_{a_0} \cap ((\gamma,+\infty) \times \Gamma) = \{(x,g(x)) : x > \gamma\} \text{ by ($\ast$),}\\
    \text{ and so } &U_a \cap ((\gamma,+\infty) \times \Gamma) = \{(x,g(x)) : x > \gamma\} \text{ for } a \in B. \tag{$\dag$}
  \end{align*}
  If $x,a \in B$, then $(v(x-a),v(f(x)-f(a))) \in U_a$ and $v(x-a) >
  \gamma$, so ($\dag$) shows
  \begin{equation*}
    v(f(x)-f(a)) = g(v(x-a)). \qedhere
  \end{equation*}
\end{proof}

\subsection{Unary sets in RV}
Recall that $\RV$ is the group $\Mm^\times/(1 + \mm)$, written
multiplicatively.  If $a \in \Mm^\times$, then $\rv(a)$ denotes the
image in $\RV$.  Note that for $a,b \in \Mm^\times$, we have
\begin{equation*}
  \rv(a) = \rv(b) \iff v(a-b) > v(b).
\end{equation*}
The group $\RV$ sits in a short exact
sequence
\begin{equation*}
  1 \to \kx \to \RV \to \Gamma \to 1.
\end{equation*}
If $X \subseteq \Gamma$, let $\RV_X$ denote the preimage of $X$ under
the map $\RV \to \Gamma$.  For $\gamma \in \Gamma$, we write
$\RV_\gamma$ for $\RV_{\{\gamma\}}$.  For example, $\RV_0$ is the
subgroup $\kx \subseteq \RV$.  In general, the sets $\RV_\gamma$
are the cosets of $\kx$ in $\RV$.  In particular, each set
$\RV_\gamma$ is strongly minimal, and orthogonal to $\Gamma$ in the
sense of Definition~\ref{def-ortho}.
\begin{lemma} \label{first-forbidden}
  Let $D \subseteq \RV$ be definable.  Then
  \begin{equation*}
    \RV_\gamma \subseteq D \text{ or } \RV_\gamma \cap D = \varnothing
  \end{equation*}
  for all but finitely many $\gamma \in \Gamma$.
\end{lemma}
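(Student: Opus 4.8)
The plan is to reduce this assertion about $\RV$ to the domination statement for the home sort, Lemma~\ref{control}, by pulling $D$ back along $\rv$. The key observation is that the composite $\Mm^\times \xrightarrow{\rv} \RV \to \Gamma$ is exactly the valuation $v$; hence the annulus $v^{-1}(\gamma) \subseteq \Mm^\times$ equals $\rv^{-1}(\RV_\gamma)$, and $\rv$ carries $v^{-1}(\gamma)$ \emph{onto} $\RV_\gamma$.

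So first I would set $E := \rv^{-1}(D) = \{x \in \Mm^\times : \rv(x) \in D\}$. Since $\rv$ is a definable function (the $\RV$ sort being interpretable in ACVF) and $D$ is definable, $E$ is a definable subset of $\Mm$ lying inside $\Mm^\times$, so Lemma~\ref{control} applies to it: there are a finite $E^{\err} \subseteq \Gamma$ and a quantifier-free definable $\tilde{E} \subseteq \Gamma$ such that for every $x \in \Mm^\times$ with $v(x) \notin E^{\err}$ one has $x \in E \iff v(x) \in \tilde{E}$.

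Now fix $\gamma \in \Gamma \setminus E^{\err}$ and an arbitrary $r \in \RV_\gamma$, and pick $x \in \Mm^\times$ with $\rv(x) = r$; then $v(x) = \gamma \notin E^{\err}$, so $r \in D \iff x \in E \iff \gamma \in \tilde{E}$. The right-hand condition does not depend on $r$, so $\RV_\gamma \subseteq D$ when $\gamma \in \tilde{E}$ and $\RV_\gamma \cap D = \varnothing$ when $\gamma \notin \tilde{E}$; since $E^{\err}$ is finite, this is exactly what is claimed, with exceptional set contained in $E^{\err}$. I do not expect any real obstacle: the only points needing a little care are that $\rv$ is genuinely a definable function of the structure (so that $E$ is a legitimate definable subset of the home sort to which Lemma~\ref{control} applies) and the elementary identity $\rv(v^{-1}(\gamma)) = \RV_\gamma$. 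This is precisely the maneuver by which Lemma~\ref{control} was seen above to yield o-minimality of $\Gamma$: there one pulls back $X \subseteq \Gamma$ to $v^{-1}(X) \subseteq \Mm$, and here one pulls back $D \subseteq \RV$ to $\rv^{-1}(D) \subseteq \Mm$.
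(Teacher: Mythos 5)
Your proposal is correct and is essentially the paper's own argument: both pull $D$ back along $\rv$ to the definable set $\{x \in \Mm^\times : \rv(x) \in D\}$, apply Lemma~\ref{control}, and use that $\rv$ maps each annulus $v^{-1}(\gamma)$ onto $\RV_\gamma$ so that membership in $D$ on $\RV_\gamma$ depends only on whether $\gamma \in \tilde{D}$, outside the finite error set. No gaps to report.
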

\begin{proof}
  Let $X = \{x \in \Mm^\times : \rv(x) \in D\}$.  Applying
  Lemma~\ref{control} to $X$ gives two definable sets $\tilde{X},
  X^\err \subseteq \Gamma$, with $X^\err$ finite, such that
  \begin{equation*}
    v(x) \notin X^\err \implies (x \in X \iff v(x) \in \tilde{X}).
  \end{equation*}
  Fix some $\gamma \in \Gamma \setminus X^\err$.  If $\rv(x) \in
  \RV_\gamma$, then $v(x) = \gamma \notin X^\err$, and so
  \begin{equation*}
    \rv(x) \in D \iff x \in X \iff v(x) \in \tilde{X} \iff \gamma \in
    \tilde{X}.
  \end{equation*}
  Therefore
  \begin{gather*}
    \gamma \in \tilde{X} \implies \RV_\gamma \subseteq D \\
    \gamma \notin \tilde{X} \implies \RV_\gamma \cap D = \varnothing. \qedhere
  \end{gather*}
\end{proof}
\begin{corollary} \label{proto-bad}
  Suppose $D \subseteq \RV$ is definable, and $D \cap \RV_\gamma$ is
  finite for every $\gamma$.  Then $D$ is finite.
\end{corollary}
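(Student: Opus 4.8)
The plan is to combine Lemma~\ref{first-forbidden} with the fact that every fiber of the map $\RV \to \Gamma$ is infinite. First I would recall why the fibers are infinite: each $\RV_\gamma$ is a coset of $\kx = \RV_0$, and since $k$ is an algebraically closed field (Fact~\ref{k-induce}(\ref{ki1})), the group $\kx$ is infinite. (Equivalently, one can just invoke the remark before Lemma~\ref{first-forbidden} that each $\RV_\gamma$ is strongly minimal, hence infinite.)

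Now apply Lemma~\ref{first-forbidden} to $D$: there is a finite set $F \subseteq \Gamma$ such that for every $\gamma \in \Gamma \setminus F$, either $\RV_\gamma \subseteq D$ or $\RV_\gamma \cap D = \varnothing$. The first alternative cannot occur, because if $\RV_\gamma \subseteq D$ then $D \cap \RV_\gamma = \RV_\gamma$ would be infinite, contradicting the hypothesis. Hence $\RV_\gamma \cap D = \varnothing$ for every $\gamma \notin F$, which means $D \subseteq \bigcup_{\gamma \in F} \RV_\gamma$, i.e.
\begin{equation*}
  D = \bigcup_{\gamma \in F} (D \cap \RV_\gamma).
\end{equation*}
This exhibits $D$ as a finite union (as $F$ is finite) of finite sets (by hypothesis $D \cap \RV_\gamma$ is finite for every $\gamma$), so $D$ is finite.

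There is no real obstacle: the entire content is in Lemma~\ref{first-forbidden}, and the only extra ingredient is the triviality that the fibers $\RV_\gamma$ are infinite, so that ``$\RV_\gamma \subseteq D$'' is incompatible with ``$D \cap \RV_\gamma$ finite.''
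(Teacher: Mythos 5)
Your proof is correct and is essentially the paper's argument: the paper proves the same statement as a one-line contrapositive of Lemma~\ref{first-forbidden} (if $D$ were infinite, infinitely many fibers $\RV_\gamma$ would meet $D$ without being contained in it, since each fiber is infinite), while you run the same reasoning in the direct form, confining $D$ to the finitely many exceptional fibers and taking a finite union of finite sets. No gap; the extra observation that each $\RV_\gamma$ is infinite is exactly the ingredient the paper also relies on implicitly.
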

\begin{proof}
  Otherwise, there are infinitely many $\gamma$ such that $D$ contains
  some but not all of $\RV_\gamma$.
\end{proof}
\begin{theorem} \label{rv-unary}
  Let $D \subseteq \RV$ be definable.  Then there is a definable set
  $\tilde{D} \subseteq \Gamma$ such that $D$ differs from
  $\RV_{\tilde{D}}$ at only finitely many points.
\end{theorem}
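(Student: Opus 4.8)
The plan is to combine Lemma~\ref{first-forbidden} with the strong minimality of the fibers $\RV_\gamma$. First I would set $\tilde{D}_0 = \{\gamma \in \Gamma : \RV_\gamma \subseteq D\}$. Since the family $\{\RV_\gamma\}_{\gamma \in \Gamma}$ is uniformly definable, $\tilde{D}_0$ is a definable subset of $\Gamma$. By Lemma~\ref{first-forbidden}, there is a finite set $F \subseteq \Gamma$ such that for every $\gamma \notin F$ we have $\RV_\gamma \subseteq D$ or $\RV_\gamma \cap D = \varnothing$. In either case $D \cap \RV_\gamma = \RV_{\tilde{D}_0} \cap \RV_\gamma$ (using that $\RV_\gamma \ne \varnothing$), so $D$ and $\RV_{\tilde{D}_0}$ already agree outside the union $\bigcup_{\gamma \in F} \RV_\gamma$.

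The remaining difficulty is that $\bigcup_{\gamma \in F} \RV_\gamma$ is infinite, so I must examine $D$ on each of the finitely many exceptional fibers. Here I would use the fact that each $\RV_\gamma$ is strongly minimal: for $\gamma \in F$, the set $D \cap \RV_\gamma$ is finite or cofinite in $\RV_\gamma$. Put $F' = \{\gamma \in F : D \cap \RV_\gamma \text{ is cofinite in } \RV_\gamma\}$ and $\tilde{D} = (\tilde{D}_0 \setminus F) \cup F'$. Since $F$ and $F'$ are finite, $\tilde{D}$ is still definable, and it coincides with $\tilde{D}_0$ away from $F$.

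Finally I would compare the fibers of $D$ with those of $\RV_{\tilde{D}}$. For $\gamma \notin F$ they agree, by the defining property of $\tilde{D}_0$ together with the fact that $\tilde{D}$ and $\tilde{D}_0$ coincide outside $F$. For $\gamma \in F'$ we have $\RV_\gamma \subseteq \RV_{\tilde{D}}$ while $D \cap \RV_\gamma$ is cofinite in $\RV_\gamma$, so the two sets differ in only finitely many points of $\RV_\gamma$. For $\gamma \in F \setminus F'$ we have $\RV_{\tilde{D}} \cap \RV_\gamma = \varnothing$ while $D \cap \RV_\gamma$ is finite, again a finite discrepancy. Since $F$ is finite, the set of points at which $D$ and $\RV_{\tilde{D}}$ disagree is a finite union of finite sets, hence finite, as desired.

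I do not anticipate a serious obstacle; the one point requiring care is that Lemma~\ref{first-forbidden} by itself leaves infinitely many possible errors, spread over the finitely many ``bad'' fibers, and it is the strong minimality of the $\RV_\gamma$ --- not merely $C$-minimality --- that lets me absorb each bad fiber into a finite correction by toggling whether its index belongs to $\tilde{D}$.
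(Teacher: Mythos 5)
Your proof is correct and follows essentially the same route as the paper: both arguments rest on Lemma~\ref{first-forbidden} together with strong minimality of the fibers $\RV_\gamma$, and your final $\tilde{D}$ coincides with the paper's $\tilde{D}$ (the set of $\gamma$ with $D \cap \RV_\gamma$ infinite, equivalently cofinite). The only difference is bookkeeping: the paper defines $\tilde{D}$ via infinite traces (using elimination of $\exists^\infty$ for definability) and concludes by applying Corollary~\ref{proto-bad} to the symmetric difference, whereas you localize the discrepancy to the finitely many exceptional fibers up front, which makes both the definability of $\tilde{D}$ and the finiteness of the difference immediate.
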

\begin{proof}
  Let $\tilde{D}$ be the set of points $\gamma \in \Gamma$ such that
  $D \cap \RV_\gamma$ is infinite.  The set $\tilde{D}$ is definable
  because $\RV_\gamma$ admits a definable bijection\footnote{\ldots of
  bounded complexity as $\gamma$ varies} to the strongly minimal set
  $\kx$, and strongly minimal sets eliminate $\exists^\infty$.
  By strong minimality of $\RV_\gamma$,
  \begin{equation*}
    \tilde{D} = \{\gamma \in \Gamma : D \cap \RV_\gamma \text{ is
      cofinite in } \RV_\gamma\}.
  \end{equation*}
  Let $S$ be the symmetric difference of $D$ and $\RV_{\tilde{D}}$.
  If $\gamma \in \tilde{D}$, then $D$ and $\RV_{\tilde{D}}$ both
  contain almost all of $\RV_\gamma$.  If $\gamma \notin \tilde{D}$,
  then $D$ and $\RV_{\tilde{D}}$ both contain almost none of
  $\RV_\gamma$.  Either way, the symmetric difference $S \cap
  \RV_\gamma$ is finite by strong minimality of $\RV_\gamma$.  Then
  $S$ is finite by Corollary~\ref{proto-bad}.
\end{proof}

\subsection{Linear functions on $\Gamma$}
Recall that the value group $\Gamma$ is an o-minimal expansion of
DOAG, the theory of divisible ordered abelian groups.  In the
following section, we work in $\Gamma$, but the arguments apply
equally well to any o-minimal expansion of DOAG.
\begin{definition} \label{vexity}
  Let $I$ be an interval in $\Gamma$.  A definable function $f : I \to
  \Gamma$ is \emph{linear} if
  \begin{equation*}
    f(a) + f(a+\epsilon_1+\epsilon_2) = f(a + \epsilon_1) +
    f(a+\epsilon_2)
  \end{equation*}
  for any $a, \epsilon_1, \epsilon_2$ such that
  $\{a,a+\epsilon_1,a+\epsilon_2,a+\epsilon_1+\epsilon_2\} \subseteq
  I$.
\end{definition}
If $f : (a,b) \to \Gamma$ is a function and $\epsilon > 0$, define a
function
\begin{gather*}
  \delta_\epsilon f : (a,b-\epsilon) \to \Gamma \\
  \delta_\epsilon f (x) = f(x+\epsilon) - f(x).
\end{gather*}
Note that $\delta_{\epsilon_1} \delta_{\epsilon_2} f =
\delta_{\epsilon_2} \delta_{\epsilon_1} f$.
\begin{remark} \label{increasing-duh}
  If $f : (a,b) \to \Gamma$ is definable, then the following are
  equivalent:
  \begin{enumerate}
  \item \label{iduh1} $f$ is constant.
  \item \label{iduh2} $f$ is locally constant, in the sense that for
    every $x \in (a,b)$ there is an interval $I$ around $x$ such that
    $f \restriction I$ is constant.
  \item \label{iduh3} $\delta_\epsilon f = 0$ for every $\epsilon > 0$.
  \item \label{iduh4} There is $\epsilon_0$ such that $\delta_\epsilon
    f = 0$ for $\epsilon \in (0,\epsilon_0)$.
  \end{enumerate}
  Indeed, the following implications are clear
  \begin{equation*}
    (\ref{iduh1})\iff(\ref{iduh3})\implies(\ref{iduh4})\implies(\ref{iduh2}),
  \end{equation*}
  and the implication (\ref{iduh2})$\implies$(\ref{iduh1}) is a
  well-known consequence of o-minimality, part of the monotonicity
  theorem \cite[Theorem 3.1.2]{lou-o-minimality}.
\end{remark}
\begin{lemma} \label{seven}
  Let $f : (a,b) \to \Gamma$ be definable.  The following seven
  conditions are equivalent:
  \begin{enumerate}
  \item \label{s1} $f$ is linear.
  \item \label{s2} $\forall \epsilon_1 > 0 ~ \forall \epsilon_2 > 0 :
    \delta_{\epsilon_2} \delta_{\epsilon_1} f = 0$.
  \item \label{s3} $\forall \epsilon_1 > 0 : (\delta_{\epsilon_1} f
    \text{ is a constant function})$.
  \item \label{s4} $\forall \epsilon_1 > 0 ~ \exists \epsilon_0 > 0 ~
    \forall \epsilon_2 \in (0,\epsilon_0) : \delta_{\epsilon_2}
    \delta_{\epsilon_1} f = 0$.
  \item \label{s5} $\exists \epsilon_0 > 0 ~ \forall \epsilon_1 > 0 ~
    \forall \epsilon_2 \in (0,\epsilon_0) : \delta_{\epsilon_2}
    \delta_{\epsilon_1} f = 0$
  \item \label{s6} $\exists \epsilon_0 > 0 ~ \forall \epsilon_2 \in
    (0,\epsilon_0) ~ \forall \epsilon_1 > 0 : \delta_{\epsilon_1}
    \delta_{\epsilon_2} f = 0$.
  \item \label{s7} $\exists \epsilon_0 > 0 ~ \forall \epsilon_2 \in
    (0,\epsilon_0) : (\delta_{\epsilon_2} f \text{ is a constant function})$.
  \end{enumerate}
\end{lemma}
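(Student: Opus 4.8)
Here is my plan for proving Lemma~\ref{seven}.

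\medskip

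The plan is to prove the equivalence by establishing a cycle (or near-cycle) of implications, leaning on Remark~\ref{increasing-duh} to handle the ``locally constant versus constant'' subtleties. First I would record the trivial reformulations: condition (\ref{s3}) says that for every $\epsilon_1 > 0$ the function $\delta_{\epsilon_1} f$ is constant, which by the equivalence (\ref{iduh1})$\iff$(\ref{iduh3}) of Remark~\ref{increasing-duh} applied to $\delta_{\epsilon_1} f$ is exactly condition (\ref{s2}); so (\ref{s2})$\iff$(\ref{s3}) is immediate. Similarly (\ref{s6})$\iff$(\ref{s7}) is immediate by the same remark applied to $\delta_{\epsilon_2} f$ (using that $\delta_{\epsilon_1}\delta_{\epsilon_2}f = \delta_{\epsilon_1}(\delta_{\epsilon_2}f)$), and (\ref{s5})$\iff$(\ref{s6}) follows from the commutativity $\delta_{\epsilon_1}\delta_{\epsilon_2}f = \delta_{\epsilon_2}\delta_{\epsilon_1}f$ noted before the lemma. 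The obvious logical weakenings give (\ref{s2})$\implies$(\ref{s4})$\implies$(\ref{s5}), since (\ref{s4}) replaces ``$\forall \epsilon_2$'' by ``$\exists \epsilon_0 \forall \epsilon_2 \in (0,\epsilon_0)$'' and (\ref{s5}) merely pulls the existential quantifier on $\epsilon_0$ out front (this is where I would be slightly careful: (\ref{s4}) has $\epsilon_0$ depending on $\epsilon_1$, (\ref{s5}) has it uniform; the implication (\ref{s4})$\implies$(\ref{s5}) is \emph{not} trivial, so I will instead route the argument the other way). Let me restructure.

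\medskip

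The clean cycle I would run is: (\ref{s1})$\implies$(\ref{s2})$\implies$(\ref{s3})$\implies$(\ref{s4})$\implies$(\ref{s5})$\implies$(\ref{s6})$\implies$(\ref{s7})$\implies$(\ref{s1}). The equivalences (\ref{s2})$\iff$(\ref{s3}), (\ref{s5})$\iff$(\ref{s6})$\iff$(\ref{s7}) I get for free as above, so the real content is: (\ref{s1})$\implies$(\ref{s2}); (\ref{s2})$\implies$(\ref{s4}) (trivial weakening of quantifiers, $\epsilon_0$ arbitrary); (\ref{s4})$\implies$(\ref{s5}); and (\ref{s7})$\implies$(\ref{s1}). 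For (\ref{s1})$\implies$(\ref{s2}): unwinding the linearity identity with $a$, $\epsilon_1$, $\epsilon_2$ gives $f(a+\epsilon_1+\epsilon_2) - f(a+\epsilon_2) = f(a+\epsilon_1) - f(a)$, i.e.\ $\delta_{\epsilon_1} f(a+\epsilon_2) = \delta_{\epsilon_1}f(a)$, so $\delta_{\epsilon_2}\delta_{\epsilon_1}f(a) = 0$ wherever everything is defined; quantifying gives (\ref{s2}). For (\ref{s7})$\implies$(\ref{s1}): fix $\epsilon_0$ as in (\ref{s7}); then for any $\epsilon_1, \epsilon_2 \in (0,\epsilon_0)$ and any admissible $a$ we get the linearity identity at $(a,\epsilon_1,\epsilon_2)$ directly from $\delta_{\epsilon_1}\delta_{\epsilon_2}f = 0$; to upgrade to arbitrary $\epsilon_1, \epsilon_2$, I would express a large increment as a sum of small ones and use that $\delta$ of a sum telescopes ($\delta_{\alpha+\beta} = \delta_\alpha + \delta_\beta\circ(\text{shift})$, or more precisely $\delta_{\alpha+\beta}f(x) = \delta_\alpha f(x) + \delta_\beta f(x+\alpha)$), together with divisibility of $\Gamma$ to subdivide. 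Concretely: condition (\ref{s7}) for small $\epsilon_2$ says each $\delta_{\epsilon_2}f$ is constant, hence (by the telescoping/subdivision) $\delta_{\epsilon_1}f$ is constant for \emph{every} $\epsilon_1 > 0$, which is (\ref{s3})$=$(\ref{s2}), and then linearity (\ref{s1}) is just the rearranged form of (\ref{s2}).

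\medskip

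The remaining link, (\ref{s4})$\implies$(\ref{s5}), is the one I expect to be the genuine obstacle, because it requires making the bound $\epsilon_0$ uniform in $\epsilon_1$. Here is the intended argument: (\ref{s4}) says that for each $\epsilon_1 > 0$, the function $\delta_{\epsilon_1}f$ is locally constant near the left endpoint, hence — using that local constancy on an interval implies global constancy (Remark~\ref{increasing-duh}, (\ref{iduh2})$\implies$(\ref{iduh1})) but applied just to deduce that the ``locally constant'' set is the whole domain — actually (\ref{s4}) already says $\exists \epsilon_0 \forall \epsilon_2 \in (0,\epsilon_0): \delta_{\epsilon_2}\delta_{\epsilon_1}f = 0$, i.e.\ $\delta_{\epsilon_1}f$ is locally constant near its left end, so by (\ref{iduh4})$\implies$(\ref{iduh2})$\implies$(\ref{iduh1}) of the remark, $\delta_{\epsilon_1}f$ is \emph{constant}. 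Thus (\ref{s4})$\implies$(\ref{s3}), and then since (\ref{s3})$=$(\ref{s2})$\implies$ every $\delta_{\epsilon_2}\delta_{\epsilon_1}f = 0$ outright, any choice of $\epsilon_0$ witnesses (\ref{s5}) — the uniformity problem dissolves once we know each $\delta_{\epsilon_1}f$ is \emph{globally} constant rather than merely locally so. So in fact the whole lemma collapses to Remark~\ref{increasing-duh} plus the algebraic identity between linearity and ``$\delta_{\epsilon_2}\delta_{\epsilon_1}f \equiv 0$'', and I would present it as: show (\ref{s2})$\iff$(\ref{s3}), (\ref{s5})$\iff$(\ref{s6})$\iff$(\ref{s7}), (\ref{s1})$\iff$(\ref{s2}) by direct computation, (\ref{s2})$\implies$(\ref{s4})$\implies$(\ref{s5}) and (\ref{s7})$\implies$(\ref{s3}) by invoking the remark on $\delta_\epsilon f$ (with a telescoping/divisibility step in the last one). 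I would allot the most words to spelling out that telescoping-plus-subdivision step and to the careful application of Remark~\ref{increasing-duh} to the auxiliary functions $\delta_\epsilon f$, since those are the only places a reader could get stuck.
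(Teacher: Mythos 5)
Your overall architecture is the same as the paper's: (\ref{s1})$\iff$(\ref{s2}) by unwinding definitions, (\ref{s2})$\iff$(\ref{s3})$\iff$(\ref{s4}) by applying Remark~\ref{increasing-duh} to $\delta_{\epsilon_1}f$, (\ref{s5})$\iff$(\ref{s6}) by commutativity of the difference operators, (\ref{s6})$\iff$(\ref{s7}) by the Remark applied to $\delta_{\epsilon_2}f$, and the loop closed by cheap quantifier implications. But the one step you single out as the crux --- proving (\ref{s7})$\implies$(\ref{s3}) by writing an arbitrary $\epsilon_1 > 0$ as a finite sum of increments lying in $(0,\epsilon_0)$ and telescoping --- does not work as stated. $\Gamma$ is only a divisible ordered abelian group: divisibility gives you $\epsilon_1/n$ for every $n$, but not $\epsilon_1/n < \epsilon_0$ for some $n$, since $\Gamma$ need not be archimedean. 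In the monster model $\Gamma$ is saturated, so there genuinely are choices with $\epsilon_0$ infinitesimal relative to $\epsilon_1$ (and relative to $b-a$), and then no finite sum of elements of $(0,\epsilon_0)$ ever reaches $\epsilon_1$; the subdivision fails in exactly the regime that the ``$\exists \epsilon_0$'' clauses of (\ref{s4})--(\ref{s7}) are designed to handle.

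Fortunately the broken step is redundant in your own plan. You already prove (\ref{s4})$\implies$(\ref{s3}) by applying Remark~\ref{increasing-duh} ((\ref{iduh4})$\implies$(\ref{iduh1})) to $\delta_{\epsilon_1}f$ --- that is precisely how you resolve (\ref{s4})$\implies$(\ref{s5}). Since (\ref{s5})$\implies$(\ref{s4}) is a pure quantifier weakening (a uniform $\epsilon_0$ serves each $\epsilon_1$ separately), you get (\ref{s7})$\iff$(\ref{s6})$\iff$(\ref{s5})$\implies$(\ref{s4})$\implies$(\ref{s3}) with no telescoping and no appeal to divisibility; the only non-formal input is o-minimality via the Remark (locally constant implies constant). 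This is exactly the paper's proof, so the right move is to delete the telescoping/subdivision paragraph rather than to elaborate it.
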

\begin{proof}
  (\ref{s1})$\iff$(\ref{s2}) by unwinding the definitions.  The
  equivalence of (\ref{s2}), (\ref{s3}), and (\ref{s4}) comes from
  applying Remark~\ref{increasing-duh} to the function
  $\delta_{\epsilon_1} f$.  The implications
  \begin{gather*}
    (\ref{s2})\implies(\ref{s5})\implies(\ref{s4}) \\
    (\ref{s5})\iff(\ref{s6})
  \end{gather*}
  are clear.  Finally, (\ref{s6})$\iff$(\ref{s7}) by applying
  Remark~\ref{increasing-duh} to the function $\delta_{\epsilon_2} f$.
\end{proof}
Focusing on conditions (\ref{s1}), (\ref{s3}), and (\ref{s7}), we see
\begin{corollary} \label{reduce-seven}
  Let $f : (a,b) \to \Gamma$ be definable.  The following are
  equivalent:
  \begin{enumerate}
  \item $f$ is linear.
  \item $\delta_\epsilon f$ is a constant function for every $\epsilon
    > 0$.
  \item $\delta_\epsilon f$ is a constant function, for all
    sufficiently small $\epsilon > 0$.
  \end{enumerate}
\end{corollary}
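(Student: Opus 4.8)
The plan is to read this off directly from Lemma~\ref{seven}. The three conditions listed are, up to renaming the bound variable $\epsilon$, precisely conditions (\ref{s1}), (\ref{s3}), and (\ref{s7}) of that lemma: ``$f$ is linear'' is condition (\ref{s1}); ``$\delta_\epsilon f$ is a constant function for every $\epsilon > 0$'' is condition (\ref{s3}) with $\epsilon_1$ renamed to $\epsilon$; and ``$\delta_\epsilon f$ is a constant function for all sufficiently small $\epsilon > 0$'' unwinds to ``$\exists \epsilon_0 > 0 ~ \forall \epsilon \in (0,\epsilon_0) : (\delta_\epsilon f \text{ is a constant function})$,'' which is condition (\ref{s7}) with $\epsilon_2$ renamed to $\epsilon$. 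Since Lemma~\ref{seven} asserts that all seven of its conditions are pairwise equivalent, in particular (\ref{s1}), (\ref{s3}), and (\ref{s7}) are equivalent, and the corollary follows immediately.

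I do not anticipate any obstacle: this statement is pure bookkeeping, isolating the three most quotable members of the seven-way equivalence for convenient later reference. If one wished to avoid citing the full strength of Lemma~\ref{seven}, one could instead extract only the relevant implications from its proof---namely (\ref{s1})$\iff$(\ref{s2})$\iff$(\ref{s3}) via Remark~\ref{increasing-duh} applied to $\delta_{\epsilon_1} f$, the trivial implication (\ref{s3})$\implies$(\ref{s7}), and (\ref{s7})$\implies$(\ref{s6})$\iff$(\ref{s5})$\implies$(\ref{s4})$\implies$(\ref{s3}) to close the cycle (using Remark~\ref{increasing-duh} again, now applied to $\delta_{\epsilon_2} f$). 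But the one-line appeal to Lemma~\ref{seven} is the cleaner route, and that is the one I would take.
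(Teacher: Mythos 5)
Your proposal is correct and matches the paper's own treatment exactly: the corollary is stated there simply by ``focusing on conditions (\ref{s1}), (\ref{s3}), and (\ref{s7})'' of Lemma~\ref{seven}, which is precisely your identification of the three conditions up to renaming the bound variable. Nothing further is needed.
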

\begin{lemma} \label{convex-merge}
  Suppose $a < b < c < d$ and $f : (a,d) \to \Gamma$ is such that the
  two restrictions $f \restriction (a,c)$ and $f \restriction (b,d)$
  are linear.  Then $f$ is linear.
\end{lemma}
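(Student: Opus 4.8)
The plan is to reduce linearity to a purely local condition via Corollary~\ref{reduce-seven}: a definable function on an interval is linear if and only if $\delta_\epsilon f$ is a constant function for all sufficiently small $\epsilon > 0$. So it suffices to exhibit some $\epsilon_0 > 0$ with the property that $\delta_\epsilon f$ is constant on its entire domain $(a, d - \epsilon)$ whenever $0 < \epsilon < \epsilon_0$, and then quote the corollary for $f$ itself.

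To carry this out I would take $\epsilon_0 = c - b$, which is positive since $b < c$, and fix $\epsilon \in (0, \epsilon_0)$. Applying Corollary~\ref{reduce-seven} to the linear function $f \restriction (a,c)$ shows that $\delta_\epsilon\bigl(f \restriction (a,c)\bigr) = (\delta_\epsilon f) \restriction (a, c - \epsilon)$ is a constant function; applying it to $f \restriction (b,d)$ shows that $(\delta_\epsilon f) \restriction (b, d - \epsilon)$ is a constant function. The two subintervals $(a, c - \epsilon)$ and $(b, d - \epsilon)$ of $(a, d - \epsilon)$ cover $(a, d - \epsilon)$, because $a < b$ and $c < d$, and their intersection is $(b, c - \epsilon)$, which is nonempty precisely because $\epsilon < c - b$. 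A function that is constant on each of two overlapping intervals takes the same value on both, hence is constant on their union, so $\delta_\epsilon f$ is constant on all of $(a, d - \epsilon)$. As $\epsilon \in (0, \epsilon_0)$ was arbitrary, Corollary~\ref{reduce-seven} now gives that $f$ is linear.

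I do not expect any real obstacle here beyond keeping track of the interval endpoints; the one place the hypotheses genuinely enter is the demand that the two intervals of linearity actually overlap, i.e. $b < c$, which is exactly what forces $(b, c - \epsilon)$ to be nonempty for small $\epsilon$ and thereby makes the two locally constant values of $\delta_\epsilon f$ agree. (Without an overlap the conclusion is false, as any piecewise-linear function with a corner between $c$ and $b$ shows.) If one wished to avoid invoking Corollary~\ref{reduce-seven} at all, one could instead verify the defining identity of Definition~\ref{vexity} directly for a configuration $\{a', a'+\epsilon_1, a'+\epsilon_2, a'+\epsilon_1+\epsilon_2\} \subseteq (a,d)$, first using additivity of that identity in each increment to reduce to small $\epsilon_1,\epsilon_2$ and then placing the small configuration inside $(a,c)$ or $(b,d)$; but the $\delta_\epsilon$ route is the cleanest.
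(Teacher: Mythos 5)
Your proposal is correct and follows the same route as the paper: apply Corollary~\ref{reduce-seven} to each restriction for a fixed $\epsilon \in (0, c-b)$, use the nonempty overlap $(b, c-\epsilon)$ to see that $\delta_\epsilon f$ is constant on all of $(a, d-\epsilon)$, and then invoke the corollary's "all sufficiently small $\epsilon$" criterion to conclude $f$ is linear. The bookkeeping of interval endpoints is accurate, so there is nothing to add.
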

\begin{proof}
  Suppose $0 < \epsilon < c-b$.  By Corollary~\ref{reduce-seven},
  $\delta_\epsilon f$ is constant on $(a,c-\epsilon)$ and
  on $(b,d-\epsilon)$.  These intervals overlap, by choice of
  $\epsilon$, and so $\delta_\epsilon f$ is constant on
  $(a,d-\epsilon)$.  By Corollary~\ref{reduce-seven} again, $f$ is
  linear.
\end{proof}
\begin{theorem} \label{local-global-convex}
  Let $f : (a,b) \to \Gamma$ be definable.  Suppose that for every $x
  \in (a,b)$, there is an interval $I \ni x$ such that $f \restriction
  I$ is linear.  Then $f$ is linear.
\end{theorem}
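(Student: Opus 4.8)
The plan is to run a standard o-minimal compactness/connectedness argument on the interval $(a,b)$, using Lemma~\ref{convex-merge} as the gluing step. First I would observe that, by o-minimality of $\Gamma$, the hypothesis gives a cover of $(a,b)$ by open subintervals on which $f$ is linear. So it suffices to show: if $f$ is linear on an interval $I$ and on an interval $J$, and $I \cap J \neq \varnothing$, then $f$ is linear on $I \cup J$. When $I \cup J$ is again an interval, this is exactly Lemma~\ref{convex-merge} (up to the degenerate cases where one interval contains the other, which are trivial, or where the intervals share only an endpoint, which cannot happen for open intervals with nonempty intersection). Iterating, $f$ is linear on any \emph{finite} union of overlapping linearity-intervals.

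To get from finite to global, I would argue by contradiction using definability. Suppose $f$ is not linear on $(a,b)$. Consider the set
\begin{equation*}
  Z = \{x \in (a,b) : f \restriction (a,x) \text{ is linear}\}.
\end{equation*}
This is a definable subset of $(a,b)$, and it is nonempty (it contains a right-neighborhood of $a$, since $f$ is linear near $a$ by hypothesis and, shrinking, on some $(a,a')$). It is also downward closed in $(a,b)$. By o-minimality $Z$ is a finite union of points and intervals, hence — being nonempty and downward closed — $Z = (a,c)$ or $Z = (a,c]$ for some $c \le b$. The key claim is $c = b$ and $Z = (a,b)$: if $c < b$, pick by hypothesis an open interval $I \ni c$ on which $f$ is linear; choose $x_0 \in I$ with $a < x_0 < c$; then $f$ is linear on $(a, x_0')$ for some $x_0' > x_0$ with $x_0' \in Z$... more directly, choose any $x_1 \in (a,c) \cap I$; then $f$ is linear on $(a, x_1 + \eta)$ for small $\eta$ by applying Lemma~\ref{convex-merge} to $(a,x_1+\eta)$... wait — I need $f$ linear on an interval reaching past $c$ and an interval starting before $c$: take the linearity interval $I = (c-\rho, c+\rho)$ around $c$, and take any $t \in (c-\rho, c)$; since $t \in Z$, $f$ is linear on $(a,t)$... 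I actually want $f$ linear on $(a,t')$ for some $t' > c - \rho$ close enough to merge; but downward-closedness and $Z \supseteq (a,c)$ give $f$ linear on $(a,s)$ for every $s < c$, in particular for some $s \in (c-\rho,c)$. Now $f$ is linear on $(a,s)$ and on $I = (c-\rho,c+\rho)$, these overlap (both contain $(c-\rho,s)$), so by Lemma~\ref{convex-merge} $f$ is linear on $(a,c+\rho)$, whence $c + \rho/2 \in Z$, contradicting $Z = (a,c)$ or $(a,c]$. Therefore $c = b$, and similarly handling the right endpoint, $f$ is linear on $(a,b)$.

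The only real subtlety — and the step I would be most careful about — is the passage from ``linear on every initial segment $(a,x)$'' to ``linear on $(a,b)$'' when $b$ itself is in question, i.e. the limiting behavior at the right endpoint. This is handled cleanly by noting that linearity on $(a,b)$ by Corollary~\ref{reduce-seven} amounts to $\delta_\epsilon f$ being constant on its domain $(a,b-\epsilon)$ for all small $\epsilon > 0$; and $\delta_\epsilon f$ is constant on $(a,b-\epsilon)$ as soon as it is constant on $(a, x - \epsilon)$ for all $x < b$, since the value of a function at a point only depends on an arbitrarily small neighborhood of that point and $\bigcup_{x < b}(a,x-\epsilon) = (a,b-\epsilon)$. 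So in fact the argument never needs $b \in Z$ in a strong sense; constancy of $\delta_\epsilon f$ is a ``local'' condition that propagates from all initial segments to the whole interval automatically, and Remark~\ref{increasing-duh}'s equivalence of local and global constancy is doing the work. This makes the endpoint issue disappear and the compactness step genuinely routine.
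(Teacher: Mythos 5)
Your overall strategy is the paper's strategy: take the supremum of a definable set of points up to which $f$ is linear, use Lemma~\ref{convex-merge} to push past that supremum, and dispose of the endpoint by noting that linearity on all proper initial segments gives linearity on the union (your $\delta_\epsilon f$ discussion; even more simply, any quadruple witnessing non-linearity already lies in some bounded initial segment). However, there is a genuine gap at the initialization step: the non-emptiness of your set $Z = \{x \in (a,b) : f \restriction (a,x) \text{ is linear}\}$. You justify it by saying that $f$ ``is linear near $a$ by hypothesis and, shrinking, on some $(a,a')$.'' The hypothesis only provides, for each \emph{interior} point $x \in (a,b)$, an open interval around $x$ on which $f$ is linear; those intervals need not reach down to $a$ (their left endpoints can tend to $a$ without ever reaching it), so linearity on an interval of the form $(a,a')$ is not given. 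Indeed, that statement is exactly the ``left half'' of the conclusion you are trying to prove, and establishing it requires the same limiting sweep you carry out at the right end. As written, $Z$ could a priori be empty, and the argument never gets started.

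The repair is the one the paper uses: anchor at an interior interval rather than at the endpoint $a$. Fix $(a_0,b_0) \subseteq (a,b)$ with $f \restriction (a_0,b_0)$ linear (this \emph{does} follow from the hypothesis applied at a single point), set $S = \{x \in (a_0,b] : f \text{ is linear on } (a_0,x)\}$, and run your supremum-plus-merge argument to get linearity on $(a_0,b)$; then run the mirror-image argument with infima to get linearity on $(a,b_0)$; finally apply Lemma~\ref{convex-merge} once more to the two overlapping intervals $(a,b_0)$ and $(a_0,b)$ to conclude linearity on $(a,b)$. Everything else in your write-up --- the definability and downward-closedness of the set of good endpoints, the use of Lemma~\ref{convex-merge} to contradict maximality of the supremum, and the passage from all initial segments to the full interval --- is correct and coincides with the paper's proof.
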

\begin{proof}
  Take some small interval $(a_0,b_0) \subseteq (a,b)$ such that $f
  \restriction (a_0,b_0)$ is linear.  Let
  \begin{gather*}
    S = \{x \in (a_0,b] : f \text{ is linear on } (a_0,x)\} \\
    b_1 = \sup S.
  \end{gather*}
  The set $S$ is definable, so $b_1$ exists by o-minimality.  The open
  interval $(a_0,b_1)$ is a union of an increasing chain of intervals on
  which $f$ is linear.  Therefore, $f$ is linear on
  $(a_0,b_1)$, and so $b_1 \in S$, and $b_1 = \max S$.

  We claim that $b_1 = b$.  Otherwise, $b_1 \in (a,b)$ and there is a
  small interval $(c,d)$ around $b_1$ on which $f$ is linear.  By
  Lemma~\ref{convex-merge}, $f$ is linear on the union
  $(a_0,b_1) \cup (c,d) \supseteq (a_0,d)$.  But then $d \in S$ and $d
  > b_1$, contradicting the fact that $b_1 = \sup S$.

  Then $b = b_1 = \max S \in S$, meaning that $f$ is linear
  on $(a_0,b)$.  A similar argument shows that $f$ is linear
  on $(a,b_0)$.  By Lemma~\ref{convex-merge}, $f$ is linear
  on the union $(a,b_0) \cup (a_0,b) = (a,b)$.
\end{proof}

\section{A spiritual overview of the induced structure on $\RV$} \label{sec-vague}
This section is a vague description of the intuition I have for
Section~\ref{sec-heart}.  It is provided solely for motivation, to
help the reader follow the next section.  Nothing I say here will be
used in the later proofs, and the reader who doesn't care about
motivation can skip straight to Section~\ref{sec-heart}.  In fact,
everything I say here should be treated as unofficial speculation, not
taken seriously.

\subsection{A first approximation}
To a first approximation, we can summarize the situation as follows:
\begin{slogan} \label{slogan-1}
  All the induced structure on $\RV$ is generated by the induced
  structure on $k$ and the induced structure on $\Gamma$.
\end{slogan}
In a $C$-minimal field, the induced structure on $k$ can be an arbitrary
strongly minimal expansion of ACF, and the induced structure on
$\Gamma$ can be an arbitrary o-minimal expansion of DOAG.
Slogan~\ref{slogan-1} is saying that the additional structure on RV
solely comes from the additional structure on $k$ and the induced
structure on $\Gamma$.

As a consequence, definable sets in $\RV^n$ are built out of the
following components:
\begin{itemize}
\item Definable sets on $\Gamma^n$, pulled back to $\RV^n$.
\item Sets definable in the group structure of $\RV$, such as
  \begin{equation*}
    \{(y,x) \in \RV : y^m = x^m\}. \tag{$\ast$}
  \end{equation*}
\item Definable sets in $k^n$, pushed forward to $\RV^n$ and
  translated.
\end{itemize}
If $D \subseteq \RV^2$ has dp-rank 1, then $D$ cannot contain the
preimage of something pulled back from $\RV^2 \to \Gamma^2$, because
the fibers of $\RV^2 \to \Gamma^2$ have dp-rank 2.  Consequently, $D$
must be built out of translates of definable sets in $k^2$, which push
forward to points in $\Gamma^2$, and sets like ($\ast$), which push
forward to line segments in $\Gamma^2$ of rational slope.  So one concludes
that
\begin{quote}
  If $D \subseteq \RV^2$ has dp-rank 1, then the image of $D$ in
  $\Gamma^2$ is made of points and line segments of rational slope.
\end{quote}
However, Slogan~\ref{slogan-1} is not quite right.

\subsection{A second approximation}
Let $E_\Gamma$ be the ring of definable endomorphisms of $(\Gamma,+)$.
Then $E_\Gamma$ is a skew field (possibly non-commutative).  Let
$E^0_\kx$ be the ring of definable endomorphisms of $\kx$.  Unlike
$E_\Gamma$, $E^0_\kx$ is not a skew field.  For example, if $\kx$ is a
pure ACF of characteristic prime to $n > 1$, then the endomorphism
$f(x) = x^n$ has no inverse.  On the other hand, $E^0_\kx$ is
commutative: the action of $E^0_\kx$ on the torsion in $\kx$ is
faithful (by strong minimality) giving an embedding of $E^0_\kx$ into
the ring of endomorphisms of the abelian group $\Qq/\Zz$.  But
$\End(\Qq/\Zz)$ is the profinite completion $\hat{\Zz}$ of $\Zz$,
which is a commutative ring.  So the embedding $E^0_\kx
\hookrightarrow \hat{\Zz}$ shows that $E^0_\kx$ is commutative.  This
also shows that $E^0_\kx$ is small, unlike $E_\Gamma$.

Let $E_\kx$ be the ring obtained from $E^0_\kx$ by inverting the
natural numbers.  Elements of $E_\kx$ are the definable ``endogenies''
of $\kx$, in the sense of \cite[Proof of Theorem~1.14]{Poizat}.  Now
$E_\kx$ is a field (not just a skew field).  When $\Gamma$ and $\kx$
have the structure of a pure DOAG and pure ACF, the two skew fields
$E_\Gamma$ and $E_\kx$ are both $\Qq$.

\begin{slogan} \label{slogan-2}
  There is an analogous ring $E_{\RV}$ of definable endogenies of $\RV$, and $E_{\RV}$ embeds into both $E_\kx$ and $E_\Gamma$:
    \begin{equation*}
      \xymatrix{ & E_{\RV} \ar[dl] \ar[dr] & \\ E_\kx & & E_\Gamma.}
    \end{equation*}
    In particular, $E_{\RV}$ is a small commutative field of characteristic 0.

    Finally, the induced structure on $\RV$ is generated by the
    induced structure on $\kx$, the induced structure on $\Gamma$, and
    the action of $E_{\RV}$ on $\RV$.
\end{slogan}
Compared to Slogan~\ref{slogan-1}, we now have to deal with definable
sets like
\begin{equation*}
  \{(x,y) \in \RV^2 : y = x^q\} \text{ for } q \in E_{\RV}.
\end{equation*}
This leads to the following consequence for sets of dp-rank 1:
\begin{quote}
  If $D \subseteq \RV^2$ has dp-rank 1, then the image of $D$ in
  $\Gamma^2$ is made of points and line segments having slopes in $E_{\RV}$.
\end{quote}
More precisely, the slopes of the lines live in the image of $E_{\RV}
\to E_\Gamma$.

Note that the field $E_{\RV}$ ties together $E_\kx$ and $E_\Gamma$ in
a strange way.  In order for there to be strange new sets in $\RV$, we
need $E_{\RV} \supsetneq \Qq$.  This requires both $E_\kx$ and
$E_\Gamma$ to be bigger than $\Qq$, so \emph{both} $\kx$ and $\Gamma$
need ``exotic automorphisms.''

Slogan~\ref{slogan-2} is still missing part of the picture.
\subsection{A third approximation}
Let $(a,b)$ and $(c,d)$ be two intervals in $\Gamma$ containing $0$.
Let $f : (a,b) \to \Gamma$ and $g : (c,d) \to \Gamma$ be definable
linear functions with $f(0) = g(0) = 0$.  Say that $f$ and $g$ have
the same ``slope'' if $f = g$ on a neighborhood of 0, or equivalently,
on the intersection $(a,b) \cap (c,d)$.  The set of slopes is
naturally a skew field $E^+_\Gamma$ extending $E_\Gamma$.  An element
of $E^+_\Gamma$ is something like an endomorphism of $\Gamma$ defined
only locally around 0.  If I understand correctly, there are
well-known o-minimal theories in which $E^+_\Gamma \supsetneq
E_\Gamma$.
\begin{slogan} \label{slogan-3}
  There is an analogous skew field $E^+_\RV$ for $\RV$, fitting into
  the following diagram of embeddings of skew fields:
  \begin{equation*}
    \xymatrix{ & E_{\RV} \ar[ddl] \ar[dr] \ar[d] & \\ & E^+_{\RV} \ar[dl] \ar[dr] & E_\Gamma \ar[d] \\ E_\kx & & E^+_\Gamma}.
  \end{equation*}
  In particular, $E^+_{\RV}$ is a small commutative field of characteristic 0.

  Finally, the induced structure on $\RV$ is generated by the induced
  structure on $\kx$, the induced structure on $\Gamma$, and the
  action of $E^+_{\RV}$ on $\RV$.
\end{slogan}
The point is that we have to deal with line segments, not just lines.
This leads to the following consequence for sets of dp-rank 1:
\begin{quote}
  If $D \subseteq \RV^2$ has dp-rank 1, then the image of $D$ in
  $\Gamma^2$ is made of points and line segments having slopes in
  $E_{\RV}^+$.
\end{quote}
We now return to the real proof.

\section{Definable sets on $\RV \times \RV$} \label{sec-heart}
Recall that $\RV_a$ is the fiber of $\RV \to \Gamma$ over $a \in
\Gamma$.  Then $\RV_a \cdot \RV_b = \RV_{a+b}$, and $\RV_0 = \kx
\subseteq \RV$.  The sets $\RV_a$ are the cosets of $\kx$ in $\RV$.
In particular, each $\RV_a$ is in definable bijection with $\kx$.

Because $\kx$ is strongly minimal, there is a good notion of
dimension for definable subsets of $(\kx)^n$.  Dimension is
definable in families, and agrees with Morley rank and dp-rank.
The set $\RV_a \times \RV_b$ is in definable bijection
with $(\kx)^2$, so there is also a good
dimension theory on $\RV_a \times \RV_b$.  Note that if $D \subseteq
\RV_a \times \RV_b$, then
\begin{gather*}
  \dim(D) \in \{-\infty,0,1,2\} \\
  \dim(D) = -\infty \iff D = \varnothing \\
  \dim(D) \le 0 \iff |D| < \infty \\
  \dim(D) = 2 \iff \dim((\RV_a \times \RV_b) \setminus D) \ne 2, \tag{$\ast$}
\end{gather*}
because the analogous properties hold in $\kx \times \kx$.
Condition ($\ast$) has to do with the fact that $\kx \times
\kx$ and $\RV_a \times \RV_b$ have Morley degree 1.
\begin{definition}
  Let $D \subseteq \RV^2$ be definable.  For $(a,b) \in \Gamma^2$, let
  \begin{equation*}
    D_{a,b} = D \cap (\RV_a \times \RV_b).
  \end{equation*}
  Let $\tilde{D}$ and $\overline{D}$ be the following subsets of
  $\Gamma^2$:
  \begin{align*}
    \tilde{D} &= \{(a,b) \in \Gamma^2 : \dim(D_{a,b}) = 2\} \\
    \overline{D} &= \{(a,b) \in \Gamma^2 : D_{a,b} \ne \varnothing\} \\
    &= \{(a,b) \in \Gamma^2 : \dim(D_{a,b}) \ge 0\}.
  \end{align*}
\end{definition}
Then $\overline{D}$ is the image of $D$ under the natural map $\RV^2 \to
\Gamma^2$, and $\tilde{D}$ is the set of points $(a,b) \in \Gamma^2$
such that $D$ contains ``most'' points of the fiber $\RV_a \times
\RV_b$.  The sets $\overline{D}$ and $\tilde{D}$ are definable sets,
because dimension is definable in $\kx$.  Note that $D \mapsto
\tilde{D}$ is a homomorphism of boolean algebras, by ($\ast$) above.
For example, if $D$ and $E$ are complementary subsets of $\RV^2$, then
$D_{a,b}$ and $E_{a,b}$ are complementary subsets of $\RV_a \times
\RV_b$, exactly one of the two has dimension 2, and so $\tilde{D}$ is
complementary to $\tilde{E}$.
\begin{remark} \label{small-analysis}
  Let $v : \RV \to \Gamma$ denote the natural map.  Let $D \subseteq
  \RV^2$ be definable.  Then $D$ can be decomposed as a symmetric
  difference
  \begin{equation*}
    D = D' \mathbin{\Delta} X,
  \end{equation*}
  where $D' = \{(x,y) \in \RV^2 : (v(x),v(y)) \in \tilde{D}\}$ and
  \begin{equation*}
    \tilde{X} = \widetilde{D \mathbin{\Delta} D'} = \tilde{D}
    \mathbin{\Delta} \widetilde{D'} = \tilde{D} \mathbin{\Delta}
    \tilde{D} = \varnothing.
  \end{equation*}
\end{remark}
\begin{definition}
  A definable set $D \subseteq \RV^2$ is \emph{small over $\Gamma^2$}
  if $\tilde{D} = \varnothing$, meaning that
  \begin{equation*}
    \dim(D_{a,b}) = \dim(D \cap (\RV_a \times \RV_b)) < 2
  \end{equation*}
  for every $(a,b) \in \Gamma^2$.
\end{definition}
Remark~\ref{small-analysis} says that every definable set in $\RV$ is
a symmetric difference of a definable set pulled back from $\Gamma^2$
and a definable set that is small over $\Gamma^2$.  In light of this,
we will focus on sets that are small over $\Gamma^2$.
\begin{definition}
  A \emph{fragment} is a non-empty definable set $X \subseteq \RV_a
  \times \RV_b$ for some $(a,b) \in \Gamma^2$, such that $\dim(X) <
  2$.  We say that $X$ is a \emph{central fragment} if $a=b=0$.
\end{definition}
Recall that $\overline{D}$ is the image of $D$ under $\RV^2 \to \Gamma^2$,
or equivalently, the set of $(a,b) \in \Gamma^2$ such that $D_{a,b}
\ne \varnothing$.  If $D$ is small over $\Gamma^2$, then $D_{a,b}$ is
a fragment for every $(a,b) \in \overline{D}$.

Say that two fragments $X,Y$ are \emph{equivalent} if one is a
translate of the other: $Y = (u,v) \cdot X$ for some $(u,v) \in
\RV^2$.  Every fragment $X$ is equivalent to at least one central
fragment, namely $(u,v)^{-1} \cdot X$ for any $(u,v) \in X$.  Recall
that central fragments are subsets of $\RV_0 \times \RV_0 =
(\kx)^2$.  By elimination of imaginaries in $\kx$, we can
assign a value $\Shape(X)$ to each central fragment $X$ in such a way
that
\begin{enumerate}[(a)]
\item \label{la} $\Shape(X)$ is a finite tuple in $k$.
\item \label{lb} $\Shape(X) = \Shape(Y)$ if and only if $X$ is equivalent to $Y$.
\item \label{lc} $\Shape(X)$ depends definably on $X$, as $X$ varies in a
  definable family.
\end{enumerate}
We call $\Shape(X)$ the \emph{shape} of $X$.  If $X$ is any fragment,
we define $\Shape(X) = \Shape(X')$ for some central fragment $X'$
equivalent to $X$.  The choice of $X'$ doesn't matter, and points
\ref{la}--\ref{lc} continue to hold.  In particular, $\Shape(X)$
is still a finite tuple in $k$, even though $X$ no longer lives in a
power of $k$.

If $X$ is a fragment, let $\Stab(X)$ denote the multiplicative setwise
stabilizer of $X$, i.e., the subgroup
\begin{equation*}
  \Stab(X) = \{(u,v) \in \RV^2 : (u,v) \cdot X = X\}.
\end{equation*}
More generally, if $X,Y$ are two fragments, let
\begin{equation*}
  \Mov(X,Y) = \{(u,v) \in \RV^2 : (u,v) \cdot X = Y\}.
\end{equation*}
The following properties are straightforward to verify.
\begin{observation}  \label{tricks}
  Let $X$ and $Y$ be fragments.
  \begin{enumerate}
  \item $\Mov(X,X) = \Stab(X)$.
  \item $\Shape(X) = \Shape(Y) \iff \Mov(X,Y) \ne \varnothing$.
  \item $\Shape(X)$ determines $\Stab(X)$: if $X$ and $Y$ are
    equivalent, then $\Stab(X) = \Stab(Y)$.
  \item If $X$ and $Y$ are equivalent, then $\Mov(X,Y)$ is a coset of
    the two equal groups $\Stab(X) = \Stab(Y)$.
  \item If $X \subseteq \RV_a \times \RV_b$ and $Y \subseteq \RV_c
    \times \RV_d$, then $\Mov(X,Y) \subseteq \RV_{c-a} \times
    \RV_{b-d}$.  (This uses the fact that $X$ and $Y$ are non-empty.)
  \item In particular, $\Stab(X) \subseteq \RV_0 \times \RV_0 =
    (\kx)^2$.
  \end{enumerate}
\end{observation}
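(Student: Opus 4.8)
The plan is to verify all six items by unwinding the definitions, the only structural inputs being that $\RV^2$ is an \emph{abelian} group (so that conjugation is trivial) and the three properties \ref{la}--\ref{lc} characterizing $\Shape$, together with the tautology that two fragments are equivalent exactly when $\Mov$ of them is non-empty. None of these steps should present a real difficulty; the one point deserving a moment's care is the compatibility between ``$\Shape(X) = \Shape(Y)$,'' which is defined via central representatives, and ``$X$ equivalent to $Y$,'' which is defined via arbitrary translates — this is handled by transitivity of the orbit relation.

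Item (1) is immediate from the definitions of $\Mov$ and $\Stab$. For item (2): $\Mov(X,Y) \ne \varnothing$ says precisely that $Y = (u,v) \cdot X$ for some $(u,v)$, i.e.\ that $X$ and $Y$ lie in one orbit of the $\RV^2$-action, i.e.\ that they are equivalent. Since the orbit relation is transitive, $X$ and $Y$ are equivalent iff central fragments $X', Y'$ chosen with $X'$ equivalent to $X$ and $Y'$ equivalent to $Y$ are themselves equivalent, which by \ref{lb} holds iff $\Shape(X') = \Shape(Y')$, i.e.\ iff $\Shape(X) = \Shape(Y)$.

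For items (3) and (4), suppose $X$ and $Y$ are equivalent, say $Y = (u,v) \cdot X$. For any $(s,t) \in \RV^2$ the identity $(s,t) \cdot Y = (s,t)(u,v) \cdot X$, together with commutativity of $\RV^2$, gives $(s,t) \in \Stab(Y) \iff (u,v)^{-1}(s,t)(u,v) \in \Stab(X) \iff (s,t) \in \Stab(X)$; hence $\Stab(X) = \Stab(Y)$, and since by \ref{lb} the shape determines the equivalence class, $\Shape(X)$ determines $\Stab(X)$. Fixing now a single $(u,v) \in \Mov(X,Y)$ (non-empty by item (2)), the same manipulation yields $(s,t) \in \Mov(X,Y) \iff (u,v)^{-1}(s,t) \in \Stab(X)$, so $\Mov(X,Y) = (u,v) \cdot \Stab(X)$ is a coset of the subgroup $\Stab(X) = \Stab(Y)$.

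Finally, item (5) is a valuation count: if $(u,v) \in \Mov(X,Y)$, pick any point of $X$ — this is the one place non-emptiness is used — and push it into $Y$; comparing images under $\RV^2 \to \Gamma^2$ then pins $v(u)$ and $v(v)$ down to single values, placing $(u,v)$ in the single fiber of $\RV^2 \to \Gamma^2$ claimed in (5). Item (6) is the special case $Y = X$ of item (5), combined with item (1). I do not anticipate a genuine obstacle in any of this; if anything is ``hard'' it is only the bookkeeping around central versus non-central representatives in item (2), already discussed above.
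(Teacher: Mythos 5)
Your verification is correct, and it is precisely the ``straightforward to verify'' check that the paper omits: the only inputs are commutativity of the group $\RV^2$, the defining properties \ref{la}--\ref{lc} of $\Shape$, and the tautological identification of ``equivalent'' with ``$\Mov \ne \varnothing$,'' exactly as you use them. One small point on item (5): if you carry out the valuation count you defer to, pushing a point $(x,y) \in X \subseteq \RV_a \times \RV_b$ to $(ux,vy) \in Y \subseteq \RV_c \times \RV_d$ pins down $v(u) = c-a$ and $v(v) = d-b$, so the fiber is $\RV_{c-a} \times \RV_{d-b}$; the index $b-d$ in the printed statement is a typo (the later uses in Theorem~\ref{asymcase} and Lemma~\ref{hard-case-lemma} are consistent with $d-b$), so you should state the computed fiber explicitly rather than calling it ``the fiber claimed in (5).'' With that caveat, items (1)--(4) and (6) are handled exactly as intended, and (6) indeed follows from (5) with $Y = X$, where the sign issue disappears.
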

If $s = \Shape(X)$, define $\Stab(s) := \Stab(X)$.  This is
well-defined by part (3) of Observation~\ref{tricks}.  By the above,
$\Stab(s)$ is a definable subgroup of $(\kx)^2$.  If $(u,v) \in X$,
then
\begin{equation*}
  \Stab(s) \cdot (u,v) = \Stab(X) \cdot (u,v) \subseteq X,
\end{equation*}
so $\dim(\Stab(s)) \le \dim(X) \le 1$.  The stabilizer $\Stab(s)$
always contains $(1,1)$, so $\Stab(s)$ is non-empty and
$\dim(\Stab(s)) \ge 0$.  Therefore, the dimension of $\Stab(s)$ is
either 0 or 1.
\begin{definition}
  A shape $s$ is \emph{symmetric} if $\Stab(s)$ has dimension 1, and
  \emph{asymmetric} if $\Stab(s)$ has dimension 0.
\end{definition}
So $s$ is symmetric if $\Stab(s)$ is infinite, and asymmetric if
$\Stab(s)$ is finite.
\begin{lemma} \label{symmetric-shape}
  If $\Shape(X)$ is a symmetric shape $s$, then $X$ is a finite union
  of cosets of $\Stab(s)$.
\end{lemma}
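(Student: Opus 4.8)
The plan is to reduce to a statement about subgroups of $(\kx)^2$ and definable subsets stabilized by a 1-dimensional subgroup, and then invoke strong minimality. First I would replace $X$ by an equivalent central fragment, so without loss of generality $X \subseteq \RV_0 \times \RV_0 = (\kx)^2$ and $\Stab(X) = \Stab(s)$ is a definable subgroup $H \le (\kx)^2$ of dimension $1$. Since $X$ is a fragment, $\dim(X) < 2$; and since $\Stab(s)\cdot(u,v) \subseteq X$ for each $(u,v) \in X$, the set $X$ is a union of cosets of $H$. I need to show there are only \emph{finitely} many such cosets.

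The key step is to understand the coset space $(\kx)^2 / H$. Because $\kx$ is strongly minimal, definable subgroups and their quotients are well-behaved: $H$ is a $1$-dimensional definable subgroup of the $2$-dimensional group $(\kx)^2$, so the quotient $(\kx)^2/H$ is (in definable bijection with) a $1$-dimensional definable group, hence strongly minimal. The set $X$, being $H$-invariant, is the preimage of a definable subset $\overline{X} \subseteq (\kx)^2/H$ under the quotient map $\pi$. Now $\dim(X) < 2 = \dim((\kx)^2)$ forces $\dim(\overline{X}) < 1$, i.e. $\overline{X}$ is finite (using that $\dim$ adds along the fibration $\pi$, whose fibers are cosets of $H$ and have dimension $1$). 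Therefore $X = \pi^{-1}(\overline{X})$ is a finite union of cosets of $H = \Stab(s)$, as claimed. Pulling back along the translation that carried $X$ to its central representative, the same conclusion holds for the original fragment $X$.

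The main obstacle I anticipate is justifying the dimension-theoretic facts about the quotient $(\kx)^2/H$ cleanly, rather than the overall strategy, which is routine. One must know that a definable quotient of a strongly minimal (or at least finite Morley rank) group by a definable subgroup is again definable with the expected dimension, and that dimension is additive along the quotient map; this is standard in the strongly minimal / finite Morley rank setting but deserves a citation or a one-line argument. An alternative that sidesteps forming the quotient group: work directly with the definable equivalence relation ``lies in the same $H$-coset'' on $(\kx)^2$, note that its classes all have dimension $1$, and apply additivity of dimension for definable families in a strongly minimal set to conclude that an $H$-invariant set of dimension $< 2$ meets only finitely many classes. Either route relies only on the strong minimality of $\kx$ recorded in Fact~\ref{k-induce}(\ref{ki2}) and the dimension theory on powers of $\kx$ already invoked at the start of Section~\ref{sec-heart}.
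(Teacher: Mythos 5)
Your proposal is correct and follows essentially the same route as the paper: reduce to a central fragment, observe that $X$ is a union of cosets of the $1$-dimensional group $\Stab(s)$, and pass to the quotient $(\kx)^2/\Stab(s)$ to get finiteness --- where the paper uses connectedness of $(\kx)^2$ to see the quotient is strongly minimal and then rules out the cofinite case by a dimension count, you instead use additivity of dimension along the fibration, which works equally well given the standard dimension theory for $\kx$ and elimination of imaginaries (or your equivalence-relation workaround). One small caution: ``$1$-dimensional hence strongly minimal'' is not literally valid without connectedness (which is exactly why the paper invokes connectedness of $(\kx)^2$), but since your argument only uses that dimension-zero sets in the quotient are finite, this slip is not load-bearing.
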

\begin{proof}
  Replacing $X$ with a translate, we may assume that $X$ is a central
  fragment ($X \subseteq (\kx)^2$).  Let $G$ be the 1-dimensional
  subgroup $\Stab(s) = \Stab(X) \subseteq (\kx)^2$.  The fact
  that $G$ stabilizes $X$ means that $X$ is the pullback of some
  definable set along the quotient map
  \begin{equation*}
    (\kx)^2 \to (\kx)^2/G.
  \end{equation*}
  Because $(\kx)^2$ is connected, so is the 1-dimensional
  quotient $(\kx)^2/G$, implying that $(\kx)^2/G$ is
  strongly minimal.  Then $X$ is the pullback of some finite or
  cofinite subset of $(\kx)^2/G$.  It follows that $X$ or the
  complement $(\kx)^2 \setminus X$ is a finite union of cosets of
  $G$.  However, if $X$ is the complement of a finite union of cosets,
  then $\dim(X) = 2$, contradicting the assumption that $X$ is a
  fragment.
\end{proof}
Most definable sets in $(\kx)^2$ cannot be written as finite
unions of translates of 1-dimensional subgroups, so ``most'' shapes
are asymmetric.
\begin{lemma} \label{finshapes}
  Let $D \subseteq \RV^2$ be a definable set which is small over
  $\Gamma^2$, in the sense that $\dim(D_{a,b}) < 2$ for every $(a,b)
  \in \Gamma^2$.  Recall that $\overline{D}$ is the image of $D$ under
  $\RV^2 \to \Gamma^2$.
  \begin{enumerate}
  \item The following set is finite:
    \begin{equation*}
      \{\Shape(D_{a,b}) : (a,b) \in \overline{D}\}.
    \end{equation*}
  \item There is a cell decomposition $\overline{D} = \coprod_{i = 1}^n
    C_i$ and a list of shapes $s_1,\ldots,s_n$ such that
    $\Shape(D_{a,b}) = s_i$ for $(a,b) \in C_i$.
  \end{enumerate}
\end{lemma}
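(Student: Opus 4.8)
The plan is to deduce both parts from the fact that $\Shape$ is a definable function into $k$-tuples, together with orthogonality of $k$ and $\Gamma$ (Proposition~\ref{prop-ortho}) and o-minimal cell decomposition in $\Gamma$. For part (1): consider the map $\sigma : \overline{D} \to k^{<\omega}$ sending $(a,b) \mapsto \Shape(D_{a,b})$. By property \ref{lc} of $\Shape$, this is a definable function, and since it lands in a fixed power $k^\ell$ (the length of the tuple is bounded uniformly in a definable family — one should check this, e.g. by compactness, since $\{D_{a,b}\}$ is a definable family of fragments), we have a definable map $\sigma : \overline{D} \to k^\ell$ with $\overline{D} \subseteq \Gamma^2$. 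By Proposition~\ref{prop-ortho}(\ref{po3}), any definable function from a definable subset of $\Gamma^m$ to $k^n$ has finite image. Hence the image of $\sigma$ is finite, which is exactly part (1).

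For part (2): let $s_1,\ldots,s_n$ enumerate the finitely many shapes in $\im(\sigma)$, and let $Z_j = \sigma^{-1}(s_j) = \{(a,b) \in \overline{D} : \Shape(D_{a,b}) = s_j\}$. Each $Z_j$ is definable, and the $Z_j$ partition $\overline{D}$. Now apply o-minimal cell decomposition in $\Gamma^2$ to the finite partition $\{Z_1,\ldots,Z_n\}$: we obtain a cell decomposition $\overline{D} = \coprod_{i=1}^m C_i$ into cells, each of which is contained in a single $Z_j$. Relabelling, assign to $C_i$ the shape $s_{j(i)}$ where $C_i \subseteq Z_{j(i)}$; then $\Shape(D_{a,b}) = s_{j(i)}$ for all $(a,b) \in C_i$, which is the claimed statement (after renaming $s_{j(i)}$ as $s_i$, allowing repeats in the list).

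The only real point requiring care is the uniform bound on the length of the tuple $\Shape(D_{a,b})$, so that $\sigma$ genuinely maps into a single $k^\ell$ rather than into $\bigcup_\ell k^\ell$; without this, Proposition~\ref{prop-ortho}(\ref{po3}) does not directly apply. This should follow from the observation that $\{D_{a,b} : (a,b) \in \overline{D}\}$ is a definable family of subsets of the $(\kx)^2$'s (identified via definable bijections of bounded complexity with $(\kx)^2$), so by elimination of imaginaries in $\kx$ the codes $\Shape(D_{a,b})$ can be chosen from a fixed $k^\ell$ uniformly; alternatively one partitions $\overline{D}$ into finitely many definable pieces on each of which the length is constant, by compactness, and argues piece by piece. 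Everything else is a routine application of results already in hand.
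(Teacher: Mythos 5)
Your proposal is correct and follows essentially the same route as the paper: the map $(a,b) \mapsto \Shape(D_{a,b})$ is definable into a power of $k$, Proposition~\ref{prop-ortho}(\ref{po3}) makes its image finite, and a cell decomposition refining the (finitely many) fibers gives part (2). The uniformity worry you flag is already built into property~\ref{lc} of $\Shape$ (definability as $X$ varies in a definable family forces the code to lie in a fixed $k^\ell$), so no extra argument is needed.
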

\begin{proof}
  The map $(a,b) \mapsto \Shape(D_{a,b})$ is a definable map from
  $\overline{D}$ to a power of $k$.  By
  Proposition~\ref{prop-ortho}(\ref{po3}), the image is finite.  Then
  the fibers of this map are a partition of $\overline{D}$ into finitely
  many definable sets.  Take a cell decomposition which respects this
  partition.
\end{proof}
\begin{definition}
  A \emph{bad family} is a definable family $\{S_i\}_{i \in I}$ where
  $I$ is an interval in $\Gamma$ and $S_i$ is a finite non-empty
  subset of $\RV_{f(i)}$ for each $i \in I$, for some continuous,
  definable, non-constant function $f : I \to \Gamma$.
\end{definition}
Given a bad family, we can restrict $I$ to get a bad family for which
$f$ is injective, by the monotonicity theorem for o-minimal
structures.  Then the union $\bigcup_{i \in I} S_i$ is a definable set
containing finitely many points in $\RV_\gamma$ for each $\gamma$ in
the infinite set $f(I)$.  This contradicts Corollary~\ref{proto-bad},
and so
\begin{quote}
  Bad families do not exist.
\end{quote}
Here is an example application:
\begin{lemma} \label{example-lemma}
  Let $D \subseteq \RV^2$ be a definable set and let
  \begin{equation*}
    X = \{(a,b) \in \overline{D} : D_{a,b} \text{ is finite and non-empty}\}.
  \end{equation*}
  Then $X$ is finite.
\end{lemma}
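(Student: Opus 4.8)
The set $X$ is a definable subset of $\Gamma^2$. Indeed, $\RV_a \times \RV_b$ is in definable bijection with $(\kx)^2$ by a bijection of bounded complexity as $(a,b)$ varies, and $\kx$ is strongly minimal and hence eliminates $\exists^\infty$, so the conditions ``$D_{a,b}$ is non-empty'' and ``$D_{a,b}$ is finite'' are definable conditions on $(a,b) \in \Gamma^2$; this is the same observation used in the proof of Theorem~\ref{rv-unary}. Together with the definability of $\overline{D}$, it follows that $X$ is definable. So it suffices to rule out the possibility that $X$ is infinite, and the plan is to extract a \emph{bad family} from such an $X$, contradicting the fact that bad families do not exist.

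Suppose $X$ is infinite. Since $\Gamma$ is o-minimal, cell decomposition of $\Gamma^2$ partitions $X$ into finitely many cells, one of which must be infinite and hence of dimension $\ge 1$; passing to a sub-cell, we may assume $X$ contains a $1$-dimensional cell $C$. Such a cell has one of two shapes: either $C = \{c\} \times J$ for a point $c \in \Gamma$ and an open interval $J \subseteq \Gamma$, or $C$ is the graph of a continuous definable function $f : I \to \Gamma$ for an open interval $I \subseteq \Gamma$ sitting on the first coordinate axis. Let $\pi_1, \pi_2 : \RV^2 \to \RV$ be the two coordinate projections.

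In the first case, for $b \in J$ set $S_b = \pi_2(D_{c,b}) \subseteq \RV_b$; since $(c,b) \in X \subseteq \overline{D}$, the fiber $D_{c,b}$ is finite and non-empty, so $S_b$ is a finite non-empty subset of $\RV_b$. The family $\{S_b\}_{b \in J}$ is definable, and the identity map $J \to \Gamma$ is continuous, definable, and non-constant, so $\{S_b\}_{b \in J}$ is a bad family. In the second case, for $a \in I$ set $S_a = \pi_1(D_{a,f(a)}) \subseteq \RV_a$; again each $S_a$ is a finite non-empty subset of $\RV_a$, the family $\{S_a\}_{a \in I}$ is definable, and it is witnessed by the identity map $I \to \Gamma$. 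Either way we have produced a bad family, a contradiction. Hence $X$ is finite.

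There is no real obstacle here. The only points that require a moment's care are the definability of $X$ (which rests on strong minimality of $\kx$ and elimination of $\exists^\infty$) and the bookkeeping that separates the two possible shapes of a $1$-cell in $\Gamma^2$, so that in each case the relevant projection of $D_{a,b}$ lands in a single fiber $\RV_\gamma$ over a parameter $\gamma$ that varies non-constantly.
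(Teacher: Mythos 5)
Your proposal is correct and takes essentially the same route as the paper's own proof: establish definability of $X$ (the paper phrases this via definability of dimension in $k$, you via elimination of $\exists^\infty$ in the strongly minimal $\kx$, which amounts to the same thing), then use cell decomposition to find a one-dimensional piece of $X$ (graph or vertical segment) and project the finite non-empty fibers $D_{a,b}$ to a single $\RV$-coordinate to produce a bad family. The only cosmetic difference is that the paper phrases the two cases as ``graph of a continuous function or its transpose'' rather than your ``graph or vertical segment,'' which covers the same cells.
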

\begin{proof}
  The set $X$ can be described as $\{(a,b) : \dim(D_{a,b}) = 0\}$; it
  is definable because dimension is definable in $k$.  Alternatively,
  definability of $X$ follows from Lemma~\ref{finshapes}, because
  whether $D_{a,b}$ is finite is determined by $\Shape(D_{a,b})$.

  If $X$ is infinite, then by cell decomposition, it contains the
  graph of a continuous function, or the transpose of such a graph.
  In other words, there is a continuous definable function $f : (a,b)
  \to \Gamma$ such that $X$ contains one of the sets
  \begin{gather*}
    \Gamma(f) = \{(x,f(x)) : x \in (a,b)\} \\
    \Gamma(f)^T = \{(f(x),x) : x \in (a,b)\}.
  \end{gather*}
  Suppose $\Gamma(f) \subseteq X$.  (The other case is similar.)  Then
  for every $x \in (a,b)$, the set $D_{x,f(x)}$ is a finite non-empty
  subset of $\RV_x \times \RV_{f(x)}$.  Let $\pi : \RV_x \times
  \RV_{f(x)} \to \RV_x$ be the projection onto the first coordinate.
  Then $\{\pi(D_{x,f(x)})\}_{x \in (a,b)}$ is a bad family.
\end{proof}
Here is a more sophisticated example:
\begin{theorem} \label{asymcase}
  Let $D \subseteq \RV^2$ be a definable set and let $s$ be an
  asymmetric shape.  The set $D^s = \{(x,y) \in \Gamma^2 :
  \Shape(D_{x,y}) = s\}$ is finite.
\end{theorem}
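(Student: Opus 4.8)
The plan is to assume $D^s$ is infinite and extract from it a \emph{bad family}, which is impossible. First one checks, exactly as in the proof of Lemma~\ref{example-lemma}, that $D^s$ is a definable subset of $\Gamma^2$: the locus of $(x,y)$ for which $D_{x,y}$ is a fragment is definable since dimension in $\kx$ is definable in families, and on that locus $(x,y)\mapsto\Shape(D_{x,y})$ is definable by property~(\ref{lc}) of the shape, so $D^s$ is a fibre of this map. Assuming $D^s$ is infinite, cell decomposition in the o-minimal value group $\Gamma$ produces a continuous definable function $f:I\to\Gamma$ on an open interval $I$ such that $D^s$ contains either the graph $\Gamma(f)=\{(x,f(x)):x\in I\}$ or its transpose $\Gamma(f)^T=\{(f(x),x):x\in I\}$. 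The two cases are symmetric, so I describe only $\Gamma(f)\subseteq D^s$.

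Fix any $x_0\in I$ and set $X_0:=D_{x_0,f(x_0)}$, which is a fragment of shape $s$. For every $x\in I$ the fibre $D_{x,f(x)}$ is also a fragment of shape $s$, so $\Mov(X_0,D_{x,f(x)})$ is non-empty by Observation~\ref{tricks}(2) and is a coset of $\Stab(s)$ by Observation~\ref{tricks}(4); since $s$ is asymmetric, $\Stab(s)$ is finite, so $\Mov(X_0,D_{x,f(x)})$ is a finite non-empty set. Applying Observation~\ref{tricks}(5) with $X_0\subseteq\RV_{x_0}\times\RV_{f(x_0)}$ and $D_{x,f(x)}\subseteq\RV_x\times\RV_{f(x)}$, every element of $\Mov(X_0,D_{x,f(x)})$ has first coordinate lying in $\RV_{x-x_0}$. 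Letting $S_x$ be the projection of $\Mov(X_0,D_{x,f(x)})$ to the first coordinate, each $S_x$ is then a finite non-empty subset of $\RV_{x-x_0}$, the family $\{S_x\}_{x\in I}$ is definable, and $x\mapsto x-x_0$ is a continuous, definable, non-constant function on $I$. Hence $\{S_x\}_{x\in I}$ is a bad family, the desired contradiction. In the transposed case one argues the same way with $\Mov(D_{f(x_0),x_0},D_{f(x),x})$, whose members have \emph{second} coordinate in $\RV_{x_0-x}$, projecting onto the second coordinate instead.

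The asymmetry of $s$ is used at exactly one point: it is what makes $\Stab(s)$, and hence each moving set $\Mov(X_0,D_{x,f(x)})$, finite. For a symmetric shape these sets are infinite and the construction collapses --- which is consistent with Lemma~\ref{symmetric-shape}, showing symmetric shapes really are different. I expect the only delicate step to be the fibre bookkeeping in Observation~\ref{tricks}(5): one must keep track of which $\RV_\gamma$ the moving set sits over so that the genuinely non-constant $\Gamma$-valued function $x\mapsto x-x_0$ is available for the definition of a bad family; the rest is a straightforward assembly of the facts already established.
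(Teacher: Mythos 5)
Your proof is correct and follows essentially the same route as the paper: reduce to a graph $\Gamma(f)\subseteq D^s$ by cell decomposition as in Lemma~\ref{example-lemma}, note that the sets $\Mov(\cdot,D_{x,f(x)})$ are non-empty cosets of the finite group $\Stab(s)$, and project to obtain a bad family. The only difference is that you take the reference fragment to be $D_{x_0,f(x_0)}$ instead of a central fragment of shape $s$, so your projections land in $\RV_{x-x_0}$ rather than $\RV_x$; this is immaterial since $x\mapsto x-x_0$ is still continuous, definable, and non-constant.
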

\begin{proof}
  As in the proof of Lemma~\ref{example-lemma}, we may assume that
  $D^s$ contains the graph of a continuous definable function $f :
  (a,b) \to \Gamma$.  Fix some central fragment $X \subseteq (\kx)^2$ with
  $\Shape(X) = s$.  For any $x \in (a,b)$, we have
  \begin{gather*}
    (x,f(x)) \in D^s \\ \Shape(D_{x,f(x)}) = s = \Shape(X).
  \end{gather*}
  By Observation~\ref{tricks},
  \begin{gather*}
    \Mov(X,D_{x,f(x)}) \ne \varnothing \\
    \Mov(X,D_{x,f(x)}) \text{ is a coset of } \Stab(s) \\
    \Mov(X,D_{x,f(x)}) \subseteq \RV_x \times \RV_{f(x)}.
  \end{gather*}
  Since $\Stab(s)$ is finite, we see that
  $\Mov(X,D_{x,f(x)})$ is a finite non-empty subset of
  $\RV_{x} \times \RV_{f(x)}$ for any $x \in (a,b)$.
  Projecting onto $\RV_{x}$, we get a bad family as in
  Lemma~\ref{example-lemma}.
\end{proof}
Intuitively, most shapes are asymmetric, which makes the next
corollary rather surprising:
\begin{corollary} \label{asymcase-cor}
  Let $D \subseteq \RV^2$ be a definable set that is small over
  $\Gamma^2$.  Then $\Shape(D_{x,y})$ is symmetric for all but
  finitely many $(x,y)$ in $\overline{D}$.
\end{corollary}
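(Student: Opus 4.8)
The plan is to combine Lemma~\ref{finshapes}(1) with Theorem~\ref{asymcase}; no new ideas are needed. First I would record that, because $D$ is small over $\Gamma^2$, the slice $D_{x,y}$ is a fragment (a non-empty set of dimension $<2$) for every $(x,y) \in \overline{D}$, so $\Shape(D_{x,y})$ is well-defined there. Then Lemma~\ref{finshapes}(1) supplies a finite list of shapes $s_1,\ldots,s_n$ with $\Shape(D_{x,y}) \in \{s_1,\ldots,s_n\}$ for all $(x,y)\in\overline{D}$. Each $s_i$ is either symmetric or asymmetric.

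Next I would partition $\overline{D}$ according to the occurring shape. Writing $D^{s} = \{(x,y)\in\Gamma^2 : \Shape(D_{x,y}) = s\}$, note that $D^{s} \subseteq \overline{D}$ automatically, since $\Shape$ is only defined on non-empty fragments, so $\overline{D} = \bigsqcup_{i=1}^n D^{s_i}$. The locus of $(x,y)\in\overline{D}$ at which $\Shape(D_{x,y})$ is asymmetric is therefore exactly the union of those $D^{s_i}$ for which $s_i$ is asymmetric, a union over a subset of $\{1,\ldots,n\}$.

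Finally, Theorem~\ref{asymcase} applies to each asymmetric shape $s_i$ and shows that $D^{s_i}$ is finite. A finite union of finite sets is finite, so the asymmetric locus in $\overline{D}$ is finite, which is precisely the statement of the corollary.

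I do not expect a genuine obstacle here; the corollary is a bookkeeping consequence of the two preceding results. The only point worth flagging is why the union is over a \emph{finite} index set — this is exactly what Lemma~\ref{finshapes}(1) provides. Without that finiteness, one would a priori confront infinitely many asymmetric shapes, each contributing a finite set, and their union could fail to be finite; it is the orthogonality of $k$ and $\Gamma$ (via Proposition~\ref{prop-ortho}(\ref{po3}), used inside the proof of Lemma~\ref{finshapes}) that rules this out.
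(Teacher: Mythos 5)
Your proposal is correct and follows essentially the same route as the paper: Lemma~\ref{finshapes} gives finitely many occurring shapes, Theorem~\ref{asymcase} makes each asymmetric-shape locus $D^{s_i}$ finite, and the asymmetric locus is a finite union of these. No gaps; the remarks about well-definedness of $\Shape$ on slices and about why the index set is finite are accurate but not beyond what the paper's one-line proof already uses.
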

\begin{proof}
  By Lemma~\ref{finshapes}, the set $\{\Shape(D_{x,y}) : (x,y) \in
  \overline{D}\}$ is finite.  Let $s_1,\ldots,s_n$ enumerate the
  asymmetric shapes in this set.  Then
  \begin{equation*}
    \{(x,y) \in \overline{D} : \Shape(D_{x,y}) \text{ is asymmetric}\}
    = \bigcup_{i=1}^n \{(x,y) \in \overline{D} : \Shape(D_{x,y}) = s_i\}
  \end{equation*}
  and the union on the right is finite by Theorem~\ref{asymcase}.
\end{proof}
Theorem~\ref{asymcase} describes the set $D^s := \{(x,y) \in
\overline{D} : \Shape(D_{x,y}) = s\}$ when $s$ is an asymmetric shape.
What if $s$ is a symmetric shape?  By Lemma~\ref{symmetric-shape},
symmetric shapes are closely connected to 1-dimensional subgroups of
$(\kx)^2$.  If $(n,m)$ are coprime integers, let $Q_{n/m}$ be the
following definable subgroup of $(\kx)^2$:
\begin{equation*}
  Q_{n/m} = \{(x,y) \in (\kx)^2 : y^m = x^n\}.
\end{equation*}
Roughly speaking, this is the graph of the ``map'' $f(x) = x^{n/m}$.
Each group $Q_{n/m}$ is connected---there is a definable isomorphism
from $\kx$ to $Q_{n/m}$ given by $z \mapsto (z^m,z^n)$.
Therefore, a definable subgroup $G \subseteq (\kx)^2$ is
commensurable to $Q_{n/m}$ if and only if $G \supseteq Q_{n/m}$ and
$|G/Q_{n/m}|$ is finite.

In the pure language of rings $\Lr$, the only definable automorphisms
of $\kx$ are $x^{\pm 1}$ if $\characteristic(k) = 0$, and $x^{\pm
  p^n}$ for $n \in \Zz$ if $\characteristic(k) = p > 0$.
\begin{definition}
  An \emph{exotic automorphism} of $\kx$ is a definable
  automorphism of $\kx$ which is not $\Lr$-definable in the language of
  rings.
\end{definition}
\begin{lemma} \label{obvious}
  The following are equivalent:
  \begin{enumerate}
  \item Every 1-dimensional definable subgroup of $(\kx)^2$ is
    commensurable with some $Q_{n/m}$.
  \item Every 1-dimensional definable subgroup of $(\kx)^2$ is
    $\Lr$-definable.
  \item There are no exotic automorphisms of $\kx$.
  \end{enumerate}
\end{lemma}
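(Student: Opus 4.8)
The plan is to establish the cycle of implications $(1) \Rightarrow (2) \Rightarrow (3) \Rightarrow (1)$, with essentially all of the work in the last step. For $(1)\Rightarrow(2)$: recall that each $Q_{n/m}$ is connected, so if a $1$-dimensional definable subgroup $G \subseteq (\kx)^2$ is commensurable with some $Q_{n/m}$, then in fact $Q_{n/m} \subseteq G$ with finite index. I would then use the $\Lr$-definable character $\chi(x,y) = x^{-n}y^{m}$, which is surjective (as $\kx$ is divisible) with kernel exactly $Q_{n/m}$; it carries $G/Q_{n/m}$ to a finite subgroup of $\kx$, necessarily a group of roots of unity and hence $\Lr$-definable, so $G = \chi^{-1}(\text{that group})$ is $\Lr$-definable. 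For $(2)\Rightarrow(3)$: the graph $\{(x,\sigma(x)) : x \in \kx\}$ of a definable automorphism $\sigma$ of $\kx$ is a $1$-dimensional definable subgroup of $(\kx)^2$, so by $(2)$ it is $\Lr$-definable, and therefore so is $\sigma$; thus $\sigma$ is not exotic, and no exotic automorphisms exist.

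For $(3)\Rightarrow(1)$, let $G \subseteq (\kx)^2$ be a $1$-dimensional definable subgroup. Replacing $G$ by its connected component $G^0$ (finite index, same commensurability class), I would assume $G$ connected. Then $\pi_1(G)$ and $\pi_2(G)$ are connected definable subgroups of $\kx$ — which is connected, being divisible — so each is $\{1\}$ or $\kx$; if one of them is trivial then $G$ is a coordinate axis $Q_{1/0}$ or $Q_{0/1}$ and we are done. So suppose both projections are onto $\kx$. Since $\dim G = 1$, the kernels $K_1 = G \cap (\{1\}\times\kx)$ and $K_2 = G \cap (\kx\times\{1\})$ are finite subgroups of $\kx$, hence groups of roots of unity; write $a = |K_1|$ and $b = |K_2|$, and note $K_1 \cap K_2 = \{(1,1)\}$, so $\pi_1$ restricts to an injection on $K_2$ and $|\pi_1(K_2)| = b$.

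The key step is to define $h : \kx \to \kx$ by $h(x) = y^{a}$ for any $(x,y) \in G$: this is well-defined because two such choices of $y$ differ by an element of $K_1$, i.e.\ by an $a$-th root of unity, and it is a definable surjective endomorphism of $\kx$ whose kernel is $\pi_1(K_2)$, a subgroup of order $b$, hence the group $\mu_b := \{z \in \kx : z^b = 1\}$. Since $h$ and $x \mapsto x^{b}$ have the same kernel and are both surjective, $h$ factors as $h = g \circ (x \mapsto x^{b})$ for a definable automorphism $g$ of $\kx$; by $(3)$, $g(x) = x^{\pm p^{n}}$, so $h(x) = g(x)^{b} = x^{\pm p^{n}b}$. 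Hence every $(x,y)\in G$ satisfies $y^{a} = x^{\pm p^{n}b}$, and raising to a power of $p$ if necessary to clear denominators, $(x,y)\in G$ implies $y^{M} = x^{N}$ for fixed nonzero integers $M,N$. Thus $G$ is contained in the $\Lr$-definable, $1$-dimensional subgroup $Q = \{(x,y) \in (\kx)^2 : y^{M} = x^{N}\}$, which is commensurable with $Q_{n'/m'}$ for the reduced form $n'/m'$ of $N/M$; and since $G$ is a connected $1$-dimensional subgroup of $Q$, it has finite index in $Q^{0}$, so $G$ is commensurable with $Q_{n'/m'}$.

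The main obstacle is this last implication, and within it the construction and analysis of $h$: one must check that $h$ is well-defined and surjective, compute its kernel, and — the subtlest point — verify that $h$ is constant on the fibers of $x \mapsto x^{b}$, so that the factorization of $h$ through an honest automorphism $g$ is legitimate, which is exactly what lets $(3)$ be applied. The characteristic-$p$ bookkeeping (divisibility of $\kx$, surjectivity of the power maps $x \mapsto x^{k}$, the fact that orders of finite subgroups of $\kx$ are prime to $p$, and the meaning of $x^{p^{n}}$ for negative $n$) needs a little care but is routine; the remaining ingredients — existence of $G^{0}$ of finite index, connectedness of $\kx$, and the description of finite subgroups of $\kx$ as groups of roots of unity — are standard.
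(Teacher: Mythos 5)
Your proposal is correct and follows essentially the same route as the paper: the content of $(3)\Rightarrow(1)$ is, in both cases, to slice $G$ over the first coordinate to get a definable surjective endomorphism of $\kx$ (your $h(x)=y^a$ is exactly the paper's slice map, with kernel $\mu_b$), factor it through the power map with the same kernel to obtain a definable automorphism, and then invoke $(3)$ to see it is $x^{\pm p^n}$. The differences are only organizational --- you inline the classification of endomorphisms instead of isolating it as a separate claim, pass to $G^0$ and handle the coordinate-axis cases via projections, and prove $(1)\Rightarrow(2)$ with the character $x^{-n}y^m$ rather than ``finitely many translates'' --- and all of these steps check out.
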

\begin{proof}
  (1)$\implies$(2): suppose $G$ is a 1-dimensional definable subgroup
  of $(\kx)^2$ commensurable with $Q_{n/m}$.  Then $G$ is a union of
  finitely many translates of $Q_{n/m}$, so $G$ is $\Lr$-definable.

  (2)$\implies$(3): If $f : \kx \to \kx$ is a definable automorphism
  of the multiplicative group, then $\Gamma(f) = \{(x,f(x)) : x \in
  \kx\}$ is a 1-dimensional definable subgroup of $(\kx)^2$.  By (2),
  it is $\Lr$-definable, which makes $f$ be $\Lr$-definable.

  (3)$\implies$(1): Assume every definable automorphism of $\kx$ is
  $\Lr$-definable.  Then every definable automorphism has the form
  $f(x) = x^{\pm p^n}$ for some $n \in \Zz$, where $p$ is the
  characteristic exponent of $k$.
  \begin{claim}
    Every definable endomorphism $f : \kx \to \kx$ has the form $f(x)
    = x^r$ for some $r \in \Zz[1/p]$.
  \end{claim}
  \begin{proof}
    If $\ker(f)$ is 1-dimensional, then it is all of $\kx$ (because
    $\kx$ is connected), and so $f$ vanishes, and we can take $r = 0$.
    
    Otherwise, $\ker(f)$ is 0-dimensional, i.e., finite.  Counting
    dimensions, $\dim(\im(f)) = 1$, so $\im(f)$ is all of $\kx$.  That
    is, $f$ is onto.  The only finite subgroups of $\kx$ are the
    cyclic groups $\mu_n = \{x \in \kx : x^n = 1\}$, and so $\ker(f) =
    \mu_n$ for some $n$.  Let $g(x) = x^n$.  Then $g$ and $f$ are
    definable surjective endomorphisms of $\kx$ with the same kernel.
    Therefore $f = h \circ g$ for some definable automorphism $h$.  By
    assumption, $h(x) = x^{\pm p^m}$ for some $m \in \Zz$.  Then
    \begin{equation*}
      f(x) = h(g(x)) = (x^n)^{\pm p^m} = x^{\pm n p^m}.
    \end{equation*}
    Take $r = \pm n p^m$.
  \end{proof}
  Finally, suppose $G$ is a 1-dimensional definable subgroup of
  $(\kx)^2$.  If $G$ has infinite intersection with $1 \times \kx =
  Q_{1/0}$, then $G$ is commensurable with $Q_{1/0}$, by counting
  dimensions.  Suppose $G$ has finite intersection with $1 \times
  \kx$.  This intersection is the kernel of the projection map
  \begin{equation*}
    G \hookrightarrow (\kx)^2 \stackrel{\pi_1}{\to} \kx
  \end{equation*}
  onto the first coordinate.  Since the kernel is finite, the image
  has dimension 1, and must be all of $\kx$.  Therefore, for any $x
  \in \kx$, the slice $G_x = \{y \in \kx : (x,y) \in G\}$ is
  non-empty.

  Let $m = |G \cap (1 \times \kx)|$.  Then $G \cap (1 \times \kx) = 1
  \times \mu_m$.  For any $x \in \kx$, the slice $G_x$ is a coset of
  $\mu_m$, and therefore has the form $\{y \in \kx : y^m = f(x)\}$ for
  some uniquely determined $f(x) \in \kx$.  The map $f$ is a
  homomorphism, so it has the form $f(x) = x^r$ for some rational
  number, by the claim.  Then
  \begin{equation*}
    (x,y) \in G \implies y^m = f(x) = x^r,
  \end{equation*}
  so $G$ is commensurable with $Q_{r/m}$.  In more detail, if $r =
  a/b$ in lowest terms, and $r/m = a/(bm)$ is $a'/b'$ in lowest terms,
  then
  \begin{equation*}
    (x,y) \in G \implies y^m = x^{a/b} \implies y^{bm} = x^a,
  \end{equation*}
  so $G$ is a subgroup of the group $\{(x,y) : y^{bm} = x^a\}$, which
  in turn contains $\{(x,y) : y^{b'} = x^{a'}\} = Q_{a'/b'} =
  Q_{r/m}$.  All these groups are 1-dimensional, so they are all
  commensurable with each other.
\end{proof}
If we are lucky, there are no exotic automorphisms of $\kx$, in which
case the following result applies:
\begin{theorem} \label{rational-slope}
  Let $D \subseteq \RV^2$ be definable.  Let $s$ be a symmetric shape
  such that $\Stab(s)$ is commensurable with $Q_{n/m}$.  Let $D^s =
  \{(a,b) \in \overline{D} : \Shape(D_{a,b}) = s\}$.  Then $D^s$ is a union
  of finitely many points and line segments of slope $n/m$.
\end{theorem}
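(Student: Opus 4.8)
The plan is to relate $D^s$ to the value group via two companion maps. Let $\psi\colon\Gamma^2\to\Gamma$ be the definable map $\psi(a,b)=mb-na$, whose fibres are exactly the lines of slope $n/m$ (with the convention that slope $\infty=n/0$ gives vertical lines and slope $0=0/m$ gives horizontal lines). Let $\phi\colon\RV^2\to\RV$ be the definable group homomorphism $\phi(x,y)=y^m x^{-n}$, so that $v(\phi(x,y))=\psi(v(x),v(y))$, the restriction $\phi\restriction(\kx)^2$ has kernel $Q_{n/m}$, and $\phi$ is invariant under multiplicative translation by $Q_{n/m}$. Since $\Stab(s)\supseteq Q_{n/m}$ with finite index, $\phi(\Stab(s))$ is a finite subgroup of $\RV$. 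Now fix $(a,b)\in D^s$: the slice $D_{a,b}$ is a fragment of shape $s$, so by Lemma~\ref{symmetric-shape} it is a finite, non-empty union of cosets of $\Stab(s)$, whence $\phi(D_{a,b})$ is a finite, non-empty union of cosets of $\phi(\Stab(s))$ --- that is, a finite, non-empty subset of $\RV_{\psi(a,b)}$. (As in Lemma~\ref{finshapes}, $D^s$ is definable.)

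The key estimate is this: if $\sigma\colon I\to\Gamma^2$ is a definable, injective, continuous curve on an interval $I\subseteq\Gamma$ with $\sigma(I)\subseteq D^s$, and $\psi\circ\sigma$ is injective, then we obtain a contradiction. Indeed, with $\pi\colon\RV^2\to\Gamma^2$ the natural projection, put $E=\phi\bigl(D\cap\pi^{-1}(\sigma(I))\bigr)=\bigcup_{x\in I}\phi(D_{\sigma(x)})\subseteq\RV$. By injectivity of $\psi\circ\sigma$, for each $\gamma\in\Gamma$ the set $E\cap\RV_\gamma$ is empty or a single $\phi(D_{\sigma(x)})$, hence finite; so $E$ is finite by Corollary~\ref{proto-bad}. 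But $\phi(D_{\sigma(x)})$ is a non-empty subset of $\RV_{\psi(\sigma(x))}$ for each $x\in I$ and these $\RV$-fibres are pairwise distinct, so $E$ is infinite. This contradiction --- the sole use of the non-existence of bad families --- drives the whole argument.

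Two consequences. First, for fixed $b_0$ the map $x\mapsto\psi(x,b_0)=mb_0-nx$ is injective when $n\ne0$, so $D^s$ contains no horizontal segment $(p,q)\times\{b_0\}$ unless $n=0$; symmetrically, $D^s$ contains no vertical segment $\{a_0\}\times(c,d)$ unless $m=0$. As $\gcd(n,m)=1$ excludes $n=m=0$, it follows that $D^s$ has empty interior, so $\dim D^s\le1$. Second, if $g\colon(p,q)\to\Gamma$ is definable and continuous with $(x,g(x))\in D^s$ for all $x$, then $x\mapsto\psi(x,g(x))=mg(x)-nx$ is constant; otherwise it has infinite image, so by the monotonicity theorem it is strictly monotone --- hence injective --- on some subinterval, where the key estimate applies. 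When $m\ne0$, dividing by $m$ (valid as $\Gamma$ is divisible) exhibits $g$ as affine of slope $n/m$.

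Finally, take a finite cell decomposition $D^s=\coprod_i C_i$, which exists because $D^s$ is definable and $\Gamma$ is o-minimal. By the first consequence $\dim D^s\le1$, so no $C_i$ is a $2$-cell. A $1$-cell is either the graph of a continuous definable $g$ over an interval in the first coordinate --- here $m\ne0$ necessarily (the case $m=0$ is excluded by the second consequence, as $-nx$ is not constant), so $g$ is affine of slope $n/m$ and $C_i$ is a segment of the line $\{mb-na=c_0\}$ --- or a vertical segment $\{a_0\}\times(c,d)$, which by the first consequence forces $m=0$, so that lines of slope $n/m$ are vertical and $C_i$ is a segment of one of them. The $0$-cells are single points. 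Hence $D^s$ is a finite union of points and line segments of slope $n/m$. The one delicate point, and the place where commensurability of $\Stab(s)$ with $Q_{n/m}$ is essential, is the finiteness of $\phi(D_{a,b})$; the maps $\psi$ and $\phi$ have been chosen precisely so that the degenerate slopes $0$ and $\infty$ require no separate treatment.
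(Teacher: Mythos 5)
Your proposal is correct and follows essentially the same route as the paper: the paper's proof also reduces to cells/parametric curves and applies the map $(x,y)\mapsto y^m/x^n$, using Lemma~\ref{symmetric-shape} and commensurability with $Q_{n/m}$ to see that each fiber $D_{a,b}$ with $(a,b)\in D^s$ has finite image in $\RV_{mb-na}$, then derives a contradiction with Corollary~\ref{proto-bad} (packaged there as the non-existence of ``bad families'') whenever $mg-nf$ is non-constant along a curve in $D^s$. Your explicit bookkeeping with $\psi$ and $\phi$ and the separate dimension estimate are only organizational variations on the same argument.
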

Here, a \emph{line segment of slope $n/m$} is a set in $\Gamma^2$ of
the form
\begin{equation*}
  \{(x,(n/m)x+c) : x \in (a,b)\}
\end{equation*}
for some open interval $(a,b)$ (possibly half-infinite) and some $c
\in \Gamma$.  A line segment of slope $1/0$ means a set $\{a\} \times
I$ where $a \in \Gamma$ and $I$ is an open interval.  The term ``line
segment'' may be misleading, as we are also including rays and lines.

Theorem~\ref{rational-slope} reduces easily to the following lemma, by
cell decomposition.
\begin{lemma}
  In the setting of Theorem~\ref{rational-slope}, if $D^s$ contains a
  parametric curve $\{(f(z),g(z)) : z \in I\}$ for some open interval
  $I$ and definable continuous functions
  \begin{gather*}
    f : I \to \Gamma \\
    g : I \to \Gamma,
  \end{gather*}
  then the function $mg(z)-nf(z)$ is constant.
\end{lemma}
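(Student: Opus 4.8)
The plan is to follow the strategy of Theorem~\ref{asymcase}, but since $\Stab(s)$ is now infinite, to first push everything forward along the group homomorphism $\psi \colon \RV^2 \to \RV$, $\psi(x,y) = y^m x^{-n}$, which collapses $\Stab(s)$ to a finite subgroup of $\kx$; after that the ``bad family'' mechanism applies exactly as before. I would start by fixing a central fragment $X \subseteq \RV_0 \times \RV_0 = (\kx)^2$ with $\Shape(X) = s$; such an $X$ exists because $s$ is realized and every fragment is equivalent to a central one. For every $z \in I$, the hypothesis that the curve lies in $D^s$ gives $\Shape(D_{f(z),g(z)}) = s = \Shape(X)$, so by Observation~\ref{tricks} the set $\Mov(X, D_{f(z),g(z)})$ is a non-empty coset of $\Stab(s)$ contained in $\RV_{f(z)} \times \RV_{g(z)}$.

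Next I would record the relevant properties of $\psi$: it is a homomorphism (as $\RV$ is abelian); it carries $\RV_a \times \RV_b$ into $\RV_{mb-na}$; and $\ker\psi \cap (\kx)^2 = \{(x,y) \in (\kx)^2 : y^m = x^n\} = Q_{n/m}$. Since $\Stab(s) \subseteq (\kx)^2$ is commensurable with $Q_{n/m}$, it contains $Q_{n/m}$ with finite index, and hence $\psi(\Stab(s)) \cong \Stab(s)/Q_{n/m}$ is a finite subgroup of $\kx \subseteq \RV_0$.

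Now set $S_z := \psi\bigl(\Mov(X, D_{f(z),g(z)})\bigr)$. Being the $\psi$-image of a coset of $\Stab(s)$, the set $S_z$ is a coset of the finite group $\psi(\Stab(s))$, hence finite and non-empty; and since $\Mov(X, D_{f(z),g(z)}) \subseteq \RV_{f(z)} \times \RV_{g(z)}$ while $\psi(\Stab(s)) \subseteq \RV_0$, we get $S_z \subseteq \RV_{mg(z)-nf(z)}$. The family $\{S_z\}_{z \in I}$ is definable (it is assembled from $D$, $X$, $\Mov$ and $\psi$ along the definable curve), $I$ is an interval in $\Gamma$, and $z \mapsto mg(z)-nf(z)$ is a continuous definable function $I \to \Gamma$ because $f$ and $g$ are. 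If $mg(z)-nf(z)$ were not constant, then $\{S_z\}_{z \in I}$ would be a bad family, contradicting the fact that bad families do not exist (established just before Lemma~\ref{example-lemma}); hence $mg(z)-nf(z)$ is constant, as desired.

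The only spots that need real care are the two annulus bookkeeping steps — that $\Mov(X, D_{f(z),g(z)})$ lands in the single annulus-product $\RV_{f(z)} \times \RV_{g(z)}$ and that $\psi$ then sends it into the single annulus $\RV_{mg(z)-nf(z)}$ — together with the identification $\ker\psi \cap (\kx)^2 = Q_{n/m}$, which is what forces $\psi(\Stab(s))$ to be finite. Granting those computations, the rest is a routine instance of the bad-family contradiction.
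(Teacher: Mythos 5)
Your proof is correct and takes essentially the same approach as the paper: both arguments collapse along the homomorphism $(x,y) \mapsto y^m x^{-n}$ to produce finite non-empty subsets of the single fiber $\RV_{mg(z)-nf(z)}$ and then invoke the nonexistence of bad families. The only cosmetic difference is that the paper pushes forward the fragments $D_{f(z),g(z)}$ themselves (using Lemma~\ref{symmetric-shape} to see that they are finite unions of cosets of $Q_{n/m}$, hence have finite image), whereas you push forward the sets $\Mov(X,D_{f(z),g(z)})$, which are cosets of $\Stab(s)$ by Observation~\ref{tricks}, thereby bypassing Lemma~\ref{symmetric-shape}.
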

\begin{proof}
  Suppose $mg(x)-nf(x)$ is non-constant.  Let $G = \Stab(s)$.  By
  assumption, this group is a finite union of cosets of $Q_{n/m}$.
  Fix a central fragment $X \subseteq (\kx)^2$ with $\Shape(X) = s$.
  By Lemma~\ref{symmetric-shape}, $X$ is a finite union of cosets of
  $G$, and therefore a finite union of cosets of $Q_{n/m}$.
  Consequently, the set $\{y^m/x^n : (x,y) \in X\}$ is a finite
  non-empty subset of $\kx$.

  For any $z \in I$, we have $\Shape(D_{f(z),g(z)}) = s$, so the
  fragment $D_{f(z),g(z)}$ is a translate of $X$.  Therefore, the set
  \begin{equation*}
    S_z = \{y^m/x^n : (x,y) \in D_{f(z),g(z)}\}
  \end{equation*}
  is finite and non-empty.  If $(x,y) \in D_{f(z),g(z)}$, then $x \in
  \RV_{f(z)}$ and $y \in \RV_{g(z)}$, so $y^m/x^n \in
  \RV_{mg(z)-nf(z)}$.  Then $\{S_z\}_{z \in I}$ is a bad family.
\end{proof}
By a \emph{line segment}, we mean a vertical interval $\{a\} \times
(b,c)$, or the graph of a linear function $f$:
\begin{equation*}
  \{(x,f(x)) : x \in (a,b)\}.
\end{equation*}
Two line segments $A$ and $B$ \emph{have the same slope} if $A \cap (B
+ (x,y))$ is infinite for some $(x,y) \in \Gamma^2$.  One can show
that this is an equivalence relation on line segments, though we will
not need this.
\begin{theorem} \label{hard-case}
  Let $D \subseteq \RV^2$ be definable.  Let $s$ be a symmetric shape.
  Let $D^s = \{(a,b) \in \overline{D} : \Shape(D_{a,b}) = s\}$.  Then $D^s$
  is a union of finitely many points and finitely many line segments
  of equal slope.
\end{theorem}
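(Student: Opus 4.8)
The plan is to reduce Theorem~\ref{hard-case} to Theorem~\ref{rational-slope} by a change of coordinates on $\RV^2$. The obstruction to applying Theorem~\ref{rational-slope} directly is that $\Stab(s)$, a one-dimensional definable subgroup of $(\kx)^2$, need not be commensurable with any $Q_{n/m}$: this is exactly the phenomenon of exotic automorphisms of $\kx$ (Lemma~\ref{obvious}). But even if $\Stab(s)$ is "exotic," it is still a one-dimensional connected-up-to-finite-index subgroup of $(\kx)^2$, so by counting dimensions it has finite intersection with at least one of the coordinate axes, say $1 \times \kx = Q_{1/0}$; arguing as in the proof of Lemma~\ref{obvious}, its slices over $\kx$ in the first coordinate are cosets of a fixed finite group $\mu_m$, and so $\Stab(s)$ determines a definable endomorphism (really an endogeny) $\phi$ of $\kx$ with $\Stab(s) = \{(x,y) : y^m = \phi(x)\}$ up to finite index. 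First I would use the stable embeddedness and elimination of imaginaries of $k$ (Fact~\ref{k-induce}) to check that $\phi$ — being read off definably from $s$, which is a tuple in $k$ — lifts to a \emph{definable} endogeny of $\RV$ in the following sense: there is a definable group homomorphism (up to finite kernel and cokernel) $\Phi : \RV \to \RV$ covering some fixed $E_\Gamma$-endomorphism of $\Gamma$ and inducing $\phi$ on $\kx = \RV_0$. This is the one genuinely $C$-minimal input, and I expect it to be the main obstacle: one must produce $\Phi$ uniformly and check it is compatible with the short exact sequence $1 \to \kx \to \RV \to \Gamma \to 1$.

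Granting such a $\Phi$, consider the definable automorphism (up to commensurability) $\Psi : \RV^2 \to \RV^2$, $\Psi(x,y) = (x, y^m/\Phi(x))$ — or its honest replacement by a finite-index correspondence. On the level of $\Gamma^2$, $\Psi$ induces the affine-linear map $(a,b) \mapsto (a, mb - \psi(a))$ where $\psi \in E_\Gamma$ is the endomorphism of $\Gamma$ covered by $\Phi$; this is a definable homeomorphism of $\Gamma^2$ taking line segments to line segments and preserving the "same slope" equivalence relation (it shears, it does not distort finiteness or the line-segment structure). Under $\Psi$, the fragment $D_{a,b}$ is carried to a fragment $\Psi(D_{a,b}) = (\Psi(D))_{a, mb-\psi(a)}$, and the shape $s$ is carried to a shape $s'$ whose stabilizer is $\Psi(\Stab(s))$, which by construction is commensurable with $Q_{1/0}$ — in particular $s'$ is symmetric and of rational slope. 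So $(\Psi(D))^{s'} = \Psi(D^s)$, and Theorem~\ref{rational-slope} applies to $\Psi(D)$ and $s'$: it tells us $\Psi(D^s)$ is a finite union of points and line segments all of slope $1/0$. Pulling back along $\Psi^{-1}$, which is again an affine shear on $\Gamma^2$, we conclude that $D^s$ is a finite union of points and line segments, and — since $\Psi^{-1}$ maps all slope-$1/0$ segments to segments of one common slope (namely the slope determined by $\psi$ and $m$) — these segments all have equal slope.

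There are two routine points to nail down. The first is that the finitely many finite-index ambiguities (replacing $\Stab(s)$ by $Q_{1/0}$ up to commensurability, and $\Phi$, $\Psi$ by correspondences rather than honest maps) only ever multiply things by finite sets, hence do not affect any of the conclusions, which are all statements about finiteness or about being a finite union of line segments; this is the same kind of bookkeeping already used in the proof of Lemma~\ref{obvious} and Theorem~\ref{rational-slope}. The second is the degenerate case $D^s = \varnothing$ or finite, where there is nothing to prove. I would also remark that in the lucky situation where $\kx$ has no exotic automorphisms, Lemma~\ref{obvious} makes $\Stab(s)$ commensurable with some $Q_{n/m}$ outright, $\Phi$ can be taken to be an honest power map, and Theorem~\ref{hard-case} collapses to Theorem~\ref{rational-slope} with no work — so the content of this theorem is precisely the extraction of the hidden coordinate change when the structure on $\kx$ is richer than a pure ACF.
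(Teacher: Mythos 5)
Your reduction-by-shearing strategy is a genuinely different route from the paper's, but it has a real gap at the step you yourself flag as the ``main obstacle'': the existence of a definable endogeny $\Phi : \RV \to \RV$ lifting the $\kx$-endogeny $\phi$ read off from $\Stab(s)$ and covering some endomorphism $\psi$ of $\Gamma$. Stable embeddedness and elimination of imaginaries of $k$ (Fact~\ref{k-induce}) only control definable subsets of powers of $k$; they give no mechanism for extending a homomorphism of $\kx = \RV_0$ across the extension $1 \to \kx \to \RV \to \Gamma \to 1$, which does not definably split, and an exotic automorphism of $\kx$ need not extend to $\RV$ at all. More decisively, the conclusion of Theorem~\ref{hard-case} must cover the case where the common slope of the segments in $D^s$ is \emph{not} induced by any global definable endomorphism of $\Gamma$: the data of $D$ only ties the fibers together along $D^s$ itself, which may be a bounded segment, so at best one gets a germ of a linear map (the distinction behind the $E^+_\Gamma$ versus $E_\Gamma$ discussion in Section~\ref{sec-vague}, where segments rather than lines appear). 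Hence the global $\psi$, and with it the global shear $\Psi(x,y) = (x, y^m/\Phi(x))$ acting on all of $\RV^2$ and on all of $\Gamma^2$, simply need not exist, and the reduction to Theorem~\ref{rational-slope} collapses at its first step. Any repair would have to work with a ``local'' shear defined only over pieces of $D^s$, and then showing that the local slopes of different cells agree is exactly the content of the theorem, not something the coordinate change can presuppose.

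For comparison, the paper avoids producing any endomorphism of $\RV$ or $\Gamma$: for two graphs $\Gamma(f), \Gamma(g)$ inside $D^s$, the sets $\Mov(D_{x,f(x)}, D_{x+d,g(x+d)})$ are cosets of $\Stab(s)$ sitting in $\RV_d \times \RV_{g(x+d)-f(x)}$, and since $\Stab(s)$ is not commensurable with $Q_{1/0}$ the first-coordinate projection has finite fibers; slicing over a fixed $u \in \RV_d$ produces a bad family unless $g(x+d) - f(x)$ is constant (Lemma~\ref{hard-case-lemma}, with Lemma~\ref{side-lemma} excluding vertical segments). Applied with $g = f$ shifted by $\epsilon$, this gives linearity of each cell via Corollary~\ref{reduce-seven}, and applied to two different cells it gives equality of slopes---all purely from the non-existence of bad families. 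Incidentally, even granting your $\Phi$, the image $\Psi(\Stab(s))$ is commensurable with $Q_{0/1}$ (the horizontal axis), not $Q_{1/0}$ as written; that is only a bookkeeping slip, but the missing lift is a genuine gap.
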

The core of the argument is the following lemma:
\begin{lemma} \label{hard-case-lemma}
  In the setting of Theorem~\ref{hard-case}, suppose that $D^s$
  contains the graph of two continuous definable functions
  \begin{gather*}
    f : (a,b) \to \Gamma \\
    g : (a+d,b+d) \to \Gamma.
  \end{gather*}
  Then the function $g(x+d)-f(x)$ is constant on $(a,b)$.
\end{lemma}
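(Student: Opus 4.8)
The plan is to assume the conclusion fails and manufacture a \emph{bad family}, contradicting the fact established above that bad families do not exist. Throughout, fix a central fragment $X \subseteq (\kx)^2$ with $\Shape(X) = s$ and put $G := \Stab(s) = \Stab(X)$; since $s$ is symmetric, $G$ is a one-dimensional definable subgroup of $(\kx)^2$. Write $\pi_1, \pi_2$ for the two coordinate projections $(\kx)^2 \to \kx$. Because $\kx$ is strongly minimal and no proper subgroup of an infinite group is cofinite, the definable subgroup $\pi_1(G)$ of $\kx$ is either finite or equal to $\kx$.

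Suppose first that $\pi_1(G)$ is finite. Then $\ker(\pi_1 \restriction G) = G \cap (1 \times \kx)$ is one-dimensional, and a one-dimensional definable subgroup of the strongly minimal group $1 \times \kx$ is all of it, so $G \supseteq 1 \times \kx$. For $x \in (a,b)$, the fragment $D_{x,f(x)} \subseteq \RV_x \times \RV_{f(x)}$ has shape $s$, so $\Stab(D_{x,f(x)}) = \Stab(s) = G \supseteq 1 \times \kx$ by Observation~\ref{tricks}. The orbits of $1 \times \kx$ on $\RV_x \times \RV_{f(x)}$ are the sets $\{p\} \times \RV_{f(x)}$, so invariance forces $D_{x,f(x)} = A_x \times \RV_{f(x)}$ with $A_x := \pi_1(D_{x,f(x)}) \subseteq \RV_x$; and $\dim D_{x,f(x)} < 2$ forces $A_x$ to be finite, while it is non-empty since $D_{x,f(x)}$ is a fragment. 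Then $\{A_x\}_{x \in (a,b)}$ is a bad family, with index function the identity on $(a,b)$ --- a contradiction. Hence, under the hypotheses of the lemma, $\pi_1(G) = \kx$.

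Now assume for contradiction that $\phi(x) := g(x+d) - f(x)$ is non-constant on $(a,b)$. For each $x \in (a,b)$, the point $(x, f(x))$ lies in $D^s$, and so does $(x+d, g(x+d))$ since $x+d$ lies in the domain of $g$; thus $D_{x,f(x)}$ and $D_{x+d, g(x+d)}$ are fragments of the common shape $s$. By Observation~\ref{tricks}, $M_x := \Mov(D_{x,f(x)}, D_{x+d, g(x+d)})$ is non-empty, is a coset of $\Stab(s) = G$, and is contained in $\RV_d \times \RV_{f(x) - g(x+d)} = \RV_d \times \RV_{-\phi(x)}$. Fix any $c_0 \in \RV_d$. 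Since $\pi_1(G) = \kx$ and $M_x$ is a $G$-coset inside $\RV_d \times \RV_{-\phi(x)}$, the first-coordinate projection of $M_x$ is all of $\RV_d$; so $c_0$ occurs there and $S_x := \{\, w \in \RV : (c_0, w) \in M_x \,\}$ is non-empty. Moreover $S_x$ is a translate of $\ker(\pi_1 \restriction G) = G \cap (1 \times \kx)$, which is finite because $\pi_1(G) = \kx$, so $S_x$ is finite. Therefore $\{S_x\}_{x \in (a,b)}$ is a definable family of finite non-empty subsets of $\RV$ with $S_x \subseteq \RV_{-\phi(x)}$, and $-\phi$ is continuous, definable, and non-constant. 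This is a bad family, a contradiction; hence $\phi$ is constant on $(a,b)$.

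The one genuine difficulty is that the sets $M_x$ one naturally produces are one-dimensional (cosets of the one-dimensional $G$), whereas the members of a bad family must be finite, so one dimension must be cut away. The obvious move --- pass to the quotient $\RV^2 / G$ --- leaves the class of sets analyzed in this section, so the ``bad families do not exist'' argument would have to be re-proved there. The argument instead freezes the first coordinate at a point $c_0 \in \RV_d$: when $\pi_1(G) = \kx$, the resulting slice of $M_x$ is automatically a translate of the \emph{finite} group $G \cap (1 \times \kx)$ and lives in the fiber $\RV_{-\phi(x)}$, indexed by (minus) the very function we must show is constant. The lone case in which this freezing fails, $\pi_1(G)$ finite, is precisely the case in which the hypothesis of the lemma already yields a bad family, so it cannot occur.
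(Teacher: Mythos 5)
Your proof is correct and is essentially the paper's argument: in the main case you form $\Mov(D_{x,f(x)},D_{x+d,g(x+d)})$, observe it is a coset of $\Stab(s)$ lying in $\RV_d$ times a fiber indexed by $\pm(g(x+d)-f(x))$, slice it at a fixed element of $\RV_d$ to obtain finite non-empty sets, and conclude from the non-existence of bad families (your sign $\RV_{f(x)-g(x+d)}$ versus the paper's $\RV_{g(x+d)-f(x)}$ is immaterial and merely reflects a typo in Observation~\ref{tricks}(5)). The only divergence is the degenerate case: where the paper cites Theorem~\ref{rational-slope} to rule out $\Stab(s)$ commensurable with $Q_{1/0}$, you rule out $\pi_1(\Stab(s))$ finite directly with a bad family indexed by the identity function, a self-contained variant in the spirit of Lemma~\ref{side-lemma}.
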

\begin{proof}
  Let $G = \Stab(s)$.  If $G$ is commensurable with $Q_{1/0}$, then
  $D^s$ can only contain points and vertical line segments by
  Theorem~\ref{rational-slope}, so $G$ certainly does not contain the
  graph of any function.  Therefore $G$ is not commensurable with
  $Q_{1/0}$.  This implies that the map
  \begin{align*}
    G &\to \kx \\
    (x,y) &\mapsto x
  \end{align*}
  is onto with finite fibers.

  For $x \in (a,b)$, the two points $(x,f(x))$ and $(x+d,g(x+d))$ are
  both in $D^s \subseteq \overline{D}$, and so
  \begin{equation*}
    S_x := \Mov(D_{x,f(x)},D_{x+d,g(x+d)})
  \end{equation*}
  is a coset of $G$ contained in $\RV_d \times \RV_{g(x+d)-f(x)}$.
  Then the map
  \begin{align*}
    S_x &\to \RV_d \\
    (u,v) &\mapsto u
  \end{align*}
  is onto with finite fibers.  Fix some $u \in \RV_d$.  Let $S'_x$ be
  the slice $\{v \in \RV_{g(x+d)-f(x)} : (u,v) \in S_x\}$.  Then
  $S'_x$ is a finite non-empty subset of $\RV_{g(x+d)-f(x)}$.  The
  family $\{S'_x\}_{x \in (a,b)}$ is a bad family, unless
  $g(x+d)-f(x)$ is constant.
\end{proof}
We will also need the following:
\begin{lemma} \label{side-lemma}
  In the setting of Theorem~\ref{hard-case}, if $\Stab(s)$ is not
  commensurable to $Q_{1/0}$, then $D^s$ contains no vertical lines.
\end{lemma}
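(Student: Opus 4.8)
The plan is to argue by contradiction: suppose $D^s$ contains a vertical line $\{a\} \times \Gamma$ (or a vertical line segment over an unbounded interval — but cell decomposition lets us assume it is genuinely unbounded in at least one direction, and we will only need a half-line). Since $\Stab(s)$ is not commensurable to $Q_{1/0}$, the projection $\Stab(s) \to \kx$ onto the first coordinate is onto with finite kernel. Fix a central fragment $X \subseteq (\kx)^2$ with $\Shape(X) = s$; by Lemma~\ref{symmetric-shape}, $X$ is a finite union of cosets of $\Stab(s)$, and so the projection $X \to \kx$ onto the first coordinate is onto with finite fibers. Consequently the same is true of every fragment with shape $s$, in particular of $D_{a,y}$ for each $y$ in the vertical line.

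Now I would run the ``bad family'' machinery in the second coordinate. For each $y$ with $(a,y) \in D^s$, pick some fixed $u \in \RV_a$ (the same $u$ for all $y$, using that all the $D_{a,y}$ sit over the same first-coordinate fiber $\RV_a$); let $S_y$ be the slice $\{v \in \RV_y : (u,v) \in D_{a,y}\}$. Because $D_{a,y} \to \RV_a$ is onto with finite fibers, $S_y$ is a finite non-empty subset of $\RV_y$. As $y$ ranges over the vertical line (or over an unbounded interval of $\Gamma$), the identity function $y \mapsto y$ is continuous, definable, and non-constant, so $\{S_y\}_y$ is a bad family. Bad families do not exist, a contradiction. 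Hence $D^s$ contains no vertical line.

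The one point that needs care — and the main obstacle — is the uniform choice of $u \in \RV_a$ valid for all $y$, and more generally making sure the slices $S_y$ vary in a genuine definable family indexed by an interval in $\Gamma$. Here I use that all the relevant fragments $D_{a,y}$ lie over the \emph{same} first coordinate $a \in \Gamma$, so $\RV_a$ is a single fixed coset of $\kx$ and we may fix one element $u$ of it outright; the family $\{S_y\}$ is then definable with parameter $u$, indexed by $\{y \in \Gamma : (a,y) \in D^s\}$, which contains an unbounded interval by hypothesis. After restricting this interval so that the indexing is by a genuine interval (shrinking if necessary), the definition of ``bad family'' applies verbatim with $f$ the identity map, and Corollary~\ref{proto-bad} delivers the contradiction exactly as in the proofs of Lemma~\ref{example-lemma} and Theorem~\ref{asymcase}. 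No new ingredients beyond those lemmas and the non-commensurability hypothesis are required.
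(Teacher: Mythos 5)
Your proof is correct and is essentially the paper's own argument: non-commensurability of $\Stab(s)$ with $Q_{1/0}$ makes the first-coordinate projection of every shape-$s$ fragment onto $\RV_a$ surjective with finite fibers, so fixing one $u \in \RV_a$ and slicing over it yields a bad family indexed by the vertical interval, contradicting Corollary~\ref{proto-bad}. The only (harmless) difference is your fuss about unboundedness: a bad family only requires an interval in $\Gamma$, so the identical argument excludes any vertical segment $\{a\}\times(b,c)$, which is the form actually needed in the proof of Theorem~\ref{hard-case}.
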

\begin{proof}
  Let $G = \Stab(s)$.  As in Lemma~\ref{hard-case-lemma}, the
  projection $\pi_1 : G \to \kx$ is onto with finite fibers, because
  $G$ is not commensurable with $Q_{1/0}$.  If $X_0$ is a central
  fragment with shape $s$, then $X_0$ is a finite union of translates
  of $G$ (Lemma~\ref{symmetric-shape}), so the projection $\pi_1 : X_0
  \to \kx$ is again onto with finite fibers.  Finally, if $X \subseteq
  \RV_a \times \RV_b$ is a fragment with shape $s$, then $X$ is a
  translate of $X_0$, and so the projection $\pi_1 : X \to \RV_a$ is
  again onto with finite fibers.

  Suppose for the sake of contradiction that $D^s$ contains a vertical
  interval $\{a\} \times (b,c)$.  Then for any $y \in (b,c)$, we have
  $(a,y) \in D^s$, so $\Shape(D_{a,y}) = s$ and the projection $\pi_1
  : D_{a,y} \to \RV_a$ is onto with finite fibers.  Fix some $u \in
  \RV_a$.  Let $S_y$ be the slice of $D_{a,y}$ over $u$:
  \begin{equation*}
    S_y = \{v \in \RV_y : (u,v) \in D_{a,y}\}.
  \end{equation*}
  Then $S_y$ is a finite nonempty subset of $\RV_y$, and $\{S_y\}_{y
    \in (b,c)}$ is a bad family.
\end{proof}
Now we can prove Theorem~\ref{hard-case}.
\begin{proof}[Proof (of Theorem~\ref{hard-case})]
  Take a cell decomposition $\overline{D} = \coprod_{i = 1}^n C_i$.
  First, we rule out the case where some $C_i$ has dimension 2.  In
  this case, $C_i$ is open.  Take a point $(a,b) \in C_i$ and a small
  $\epsilon > 0$ such that $C_i$ contains the box $(a - \epsilon, a +
  \epsilon) \times (b - \epsilon, b + \epsilon)$.  Then $C_i$ contains
  the graphs of the two functions $f,g : (a-\epsilon,a+\epsilon) \to
  \Gamma$ given by
  \begin{gather*}
    f(x) = b+(x-a) \\
    g(x) = b-(x-a)
  \end{gather*}
  These two functions contradict Lemma~\ref{hard-case-lemma} (with
  $d=0$).

  Therefore, every $C_i$ is one-dimensional.  If $\Stab(s)$ is
  commensurable with $Q_{1/0}$, then $D^s$ is a union of points and
  vertical line segments by Theorem~\ref{rational-slope}, and there is
  nothing to prove.  So we may assume that $\Stab(s)$ is not
  commensurable with $Q_{1/0}$.  Then Lemma~\ref{side-lemma} shows
  that $D^s$ contains no vertical lines segments.  Therefore every
  cell is either a point, or the graph $\Gamma(f)$ of a continuous
  definable function $f : (a,b) \to \Gamma$.
  \begin{claim}
    If $\Gamma(f) \subseteq D^s$, then $f$ is linear.
  \end{claim}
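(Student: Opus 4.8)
The plan is to deduce linearity of $f$ directly from Lemma~\ref{hard-case-lemma} by comparing $f$ with a shifted copy of itself. First I would fix a positive element $d \in \Gamma$ with $d < b-a$, which exists since $\Gamma$ is densely ordered, and consider the two restrictions $f \restriction (a, b-d)$ and $f \restriction (a+d, b)$; both have graphs contained in $\Gamma(f) \subseteq D^s$, and $(a+d, b)$ is exactly the translate of $(a, b-d)$ by $d$. Then I would apply Lemma~\ref{hard-case-lemma} to these two functions with shift parameter $d$ (in the notation of that lemma, its ``$f$'' is $f\restriction(a,b-d)$, its ``$g$'' is $f\restriction(a+d,b)$, and its ``$d$'' is our $d$), concluding that
\begin{equation*}
  x \longmapsto f(x+d) - f(x) = \delta_d f(x)
\end{equation*}
is a constant function on $(a, b-d)$.

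Since $d$ was an arbitrary positive element of $\Gamma$ below $b-a$, in particular $\delta_d f$ is constant for all sufficiently small $d > 0$, so $f$ is linear by Corollary~\ref{reduce-seven}. That finishes the claim.

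I don't expect a real obstacle: essentially all the work has already been done in Lemma~\ref{hard-case-lemma} (and upstream in Lemma~\ref{side-lemma} and Theorem~\ref{rational-slope}, which get us to the situation where the cell $\Gamma(f)$ is the graph of a continuous function). The only thing to watch is that the domain conventions of Lemma~\ref{hard-case-lemma} are matched correctly, and that the translate $(a+d,b)$ of $(a,b-d)$ still lies inside $(a,b)$ so that the graph of $f\restriction(a+d,b)$ really is a subset of $\Gamma(f)$ --- both of which are immediate.
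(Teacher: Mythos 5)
Your proposal is correct and is essentially the same argument as the paper's: the paper also applies Lemma~\ref{hard-case-lemma} to the two restrictions $f\restriction(a,b-\epsilon)$ and $f\restriction(a+\epsilon,b)$ with shift $\epsilon$, concludes that $\delta_\epsilon f$ is constant, and invokes Corollary~\ref{reduce-seven}. Your only difference is notational ($d$ in place of $\epsilon$), and your check of the domain conventions matches the paper's implicit use of the lemma.
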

  \begin{proof}
    For any small $\epsilon > 0$, let $g$ and $h$ be the restrictions
    of $f$ to the intervals $(a,b-\epsilon)$ and $(a+\epsilon,b)$,
    respectively.  The graphs of $g$ and $h$ are contained in $D^s$.
    By Lemma~\ref{hard-case-lemma}, the following function must be
    constant:
    \begin{gather*}
      (a,b-\epsilon) \to \Gamma \\
      x \mapsto h(x+\epsilon) - g(x) = f(x+\epsilon) - f(x) =
      \delta_\epsilon f (x).
    \end{gather*}
    Then $f$ is linear, by Corollary~\ref{reduce-seven}.
  \end{proof}
  Now $D^s$ is a finite union of points and graphs of linear
  functions.  If
  \begin{gather*}
    f : (a,b) \to \Gamma \\
    g : (c,d) \to \Gamma
  \end{gather*}
  are two linear functions whose graphs are contained in $D^s$, then
  the graphs must have the same slope by Lemma~\ref{hard-case-lemma}.
  In more detail, we can shrink $b$ or $d$ to make $b-a = d-c$.  Let
  $\epsilon = c-a = d-b$, so that $\dom(g) = (a+\epsilon,b+\epsilon)$.
  Then Lemma~\ref{hard-case-lemma} implies that
  \begin{equation*}
    g(x+\epsilon)-f(x) \text{ is a constant } u,
  \end{equation*}
  in which case $\Gamma(g) = (\epsilon,u) + \Gamma(f)$.  Thus, the two
  line segments have the same slope.
\end{proof}
Putting everything together, we have proven the following:
\begin{theorem} \label{key-thm}
  Let $D$ be a definable subset of $\RV^2$ which is small over
  $\Gamma^2$, in the sense that the fibers of $D \hookrightarrow \RV^2
  \to \Gamma^2$ have dimension at most 1.
  \begin{enumerate}
  \item $\overline{D}$ is a union of finitely many line segments.
  \item If $\kx$ has no exotic automorphisms, then $\overline{D}$ is a
    union of finitely many line segments with rational slopes.
  \item For all but finitely many points of $\overline{D}$, the
    fragment $D_{x,y}$ is a finite union of translates of a
    1-dimensional subgroup of $(\kx)^2$.
  \end{enumerate}
\end{theorem}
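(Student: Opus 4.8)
The plan is to assemble the statement directly from the shape analysis above, treating $\overline{D}$ one shape at a time. First I would invoke Lemma~\ref{finshapes} to see that the set of shapes $\{\Shape(D_{x,y}) : (x,y) \in \overline{D}\}$ is finite, say equal to $\{s_1,\ldots,s_N\}$; writing $D^{s_i} = \{(x,y) \in \overline{D} : \Shape(D_{x,y}) = s_i\}$, this decomposes $\overline{D}$ as the finite union $\bigcup_{i=1}^N D^{s_i}$. It then suffices to understand each $D^{s_i}$, and the relevant dichotomy is whether $s_i$ is symmetric or asymmetric.

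For an asymmetric shape $s_i$, Theorem~\ref{asymcase} gives that $D^{s_i}$ is finite, so it contributes only finitely many isolated points (degenerate line segments, of whatever slope one likes). For a symmetric shape $s_i$, Theorem~\ref{hard-case} gives that $D^{s_i}$ is a union of finitely many points and finitely many line segments of a common slope. Taking the union over $i$ proves part (1). For part (2), assume $\kx$ has no exotic automorphisms; by Lemma~\ref{obvious} every $1$-dimensional definable subgroup of $(\kx)^2$ is commensurable with some $Q_{n/m}$, and for a symmetric shape $s_i$ the group $\Stab(s_i)$ is by definition $1$-dimensional, hence commensurable with some $Q_{n_i/m_i}$. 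Theorem~\ref{rational-slope} then upgrades the conclusion to say that $D^{s_i}$ consists of finitely many points and line segments of the rational slope $n_i/m_i$; combining with the (finite) asymmetric pieces gives part (2).

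Part (3) I would read off from Corollary~\ref{asymcase-cor} together with Lemma~\ref{symmetric-shape}. Away from the finitely many points of $\overline{D}$ where the shape is asymmetric, every fragment $D_{x,y}$ has symmetric shape $s = \Shape(D_{x,y})$, so $\Stab(s)$ is a $1$-dimensional definable subgroup of $(\kx)^2$ (Observation~\ref{tricks} puts it inside $(\kx)^2$, and symmetry makes it $1$-dimensional), and Lemma~\ref{symmetric-shape} exhibits $D_{x,y}$ as a finite union of cosets, i.e.\ translates, of $\Stab(s)$. Since only finitely many shapes occur, only finitely many such subgroups are involved.

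I do not expect a genuine obstacle in the assembly: all the real difficulty has already been absorbed into Theorems~\ref{asymcase}, \ref{hard-case}, and \ref{rational-slope}, each resting on the ``bad families do not exist'' principle, and into the finiteness of the set of shapes (Lemma~\ref{finshapes}). The only thing requiring care is the bookkeeping around the term ``line segment'': isolated points, produced both by asymmetric shapes and by zero-dimensional cells, must be allowed as degenerate line segments in parts (1) and (2), and one should observe that it is precisely the finitely-many-shapes bound that keeps the slopes and the associated $1$-dimensional subgroups ranging over a finite set, so no uniformity issue arises.
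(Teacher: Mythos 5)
Your proposal is correct and is essentially the paper's own argument: decompose $\overline{D}$ by shape via Lemma~\ref{finshapes}, handle asymmetric shapes with Theorem~\ref{asymcase}, symmetric ones with Theorem~\ref{hard-case} (upgraded by Theorem~\ref{rational-slope} and Lemma~\ref{obvious} when $\kx$ has no exotic automorphisms), and get part (3) from the finiteness of the asymmetric locus plus Lemma~\ref{symmetric-shape}. The only cosmetic point, which you already flag, is that isolated points must be tolerated alongside the line segments, exactly as in the paper's statement of the constituent theorems.
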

Indeed, if $\{s_1,\ldots,s_n\} = \{\Shape(D_{x,y}) : (x,y) \in
\overline{D}\}$, then $\overline{D}$ is the union of the finitely many
sets $D^{s_1} \cup \cdots \cup D^{s_n}$.  Each of the sets $D^{s_i}$
is a finite union of points and line segments by
Theorems~\ref{asymcase} and \ref{hard-case}.  In the absence of exotic
automorphisms, we can assume that the slopes are rational
(Theorem~\ref{rational-slope} plus Lemma~\ref{obvious}).  When $s_i$
is asymmetric, $D^{s_i}$ is finite (Theorem~\ref{asymcase}), and so
$\Shape(D_{x,y})$ is symmetric at all but finitely many $(x,y)$ in
$\overline{D}$.  Then $D_{x,y}$ must be a finite union of cosets by
Lemma~\ref{symmetric-shape}.

We also note the following connection to dp-rank:
\begin{theorem} \label{dpr1}
  Let $D$ be a definable subset of $\RV^2$.  Then $\dpr(D) \le 1$ if
  and only if $D$ is small over $\Gamma^2$.
\end{theorem}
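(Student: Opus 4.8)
The plan is to prove the two implications separately; the forward implication is short, and essentially all the work is in the converse, which builds on the structure theory for small sets developed earlier in this section.

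For $\dpr(D) \le 1 \implies D$ small over $\Gamma^2$, I would argue contrapositively. If $D$ is not small, fix $(a,b) \in \Gamma^2$ with $\dim(D_{a,b}) = 2$ and use the definable bijection $\RV_a \times \RV_b \cong (\kx)^2$ to regard $D_{a,b}$ as a definable subset of $(\kx)^2$ of dimension $2$. Because $\RV_a \times \RV_b$ has Morley degree $1$, the complement of $D_{a,b}$ in $\RV_a \times \RV_b$ has dimension $\le 1$, hence is a finite union of strongly minimal sets and finite sets, so it has dp-rank $\le 1$. Since $\dpr(\RV_a \times \RV_b) = 2$ (the sort $\kx$ is strongly minimal) and the dp-rank of a finite union is the maximum of the dp-ranks, it follows that $\dpr(D_{a,b}) = 2$, and therefore $\dpr(D) \ge \dpr(D_{a,b}) = 2$.

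For the converse, suppose $D$ is small over $\Gamma^2$. By Lemma~\ref{finshapes} the set of shapes $\{\Shape(D_{a,b}) : (a,b) \in \overline{D}\}$ is finite, and $\overline{D}$ is the union of the corresponding sets $D^s$; by Theorem~\ref{asymcase} each $D^s$ with $s$ asymmetric is finite, while by Theorem~\ref{hard-case} each $D^s$ with $s$ symmetric is a finite union of points and line segments. Hence $\overline{D} = P \cup L_1 \cup \cdots \cup L_r$ where $P$ is finite and each $L_j$ is a line segment along which $\Shape(D_{a,b})$ is a fixed symmetric shape $s_j$. Writing $D|_Y = \bigcup_{(a,b) \in Y} D_{a,b}$, we get $D = D|_P \cup D|_{L_1} \cup \cdots \cup D|_{L_r}$, so, since dp-rank of a finite union is the maximum, it suffices to bound each piece. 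Now $D|_P$ is a finite union of fragments, i.e.\ of definable subsets of copies of $(\kx)^2$ of dimension $\le 1$, so $\dpr(D|_P) \le 1$.

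It remains to bound $\dpr(D|_{L_j})$. Set $G_j = \Stab(s_j)$, a $1$-dimensional subgroup of $(\kx)^2$; by Lemma~\ref{symmetric-shape} every fibre $D_{a,b}$ with $(a,b) \in L_j$ is a finite union of cosets of $G_j$. If $L_j$ is not a vertical line segment, then $G_j$ is not commensurable with $Q_{1/0}$---otherwise Theorem~\ref{rational-slope} would force $D^{s_j}$ to consist only of points and vertical segments, contradicting $L_j \subseteq D^{s_j}$---so $G_j \cap (\{1\} \times \kx)$ is finite. Hence the first-coordinate projection $\pi_1 \colon \RV^2 \to \RV$ is finite-to-one on each such $D_{a,b}$, and, as $L_j$ is the graph of a function of its first coordinate, finite-to-one on all of $D|_{L_j}$; thus $\dpr(D|_{L_j}) \le \dpr(\RV) = 1$. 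If instead $L_j$ is vertical, then by Lemma~\ref{side-lemma} $G_j$ is commensurable with $Q_{1/0}$, so $G_j \cap (\kx \times \{1\})$ is finite, and the same argument with the second-coordinate projection again gives $\dpr(D|_{L_j}) \le \dpr(\RV) = 1$. Combining the pieces yields $\dpr(D) \le 1$. (This uses $\dpr(\RV) = 1$, valid because $\RV$ is a definable quotient of $K^\times$ and the home sort of a $C$-minimal field is dp-minimal.) The step I expect to be the main obstacle is exactly this dichotomy: one must verify that after the shape-decomposition every line segment occurring in $\overline{D}$ admits a coordinate projection that is finite-to-one on the corresponding part of $D$, and it is precisely Lemma~\ref{side-lemma} and Theorem~\ref{rational-slope} that make this possible.
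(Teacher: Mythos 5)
Your proof is correct and takes essentially the same route as the paper: decompose $\overline{D}$ by shape, dispose of the asymmetric shapes via Theorem~\ref{asymcase}, and on the symmetric part use the stabilizer (via Lemma~\ref{side-lemma}/Theorem~\ref{rational-slope}) to get a finite-to-one coordinate projection to $\RV$, concluding with $\dpr(\RV) \le \dpr(\Mm) = 1$. The only difference is organizational: by refining the decomposition to individual line segments of $\overline{D}$ and choosing the projection per segment, you avoid the paper's coordinate swap and its indiscernible-sequence argument that $\pi_1 : D \to \RV$ has finite fibers.
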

\begin{proof}
  First suppose $\dpr(D) \le 1$.  Then the subset $D_{a,b} = D \cap
  (\RV_a \times \RV_b)$ has dp-rank at most 1 for any $(a,b) \in
  \Gamma^2$.  Dp-rank agrees with dimension and Morley rank on $\RV_a
  \times \RV_b$, so $\dim(D_{a,b}) \le 1$, and $D$ is small over
  $\Gamma^2$.

  Conversely, suppose $D$ is small over $\Gamma^2$.  By
  Lemma~\ref{finshapes}, we can partition $\overline{D}$ into finitely
  many sets $D^{s_1} \sqcup \cdots \sqcup D^{s_n}$ such that
  $\Shape(D_{x,y}) = s_i$ for $(x,y) \in D^{s_i}$.  Let $D_i$ be the
  part of $D$ lying over $D^{s_i}$.  It suffices to show that
  $\dpr(D_i) \le 1$ for each $i$.  Replacing $D$ with $D_i$, we may
  assume that $\overline{D} = D^s$ for some fixed shape $s$, i.e.,
  $\Shape(D_{x,y}) = s$ for every $(x,y) \in \overline{D}$.

  If $s$ is asymmetric, then $\overline{D} = D^s$ is finite by
  Theorem~\ref{asymcase}.  Therefore, $D$ is a finite union
  $\bigcup_{i = 1}^n D_{a_i,b_i}$, where
  $\{(a_1,b_1),\ldots,(a_n,b_n)\}$ enumerates $\overline{D}$.  Each of
  the fragments $D_{a_i,b_i}$ has dimension $\le 1$ because $D$ is
  small over $\Gamma^2$, and dimension agrees with dp-rank on
  $\RV_{a_i} \times \RV_{b_i}$, so $\dpr(D_{a_i,b_i}) \le 1$ and
  $\dpr(D) \le 1$.

  Next suppose that $s$ is symmetric.  Exchanging the two coordinates,
  we may assume that $s$ is not commensurable with $Q_{1/0}$.  Then
  $\overline{D} = D^s$ contains no vertical lines by
  Lemma~\ref{side-lemma}, and it is one-dimensional by
  Theorem~\ref{hard-case}.  Therefore, the projection $\pi_1 :
  \overline{D} \to \Gamma$ has finite fibers.  Moreover, for any
  $(a,b) \in \overline{D}$, the projection $\pi_1 : D_{a,b} \to \RV_a$
  has finite fibers as in the proof of Theorem~\ref{hard-case}.  It
  follows that the projection $\pi_1 : D \to \RV$ has finite fibers.
  Otherwise, $D$ contains an indiscernible sequence of distinct points
  $(x,y_1), (x,y_2), \ldots$.  Let $a = v(x)$ and $b_i = v(y_i)$, so
  that $(x,y_i) \in \RV_a \times \RV_{b_i}$.  Then $(a,b_i) \in
  \overline{D}$.  Because $\pi_1 : \overline{D} \to \Gamma$ has finite
  fibers, the set $\{b_1,b_2,\ldots\}$ is finite, and so $b_i$ is a
  constant $b$ by indiscernibility.  Then $(x,y_i)$ is an infinite
  sequence of distinct points in $D_{a,b}$, contradicting the fact
  that $\pi_1 : D_{a,b} \to \RV_a$ has finite fibers.

  So $\pi_1 : D \to \RV$ has finite fibers, as claimed.  By properties
  of dp-rank, $\dpr(D) \le \dpr(\RV) \le \dpr(\Mm) = 1$.
\end{proof}

\section{Asymptotics and polynomial-boundedness}
\begin{theorem} \label{affine-to-linear}
  Let $f : (a,+\infty) \to \Gamma$ be a definable linear function.
  Then $f(x) = g(x)+c$ for some definable endomorphism $g : \Gamma
  \to \Gamma$ and constant $c \in \Gamma$.
\end{theorem}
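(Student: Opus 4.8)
The plan is to use the "linear $=$ affine" structure of $f$ together with divisibility of $\Gamma$ to extract a genuine endomorphism. Fix any point $x_0 \in (a,+\infty)$ and set $c = f(x_0) - g_0$, where the idea is that $g(x) := f(x) - c$ should be forced to be an endomorphism on a suitable domain, and then extended to all of $\Gamma$ by homogeneity. First I would translate the domain: replacing $f$ by $x \mapsto f(x + x_0)$ (which is again linear, on a ray based at $a - x_0$), I may assume $0$ lies in the interior of the domain, and by a further vertical translation I may assume $f(0) = 0$. The linearity identity from Definition~\ref{vexity}, specialized at $a = 0$, reads $f(\epsilon_1 + \epsilon_2) = f(\epsilon_1) + f(\epsilon_2)$ whenever all four of $0, \epsilon_1, \epsilon_2, \epsilon_1 + \epsilon_2$ lie in the domain. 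So $f$ is additive on a neighborhood of $0$; call this local additivity.

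Next I would upgrade local additivity to a global endomorphism of $(\Gamma,+)$. Since $\Gamma$ is a divisible ordered abelian group and the domain contains an interval $(-\eta,\eta)$ around $0$, every $\gamma \in \Gamma$ can be written as $\gamma = n\delta$ for a positive integer $n$ and $\delta \in (-\eta,\eta)$ (choose $n$ large, then $\delta = \gamma/n$ by divisibility). Define $g(\gamma) := n \cdot f(\gamma/n)$. The first task is to check this is well-defined: if $\gamma/n$ and $\gamma/n'$ both lie in the small interval, then iterating local additivity gives $f(\gamma/n) = (n'/n)\cdot$-type relations — more precisely, $n' \cdot f(\gamma/n) = f((n'/n)\gamma) \cdot$ issues force one to work inside the interval. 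The clean way: for $m \mid n$ with $\gamma/m$ also small, local additivity (applied $n/m$ times, all partial sums staying in $(-\eta,\eta)$ by shrinking $\eta$) yields $f(\gamma/m) = (n/m) f(\gamma/n)$, so $m f(\gamma/m) = n f(\gamma/n)$; a common-refinement argument then shows $n f(\gamma/n)$ is independent of $n$. Then $g$ is additive globally: for $\gamma, \gamma'$ pick a common large $n$ with $\gamma/n, \gamma'/n, (\gamma+\gamma')/n$ all in $(-\eta/2,\eta/2)$, and apply local additivity once. And $g$ is definable because it is defined by an explicit formula from the definable $f$ (e.g. $g(\gamma) = y \iff (\exists \delta)(\,2\delta \in (-\eta,\eta) \wedge 2\delta = \gamma/1 \cdots)$ — concretely, fix one large enough $n$, or quantify). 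Finally $g = f$ on a neighborhood of $0$ (take $n = 1$), so $g$ is a definable endomorphism extending $f$ up to the constant $c$ we absorbed, i.e.\ the original $f$ satisfies $f(x) = g(x - x_0) + f(x_0)$; rewriting, $f(x) = g(x) + c$ with $c = f(x_0) - g(x_0) \in \Gamma$, as desired.

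The main obstacle is the well-definedness / consistency of the formula $g(\gamma) = n f(\gamma/n)$ across different choices of $n$, which is where one actually \emph{uses} that $f$ is linear on the \emph{whole} ray $(a,+\infty)$ and not merely locally additive near $0$: one needs enough room to run the telescoping sums $f(\delta) + f(\delta) + \cdots + f(\delta) = f(k\delta)$ for $k$ up to the relevant ratios while keeping every partial sum inside the interval where additivity was established — shrinking the interval geometrically handles this. Alternatively, and perhaps more cleanly, one can avoid the arithmetic entirely: extend $f$ from $(a,+\infty)$ leftward using local additivity to get a definable function $\hat f$ on all of $\Gamma$ satisfying the linearity identity globally (by Theorem~\ref{local-global-convex}, local linearity on every interval propagates), normalize $\hat f(0) = 0$, and then observe that a definable linear function $\Gamma \to \Gamma$ with $\hat f(0)=0$ is automatically an endomorphism, since linearity with a fixed point at $0$ forces $\hat f(\epsilon_1+\epsilon_2) = \hat f(\epsilon_1) + \hat f(\epsilon_2) - \hat f(0) = \hat f(\epsilon_1)+\hat f(\epsilon_2)$ for all $\epsilon_i$ once the domain is all of $\Gamma$. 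I expect the second route to be the one to write up, with the divisibility bookkeeping relegated to a remark.
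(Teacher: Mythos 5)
Your first route breaks at the very first step of the extension: the claim that every $\gamma \in \Gamma$ can be written as $n\delta$ with $n$ a positive integer and $\delta \in (-\eta,\eta)$ is false in a general divisible ordered abelian group. Divisibility gives you $\gamma/n \in \Gamma$, but it does not make $\gamma/n$ small: the set $\bigcup_n n\cdot(-\eta,\eta)$ is just the convex subgroup of elements archimedean-comparable to $\eta$, and in the value group of the monster model (which is saturated, hence wildly non-archimedean) this is a proper subgroup. For instance, in $\Qq \times \Qq$ ordered lexicographically, with $\eta = (0,1)$ and $\gamma = (1,0)$, no integer $n$ puts $\gamma/n$ inside $(-\eta,\eta)$. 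So $g(\gamma) := n f(\gamma/n)$ is simply not defined for most $\gamma$, and the definability fix you sketch does not work either: no single $n$ can serve for all $\gamma$ (each $n\cdot(-\eta,\eta)$ is bounded while $\Gamma$ is not), and quantifying over the standard integers is not first-order. There is also a structural reason your route cannot succeed: after your translation you only ever use that $f$ is additive on a neighborhood of $0$, so if the argument worked it would show that every definable germ of a linear map at $0$ extends to a global definable endomorphism; the paper's discussion of $E^+_\Gamma$ versus $E_\Gamma$ in Section~\ref{sec-vague} indicates exactly this can fail, and indeed the theorem is stated for a ray $(a,+\infty)$ precisely because the unboundedness of the domain is essential.

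Your second route gestures at the right idea but leaves the real content unproved: ``extend $f$ leftward using local additivity'' is precisely the construction the theorem demands, and Theorem~\ref{local-global-convex} does not supply it --- it only upgrades local linearity to linearity on a fixed interval, never enlarging the domain. The paper's proof is the missing construction, and it uses the unboundedness of the ray rather than divisibility: for any $b \in \Gamma$ choose $x$ so large that both $x$ and $x+b$ lie in $(a,+\infty)$ and set $g(b) = f(x+b)-f(x)$; the linearity identity shows this is independent of the choice of $x$ (so $g$ is well defined and definable by a single formula), a telescoping computation at large $x$ gives $g(b_1+b_2)=g(b_1)+g(b_2)$, and then $f(x)-f(y)=g(x-y)=g(x)-g(y)$ for $x,y>a$ shows $f-g$ is a constant $c$. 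If you replace your divisibility bookkeeping by this ``slide far to the right'' device, your write-up becomes essentially the paper's proof.
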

\begin{proof}
  For $b \in \Gamma$, define $g(b)$ to be $f(x+b)-f(x)$ for any $x$
  with $\{x,x+b\} \subseteq (a,+\infty)$.  The choice of $x$ doesn't
  matter because $f$ is linear.  The definable function $g$ is an
  endomorphism because given $b_1, b_2$, we have
  \begin{equation*}
    g(b_1+b_2) = f(x+b_1+b_2)-f(x) =
    [f(x+b_1+b_2)-f(x+b_2)]+[f(x+b_2)-f(x)] = g(b_1)+g(b_2)
  \end{equation*}
  for all sufficiently large $x$.  Note that for $x,y >
  a$, we have
  \begin{gather*}
    f(x)-f(y) = g(x-y) = g(x)-g(y) \\
    f(x)-g(x) = f(y)-g(y).
  \end{gather*}
  Therefore $f(x)-g(x)$ is some constant $c$.
\end{proof}
\begin{lemma} \label{asymptotics}
  Let $f : \Mm^\times \to \RV$ be definable.
  \begin{enumerate}
  \item There is a ball $B \ni 0$, an endomorphism $g : \Gamma \to
    \Gamma$ and a constant $c \in \Gamma$ such that
    \begin{equation*}
      v(f(x)) = g(v(x)) + c \text{ for } x \in B \setminus \{0\}.
    \end{equation*}
  \item If $k^\times$ has no exotic automorphisms, then $g(x) = qx$
    for some $q \in \Qq$.
  \end{enumerate}
\end{lemma}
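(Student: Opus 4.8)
The plan is to feed $f$ into the analysis of dp-rank $1$ subsets of $\RV^2$ carried out in Section~\ref{sec-heart}. First I would form the definable map $\Mm^\times \to \RV^2$, $x \mapsto (\rv(x), f(x))$, and let $D \subseteq \RV^2$ be its image. Since $D$ is a definable image of $\Mm^\times$ and $\dpr(\Mm^\times) = \dpr(\Mm) = 1$, we have $\dpr(D) \le 1$, so Theorem~\ref{dpr1} says $D$ is small over $\Gamma^2$, and then Theorem~\ref{key-thm}(1) applies: $\overline{D}$ is a finite union of line segments. Because the composite $\Mm^\times \xrightarrow{\rv} \RV \xrightarrow{v} \Gamma$ is just $v$, we have $\overline{D} = \{(v(x), v(f(x))) : x \in \Mm^\times\}$.

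Next I would cut down to a single line segment. Write $\overline{D} = L_1 \cup \cdots \cup L_n$ with each $L_i$ a line segment, and partition $\Mm^\times$ into definable sets $X_1, \ldots, X_n$ according to which $L_i$ contains $(v(x), v(f(x)))$ (breaking ties to make the $X_i$ disjoint). By Corollary~\ref{inf-cor}, some $X_i$ contains a punctured ball $B \setminus \{0\}$. On $B \setminus \{0\}$ the valuation $v$ takes arbitrarily large values, so the first-coordinate projection of $L_i$ is unbounded above; hence $L_i$ is neither a point nor a vertical segment, so it is the graph of a definable linear function $g_0 : (a, +\infty) \to \Gamma$. Therefore $v(f(x)) = g_0(v(x))$ for all $x \in B \setminus \{0\}$.

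For part (1), I apply Theorem~\ref{affine-to-linear} to $g_0$: it writes $g_0(y) = g(y) + c$ for a definable endomorphism $g : \Gamma \to \Gamma$ and a constant $c \in \Gamma$, so $v(f(x)) = g(v(x)) + c$ on $B \setminus \{0\}$. For part (2), suppose $k^\times$ has no exotic automorphisms. Then Theorem~\ref{key-thm}(2) gives that the line segments in $\overline{D}$ have rational slopes, so $L_i$ has slope $q$ for some $q \in \Qq$; since $L_i$ is the graph of $g_0$ on $(a, +\infty)$, this forces $g_0(y) = qy + c$ (using divisibility of $\Gamma$), and hence $g(y) = qy$.

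The one delicate point is the bookkeeping around Corollary~\ref{inf-cor}: one must arrange an honest partition of $\Mm^\times$ before invoking it, and then observe that unboundedness of the projection rules out $L_i$ being a point or a vertical segment. Apart from that, the lemma is an immediate consequence of Theorems~\ref{dpr1}, \ref{key-thm}, and \ref{affine-to-linear} --- essentially all the real work has already been done in Section~\ref{sec-heart}.
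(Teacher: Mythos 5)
Your proposal is correct and follows essentially the same route as the paper: form $D = \{(\rv(x),f(x))\}$, use Theorem~\ref{dpr1} and Theorem~\ref{key-thm} to see $\overline{D}$ is a finite union of points and line segments (with rational slopes in the no-exotic-automorphisms case), then use the partition-plus-Corollary~\ref{inf-cor} argument from the proof of Theorem~\ref{copy-1} to find a segment that is the graph of a linear function on $(a,+\infty)$ covering a punctured ball around $0$, and finish with Theorem~\ref{affine-to-linear}. The only cosmetic difference is that for part (2) you should fix the rational-slope decomposition from Theorem~\ref{key-thm}(2) at the outset (as the paper implicitly does) rather than re-deriving it after choosing the segments in part (1), but this is a trivial reordering.
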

\begin{proof}
  Let $D = \{(\rv(x),f(x)) : x \in \Mm^\times\}$.  This set has
  dp-rank 1 as it is the image of $\Mm^\times$ under a definable map.
  By Theorem~\ref{dpr1}, $D$ is small over $\Gamma^2$.  The image in
  $\Gamma^2$ is
  \begin{equation*}
    \overline{D} = \{(v(x),v(f(x))) : x \in \Mm^\times\}.
  \end{equation*}
  By Theorem~\ref{key-thm}, $\overline{D}$ has a cell decomposition
  $\coprod_{i = 1}^n C_i$ where each $C_i$ is a point or a ``line
  segment''---a vertical line segment or the graph of a linear
  function.  Moreover, if $k^\times$ has no exotic automorphisms then
  we can assume each line segment has rational slope.

  As in the proof of Theorem~\ref{copy-1}, there is a cell $C_i$ and a
  ball $B \ni 0$ such that
  \begin{equation*}
    x \in B \setminus \{0\} \implies (v(x),v(f(x))) \in C_i,
  \end{equation*}
  and $C_i$ must be the graph of a continuous definable function $h :
  (a,+\infty) \to \Gamma$, so that
  \begin{equation*}
    x \in B \setminus \{0\} \implies v(f(x)) = h(v(x)).
  \end{equation*}
  Because $C_i$ is a line segment, $h$ must be a linear map, so $h(x)
  = g(x)+c$ for some endomorphism $g : \Gamma \to \Gamma$, by Lemma~\ref{affine-to-linear}.
  If $k^\times$ has no exotic automorphisms, then $C_i$ has rational
  slope, so $g(x) = q \cdot x$ for some $q \in \Qq$.
\end{proof}
Say that ``$\Gamma$ has no exotic automorphisms'' if the only
definable automorphisms of the ordered abelian group $(\Gamma,+,\le)$
are multiplication by positive rational numbers.  This is equivalent
to the statement that the only definable endomorphisms $\Gamma \to
\Gamma$ are $g(x) = qx$ for rational $q$.
\begin{theorem} \label{pbound}
  If $k^\times$ has no exotic automorphisms or $\Gamma$ has no exotic
  automorphisms, then $\Mm$ is polynomially bounded: for any definable
  function $f : \Mm \to \Mm$, there is some $n$ such that
  \begin{equation*}
    v(f(x)) > n \cdot v(x)
  \end{equation*}
  for all $x$ outside a bounded set.
\end{theorem}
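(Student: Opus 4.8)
The plan is to do a light wrapping around Lemma~\ref{asymptotics}. First I would reduce a statement about $f$ near $\infty$ to one about $f$ near $0$ by the substitution $x \mapsto 1/x$: given a definable $f : \Mm \to \Mm$, put $F(x) = f(1/x)$, a definable map $\Mm^\times \to \Mm$. By Fact~\ref{inf-fact} applied to $\{x \in \Mm^\times : F(x) = 0\}$ there is a ball $B_0 \ni 0$ such that either $F$ vanishes identically on $B_0 \setminus \{0\}$ or $F$ is nowhere zero there. In the first case $f$ is identically $0$ outside a bounded set, so $v(f(x)) = +\infty > n \cdot v(x)$ holds trivially (take $n = 1$), and we are done; so assume we are in the second case.

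Then $\rv \circ F$ is a genuine definable map $B_0 \setminus \{0\} \to \RV$, which I would extend arbitrarily --- say by the constant $1$ --- to a definable map $G : \Mm^\times \to \RV$. Applying Lemma~\ref{asymptotics}(1) to $G$ yields a ball $B \ni 0$, a definable endomorphism $g : \Gamma \to \Gamma$, and a constant $c \in \Gamma$ with $v(G(x)) = g(v(x)) + c$ for all $x \in B \setminus \{0\}$. Shrinking $B$ so that $B \subseteq B_0$ (the identity persists on smaller balls), we have $v(G(x)) = v(F(x)) = v(f(1/x))$ on $B \setminus \{0\}$, so
\begin{equation*}
  v(f(1/x)) = g(v(x)) + c \qquad \text{for all } x \in B \setminus \{0\}.
\end{equation*}

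Now I would pin down $g$ using the hypothesis. If $k^\times$ has no exotic automorphisms, Lemma~\ref{asymptotics}(2) gives $g(x) = qx$ for some $q \in \Qq$; if instead $\Gamma$ has no exotic automorphisms, then (as recalled above) every definable endomorphism of $\Gamma$ is multiplication by a rational, so again $g(x) = qx$ with $q \in \Qq$. Writing $y = 1/x$, so that $v(x) = -v(y)$ and $y$ ranges over the complement of a bounded set as $x$ ranges over $B \setminus \{0\}$, the displayed identity becomes $v(f(y)) = -q \cdot v(y) + c$. Picking an integer $n$ with $n > -q$, we get $v(f(y)) - n \cdot v(y) = -(n+q) \cdot v(y) + c$; since $n + q > 0$ and $v(y) \to -\infty$ as $y \to \infty$, this tends to $+\infty$, so $v(f(y)) > n \cdot v(y)$ for all $y$ outside a sufficiently large bounded set. (If one insists on $n > 0$, note that replacing $n$ by $n + 1$ only strengthens the inequality for large $y$.)

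All the substance has been done in Section~\ref{sec-heart} and packaged into Lemma~\ref{asymptotics}; what is left is essentially bookkeeping. The one place needing attention is the inversion itself: keeping straight that a punctured ball around $0$ corresponds to a neighbourhood of $\infty$, that $g(-t) = -g(t)$ because $g$ is a group endomorphism, and the sign of $q$; and also the preliminary dichotomy ``$f$ eventually zero versus eventually nonzero near $\infty$'', which is what makes $\rv \circ F$ a legitimate function to feed into Lemma~\ref{asymptotics}. None of this is difficult.
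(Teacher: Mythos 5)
Your proposal is correct and follows essentially the same route as the paper: apply Lemma~\ref{asymptotics} to $\rv(f(1/x))$, use the no-exotic-automorphism hypothesis (via Lemma~\ref{asymptotics}(2) for $k^\times$, or directly for $\Gamma$) to make the slope rational, and unwind the inversion $x \mapsto 1/x$. Your extra bookkeeping (the eventually-zero dichotomy and the explicit sign analysis) just fills in details the paper leaves implicit.
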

\begin{proof}
  Applying Lemma~\ref{asymptotics} to the function $\rv(f(1/x))$, we
  get a punctured ball $B \setminus \{0\}$ and a rational number $q$
  such that
  \begin{equation*}
    x \in B \setminus \{0\} \implies v(f(1/x)) = q \cdot v(x) = -q
    \cdot v(1/x).
  \end{equation*}
  This implies that $f$ is polynomially bounded.
\end{proof}

\section{The exchange property}
The group $\Mm/\Oo$ is the set of closed balls of valuative radius 0.
The zero element $0 \in \Mm/\Oo$ ``is'' the valuation ring $\Oo$.  If $B \in
\Mm/\Oo \setminus \{0\}$, define $\rv(B)$ to be $\rv(x)$ for any element
$x \in B$.  This is well-defined, because if $x,y\in B$, then
\begin{gather*}
  x-y \in \Oo \text{ but } x,y \notin \Oo \\
  v(x-y) \ge 0 > v(y) \\
  \rv(x) = \rv(y).
\end{gather*}
Similarly, define $v(B)$ to be $v(x)$ for any $x \in B$.  This makes
the following diagram commute:
\begin{equation*}
  \xymatrix{ \Mm/\Oo \setminus \{0\} \ar[dr]_-v \ar[r]^-{\rv} & \RV \ar[d]
    \\ & \Gamma,}
\end{equation*}
where $\RV \to \Gamma$ is the usual map.  Note that $v(B) < 0$ for
non-zero $B \in \Mm/\Oo$.

We can apply the ideas from the proofs of Theorems~\ref{copy-2} and
\ref{pbound} to the setting of definable functions $\Mm \to \Mm/\Oo$,
yielding the following:
\begin{lemma}\label{shuksan}
  Let $D \subseteq \Mm$ be definable and infinite and $f : D \to
  \Mm/\Oo$ be definable and injective.
  \begin{enumerate}
  \item \label{sk1} There is a ball $B_1 \subseteq D$ and a definable
    continuous function $h : \Gamma \to \Gamma$ such that
    \begin{equation*}
      v(f(x)-f(y)) = h(v(x-y))
    \end{equation*}
    for distinct $x,y \in B_1$.
  \item There is a ball $B_2 \subseteq D$, a definable endomorphism $g :
    \Gamma \to \Gamma$, and a constant $c \in \Gamma$ such that
    \begin{equation*}
      v(f(x)-f(y)) = g(v(x-y)) + c
    \end{equation*}
    for distinct $x,y \in B_2$.
  \end{enumerate}
\end{lemma}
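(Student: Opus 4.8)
The plan is to adapt the proofs of Theorems~\ref{copy-2} and \ref{pbound} essentially verbatim, replacing the target $\Mm$ (and the auxiliary map $v$) with $\Mm/\Oo$ and the lift $\rv : (\Mm/\Oo)\setminus\{0\} \to \RV$. The point is that $\Mm/\Oo$ comes equipped with both a ``valuation'' $v : (\Mm/\Oo)\setminus\{0\} \to \Gamma_{<0}$ and an $\RV$-lift $\rv$ making the triangle in the text commute, so the machinery of Section~\ref{sec-heart} (in particular Theorems~\ref{dpr1} and \ref{key-thm}) applies to the image of $\rv\circ f$ just as it did to $\rv\circ f$ for ordinary functions $\Mm\to\Mm$.

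For part~(\ref{sk1}), first I would mimic Theorem~\ref{copy-2}: for each $a \in D$ set $U_a = \{(v(x-a),\, v(f(x)-f(a))) : x \in D\setminus\{a\}\} \subseteq \Gamma \times \Gamma_{<0}$, where $f(x)-f(a)$ is computed in the ultrametric space $\Mm/\Oo$ (with $v(B) = v(x)$ for $x\in B$ as in the text; note $v(f(x)-f(a)) < 0$ since $f$ is injective, so $f(x)\ne f(a)$). By Corollary~\ref{corfam} the family $\{U_a\}$ is locally constant off a finite set, so there is an open ball $B \ni a_0$ with $U_a = U_{a_0}$ for $a \in B$. Then I would run the analogue of Theorem~\ref{copy-1}: the set $\{(\rv(x-a_0),\, \rv(f(x)-f(a_0))) : x\in D\setminus\{a_0\}\}$ is the image of $D$ under a definable map, hence has dp-rank $\le 1$, hence (Theorem~\ref{dpr1}) is small over $\Gamma^2$, so by Theorem~\ref{key-thm} its image $\overline{D}$ in $\Gamma^2$ is a finite union of line segments; the argument of Theorem~\ref{copy-1} (via Corollary~\ref{inf-cor}, picking the cell that sees arbitrarily large values of $v(x-a_0)$) then produces a ball $B' \ni a_0$ and a definable continuous $h : (\alpha,+\infty) \to \Gamma$ with $v(f(x)-f(a_0)) = h(v(x-a_0))$ for $x \in B'\setminus\{a_0\}$. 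Intersecting $B$ and $B'$ and copying the $(\dag)$-step of Theorem~\ref{copy-2} (using that $\{U_a\}$ is constant on $B$ and that $v(x-a) > \gamma$ for $x,a$ in a small enough ball) upgrades this to $v(f(x)-f(y)) = h(v(x-y))$ for all distinct $x,y \in B_1 := B \cap B'$, giving~(\ref{sk1}).

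For part~(2), I would observe that the cell $C_i$ selected above is a \emph{line segment}, not merely a graph of a continuous function, so $h$ is a linear function on $(\alpha,+\infty)$; then Theorem~\ref{affine-to-linear} writes $h(x) = g(x)+c$ for a definable endomorphism $g : \Gamma\to\Gamma$ and constant $c\in\Gamma$. Taking $B_2 = B_1$ yields~(2). The only genuinely delicate point is bookkeeping around $\Mm/\Oo$: one must check that subtraction in the ultrametric space $\Mm/\Oo$ is definable and that $v(B-B')$ and $\rv(B-B')$ are the definable functions one expects, and that injectivity of $f$ really forces $v(f(x)-f(y)) < 0$ so that everything stays in the ``non-zero'' locus where $\rv$ is defined — but these are routine, following the computation already carried out in the text for $\rv(B)$ and $v(B)$. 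No new input beyond Section~\ref{sec-heart} and the basic facts of Section~2 is needed.
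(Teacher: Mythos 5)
Your proposal is correct and follows essentially the paper's own route: the paper likewise proves part (1) by noting that the arguments of Theorems~\ref{copy-1} and \ref{copy-2} transfer verbatim to targets in $\Mm/\Oo$, and obtains part (2) from part (1). The only organizational difference is that the paper gets (2) by applying Lemma~\ref{asymptotics} to $y \mapsto \rv(f(y+a)-f(a))$ at a fixed $a \in B_1$ and matching $h(z)=g(z)+c$ above the radius of the resulting ball, whereas you inline the proof of that lemma (the $\RV$-lift, Theorems~\ref{dpr1} and \ref{key-thm}, and Theorem~\ref{affine-to-linear}) into the part-(1) cell selection, which is the same mathematics.
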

Later in Lemma~\ref{glacier}, we will see that the assumptions of
Lemma~\ref{shuksan} can never hold.
\begin{proof}
  \begin{enumerate}
  \item The proofs of Theorems~\ref{copy-1} and \ref{copy-2}
    work in this setting.
  \item Let $h$ and $B_1$ be as in part (\ref{sk1}).  Fix some $a \in
    B_1$.  Applying Lemma~\ref{asymptotics} to the map
    \begin{equation*}
      y \mapsto \rv(f(y+a)-f(a)),
    \end{equation*}
    we get a definable endomorphism $g : \Gamma \to \Gamma$, a constant
    $c \in \Gamma$, and a ball $B_2 \ni a$ such that
    \begin{gather*}
      y \in B_2 - a \implies v(f(y+a)-f(a)) = g(v(y)) + c \\
      x \in B_2 \implies v(f(x)-f(a)) = g(v(x-a)) + c.
    \end{gather*}
    On the other hand,
    \begin{gather*}
      x \in B_2 \implies v(f(x)-f(a)) = h(v(x-a)).
    \end{gather*}
    Then $h(z) = g(z)+c$ for any $z$ greater than the radius of $B_2$.  It
    follows that
    \begin{equation*}
      v(f(x)-f(y)) = h(v(x-y)) = g(v(x-y)) + c
    \end{equation*}
    for \emph{any} $x,y \in B_2$. \qedhere
  \end{enumerate}
\end{proof}
\begin{lemma} \label{baker}
  Suppose $D \subseteq \Mm/\Oo$ is a definable set and $c \in \Gamma$
  is a constant such that
  \begin{equation*}
    v(x-y) = c \text{ for distinct } x,y \in D.
  \end{equation*}
  Then $D$ is finite.
\end{lemma}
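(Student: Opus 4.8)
The plan is to argue geometrically, directly with the closed balls making up $D$. We may assume that $D$ has at least two elements, since otherwise it is already finite; then, picking distinct $x,y\in D$, the difference $x-y$ is a non-zero element of $\Mm/\Oo$, so $c=v(x-y)<0$. The key first observation will be that the balls belonging to $D$ are ``spread out'' inside a single slightly larger ball. Fix $d_0\in D$ and let $B^\ast$ be the closed ball of valuative radius $c$ around some representative of $d_0$. If $B_1,B_2$ are distinct balls belonging to $D$, with representatives $a_1,a_2$, then $v(a_1-a_2)=c$, so by the ultrametric inequality $B_1,B_2\subseteq B^\ast$; moreover $a_1$ and $a_2$ lie at valuative distance exactly $c$, not $>c$, so $B_1$ and $B_2$ lie in \emph{distinct} maximal open sub-balls of $B^\ast$ (the open balls of valuative radius $c$ contained in $B^\ast$, which partition $B^\ast$). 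Hence the map sending a ball of $D$ to the maximal open sub-ball of $B^\ast$ containing it is injective; in particular, if $D$ is infinite then infinitely many such sub-balls $U$ contain a (necessarily unique) ball of $D$.

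Next I would pull $D$ back to the home sort. Let $\hat D=\{x\in\Mm : x+\Oo\in D\}$, a definable subset of $\Mm^1$. For a maximal open sub-ball $U$ of $B^\ast$, the trace $\hat D\cap U$ is empty if no ball of $D$ lies in $U$, and otherwise equals the unique ball of $D$ contained in $U$, which is a closed ball of valuative radius $0$ --- in particular a \emph{non-empty proper} subset of $U$, properness because $U$ has valuative radius $c<0$ while $\Gamma$ is densely ordered (Fact~\ref{gamma-induce}), so $U$ strictly contains a closed ball of radius $0$. By the previous paragraph, this second case occurs for infinitely many $U$ when $D$ is infinite.

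Now I would derive a contradiction using $C$-minimality. Suppose $D$ is infinite. By $C$-minimality, $\hat D$ is a finite boolean combination of balls $C_1,\dots,C_m$ and points $p_1,\dots,p_\ell$. For each $i$, comparing $C_i$ with $B^\ast$ in the ultrametric tree, either $C_i$ is disjoint from $B^\ast$, or $C_i\supseteq B^\ast$, or $C_i$ is a proper sub-ball of $B^\ast$ and hence contained in a single maximal open sub-ball of $B^\ast$. Therefore only finitely many maximal open sub-balls $U$ of $B^\ast$ contain one of the $C_i$ of the third kind or one of the points $p_j$. For every other $U$, each generator of the boolean combination meets $U$ in $\varnothing$ or in all of $U$, and hence $\hat D\cap U\in\{\varnothing,U\}$; in particular $\hat D\cap U$ is never a non-empty proper subset of $U$. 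This contradicts the second paragraph. Therefore $D$ is finite.

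The only delicate point is the bookkeeping in the last step: one must check carefully that finitely many balls and points can ``non-trivially cut'' only finitely many of the maximal open sub-balls of $B^\ast$, so that on cofinitely many of them the trace of $\hat D$ is all-or-nothing. This is precisely where the ultrametric structure --- any two balls are nested or disjoint, and a proper sub-ball of $B^\ast$ lies inside one of its maximal open sub-balls --- combines with the finiteness supplied by $C$-minimality to close the argument.
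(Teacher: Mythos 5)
Your proof is correct, and it takes a genuinely different route from the paper's. The paper disposes of the lemma in a few lines with a dp-rank argument: letting $U$ be the open ball of radius $c$, the map $D \times (U/\Oo) \to \Mm/\Oo$, $(x,z) \mapsto x+z$, is injective (if $x+z=y+w$ then $x-y=w-z$ has valuation $>c$, forcing $x=y$ and $z=w$), which is impossible for infinite $D$ since $D \times (U/\Oo)$ would have dp-rank at least $2$ while $\Mm/\Oo$ has dp-rank at most $1$. You instead pull $D$ back to the home sort and run the definition of $C$-minimality directly: the pullback $\hat D$ is a finite boolean combination of balls and points, and such a set can cut only finitely many of the maximal open sub-balls of $B^\ast$ non-trivially, whereas infiniteness of $D$ forces infinitely many of those sub-balls $U$ to satisfy $\varnothing \ne \hat D \cap U \subsetneq U$ (the trace being a single coset of $\Oo$; properness uses $c<0$ together with divisibility of $\Gamma$, as you note). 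The delicate bookkeeping you flag does check out: any ball properly contained in $B^\ast$ lies inside a single maximal open sub-ball (otherwise it would contain two points at distance exactly $c$ and hence all of $B^\ast$), so finitely many generating balls and points disturb only finitely many sub-balls, and on the rest every generator, hence every boolean combination, traces as all-or-nothing. Comparing the two: the paper's argument buys brevity and fits the dp-rank bookkeeping used throughout (it never needs the precise shape of unary definable sets), while yours is more elementary in that it uses only the swiss-cheese description of unary sets and the ultrametric tree, at the cost of a longer combinatorial verification.
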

\begin{proof}
  Suppose $D$ is infinite.  Note that $c = v(x-y) < 0$.  Let $U$ be
  the open ball of radius $c$.  Then $U/\Oo$ is infinite.  The map
  \begin{align*}
    D \times (U/\Oo) \to \Mm/\Oo \\
    (x,z) \mapsto x+z
  \end{align*}
  is injective.\footnote{Indeed, if $(x,z)$ and $(y,w)$ map to the
  same point, then $x+z = y+w$, $x-y = w-z \in U/\Oo$, $v(x-y) > c$,
  $x=y$, and $z=w$.}  But $D \times (U/\Oo)$ has dp-rank at least 2,
  and $\Mm/\Oo$ has dp-rank at most 1.
\end{proof}
\begin{lemma} \label{glacier}
  If $D \subseteq \Mm$ is infinite and definable, then there is
  \emph{no} definable injection $f : D \to \Mm/\Oo$.
\end{lemma}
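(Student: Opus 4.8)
The plan is to suppose, toward a contradiction, that such an injection $f : D \to \Mm/\Oo$ exists. By Lemma~\ref{shuksan}(2) there is a ball $B \subseteq D$, a definable endomorphism $g : \Gamma \to \Gamma$, and a constant $c \in \Gamma$ with
\begin{equation*}
  v(f(x) - f(y)) = g(v(x-y)) + c \qquad \text{for distinct } x, y \in B. \tag{$\star$}
\end{equation*}
I will use three facts about $\Mm/\Oo$: every nonzero element has valuation $< 0$; choosing representatives of negative valuation, $v$ inherits from $\Mm$ the ultrametric identity $v(E_1 + E_2) = \min(v(E_1), v(E_2))$ whenever $v(E_1) \ne v(E_2)$ (and then $E_1+E_2 \ne 0$); and as $x,y$ range over $B$ the values $v(x-y)$ form a final segment $I$ of $\Gamma$ (everything above the radius of $B$), in particular an unbounded-above set. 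Since $f$ is injective, the left-hand side of $(\star)$ is always an honest element of $\Gamma_{<0}$, never $+\infty$.

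The crucial step is to show that $g$ is non-decreasing on $I$. Given $\alpha < \beta$ in $I$, I pick $x_0 \in B$, then $x_1 \in B$ with $v(x_0-x_1)=\alpha$, then $x_2 \in B$ with $v(x_1 - x_2) = \beta$ (here $x_2$ lands in $B$ because $\beta$ exceeds the radius of $B$). The ultrametric inequality forces $v(x_0 - x_2) = \alpha$, so $(\star)$ gives $v(f(x_0)-f(x_1)) = g(\alpha)+c$, $v(f(x_1)-f(x_2)) = g(\beta)+c$, and $v(f(x_0)-f(x_2)) = g(\alpha)+c$. If $g(\alpha) > g(\beta)$, then the two summands of $f(x_0)-f(x_2) = (f(x_0)-f(x_1)) + (f(x_1)-f(x_2))$ have distinct valuations, forcing $v(f(x_0)-f(x_2)) = g(\beta)+c \ne g(\alpha)+c$, a contradiction; hence $g(\alpha) \le g(\beta)$. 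I expect this three-point computation to be the main obstacle: it is exactly what rules out $g$ reversing the order of $\Gamma$, and that case cannot be handled by a continuity/homeomorphism argument since $f$ is only assumed injective.

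Once $g$ is non-decreasing on $I$, I claim $g = 0$. If not, then since $g$ is additive and $\Gamma$ is divisible, $g$ is not constant on any interval (otherwise $g(\epsilon)=0$ for all small $\epsilon>0$, forcing $g \equiv 0$), so there are $z_1 < z_2$ in $I$ with $g(\epsilon) := g(z_2) - g(z_1) = g(z_2-z_1) > 0$; then the elements $g(z_1 + n\epsilon) = g(z_1) + n\,g(\epsilon)$ are unbounded above in $\Gamma$, so for suitable $n$ we get $g(z_1 + n\epsilon) + c \ge 0$ with $z_1 + n\epsilon \in I$. Taking distinct $x,y \in B$ with $v(x-y) = z_1 + n\epsilon$, equation $(\star)$ gives $v(f(x)-f(y)) \ge 0$, impossible since $f(x) - f(y)$ is a nonzero element of $\Mm/\Oo$. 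So $g = 0$, and $(\star)$ becomes $v(f(x)-f(y)) = c$ for all distinct $x,y \in B$; then Lemma~\ref{baker} applies to $f(B)$ and shows $f(B)$ is finite, contradicting injectivity of $f$ together with the fact that the ball $B$ is infinite. Apart from the three-point argument, the remaining steps are routine manipulation of $(\star)$ and of the divisible ordered group structure of $\Gamma$.
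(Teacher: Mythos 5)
Your overall architecture matches the paper's proof of Lemma~\ref{glacier}: get $(\star)$ from Lemma~\ref{shuksan}(2), kill the order-reversing behaviour of $g$ with a three-point ultrametric computation in $\Mm/\Oo$ (this is exactly the paper's second case), and kill $g=0$ via Lemma~\ref{baker} (the paper's third case). Those two parts of your argument are fine. The problem is the remaining case, where $g$ is non-decreasing and non-zero.

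There you assert that the elements $g(z_1)+n\,g(\epsilon)$, $n\in\Nn$, are unbounded above in $\Gamma$, so that some $g(z_1+n\epsilon)+c\ge 0$. This is false in general: $\Gamma$ is a divisible ordered abelian group in a monster model of an expansion of ACVF, hence (by saturation) badly non-archimedean, and the set of integer multiples of a fixed positive element $g(\epsilon)$ is never cofinal in $\Gamma$; it can easily happen that $g(z_1)+n\,g(\epsilon) < -c$ for every $n$. So your argument does not rule out a non-zero, non-decreasing $g$ whose values on the final segment $I$ stay below $-c$. The gap cannot be closed by archimedean-style bookkeeping; you need a definability input. The standard fix (and what underlies the paper's first case, ``$g$ increasing with $\lim_{z\to+\infty}g(z)=+\infty$'') is that the image $g(\Gamma)$ is a definable subgroup of $(\Gamma,+)$, and an o-minimal divisible ordered abelian group has no proper non-trivial definable subgroups; hence a non-zero definable endomorphism $g$ is surjective, in particular unbounded above, and being (eventually) non-decreasing it satisfies $g(\gamma)+c\ge 0$ for all sufficiently large $\gamma\in I$. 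At that point your intended contradiction with the fact that non-zero elements of $\Mm/\Oo$ have negative valuation goes through. (A similar, but harmless, elision occurs where you say ``$g(\epsilon)=0$ for all small $\epsilon>0$ forces $g\equiv 0$'': that also needs the no-proper-definable-subgroup fact applied to $\ker(g)$, which contains an interval around $0$.) With this repair your proof becomes essentially the paper's case analysis.
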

\begin{proof}
  By Lemma~\ref{shuksan}, there is a ball $B \subseteq D$, an endomorphism $g : \Gamma \to \Gamma$, and a constant $c$ such that
  \begin{equation*}
    v(f(x)-f(y)) = g(v(x-y)) + c
  \end{equation*}
  for distinct $x,y \in B$.  By o-minimality of $\Gamma$, the endomorphism $g$ must fall into one of three cases:
  \begin{enumerate}
  \item $g$ is increasing and $\lim_{z \to +\infty} g(z) = +\infty$.
    Fix $a \in B$.  As $x$ approaches $a$, the valuation $v(x-a)$
    approaches $+\infty$, and so
    \begin{equation*}
      \lim_{x \to a} v(f(x)-f(a)) = \lim_{x \to a} \left(g(v(x-a)) +
      c\right) = +\infty.
    \end{equation*}
    But $f(x)-f(a)$ is a non-zero element of $\Mm/\Oo$, so its
    valuation is negative, and the limit cannot approach $+\infty$.
  \item $g$ is decreasing and $\lim_{z \to +\infty} g(z) = -\infty$.
    Take $x,y,z \in B$ with
    \begin{equation*}
      v(x-y) > v(x-z) = v(y-z).
    \end{equation*}
    Applying the order-reversing function $g(z)+c$, we get
    \begin{equation*}
      v(f(x)-f(y)) < v(f(x)-f(z)) = v(f(y)-f(z)).
    \end{equation*}
    This is impossible.  For example, taking points $x',y',z'$ in
    $f(x),f(y),f(z)$, respectively, we get
    \begin{equation*}
      v(x'-y') < v(x'-z') = v(y'-z'),
    \end{equation*}
    a violation of the triangle inequality.
  \item $g$ is identically zero.  Then
    \begin{equation*}
      v(f(x)-f(y)) = c \text{ for distinct } x,y \in B. \tag{$\ast$}
    \end{equation*}
    Let $D = \{f(x) : x \in B\}$.  Then the set $D$ contradicts
    Lemma~\ref{baker}. \qedhere
  \end{enumerate}
\end{proof}
\begin{theorem} \label{big-thm}
  $\Mm$ has the exchange property.
\end{theorem}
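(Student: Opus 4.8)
The plan is to assume the exchange property fails and obtain a contradiction with Lemma~\ref{glacier}. Suppose it fails. By the theorem of Haskell and Macpherson \cite[Proposition~6.1]{cminfields}, there is then a definable bijection $f$ from a nonempty open set $U \subseteq \Mm$ onto an antichain $\mathcal{A}$ in the tree of closed balls. Since $\Mm$ is a nontrivially valued field, $U$ is infinite, hence so is $\mathcal{A}$; and since any two closed balls are disjoint or comparable, $\mathcal{A}$ is a family of pairwise disjoint closed balls. An infinite antichain cannot contain the whole space $\Mm$ (which is comparable to every ball), so every member of $\mathcal{A}$ is a proper closed ball with valuative radius in $\Gamma$.

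The next step is to normalize the radius so that $f$ lands in $\Mm/\Oo$. Writing $r(x)$ for the valuative radius of the ball $f(x)$, the function $r : U \to \Gamma$ is definable, so by Fact~\ref{const-to-gamma} it is locally constant at all but finitely many points of $U$. Choosing a point of local constancy and passing to a sufficiently small ball around it inside $U$, I obtain an infinite definable set $N \subseteq U$ on which $r$ is constant, say $r \equiv \gamma_0 \in \Gamma$. Since $v : \Mm^\times \to \Gamma$ is surjective, pick $t \in \Mm^\times$ with $v(t) = -\gamma_0$. For a closed ball $B$, the set $t \cdot B := \{tz : z \in B\}$ is again a closed ball, whose valuative radius is that of $B$ increased by $v(t)$, and $B \mapsto t \cdot B$ is a bijection of the set of closed balls onto itself. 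Hence $x \mapsto t \cdot f(x)$ is a definable injection from $N$ into the set of closed balls of valuative radius $0$, i.e., into $\Mm/\Oo$. As $N$ is an infinite definable subset of $\Mm$, this contradicts Lemma~\ref{glacier}, so exchange must hold.

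As for where the difficulty lies: essentially all the substance of this argument has already been discharged in Lemma~\ref{glacier}, and behind it in Lemmas~\ref{shuksan} and~\ref{baker} and the analysis of $\RV \times \RV$ in Section~\ref{sec-heart} (in particular Theorems~\ref{key-thm} and~\ref{dpr1}). Given that lemma and the Haskell--Macpherson reduction \cite[Proposition~6.1]{cminfields}, the only genuinely new thing to check is the radius normalization above, which is immediate from Fact~\ref{const-to-gamma} and rescaling by a suitable $t$; I do not expect any obstacle there.
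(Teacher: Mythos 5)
Your proposal is correct and follows essentially the same route as the paper: invoke the Haskell--Macpherson reduction to a definable bijection onto an antichain of closed balls, use Fact~\ref{const-to-gamma} to pass to an infinite definable subset on which the radius is constant, rescale (your explicit multiplication by $t$ with $v(t)=-\gamma_0$ is exactly the paper's ``rescaling'') to land in $\Mm/\Oo$, and contradict Lemma~\ref{glacier}. No substantive difference.
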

\begin{proof}
  Suppose not.  By \cite[Proposition~6.1]{cminfields}, there is a
  ``bad function''.  Whatever the definition of bad function is, it at
  least gives us a definable bijection between an infinite definable
  set $D \subseteq \Mm$, and an infinite definable antichain in the
  class $\mathcal{B}$ of closed balls.  That is, there is a definable
  injection $f : D \to \mathcal{B}$ such that
  \begin{equation*}
    f(x) \cap f(y) = \varnothing \text{ for distinct } x,y \in B.
  \end{equation*}
  Let $g(x)$ be the radius of $f(x)$.  By Fact~\ref{const-to-gamma}, the
  function $g$ is locally constant almost everywhere.  Shrinking $D$,
  we may assume that $g$ is constant.  Then $f$ is a definable
  injection from $D$ to the class of closed balls of radius $\gamma$,
  for some fixed $\gamma \in \Gamma$.  Rescaling, we may assume
  $\gamma = 0$.  Then $f$ is a definable injection from $B$ to
  $\Mm/\Oo$, contradicting Lemma~\ref{glacier}.
\end{proof}
\begin{remark}
  The definition of ``bad function'' gives much more than an injection
  from $D$ to an antichain in $\mathcal{B}$.  One even gets an
  isomorphism of C-structures.  Using this, one can probably
  streamline the proof of Theorem~\ref{big-thm}.  For example, one can
  essentially avoid Lemma~\ref{baker} as well as Cases 2 and 3 in the
  proof of Lemma~\ref{glacier}.  We have avoided this streamlined
  approach in order to minimize the use of C-structures.
\end{remark}
\begin{corollary}
  The theory $T$ is geometric.
\end{corollary}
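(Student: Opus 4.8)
The plan is to unwind the definition of ``geometric theory.''  Following Hrushovski and Pillay \cite[Section~2]{udi-anand-group-field}, a complete theory is geometric precisely when (i) it eliminates the quantifier $\exists^\infty$ --- for each formula $\varphi(x;\bar y)$ with a single object variable $x$ there is an integer $N_\varphi$ so that every instance $\varphi(\Mm;\bar b)$ is either infinite or of size at most $N_\varphi$ --- and (ii) $\acl$ satisfies the exchange property in every model.  Condition (ii) is exactly Theorem~\ref{big-thm}, so the only thing left to check is (i).

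First I would record the elementary observation that in a $C$-minimal expansion of ACVF every nonempty ball $B \subseteq \Mm^1$ is infinite: this is standard for ACVF, and follows here from the fact that the residue field is infinite (Fact~\ref{k-induce}) and the value group is densely ordered (Fact~\ref{gamma-induce}).  Consequently a finite boolean combination of balls and points is finite if and only if it is a finite union of singletons, and in that case its cardinality is bounded by the number of singletons occurring in the combination.

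Next, fix $\varphi(x;\bar y)$ with $|x| = 1$.  By $C$-minimality each $\varphi(\Mm;\bar b)$ is a boolean combination of at most $m(\bar b)$ balls and points for some $m(\bar b) \in \Nn$.  The hard part --- really the only point requiring care --- is to see that $m(\bar b)$ can be taken bounded uniformly in $\bar b$; this is the usual compactness argument exploiting that $C$-minimality persists in elementary extensions, since otherwise an elementary extension would produce a parameter for which the corresponding instance of $\varphi$ fails to be a finite boolean combination of balls and points, contradicting $C$-minimality of the extension.  With such a bound $N_\varphi$ in hand, the previous paragraph shows that whenever $\varphi(\Mm;\bar b)$ is finite it has size at most $N_\varphi$, which is precisely elimination of $\exists^\infty$.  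Combined with Theorem~\ref{big-thm}, this shows that $T$ is geometric.
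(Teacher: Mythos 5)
Your proposal is correct and follows the same decomposition as the paper: the exchange property is Theorem~\ref{big-thm}, and elimination of $\exists^\infty$ comes from $C$-minimality. The only difference is in the second step, where the paper's verification is a one-liner---a definable $D \subseteq \Mm$ is infinite if and only if it contains a ball, which is a uniformly definable condition on the parameters---whereas you rederive the uniform bound by compactness on the number of balls and points in the boolean combination and then count singletons; both arguments work.
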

\begin{proof}
  We need to check that $T$ has the exchange property and eliminates
  $\exists^\infty$.  The first part is Theorem~\ref{big-thm}.  The
  second part is a trivial consequence of $C$-minimality.  Indeed, $D
  \subseteq \Mm$ is infinite if and only if it contains a ball.
\end{proof}
\subsection{A variant of Lemma~\ref{shuksan}}\label{other-assaf}
Assaf Hasson suggested the following variant of Lemma~\ref{shuksan}:
\begin{proposition} \label{for-hasson}
  Let $D \subseteq \Mm/\Oo$ be definable and infinite, and let $f : D
  \to \Mm/\Oo$ be definable and injective.  Then there is a ball $B
  \subseteq D$ and a linear function $g : [-\gamma,\gamma] \to \Gamma$
  such that $g(0) = 0$, and
  \begin{equation*}
    v(f(x) - f(y)) = g(v(x-y)) \tag{$\dag$}
  \end{equation*}
  for distinct $x,y \in B$.  If $k^\times$ has no exotic
  automorphisms, then we can take $g$ to be $g(x) = q \cdot x$ for
  some rational $q$.
\end{proposition}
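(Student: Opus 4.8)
The plan is to re-run the proof of Lemma~\ref{shuksan}, now with the dense ultrametric sort $\Mm/\Oo$ (value set $\Gamma_{<0}$) in the role of the home sort, and then to extract the two extra conclusions. Since $D$ is infinite it contains a ball $B_0$: pull $D$ back along the quotient map $q\colon\Mm\to\Mm/\Oo$ to an infinite definable union of cosets of $\Oo$, which contains an open ball of valuative radius $<0$, and push forward. Fix $\bar a\in B_0$. As $f$ is injective, for $\bar x\in B_0\setminus\{\bar a\}$ both $\bar x-\bar a$ and $f(\bar x)-f(\bar a)$ are non-zero in $\Mm/\Oo$, so $\bar x\mapsto(\rv(\bar x-\bar a),\rv(f(\bar x)-f(\bar a)))$ is a definable map $B_0\setminus\{\bar a\}\to\RV^2$; being a definable image of a subset of the dp-rank-one sort $\Mm/\Oo$, its image has dp-rank $\le 1$, so by Theorem~\ref{dpr1} it is small over $\Gamma^2$, and by Theorem~\ref{key-thm} the set
\begin{equation*}
  V_{\bar a}:=\{(v(\bar x-\bar a),\,v(f(\bar x)-f(\bar a))):\bar x\in B_0\setminus\{\bar a\}\}
\end{equation*}
is a union of finitely many line segments, with \emph{rational} slopes if $k^\times$ has no exotic automorphisms.

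Next I would imitate the proofs of Theorems~\ref{copy-1} and~\ref{copy-2}, which use only that definable subsets of the home sort are Boolean combinations of balls and points --- equivalently, Corollaries~\ref{inf-cor} and~\ref{corfam} --- and the analogous statements hold with $\Mm/\Oo$ in place of $\Mm$, since definable subsets of $\Mm/\Oo$ are again Boolean combinations of balls and points (a routine consequence of the $C$-minimality of $\Mm$). Cell-decomposing $V_{\bar a}$ into points, vertical segments, and graphs of linear functions, and applying the $\Mm/\Oo$-analogue of Corollary~\ref{inf-cor} to the partition of $B_0\setminus\{\bar a_0\}$ this induces, I find one cell of $V_{\bar a_0}$ that contains $(v(\bar x-\bar a_0),v(f(\bar x)-f(\bar a_0)))$ for every $\bar x$ in a punctured ball about $\bar a_0$; since this cell meets first coordinates arbitrarily close to $0$, it is the graph of a linear function $h$. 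The $\Mm/\Oo$-analogue of the uniform-$\bar a$ step in Theorem~\ref{copy-2} then gives an open ball $B\ni\bar a_0$, of some valuative radius $\rho<0$, with
\begin{equation*}
  v(f(\bar x)-f(\bar y))=h(v(\bar x-\bar y))\qquad\text{for distinct }\bar x,\bar y\in B.
\end{equation*}
Here $v(\bar x-\bar y)$ ranges over $(\rho,0)$; putting $\gamma:=-\rho$ and extending $h$ linearly to $g\colon[-\gamma,\gamma]\to\Gamma$ (shrinking $B$ slightly if necessary so that $g$ is defined on all of $[-\gamma,\gamma]$), we get a linear $g$ with $v(f(\bar x)-f(\bar y))=g(v(\bar x-\bar y))$ on $B$, of the form $g(x)=qx+(\text{const})$ with $q\in\Qq$ in the no-exotic case.

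It remains to see $g(0)=0$. If $g$ is constant, then $v(f(\bar x)-f(\bar y))$ is a fixed $c$ for distinct $\bar x,\bar y\in B$, so $f(B)$ is finite by Lemma~\ref{baker}, contradicting injectivity; hence $g$ is non-constant and linear, therefore strictly monotonic on its domain. It cannot be decreasing: choosing $\bar x,\bar y,\bar z\in B$ with $v(\bar x-\bar y)>v(\bar x-\bar z)=v(\bar y-\bar z)$ and applying the order-reversing $g$ would give $v(f(\bar x)-f(\bar y))<v(f(\bar x)-f(\bar z))=v(f(\bar y)-f(\bar z))$, violating the ultrametric inequality on $f(\bar x),f(\bar y),f(\bar z)$, exactly as in Case~2 of the proof of Lemma~\ref{glacier}. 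So $g$ is increasing, and $g((\rho,0))=(g(\rho),g(0))$. All values on the left are negative, so $g(0)\le0$; on the other hand $f(B)$ is infinite and definable, hence contains a ball, so the valuations $v(f(\bar x)-f(\bar y))=g(v(\bar x-\bar y))$ come arbitrarily close to $0$ from below, forcing $g(0)\ge0$. Thus $g(0)=0$, and $g(x)=qx$ in the no-exotic-automorphism case.

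The main obstacle is the middle step: verifying that the basic $C$-minimality toolkit driving Theorems~\ref{copy-1} and~\ref{copy-2} (namely Corollaries~\ref{inf-cor} and~\ref{corfam}) survives passage to the quotient sort $\Mm/\Oo$, and bookkeeping the fact that the $\RV$-lifting is defined only off the diagonal. By contrast, the dp-rank estimate is immediate from Theorem~\ref{dpr1}, and the final case analysis pinning $g(0)$ to $0$ is short.
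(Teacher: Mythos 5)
Your argument is correct and runs along the same lines as the paper's: both proofs transfer the basic $C$-minimality toolkit (Fact~\ref{const-to-gamma}, Corollaries~\ref{inf-cor}/\ref{corfam}, Theorems~\ref{copy-1}--\ref{copy-2}) to the $C$-minimal quotient $\Mm/\Oo$, lift the difference map to $\RV\times\RV$, and use the dp-rank-$1$ estimate together with Theorems~\ref{dpr1} and~\ref{key-thm} to force (piecewise) linearity, with rational slope when $\kx$ has no exotic automorphisms; your reliance on ``definable subsets of $\Mm/\Oo$ are Boolean combinations of balls and points'' is exactly the same unproved-but-routine transfer the paper itself invokes. The genuine difference is the endgame pinning down $g(0)=0$. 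The paper first arranges, by generic continuity of definable functions in the $C$-minimal structure $\Mm/\Oo$, that $f$ is continuous, and then reads off $\lim_{\gamma\to 0^-}g(\gamma)=0$ directly. You avoid the generic-continuity input and instead run a monotonicity trichotomy on the (already linear) $g$: constant is excluded by Lemma~\ref{baker} via injectivity, decreasing is excluded by the ultrametric inequality exactly as in Case~2 of Lemma~\ref{glacier}, and in the increasing case you squeeze $g(0)$ between $\le 0$ (the value set of $\Mm/\Oo$ is $\Gamma_{<0}$, and every distance in $(\rho,0)$ is realized in $B$) and $\ge 0$ (the infinite definable image $f(B)$ contains a ball of $\Mm/\Oo$, so image distances approach $0$ from below). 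Both routes are sound; yours leans harder on the injectivity hypothesis and on Lemmas~\ref{baker}/\ref{glacier}-style arguments, which buys you independence from the generic-continuity fact, while the paper's continuity argument is shorter once that fact is granted. A further, cosmetic, difference is order of operations: you fold the $\RV$-lift into the copy-1/copy-2 step so that the function produced is linear from the outset, whereas the paper first produces a general $g$ satisfying $(\dag)$ and only afterwards shows it is piecewise linear near $0$; your step of extending the linear germ to $[-\gamma,\gamma]$ is left at the same ``easy exercise'' level as in the paper, which is acceptable.
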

\begin{proof}
  Because $\Mm/\Oo$ is itself $C$-minimal, the analogue of
  Fact~\ref{const-to-gamma} holds: any definable function $D \to
  \Gamma^n$ is locally constant at all but finitely many points of
  $D$.  Then Corollary~\ref{corfam} and
  Theorem~\ref{copy-1}--\ref{copy-2} continue to hold in this setting.
  In particular, we can shrink $D$ and arrange for ($\dag$) to hold
  for some definable function $g : (-\gamma,0) \to \Gamma_{<0}$, not
  necessarily linear.  By the generic continuity of definable
  functions in $C$-minimal structures (applied to the $C$-minimal
  structure $\Mm/\Oo$), we can shrink $D$ and assume that $f$ is
  continuous.  Since the ``value set'' of $\Mm/\Oo$ is $\Gamma_{<0}$,
  continuity of $f$ means precisely that
  \begin{equation*}
    \lim_{\gamma \to 0^-} g(\gamma) = 0.
  \end{equation*}
  Fix some $a \in D$.  The set $\{(\rv(x-a),\rv(f(x)-f(a))) : x \in
  D\} \subseteq \RV \times \RV$ has dp-rank 1, so its image in
  $\Gamma \times \Gamma$, the set
  \begin{equation*}
    S = \{(v(x-a),v(f(x)-f(a))) : x \in D\} \subseteq \Gamma \times \Gamma,
  \end{equation*}
  must be a finite union of points and line segments, and the line
  segments must have rational slopes unless $\kx$ has exotic
  automorphisms.  The set $S$ is contained in the graph of $g$.  As in
  the proof of Lemma~\ref{asymptotics}, it follows that $g$ is
  piecewise linear.  In particular, $g$ is linear on an interval of
  the form $(-\gamma,0)$, for sufficiently small $\gamma > 0$.  An
  easy exercise shows that $g$ extends to a linear function on the
  interval $[-\gamma,\gamma]$.  The fact that $\lim_{\gamma \to 0^-}
  g(\gamma) = 0$ implies that $g(0) = 0$.  In the case where $\kx$ has
  no exotic automorphisms, $g$ must be $\Qq$-linear, so $g(x) = q
  \cdot x$ for some rational number $q$.
\end{proof}
%

\section{Definably complete $C$-minimal fields}
In this section, \textbf{suppose that $\Mm$ is definably complete},
meaning that if $\{B_a\}_{a \in D}$ is a definable chain of balls with
radii approaching $+\infty$, then $\bigcap_{a \in D} B_a$ is
non-empty.
\begin{lemma} \label{hozomeen}
  Let $f : \Mm^\times \to \Mm$ be definable.  Let $B$ be a closed
  ball.  Suppose that $f(x) \in B$ for all sufficiently small $x$.
  Then there is an open ball $B' \subset B$ such that $f(x) \in B'$
  for all sufficiently small $x$.
\end{lemma}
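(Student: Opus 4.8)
The plan is to normalize and then pin down a single residue class that $f$ eventually lands in. Translating $f$ by an element of $B$ and rescaling by an element of $\Mm^\times$ of appropriate valuation (using that $v \colon \Mm^\times \to \Gamma$ is surjective), I may assume $B = \Oo$; an affine change of variable turns any open ball $B'' \subsetneq \Oo$ with $f(x) \in B''$ eventually back into an open ball $B' \subsetneq B$ with the same property. The maximal proper open sub-balls of $\Oo$ are exactly the residue classes $a + \mm$ for $a \in \Oo$. So it suffices to produce $a \in \Oo$ such that $\res(f(x)) = \res(a)$ for all sufficiently small $x$, and then take $B' = a + \mm$.

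Fix $\epsilon_0$ large enough that $f(x) \in \Oo$ whenever $x \neq 0$ and $v(x) \geq \epsilon_0$. For each $a \in \Oo$, apply Theorem~\ref{copy-1} to the definable map $x \mapsto f(x) - a$ to get a definable $g_a \colon \Gamma \to \Gamma_\infty$ with $v(f(x) - a) = g_a(v(x))$ for $x$ in a punctured neighbourhood of $0$. On the relevant final segment of $\Gamma$ the function $g_a$ takes values in $\Gamma_{\geq 0} \cup \{+\infty\}$, since $f(x), a \in \Oo$ there. By the monotonicity theorem for o-minimal structures, a definable $\Gamma_{\geq 0} \cup \{+\infty\}$-valued function on a half-line $(\gamma,+\infty)$ which is not eventually identically $0$ is eventually strictly positive (its rightmost monotonicity piece is constant and nonzero, or strictly monotone, and in the strictly decreasing case it cannot dip to $0$ on an unbounded interval). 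Note also that $v(f(x) - a) = 0 \iff \res(f(x)) \neq \res(a)$. So for a given $a$, \emph{either} $g_a$ is eventually $0$, which says precisely that $\res(f(x)) \neq \res(a)$ for all sufficiently small $x$, \emph{or} $g_a(\gamma) > 0$ for all large $\gamma$, so that $f(x) \in a + \mm$ eventually, which is exactly what we want. The whole problem thus reduces to exhibiting one $a$ for which $g_a$ is \emph{not} eventually $0$.

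For $\epsilon \geq \epsilon_0$ put $N(\epsilon) = \{\res(f(x)) : x \neq 0,\ v(x) \geq \epsilon\} \subseteq k$; this is a $\subseteq$-decreasing definable family of nonempty subsets of $k$, and by the previous paragraph ``$g_a$ is eventually $0$'' is equivalent to ``$\res(a) \notin N(\epsilon)$ for some (equivalently, all large) $\epsilon$''. Hence it is enough to show $\bigcap_{\epsilon \geq \epsilon_0} N(\epsilon) \neq \varnothing$, and then take $a$ with $\res(a)$ in this intersection. By strong minimality of $k$ (Fact~\ref{k-induce}), each $N(\epsilon)$ is finite or cofinite, and since $N$ is decreasing, either $N(\epsilon)$ is finite for all large $\epsilon$ or $N(\epsilon)$ is cofinite for all $\epsilon \geq \epsilon_0$. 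In the first case $\epsilon \mapsto |N(\epsilon)|$, and in the second $\epsilon \mapsto |k \setminus N(\epsilon)|$, is a definable function from a final segment of $\Gamma$ to $\Nn$; such a function has finite image, because a strictly monotone continuous o-minimal function on a non-degenerate interval has an interval as image, and an interval with more than one point in the densely ordered group $\Gamma$ cannot consist solely of integers. Being monotone with finite image, the function is eventually constant, so the chain $N(\epsilon)$ is eventually constant with nonempty value, and the intersection is nonempty.

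The main obstacle is that last step: proving that the decreasing definable chain $N(\epsilon)$ of subsets of $k$ stabilises so that $\bigcap_\epsilon N(\epsilon) \neq \varnothing$. This is where the o-minimality of $\Gamma$ (via the fact that a definable function $\Gamma \to \Nn$ has finite image) must be combined with the strong minimality of $k$; everything else is routine bookkeeping with balls, residues, Theorem~\ref{copy-1}, and the monotonicity theorem. (Definable completeness, the running hypothesis of the section, is not in fact needed for this lemma.)
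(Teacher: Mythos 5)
Your overall route is sound and in fact runs parallel to the paper's: reduce to $B = \Oo$, look at the descending definable chain $N(\epsilon) = \{\res(f(x)) : v(x) \ge \epsilon\}$ of non-empty subsets of $k$, show it stabilizes, and pick a residue class in the stable value. (Your endgame via Theorem~\ref{copy-1} and the dichotomy ``$g_a$ eventually $0$ or eventually positive'' is a legitimate, if more roundabout, substitute for the paper's one-line appeal to Fact~\ref{inf-fact} applied to the set $\{x : \res(f(x)) = \alpha\}$, which contains arbitrarily small points and hence a punctured ball around $0$. Your remark that definable completeness is not needed is also correct.)

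The gap is exactly at the step you flag as the crux. You justify the stabilization of the chain by saying that $\epsilon \mapsto |N(\epsilon)|$ (or $\epsilon \mapsto |k \setminus N(\epsilon)|$) is ``a definable function from a final segment of $\Gamma$ to $\Nn$'' and then invoking the o-minimal fact that a monotone continuous function has an interval as image. That argument is a category error: the cardinality is not a definable $\Gamma$-valued function (there is no copy of $\Nn$ sitting definably inside the divisible group $\Gamma$ for it to take values in), so the monotonicity/interval-image theorem says nothing here; what is definable is only each fiber $\{\epsilon : |N(\epsilon)| = n\}$ separately, and o-minimality of $\Gamma$ alone does not bound how many of these are non-empty. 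Note also that in your ``finite'' case no argument is needed at all, since $N(\epsilon) \subseteq N(\epsilon_1)$ with $N(\epsilon_1)$ finite; the real issue is the cofinite case, where you must bound $|k \setminus N(\epsilon)|$ uniformly in $\epsilon$. This is true, but it needs either (i) elimination of $\exists^\infty$ in the strongly minimal set $k$ plus saturation of the monster (if the sizes were unbounded, the decreasing definable chain $\{\epsilon : |k \setminus N(\epsilon)| \ge n\}$ of non-empty sets would have a common point, making some $N(\epsilon)$ co-infinite), or, much more directly, (ii) the orthogonality of $k$ and $\Gamma$: by Proposition~\ref{prop-ortho} the definable family $\{N(\epsilon)\}_{\epsilon \in \Gamma}$ of subsets of $k$ has only finitely many distinct members, and a chain with finitely many members stabilizes at a non-empty set. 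Option (ii) is precisely the paper's argument (with $A_\gamma = N(\gamma)$), and once you replace your cardinality step by it, your proof goes through.
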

\begin{proof}
  Without loss of generality, $B = \Oo$.  For $\gamma \in \Gamma$, let
  \begin{equation*}
    A_\gamma = \{\res(f(x)) : x \in \Mm, ~ v(x) > \gamma\}.
  \end{equation*}
  Then $A_\gamma$ is a descending chain of non-empty subsets of $k$.
  By the orthogonality of $\Gamma$ and $k$
  (Proposition~\ref{prop-ortho}), there are only finitely many
  $A_\gamma$.  So there is some fixed definable set $A \subseteq k$
  such that $A_\gamma = A$ for all sufficiently large $\gamma$.  Take
  $\alpha \in A$.  Then for every $\gamma$, there is $x$ with $v(x) >
  \gamma$ and $\res(f(x)) = \alpha$.  That is, the set
  \begin{equation*}
    \{x : \res(f(x)) = \alpha\}
  \end{equation*}
  contains arbitrarily small points.  By Fact~\ref{inf-fact},
  this set contains \emph{all} sufficiently small $x$.  Then we can
  take $B'$ to be the residue class $\{y \in \Oo : \res(y) =
  \alpha\}$.
\end{proof}
\begin{lemma} \label{fernow}
  Let $\mathcal{C}$ be a non-empty definable chain of balls with empty
  intersection.  There is a unique definable function $\rho : \Mm \to
  \RV$ with the following property:
  \begin{enumerate}
  \item If $a \in \Mm$, $B \in \mathcal{C}$, $a \notin B$, and $x \in
    B$, then $\rv(a-x) = \rho(a)$.
  \end{enumerate}
  Moreover, $v(\rho(a))$ has the following two properties:
  \begin{enumerate}[resume]
  \item \label{fern2} If $v(\rho(a)) \ge r$, then there is a ball $B
    \in \mathcal{C}$ with radius $\ge r$.
  \item \label{fern3} If $B \in \mathcal{C}$ has radius $r$, then $v(\rho(a)) \ge r$
    for $a \in B$.
  \end{enumerate}
\end{lemma}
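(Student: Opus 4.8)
The plan is to define $\rho(a)$ directly by the recipe forced by property~(1). Since $\bigcap\mathcal{C} = \varnothing$, every $a \in \Mm$ lies outside some ball $B \in \mathcal{C}$, and I set $\rho(a) := \rv(a - x)$ for such a $B$ and an arbitrary $x \in B$; note $a - x \ne 0$, so this is a well-defined element of $\RV$ \emph{provided} the choices do not matter. To see independence of $x$: if $x, x' \in B$ and $a \notin B$, then $v(x - x') > v(a-x)$ (the ``distance'' from $a$ to $B$ is strictly smaller than the diameter of $B$), so $\rv(a-x) = \rv(a-x')$ by the identity $\rv(u) = \rv(w) \iff v(u-w) > v(w)$. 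Independence of $B$ is where the chain hypothesis enters: if $B, B' \in \mathcal{C}$ both avoid $a$, one contains the other, and a point of the smaller one witnesses that the two recipes agree. Definability of $\rho$ is automatic from definability of $\mathcal{C}$, property~(1) holds by construction, and uniqueness is immediate, since any function satisfying~(1) is pinned down at every point by the same recipe.

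For the two quantitative clauses I would first record the elementary dichotomy: for $a \notin B \in \mathcal{C}$ and $x \in B$ one has $v(\rho(a)) = v(a-x) \le r_B$, where $r_B$ is the (valuative) radius of $B$; whereas if $a$ and $x$ both lie in a ball of radius $r$ then $v(a-x) \ge r$. Clause~(\ref{fern2}) then follows by contradiction: if $v(\rho(a)) \ge r$ but every ball of $\mathcal{C}$ had radius $< r$, I could pick $B \in \mathcal{C}$ with $a \notin B$ and obtain $v(\rho(a)) \le r_B < r$, a contradiction. For clause~(\ref{fern3}), given $B \in \mathcal{C}$ of radius $r$ and $a \in B$, I pick $B' \in \mathcal{C}$ with $a \notin B'$; since $\mathcal{C}$ is a chain and $a \in B \setminus B'$, necessarily $B' \subseteq B$, so any $x \in B'$ also lies in $B$, and then $v(\rho(a)) = v(a-x) \ge r$.

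I do not expect a genuine obstacle: the whole argument is ultrametric bookkeeping and uses nothing about $C$-minimality (or even definable completeness) beyond the fact that definable objects remain definable. The one place that needs a moment's attention is the treatment of open versus closed balls — the strict inequality in the well-definedness step comes out of the open-ball inequality in one case and the closed-ball inequality in the other — but this never changes the structure of the argument.
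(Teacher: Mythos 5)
Your proposal is correct and follows essentially the same route as the paper: the same pointwise definition $\rho(a)=\rv(a-x)$, the same well-definedness checks (independence of $x$ via the strict ultrametric inequality, independence of $B$ via the chain property), and the same arguments for clauses (2) and (3). The only cosmetic difference is in clause (2), where you bound $v(\rho(a))\le r_B$ directly instead of the paper's two-point argument inside the small ball, and both are fine.
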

Intuitively, $\rho(a)$ is $\rv(a-x)$ for a non-existent point $x$ in
the intersection $\bigcap \mathcal{C}$, and $v(\rho(a))$ measures how
close $a$ is to the intersection.
\begin{proof}
  \begin{enumerate}
  \item Fix a point $a \in \Mm$.  Then $a \notin \varnothing = \bigcap
    \mathcal{C}$, so there is at least one ball $B \in \mathcal{C}$
    with $a \notin B$.  Take any $x \in B$ and define
    \begin{equation*}
      \rho(a,B) = \rv(a-x).
    \end{equation*}
    This is well-defined because if $x,y \in B$, then
    \begin{equation*}
      v(x-y) > v(a-x) = v(a-y) \tag{$\ast$}
    \end{equation*}
    thanks to the ball $B$ which contains $x$ and $y$ but not $a$.
    But ($\ast$) implies $\rv(a-x) = \rv(a-y)$.

    Moreover, if $B$ and $B'$ are two different balls in $\mathcal{C}$
    not containing $a$, then taking $x \in B \cap B'$ we see
    \begin{equation*}
      \rho(a,B) = \rv(a-x) = \rho(a,B').
    \end{equation*}
    Therefore $\rho(a,B)$ does not depend on $B$, and we can write it
    as $\rho(a)$.  Definability of $\rho$ is straightforward.
  \item Suppose $v(\rho(a)) \ge r$.  Take a ball $B \in \mathcal{C}$ not containing
    $a$.  If the radius of $B$ is at least $r$, we are done.
    Otherwise, the radius is less than $r$, so there are $x,y \in B$
    with $v(x-y) < r$.  Then
    \begin{equation*}
      \rv(a-x) = \rho(a) = \rv(a-y)
    \end{equation*}
    implying that
    \begin{equation*}
      v(a-x) < v(x-y) < r.
    \end{equation*}
    But $\rho(a) = \rv(a-x)$, so
    \begin{equation*}
      v(\rho(a)) = v(a-x) < r \le v(\rho(a)),
    \end{equation*}
    a contradiction.
  \item Suppose $B \in \mathcal{C}$ has radius $r$ and $a \in B$.
    Take $B' \in \mathcal{C}$ such that $a \notin B'$.  Then $B'
    \subsetneq B$.  Take $x \in B'$.  By definition of $\rho$,
    \begin{gather*}
      \rho(a) = \rv(a-x) \\
      v(\rho(a)) = v(a-x) \ge r. \qedhere
    \end{gather*}
  \end{enumerate}
\end{proof}
\begin{theorem} \label{limits-exist}
  Let $B \ni 0$ be a ball and $f : B \setminus \{0\} \to \Mm$ be a
  definable function.  Then $\lim_{x \to 0} f(x)$ exists in $\Mm \cup
  \{\infty\}$.
\end{theorem}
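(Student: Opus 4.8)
The plan is to analyse $f$ near $0$ via the definable family
\[
  \mathcal{C} \;=\; \{\,B' \text{ a closed ball} : f(x) \in B' \text{ for all sufficiently small } x\in B\setminus\{0\}\,\}.
\]
By Fact~\ref{inf-fact} applied to $\{x : f(x)\in B'\}$, for every closed ball $B'$ either $B'\in\mathcal{C}$ or $f(x)\notin B'$ for all small $x$. If $B',B''\in\mathcal{C}$ then $B'\cap B''$ contains $f(x)$ for small $x$, so it is non-empty and $B',B''$ are nested; hence $\mathcal{C}$ is a definable chain and $\bigcap\mathcal{C}$ is empty, a single point, or a closed ball of radius in $\Gamma$. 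If $\mathcal{C}=\varnothing$, then $\bar B(0,\gamma)\notin\mathcal{C}$ for every $\gamma$, so $v(f(x))<\gamma$ for all small $x$, hence $v(f(x))\to-\infty$ and $\lim_{x\to 0} f(x)=\infty$. If $\bigcap\mathcal{C}=\{a\}$, then (a chain of closed balls with a single point as its intersection has unbounded radii) for every $\gamma$ some member of $\mathcal{C}$ lies inside $\bar B(a,\gamma)$, so $v(f(x)-a)\to+\infty$ and $\lim_{x\to 0} f(x)=a$. It remains to exclude the cases where $\bigcap\mathcal{C}$ is a closed ball of radius in $\Gamma$, or is empty.

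Both exclusions rest on one rigidity fact: as in the proof of Lemma~\ref{glacier}, a definable endomorphism of $(\Gamma,+)$ is either $0$ or tends to $\pm\infty$ at $+\infty$, so by Lemma~\ref{asymptotics} any definable $h:\Mm^\times\to\RV$ has $v(h(x))$, near $0$, either eventually constant or tending to $\pm\infty$. Suppose $\bigcap\mathcal{C}=\bar B$, a closed ball of radius in $\Gamma$. Translating and rescaling, $\bar B=\Oo$; then one checks that $\bar B(0,r)$ captures $f$ for every $r<0$, while no closed ball of radius $>0$ captures $f$. By Fact~\ref{inf-fact}, either $f(x)\in\Oo$ for all small $x$ or $f(x)\notin\Oo$ for all small $x$. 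In the first case, the proof of Lemma~\ref{hozomeen} gives $\tilde\alpha\in\Oo$ with $v(f(x)-\tilde\alpha)>0$ for small $x$; we may assume $f(x)\neq\tilde\alpha$ near $0$ (otherwise $f\equiv\tilde\alpha$, and every $\bar B(\tilde\alpha,\gamma)$ captures $f$). Applying the rigidity fact to $x\mapsto\rv(f(x)-\tilde\alpha)$, and noting $v(f(x)-\tilde\alpha)>0$ is bounded below, this valuation cannot tend to $-\infty$; if it tends to $+\infty$ then every $\bar B(\tilde\alpha,\gamma)$ captures $f$; if it is eventually a constant $c>0$ then $\bar B(\tilde\alpha,c)$ captures $f$ — each contradicting ``no closed ball of radius $>0$ captures $f$''. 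In the second case, $v(f(x))<0$ for small $x$ together with $v(f(x))\geq r$ for small $x$ (each $r<0$) forces $v(f(x))\to 0^-$; by rigidity $v(f(x))$ is eventually a constant $c$ with $c<0$ and $c\geq r$ for all $r<0$, impossible since $\Gamma$ is densely ordered.

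Finally suppose $\bigcap\mathcal{C}=\varnothing$. Definable completeness forces the radii of balls in $\mathcal{C}$ to be bounded above (otherwise $\bigcap\mathcal{C}\neq\varnothing$), so the definable set of radii is a down-set with no greatest element, equal to $\Gamma_{<s}$ for some $s\in\Gamma$; write $\bar B_r\in\mathcal{C}$ for the ball of radius $r$. Apply Lemma~\ref{fernow} to $\mathcal{C}$ to get $\rho:\Mm\to\RV$. Property~(\ref{fern3}) gives $v(\rho(f(x)))\geq r$ for all small $x$ and each $r<s$, while property~(\ref{fern2}) together with the absence of radii $\geq s$ gives $v(\rho(a))<s$ for every $a\in\Mm$. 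So $x\mapsto v(\rho(f(x)))$ is eventually bounded but eventually exceeds every $r<s$; by the rigidity fact applied to $x\mapsto\rho(f(x))$ it is eventually a constant $c$ with $c<s$ and $c\geq r$ for all $r<s$, impossible since $\Gamma$ is densely ordered. This contradiction completes the case analysis, so $\lim_{x\to 0}f(x)$ exists in $\Mm\cup\{\infty\}$.

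The main obstacle is the empty-intersection case: with no limit point to measure distances from, one needs Lemma~\ref{fernow}, whose $\rho$ behaves like ``$\rv$ of the distance to the missing point'', to produce the auxiliary definable map $\rho\circ f$; then the interplay of properties~(\ref{fern2}) and~(\ref{fern3}) squeezes $v(\rho(f(x)))$ strictly below $s$ yet above everything below $s$, which the rigidity of definable endomorphisms of $\Gamma$ refutes. The remaining difficulty is bookkeeping: verifying that each normalization (translation, rescaling) preserves ``the limit exists in $\Mm\cup\{\infty\}$'', that the auxiliary maps are defined on a punctured neighbourhood of $0$ so that Lemma~\ref{asymptotics} applies, and that the sub-cases under $\bigcap\mathcal{C}=\bar B$ are genuinely exhaustive.
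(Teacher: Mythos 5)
Your proof is correct and follows essentially the same route as the paper: the same chain $\mathcal{C}$ of balls eventually capturing $f$, the same rigidity input (Lemma~\ref{asymptotics} plus the trichotomy for definable endomorphisms of $\Gamma$), Lemma~\ref{hozomeen} for the residue-class refinement, and the identical squeeze via Lemma~\ref{fernow} in the empty-intersection case. The only differences are organizational: the paper first reduces to $f(x)\in\Oo$ via Corollary~\ref{inf-cor} and $1/f$ and allows open balls in $\mathcal{C}$, whereas you keep only closed balls, dispose of $\mathcal{C}=\varnothing$ and the singleton-intersection case directly, and fold the paper's ``case 3'' analysis into excluding the closed-ball-intersection case.
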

Here, $\lim_{x \to 0} f(x) = \infty$ means that $\lim_{x \to 0} 1/f(x)
= 0$.  Theorem~\ref{limits-exist} was proved by Cubides Kovacsics and
Delon \cite[Lemma~5.3]{CK-D} under an additional assumption on
$\Gamma$, building off an earlier result of Delon
\cite[Proposition~4.5]{honneur}.\footnote{It's possible that Cubides
Kovacsics and Delon's proof would apply here with minimal changes.
Unfortunately, I wasn't able to understand the proof of
\cite[Proposition~4.5]{honneur}.}
\begin{proof}
  By Corollary~\ref{inf-cor}, we may shrink $B$ and assume that one of
  the following holds:
  \begin{itemize}
  \item $f(x) \in \Oo$ for all $x$.
  \item $f(x) \notin \Oo$ for all $x$.
  \end{itemize}
  Replacing $f(x)$ with $1/f(x)$, we may assume the first case: $f(x)
  \in \Oo$.

  Let $\mathcal{C}$ be the definable class of (open and closed) balls
  $B'$ with the property that $f(x) \in B'$ for all sufficiently small
  $x$.  We have just arranged $\Oo \in \mathcal{C}$, and so
  $\mathcal{C}$ is non-empty.  Moreover, $\mathcal{C}$ is a chain,
  because if $B_1, B_2$ are two incomparable balls in $\mathcal{C}$,
  then $f(x) \in B_1 \cap B_2 = \varnothing$ for sufficiently small
  $x$.

  First suppose that $\bigcap \mathcal{C}$ is non-empty, containing a
  point $a$.  By Lemma~\ref{asymptotics} applied to $\rv(f(x)-a)$,
  there is an endomorphism $g : \Gamma \to \Gamma$ and constant $c \in
  \Gamma$ such that
  \begin{equation*}
    v(f(x)-a) = g(v(x)) + c
  \end{equation*}
  for all sufficiently small $x$.  There are three cases:
  \begin{enumerate}
  \item $g$ is increasing and $\lim_{z \to +\infty} g(z) = +\infty$.  Then
    \begin{equation*}
      \lim_{x \to 0} v(f(x)-a) = \lim_{x \to 0} (g(v(x)) + c) = +\infty,
    \end{equation*}
    and so $\lim_{x \to 0} f(x) = a$.
  \item $g$ is decreasing and $\lim_{z \to +\infty} g(z) = -\infty$.
    Then $\lim_{x \to 0} v(f(x) - a) = -\infty$, contradicting the
    fact that $f(x) \in \Oo$.
  \item $g$ is zero and so
    \begin{equation*}
      v(f(x)-a) = c \text{ for all sufficiently small $x$.} \tag{$\ast$}
    \end{equation*}
    Let $B' = \{y \in \Mm : v(y-a) \ge c\}$.  Then $f(x) \in B'$ for
    all sufficiently small $x$.  By Lemma~\ref{hozomeen}, there is a
    smaller open ball $B'' \subset B'$ such that $f(x) \in B''$ for
    sufficiently small $x$.  Then $B'' \in \mathcal{C}$, so $a \in
    B''$.  This forces $B''$ to be the open ball $\{y \in \Mm : v(y-a)
    > c\}$, or a subset.  Thus
    \begin{equation*}
      v(f(x)-a) > c \text{ for all sufficiently small $x$},
    \end{equation*}
    contradicting ($\ast$).
  \end{enumerate}
  Otherwise, $\bigcap \mathcal{C}$ is empty.  Consequently,
  $\mathcal{C}$ contains no minimum.  Let $r_\infty$ be the supremum
  of the radii of balls in $\mathcal{C}$.  By definable completeness,
  $r_\infty < +\infty$.  In other words, $\mathcal{C}$ witnesses a
  failure of definable spherical completeness, but not a failure of
  definable completeness.  No ball in $\mathcal{C}$ has radius exactly
  $r_\infty$, or else $\mathcal{C}$ would contain a minimum.

  Let $\rho : \Mm \to \RV$ be as in Lemma~\ref{fernow}.  By
  Lemma~\ref{fernow}(\ref{fern2}),
  \begin{equation*}
    v(\rho(a)) < r_\infty \text{ for } a \in \Mm.  \tag{$\dag$}
  \end{equation*}
  On the other hand,
  \begin{claim}
    If $r < r_\infty$, then $v(\rho(f(x))) \ge r$ for all sufficiently
    small $x$.
  \end{claim}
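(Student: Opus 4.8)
The plan is simple: use the definition of $r_\infty$ as a supremum to produce a single ball in $\mathcal{C}$ of radius exceeding $r$, and then push membership in that ball through Lemma~\ref{fernow} to get the desired lower bound on $v(\rho(f(x)))$.

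First I would unwind the definition of $r_\infty$. Recall that $r_\infty$ is the supremum of the radii of the balls in $\mathcal{C}$, and it is finite by definable completeness. Since $r < r_\infty$, the value $r$ is not an upper bound for these radii, so there is some ball $B' \in \mathcal{C}$ whose radius $r'$ satisfies $r' > r$. (We do not need $B'$ to realize the supremum; indeed no ball of $\mathcal{C}$ has radius exactly $r_\infty$, because $\mathcal{C}$ has no minimum.) By the very definition of $\mathcal{C}$, the assertion $B' \in \mathcal{C}$ means precisely that $f(x) \in B'$ for all sufficiently small $x$.

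Now fix such a $B'$ of radius $r' > r$. For all sufficiently small $x$ we have $f(x) \in B'$, and Lemma~\ref{fernow}(\ref{fern3}), applied to the ball $B'$ of radius $r'$ and the point $a = f(x) \in B'$, yields $v(\rho(f(x))) \ge r' > r$. This proves the claim, with room to spare. The only point requiring a moment's care is that $\rho(f(x))$ is defined at all, i.e.\ that $\rho$ is total on $\Mm$; but this is part of Lemma~\ref{fernow}, and it holds precisely because $\bigcap \mathcal{C} = \varnothing$ forces every point of $\Mm$ to lie outside some ball of $\mathcal{C}$. I do not expect any genuine obstacle here: the argument is just the definition of ``supremum'' combined with property (\ref{fern3}) of $\rho$, plus the routine observation that ``for all sufficiently small $x$'' means ``on some punctured ball around $0$'' and that we are imposing only finitely many such conditions.
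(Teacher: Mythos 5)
Your proof is correct and follows essentially the same route as the paper: use $r < r_\infty = \sup$ to find a ball in $\mathcal{C}$ of radius at least $r$, note that $f(x)$ lies in it for all sufficiently small $x$ by the definition of $\mathcal{C}$, and conclude via Lemma~\ref{fernow}(\ref{fern3}). The extra remark about $\rho$ being total is a harmless (and accurate) check already built into Lemma~\ref{fernow}.
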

  \begin{claimproof}
    By definition of $r_\infty$, there is a ball $B \in \mathcal{C}$
    with radius at least $r$.  By definition of $\mathcal{C}$, we have
    $f(x) \in B$ for all sufficiently small $x$.  Then $v(\rho(f(x)))
    \ge r$ by Lemma~\ref{fernow}(\ref{fern3}).
  \end{claimproof}
  Therefore, as $x$ approaches 0, $v(\rho(f(x)))$ approaches
  $r_\infty$ from below.

  By Lemma~\ref{asymptotics} applied to $\rho(f(x))$, there is a definable endomorphism $g : \Gamma \to \Gamma$ and constant $c
  \in \Gamma$ such that
  \begin{equation*}
    v(\rho(f(x))) = g(v(x)) + c.
  \end{equation*}
  As $x$ goes to zero, the left hand side approaches $r_\infty$.  This
  can only happen if $g$ vanishes and $c = r_\infty$.  Then
  \begin{equation*}
    v(\rho(f(x))) = r_\infty
  \end{equation*}
  for all sufficiently small $x$, \emph{contradicting ($\dag$)}.
\end{proof}
Combining this with the results on asymptotics, we get an interesting
corollary:
\begin{theorem} \label{twist}
  Let $f : \Mm^\times \to \Mm^\times$ be definable.  Then there is a
  definable multiplicative endomorphism $g : \Mm^\times \to \Mm^\times$ and a
  constant $c \in \Mm^\times$ such that
  \begin{equation*}
    \lim_{x \to 0} f(x)/g(x) = c
  \end{equation*}
\end{theorem}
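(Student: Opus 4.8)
The plan is to define $g$ directly from $f$ by a limiting procedure and then verify that it has the required properties. First I would apply Lemma~\ref{asymptotics}(1) to the definable function $x \mapsto \rv(f(x))$ from $\Mm^\times$ to $\RV$. Since the composite of $\rv$ with $\RV \to \Gamma$ is $v$, this yields a ball $B_0 \ni 0$, a definable endomorphism $\phi : \Gamma \to \Gamma$, and a constant $\gamma_0 \in \Gamma$ such that
\begin{equation*}
  v(f(x)) = \phi(v(x)) + \gamma_0 \qquad \text{for } x \in B_0 \setminus \{0\}.
\end{equation*}

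Next I would define $g : \Mm^\times \to \Mm^\times$ by $g(x) := \lim_{t \to 0} f(xt)/f(t)$. The first task is to check that $g$ is a well-defined definable function with values in $\Mm^\times$. For fixed $x$, the map $t \mapsto f(xt)/f(t)$ is definable on a punctured neighbourhood of $0$, so by Theorem~\ref{limits-exist} its limit exists in $\Mm \cup \{\infty\}$. Moreover, for all sufficiently small $t$ (namely $t$ with $v(t)$ large enough that $xt, t \in B_0 \setminus \{0\}$), additivity of $\phi$ gives
\begin{equation*}
  v\!\left(\frac{f(xt)}{f(t)}\right) = \bigl(\phi(v(x)+v(t))+\gamma_0\bigr) - \bigl(\phi(v(t))+\gamma_0\bigr) = \phi(v(x)),
\end{equation*}
a fixed element of $\Gamma$. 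Hence the limit can be neither $0$ nor $\infty$, so $g(x) \in \Mm^\times$ and $v(g(x)) = \phi(v(x))$. Definability of $g$ is clear, since ``$\lim_{t\to 0} h(t) = y$'' is expressible by a first-order formula over $\Gamma$ and $\Mm$. I expect this well-definedness check to be the only genuinely load-bearing step, and it rests entirely on the valuation identity from Lemma~\ref{asymptotics}.

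Then I would verify that $g$ is a group homomorphism. Clearly $g(1) = \lim_{t\to 0} 1 = 1$. For $x_1, x_2 \in \Mm^\times$, write $f(x_1 x_2 t)/f(t) = \bigl(f(x_1 \cdot x_2 t)/f(x_2 t)\bigr)\cdot\bigl(f(x_2 t)/f(t)\bigr)$; as $t \to 0$ the second factor tends to $g(x_2)$, and substituting $s = x_2 t$ (a homeomorphism of $\Mm$ fixing $0$, as $x_2 \neq 0$) shows the first factor tends to $\lim_{s\to 0} f(x_1 s)/f(s) = g(x_1)$. Both limits lie in $\Mm^\times$, so the limit of the product is $g(x_1)g(x_2)$, giving $g(x_1 x_2) = g(x_1)g(x_2)$. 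Thus $g$ is a definable multiplicative endomorphism of $\Mm^\times$. Finally, by Theorem~\ref{limits-exist} the limit $c := \lim_{x\to 0} f(x)/g(x)$ exists in $\Mm \cup \{\infty\}$, and for $x \in B_0 \setminus \{0\}$ we compute $v(f(x)/g(x)) = v(f(x)) - v(g(x)) = \bigl(\phi(v(x)) + \gamma_0\bigr) - \phi(v(x)) = \gamma_0$, a fixed element of $\Gamma$; so $c$ is neither $0$ nor $\infty$, i.e.\ $c \in \Mm^\times$, completing the proof. Apart from pinning down the right definition of $g$, everything is routine once Theorem~\ref{limits-exist} and Lemma~\ref{asymptotics} are available, so I do not anticipate a serious obstacle.
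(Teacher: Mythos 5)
Your proposal is correct and follows essentially the same route as the paper's own proof: both apply Lemma~\ref{asymptotics} to $\rv(f(x))$, define $g(x)=\lim_{t\to 0}f(xt)/f(t)$, use Theorem~\ref{limits-exist} plus the constancy of the valuation to see the limit lands in $\Mm^\times$, verify multiplicativity by the same factor-and-substitute trick, and conclude with a second application of Theorem~\ref{limits-exist} to $f(x)/g(x)$. No gaps; this matches the paper's argument step for step.
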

\begin{proof}
  Applying Lemma~\ref{asymptotics} to $\rv(f(x))$, we get an
  endomorphism $h : \Gamma \to \Gamma$ and constant $\gamma$ such that
  \begin{equation*}
    v(f(x)) = h(v(x)) + \gamma
  \end{equation*}
  for sufficiently small $x \ne 0$.  For any constant $a \in
  \Mm^\times$, we then have
  \begin{gather*}
    v\left(\frac{f(ax)}{f(x)}\right) = v(f(ax)) - v(f(x)) = h(v(ax)) -
    h(v(x)) = h(v(ax)-v(x)) = h(v(a))
  \end{gather*}
  for sufficiently small $x$.  It follows that $f(ax)/f(x)$ is
  confined to the annulus $\{y \in \Mm^\times : v(y) = h(v(a))\}$.  Define
  \begin{equation*}
    g(a) := \lim_{x \to 0} \frac{f(ax)}{f(x)}.
  \end{equation*}
  By Theorem~\ref{limits-exist}, $g(a)$ exists and lives in the
  annulus.  In particular, $g(a) \in \Mm^\times$ rather than $g(a) =
  \infty$ or $g(a) = 0$.  Moreover,
  \begin{equation*}
    v(g(a)) = h(v(a)).  \tag{$\ast$}
  \end{equation*}
  Given $a,b \in \Mm^\times$,
  \begin{equation*}
    g(a) = \lim_{x \to 0} \frac{f(abx)}{f(bx)}
  \end{equation*}
  and so
  \begin{equation*}
    g(ab) = \lim_{x \to 0} \left(\frac{f(abx)}{f(bx)} \cdot
    \frac{f(bx)}{f(x)}\right) = g(a)g(b).
  \end{equation*}
  Thus $g$ is a definable multiplicative endomorphism.  Finally, note that
  \begin{equation*}
    v\left(\frac{f(x)}{g(x)}\right) = v(f(x))-v(g(x)) = h(v(x)) +
    \gamma - h(v(x)) = \gamma
  \end{equation*}
  for sufficiently small $x$, by ($\ast$).  Applying
  Theorem~\ref{limits-exist} again,
  \begin{equation*}
    \lim_{x \to 0} \frac{f(x)}{g(x)} = c
  \end{equation*}
  for some $c \in \Mm^\times$ with $v(c) = \gamma$.
\end{proof}
It is worth noting the following about definable endomorphisms of
$\Mm^\times$:
\begin{theorem} \label{chikamin}
  Let $f : \Mm^\times \to \Mm^\times$ be a definable endomorphism.
  \begin{enumerate}
  \item There is a definable endomorphism $g : \Gamma \to \Gamma$ such that
    \begin{equation*}
      v(f(x)) = g(v(x)).
    \end{equation*}
  \item If $k^\times$ has no exotic automorphisms or $\Gamma$
    has no exotic automorphisms, then $g(x) = q \cdot x$ for some
    rational number $q$.
  \item \label{chk3} If $g(x) = q \cdot x$ for some rational number $q$, then $q =
    n/p^m$ where $p$ is the characteristic exponent of $\Mm$ and $n,m$
    are integers.  Moreover, $f(x) = x^{n/p^m}$.
  \end{enumerate}
\end{theorem}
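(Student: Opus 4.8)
The plan is to combine the asymptotics of Lemma~\ref{asymptotics} with the multiplicativity of $f$. First I would apply Lemma~\ref{asymptotics}(1) to the definable map $x \mapsto \rv(f(x))$, obtaining a ball $B \ni 0$, a definable endomorphism $g : \Gamma \to \Gamma$, and a constant $c \in \Gamma$ with $v(f(x)) = g(v(x)) + c$ for all $x \in B \setminus \{0\}$. Since $v \circ f : (\Mm^\times,\cdot) \to (\Gamma,+)$ is a homomorphism, choosing $x,y$ of large enough valuation that $xy$ also lies in $B \setminus \{0\}$ and computing $v(f(xy))$ two ways gives $g(v(x)) + g(v(y)) + c = g(v(x)) + g(v(y)) + 2c$, so $c = 0$. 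Then for an arbitrary $x \in \Mm^\times$ I would pick $y \in B \setminus \{0\}$ with $v(y)$ large enough that $xy \in B \setminus \{0\}$; comparing $v(f(xy))$ two ways once more yields $v(f(x)) = g(v(x))$ for all $x \in \Mm^\times$. This proves (1).

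\textbf{Part (2).} If $\kx$ has no exotic automorphisms, Lemma~\ref{asymptotics}(2), applied to the same map $\rv(f(x))$, already delivers $g(x) = q x$ with $q \in \Qq$. If instead $\Gamma$ has no exotic automorphisms, then by definition the only definable endomorphisms $\Gamma \to \Gamma$ are multiplication by rationals, so the endomorphism $g$ produced in (1) has this form.

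\textbf{Part (3).} Assume $g(x) = q x$ and write $q = n/m$ in lowest terms with $m \ge 1$. The strategy is to force $f$ to be literally the power map. Set $\chi(x) := f(x)^m / x^n$, a definable endomorphism of $\Mm^\times$; since $v(\chi(x)) = m q\, v(x) - n\, v(x) = 0$, it takes values in $\Oo^\times$. I claim $\chi$ is trivial: using definable completeness, Theorem~\ref{limits-exist} gives that $L := \lim_{x \to 0} \chi(x)$ exists in $\Mm \cup \{\infty\}$, and since $\chi$ takes values in $\Oo^\times$ the limit $L$ must itself lie in $\Oo^\times$ (it can be neither $\infty$ nor an element of positive valuation). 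Now for any $a \in \Mm^\times$, multiplicativity and the substitution $y = ax$ give $\chi(a) \cdot L = \lim_{x \to 0}\chi(ax) = \lim_{y \to 0}\chi(y) = L$, so $\chi(a) = 1$. Hence $f(x)^m = x^n$ for all $x$. Evaluating this at a generator of $\mu_\ell(\Mm) \cong \Zz/\ell$ for a prime $\ell \ne p$ (the characteristic exponent of $\Mm$), and using that $f$ carries $\mu_\ell$ into itself, gives $sm \equiv n \pmod \ell$ for some integer $s$; if $\ell \mid m$ this forces $\ell \mid n$, contradicting $\gcd(m,n) = 1$. Therefore $m = p^j$ for some $j \ge 0$, so $q = n/p^j$. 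Finally, when $p = 1$ we get $m = 1$ and $f(x) = x^n$; when $p > 1$ the field $\Mm$ has characteristic $p$, the $p^j$-th power map on $\Mm$ is injective, and $f(x)^{p^j} = x^n = \bigl(x^{n/p^j}\bigr)^{p^j}$ forces $f(x) = x^{n/p^j}$.

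\textbf{The main obstacle.} The only genuine difficulty is the triviality of $\chi$ in part (3). A frontal attack — classifying definable endomorphisms $\Mm^\times \to \Oo^\times$ via the structure of $\Oo^\times$, or via the induced endomorphism of $\kx$ and the analysis of $\RV$ in Section~\ref{sec-heart} — has to contend with possibly-exotic automorphisms of $\kx$, which are \emph{not} excluded in part (3). Routing through $\lim_{x\to 0}\chi(x)$ avoids this: definable completeness supplies the limit, the bound $v(\chi) \equiv 0$ confines it to $\Oo^\times$, and multiplicativity then collapses $\chi$ to the identity. Everything after that — the roots-of-unity computation and the Frobenius step — is routine arithmetic.
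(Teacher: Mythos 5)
Your proposal is correct and takes essentially the same route as the paper: parts (1)--(2) come from Lemma~\ref{asymptotics} plus multiplicativity cancelling the additive constant, and part (3) rests on Theorem~\ref{limits-exist} applied to a valuation-zero multiplicative map (so that $\chi(a)\cdot L = L$ forces $\chi \equiv 1$) together with the roots-of-unity computation and injectivity of Frobenius. The only cosmetic difference is organizational: the paper first reduces to the case $q=0$ (showing a homomorphism into $\Oo^\times$ is trivial) and then rules out denominators prime to $p$ by a separate contradiction with $f(x)^\ell$, whereas you fold both steps into the single auxiliary map $\chi(x)=f(x)^m/x^n$ and constrain $m$ afterwards.
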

\begin{proof}
  For (1) and (2), Lemma~\ref{asymptotics} gives an endomorphism $g$ and a
  constant $c$ such that
  \begin{equation*}
    v(f(x)) = g(v(x)) + c \tag{$\ast$}
  \end{equation*}
  for all sufficiently small $x$.  Moreover, $g$ has the form $q \cdot
  x$ unless both $\Gamma$ and $k^\times$ have exotic automorphisms.

  Fix some $a \in \Mm^\times$.  Take $x$ so small that ($\ast$) holds
  for both $x$ and $ax$.  Then
  \begin{align*}
    v(f(ax)) &= g(v(ax)) + c \\
    v(f(x)) &= g(v(x)) + c \\
    \text{ and so } v(f(a)) &= g(v(a))
  \end{align*}
  by subtracting the second line from the first.  This proves (1) and (2).

  For part (3), first suppose that $g(x) = q \cdot x$ for some $q$ of
  the form $n/p^m$.  Then $v(f(x)) = q \cdot v(x) = v(x^q)$.  We must
  show that $f(x) = x^q$.  Replacing $f(x)$ with $f(x)/x^q$, we may
  assume $q = 0$, so that $v(f(x)) = 0$ for all $x$.  We must show
  that $f(x) = 1$.  Note that $f$ is a definable homomorphism
  $\Mm^\times \to \Oo^\times$.  By Theorem~\ref{limits-exist},
  \begin{equation*}
    \lim_{x \to 0} f(x) = c
  \end{equation*}
  for some $c \in \Mm \cup \{\infty\}$.  But $f(x)$ lies in the clopen
  set $\Oo^\times$, so in fact $c \in \Oo^\times$.  For any $a \in
  \Mm^\times$, we have
  \begin{equation*}
    c = \lim_{x \to 0} f(ax) = \lim_{x \to 0} f(a)f(x) = f(a) \cdot c.
  \end{equation*}
  This is impossible unless $f(a) = 1$.

  Next suppose that $g(x) = q \cdot x$ where $q$ does \emph{not} have
  the desired form.  We must obtain a contradiction.  Write $q$ in
  lowest terms as $n/(p^m \ell)$, where $\ell > 1$ is prime to $p$.
  Note that
  \begin{equation*}
    v(f(x)^\ell) = \ell \cdot q \cdot v(x) = (n/p^m) \cdot v(x).
  \end{equation*}
  By the case described above, we must have $f(x)^\ell = x^{n/p^m}$.
  Since $f$ is a multiplicative endomorphism,
  \begin{equation*}
    f(x^\ell) = x^{n/p^m} \text{ for any } x \in \Mm^\times.
  \end{equation*}
  Taking $x$ to be a primitive $\ell$th root of unity, we get a
  contradiction (the left side is $1$ and the right side is not).
\end{proof}
As a corollary, a definable endomorphism of $\Mm^\times$ is
determined by the induced map on $\Gamma$:
\begin{corollary} \label{napeequa}
  Let $h, h'$ be two definable endomorphisms of $\Mm^\times$.  Let $g$
  and $g'$ be the corresponding definable endomorphisms of
  $(\Gamma,+)$, so that
  \begin{gather*}
    v(h(x)) = g(v(x)) \\
    v(h'(x)) = g'(v(x))
  \end{gather*}
  If $g = g'$, then $h = h'$.
\end{corollary}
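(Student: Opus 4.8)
The plan is to reduce the statement to Theorem~\ref{chikamin}(\ref{chk3}) by passing to the quotient endomorphism $k := h/h'$. Since $\Mm^\times$ is abelian, the map $k : \Mm^\times \to \Mm^\times$ given by $k(x) = h(x)/h'(x)$ is a definable endomorphism, because
\[
  k(xy) = \frac{h(xy)}{h'(xy)} = \frac{h(x)h(y)}{h'(x)h'(y)} = k(x) k(y).
\]

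Next I would identify the endomorphism of $\Gamma$ induced by $k$. From the hypotheses $v(h(x)) = g(v(x))$, $v(h'(x)) = g'(v(x))$, and $g = g'$, we get
\[
  v(k(x)) = v(h(x)) - v(h'(x)) = g(v(x)) - g'(v(x)) = 0
\]
for every $x \in \Mm^\times$. Hence $k$ is a definable homomorphism $\Mm^\times \to \Oo^\times$, and the endomorphism of $(\Gamma,+)$ attached to $k$ by Theorem~\ref{chikamin}(1) is the zero map $x \mapsto 0 \cdot x$. In particular this is multiplication by the rational number $q = 0 = 0/p^0$, so the hypothesis of Theorem~\ref{chikamin}(\ref{chk3}) is met.

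Applying Theorem~\ref{chikamin}(\ref{chk3}) to $k$ now yields $k(x) = x^{0} = 1$ for all $x$, i.e.\ $h(x) = h'(x)$ for all $x \in \Mm^\times$, which is the claim. The real content of this corollary is already contained in Theorem~\ref{chikamin}; the only thing to check is that its hypotheses apply to the quotient endomorphism $k$, which is the routine computation above, so I do not anticipate any genuine obstacle.
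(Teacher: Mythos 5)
Your proposal is correct and is essentially identical to the paper's own proof: both pass to the quotient endomorphism $h/h'$, observe that its induced endomorphism of $\Gamma$ is zero, and apply Theorem~\ref{chikamin}(\ref{chk3}) with $q=0$ to conclude $h/h' \equiv 1$. No gaps.
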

\begin{proof}
  Let $h''(x) = h(x)/h'(x)$.  Then $h''$ is a definable endomorphism
  of $\Mm^\times$, and the corresponding definable endomorphism of
  $\Gamma$ is $g''(x) = g(x)-g'(x) = 0$.  By
  Theorem~\ref{chikamin}(\ref{chk3}), with $q=0$, we see that $h''(x)
  = x^0 = 1$, so that $h(x) = h'(x)$.
\end{proof}
We can also deduce asymptotic expansions of definable functions, under
some special assumptions:
\begin{theorem} \label{asymptotics-ckd}
  Suppose $\kx$ has no exotic automorphisms or $\Gamma$ has no
  exotic automorphisms, and suppose $\Mm$ has characteristic 0.  Let $X$
  be a punctured neighborhood of 0 and $f : X \to \Mm$ be a definable
  function.  Then there is an increasing sequence of integers $m_1 <
  m_2 < \cdots $ and elements $c_1, c_2, \ldots \in \Mm$ such that
  \begin{equation*}
    \lim_{x \to 0} \frac{f(x) - \sum_{n = 1}^\ell
      c_nx^{m_n}}{x^{m_\ell}} = 0 \text{ for each } \ell.
  \end{equation*}
  In other words,
  \begin{equation*}
    f(x) = \sum_{n = 1}^\ell c_nx^{m_n} + o(x^{m_\ell}) \text{ as } x \to 0,
  \end{equation*}
  so $f(x)$ has the asymptotic expansion $\sum_{n=1}^\infty
  c_nx^{m_n}$.
\end{theorem}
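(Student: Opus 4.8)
The plan is to build the asymptotic expansion one term at a time, extracting each leading monomial with Theorem~\ref{twist} and then using Theorem~\ref{chikamin} together with the hypothesis $\characteristic(\Mm) = 0$ to see that the monomial has an \emph{integer} exponent. For the base step we may assume $X = B_0 \setminus \{0\}$ for a ball $B_0 \ni 0$. If $f$ vanishes on some punctured neighborhood of $0$, take $c_n = 0$ and $m_n = n$ for all $n$ and we are done; otherwise, by Fact~\ref{inf-fact}, $f$ is nonzero on some punctured ball around $0$, and after shrinking $X$ and extending $f$ by the constant $1$ outside $X$ we may view $f$ as a definable map $\Mm^\times \to \Mm^\times$ (this changes neither the germ of $f$ at $0$ nor any of the limits below). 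By Theorem~\ref{twist} there is a definable multiplicative endomorphism $g$ of $\Mm^\times$ and a constant $c \in \Mm^\times$ with $\lim_{x \to 0} f(x)/g(x) = c$. Applying Theorem~\ref{chikamin} to $g$: by parts (1)--(2) and the no-exotic-automorphisms hypothesis, the endomorphism of $\Gamma$ induced by $g$ is $v(g(x)) = q\, v(x)$ for some $q \in \Qq$; by part (\ref{chk3}), $q = n/p^m$ where $p$ is the characteristic exponent of $\Mm$, which equals $1$ since $\characteristic(\Mm) = 0$, so $q =: m_1 \in \Zz$ and $g(x) = x^{m_1}$. Putting $c_1 = c$ and $f_1(x) = f(x) - c_1 x^{m_1}$, we get a definable function with $\lim_{x \to 0} f_1(x)/x^{m_1} = 0$.

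Next I would iterate. Suppose we have integers $m_1, \dots, m_n$, elements $c_1, \dots, c_n \in \Mm$, and the definable function $f_n(x) = f(x) - \sum_{j=1}^n c_j x^{m_j}$ with $\lim_{x \to 0} f_n(x)/x^{m_n} = 0$. If $f_n$ vanishes on a punctured neighborhood of $0$, stop and pad the sequence, setting $c_j = 0$ for $j > n$ and choosing any integers $m_{n+1} < m_{n+2} < \cdots$. Otherwise, as in the base step (shrink the domain to where $f_n \ne 0$, extend by $1$), Theorems~\ref{twist} and \ref{chikamin} produce an integer $m_{n+1}$ and a constant $c_{n+1} \in \Mm^\times$ with $\lim_{x \to 0} f_n(x)/x^{m_{n+1}} = c_{n+1}$; set $f_{n+1}(x) = f_n(x) - c_{n+1} x^{m_{n+1}}$, so that $\lim_{x \to 0} f_{n+1}(x)/x^{m_{n+1}} = 0$. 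To see $m_{n+1} > m_n$: the relation $\lim_{x \to 0} f_n(x)/x^{m_n} = 0$ gives $v(f_n(x)) - m_n v(x) \to +\infty$, while $\lim_{x \to 0} f_n(x)/x^{m_{n+1}} = c_{n+1} \ne 0$ gives $v(f_n(x)) - m_{n+1} v(x) \to v(c_{n+1}) \in \Gamma$; subtracting, $(m_{n+1} - m_n) v(x) \to +\infty$, and since $v(x) \to +\infty$ as $x \to 0$ the integer $m_{n+1} - m_n$ must be positive.

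This yields the increasing sequence $m_1 < m_2 < \cdots$ of integers and elements $c_1, c_2, \ldots$, and for every $\ell$ we have $f(x) - \sum_{n=1}^\ell c_n x^{m_n} = f_\ell(x)$ with $\lim_{x \to 0} f_\ell(x)/x^{m_\ell} = 0$ (in the terminating case $f_\ell$ is identically $0$ near $0$ once $\ell$ passes the stopping index, so the limit is trivially $0$ there) — which is exactly the asserted expansion. The genuinely substantive ingredients, Theorem~\ref{twist} and Theorem~\ref{chikamin}, are already in hand, so I do not anticipate a serious obstacle; the only points requiring care are the repeated shrinking and re-extending of the domain needed to re-enter the hypotheses of Theorem~\ref{twist} at each stage, and the valuation comparison establishing that successive exponents strictly increase.
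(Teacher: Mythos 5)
Your proposal is correct and follows essentially the same route as the paper: extract the leading term via Theorem~\ref{twist}, use Theorem~\ref{chikamin}(2--3) with characteristic $0$ to see the exponent is an integer, and iterate on the remainder, which is $o(x^{m_n})$ and hence forces the next exponent to be strictly larger. You merely spell out the domain adjustments and the valuation comparison that the paper leaves implicit.
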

\begin{proof}
  By Theorem~\ref{twist} and Theorem~\ref{chikamin}(2-3), any non-zero
  definable function $f(x)$ on a punctured neighborhood of 0 has
  asymptotic behavior
  \begin{equation*}
    f(x) \sim c x^m \text{ as } x \to 0,
  \end{equation*}
  for some $c \in \Mm^\times$ and $m \in \Zz$.  The remainder $f(x) -
  c x^m$ is $o(x^m)$, so it has asymptotic expansion
  \begin{equation*}
    f(x) - c x^m \sim c' x^{m'}
  \end{equation*}
  for some greater integer $m' > m$.  Iterating this, we get the
  desired asymptotic expansion.
\end{proof}
Cubides Kovacsics and Delon proved Theorem~\ref{asymptotics-ckd} in
the special case where definable unary functions on $\Gamma$ are
eventually $\Qq$-linear, a stronger assumption than ``$\Gamma$ has no
exotic automorphisms'' \cite[Theorem~6.1]{CK-D}.

\section{Strengthening limits}
%
%
We need the following random technical fact.
\begin{theorem} \label{upgrade}
  Let $B \subseteq \Mm$ be a ball containing 0.  Let $U \subseteq \Mm$ be definable and open.  Let $f :
  U \times B \to \Mm$ be a definable function.  Suppose that
  \begin{equation*}
    \lim_{y \to 0} f(a,y) = f(a,0)
  \end{equation*}
  for every $a \in U$.  Then $f$ is continuous at $(a,0)$ for all
  but finitely many $a \in U$.
\end{theorem}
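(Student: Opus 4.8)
The plan is to show that the set $\{a \in U : f \text{ is discontinuous at } (a,0)\}$ is finite by producing, for each ``radius'' $r \in \Gamma$, a definable modulus of continuity $\gamma_r(a)$ for the slice $y \mapsto f(a,y)$, and then using that the \emph{whole family} of these moduli is locally constant in $a$ outside a finite set. We may assume $B \neq \Mm$ (continuity at $(a,0)$ is a local condition, so we can shrink $B$), so that $B$ has a valuative radius $\beta \in \Gamma$.

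First I would make the modulus precise. Fix $r \in \Gamma$ and $a \in U$ and let $S_r(a) = \{ v(y) : y \in B \setminus \{0\}, \ v(f(a,y) - f(a,0)) \le r \} \subseteq \Gamma$. The hypothesis $\lim_{y \to 0} f(a,y) = f(a,0)$ together with Fact~\ref{inf-fact} (uniqueness of the nonzero infinitesimal type) forces $S_r(a)$ to be bounded above: if it were unbounded, the set $\{y \in B : v(f(a,y)-f(a,0)) \le r\}$ would, by Fact~\ref{inf-fact}, contain a whole punctured ball around $0$, contradicting the limit. By o-minimality of $\Gamma$ I can therefore set $\gamma_r(a) := \sup S_r(a) \in \Gamma$ (and $\gamma_r(a) := \beta$ when $S_r(a) = \varnothing$). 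Then $(r,a) \mapsto \gamma_r(a)$ is definable, $\gamma_r(a) \ge \beta$, and $v(y) > \gamma_r(a) \implies v(f(a,y)-f(a,0)) > r$ for all $y \in B$.

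The main step is to package all the $\gamma_r$ at once. For $a \in U$ set $G_a = \{(r, \gamma_r(a)) : r \in \Gamma\} \subseteq \Gamma^2$; this is a definable family of subsets of $\Gamma^2$ indexed by $a \in U \subseteq \Mm$, so Corollary~\ref{corfam} yields a finite $Z_1 \subseteq U$ such that for each $a \in U \setminus Z_1$ the family $\{G_{a'}\}$ is constant on some ball $N_a \ni a$; equivalently $\gamma_r(a') = \gamma_r(a)$ for \emph{every} $r \in \Gamma$ and every $a' \in N_a$. Separately, $a \mapsto f(a,0)$ is a definable unary function on $U$, hence continuous outside a finite set $Z_2$ by generic continuity of definable functions in $C$-minimal fields. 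Then I would verify that $f$ is continuous at $(a,0)$ for each $a \in U \setminus (Z_1 \cup Z_2)$: given a basic neighbourhood $V = \{z : v(z - f(a,0)) > r\}$ of $f(a,0)$, shrink $N_a$ to a ball $N \subseteq U$ with $a \in N$ and $v(f(a',0) - f(a,0)) > r$ for all $a' \in N$, and put $B' = \{y \in B : v(y) > \gamma_r(a)\}$, a ball around $0$. For $(a',y) \in N \times B'$ one has $\gamma_r(a') = \gamma_r(a)$, hence $v(f(a',y) - f(a',0)) > r$, hence $v(f(a',y) - f(a,0)) > r$ by the ultrametric inequality; thus $f(N \times B') \subseteq V$ and $f$ is continuous at $(a,0)$.

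The step I expect to be the crux is the uniformity over $r$ in the previous paragraph. Applying Fact~\ref{const-to-gamma} to each function $a \mapsto \gamma_r(a)$ separately would only give, for each fixed $r$, a finite exceptional set $Z_{1,r}$, and $\bigcup_{r} Z_{1,r}$ need not be finite. Encoding all the modulus data into the single $\Gamma^2$-valued definable family $\{G_a\}_{a \in U}$, so that one application of Corollary~\ref{corfam} handles all $r$ simultaneously, is the point that makes the exceptional set finite; everything else (the ultrametric estimate, the reduction to basic neighbourhoods, the definability bookkeeping) is routine.
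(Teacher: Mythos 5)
Your proposal is correct, but it takes a genuinely different route from the paper's. The paper argues by contradiction: if the discontinuity set were infinite it would contain a ball (an infinite definable unary set has interior, by $C$-minimality), and after repeatedly shrinking $U$ it uniformizes first a ``badness level'' $\epsilon$ and then a modulus $\delta$ by saturation, finally contradicting generic continuity of $a \mapsto f(a,0)$ via the ultrametric inequality. You instead argue directly: the limit hypothesis plus o-minimality of $\Gamma$ lets you define an honest modulus $\gamma_r(a) = \sup S_r(a)$ (the sup exists since $S_r(a)$ is definable, and non-emptiness/boundedness are handled as you say---in fact boundedness already follows from the limit hypothesis without Fact~\ref{inf-fact}), and your key move---encoding the whole function $r \mapsto \gamma_r(a)$ as a single definable subset $G_a \subseteq \Gamma^2$ so that one application of Corollary~\ref{corfam} gives local constancy in $a$ outside a finite set, uniformly in $r$---is exactly what replaces the paper's two saturation-and-shrinking steps. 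Your diagnosis of the crux is accurate: applying Fact~\ref{const-to-gamma} to each $\gamma_r$ separately would not yield a finite exceptional set, and the graph trick is legitimate because Corollary~\ref{corfam} allows arbitrary definable families of subsets of $\Gamma^n$. As for what each approach buys: yours is direct, avoids saturation, and produces slightly more information (a definable modulus of continuity that is locally constant in $a$), at the price of leaning on the heavier machinery behind Corollary~\ref{corfam} (stable embeddedness and elimination of imaginaries in $\Gamma$); the paper's argument uses only elementary consequences of $C$-minimality plus saturation and generic continuity, and its shrink-to-a-ball structure is the one that transfers to the several-variable Theorem~\ref{upgrade2}, where the ball-shrinking is replaced by Fact~\ref{baire}, whereas your single use of Corollary~\ref{corfam} is tailored to a one-dimensional parameter set. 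Both proofs share the same final step: generic continuity of the unary slice $a \mapsto f(a,0)$ combined with the ultrametric inequality.
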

\begin{proof}
  Otherwise, $\{a \in U : f \text{ is discontinuous at } (a,0)\}$
  has non-empty interior by $C$-minimality.  Shrinking $U$, we may
  assume that $f$ is discontinuous at $(a,0)$ for every $a \in U$.
  By generic continuity, we may shrink $U$ and assume that the
  function $f(x,0)$ is continuous for $x \in U$.

  If $\epsilon \in \Gamma$ and $a \in U$, say that $a$ is
  \emph{$\epsilon$-good} if $v(f(x,y)-f(a,0)) > \epsilon$ for all
  $(x,y)$ in a neighborhood of $(a,0)$.  Otherwise, say that $a$ is
  \emph{$\epsilon$-bad}.  Every $a \in U$ is $\epsilon$-bad for
  sufficiently large $\epsilon$, or else $f$ would be continuous at
  $(a,0)$.

  Take distinct $a_1, a_2, \ldots \in U$.  Take $\epsilon_i$ such
  that $a_i$ is $\epsilon_i$-bad.  By saturation, there is $\epsilon
  \ge \epsilon_i$ for every $i$.  Then $a_i$ is $\epsilon$-bad for
  every $i$.  Therefore, the set of $\epsilon$-bad points is infinite,
  and contains a ball by $C$-minimality.  Shrinking $U$, we may assume
  that every $a \in U$ is $\epsilon$-bad (for some fixed
  $\epsilon$).

  Fix this $\epsilon$.  Say that $a \in U$ is \emph{compatible with}
  $\delta \in \Gamma$ if
  \begin{equation*}
    v(y) > \delta \implies v(f(a,y)-f(a,0)) > \epsilon.
  \end{equation*}
  The fact that $\lim_{y \to 0} f(a,y) = f(a,0)$ implies that every $a
  \in U$ is compatible with all sufficiently large $\delta$.  Take
  distinct $a_1, a_2, \ldots$ in $U$.  Take $\delta_i$ compatible
  with $a_i$.  Take $\delta$ greater than every $\delta_i$.  Then
  every $a_i$ is compatible with $\delta$.  The set of $a \in U$
  compatible with $\delta$ is definable and infinite, so it contains a
  ball.  Shrinking $U$, we may assume that every $a \in U$ is
  compatible with $\delta$.

  Fix some $a_0 \in U$.  By continuity of $f(x,0)$, there is a
  ball $a_0 \in B' \subseteq U$ such that
  \begin{equation*}
    a \in B' \implies v(f(a,0)-f(a_0,0)) > \epsilon.
  \end{equation*}
  If $a \in B'$ and $v(y) > \delta$, then
  \begin{gather*}
    v(f(a,0)-f(a_0,0)) > \epsilon \\
    v(f(a,y)-f(a,0)) > \epsilon \\
    v(f(a,y)-f(a_0,0)) > \epsilon.
  \end{gather*}
  The second line holds because $a$ is compatible with $\delta$.  The
  third line holds by the ultrametric inequality.  If $B(\gamma)$
  denotes the open ball around 0 of radius $\gamma$, then we have just
  shown that
  \begin{equation*}
    (a,y) \in B' \times B(\delta) \implies v(f(a,y)-f(a_0,0)) > \epsilon.
  \end{equation*}
  Therefore $a_0$ is $\epsilon$-good, contradicting the fact that
  every point in $U$ is $\epsilon$-bad.
\end{proof}
As an example, if $f$ is differentiable, then $f$ is strictly
differentiable at almost all points:
\begin{theorem} \label{strong-diff}
  Let $B$ be a ball and $f : B \to \Mm$ be a differentiable definable
  function.  Then
  \begin{equation*}
    \lim_{(x,y) \to (a,a)} \frac{f(x)-f(y)}{x-y} = f'(a)
  \end{equation*}
  for all but finitely many $a \in B$.
\end{theorem}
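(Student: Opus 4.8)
The plan is to deduce this from Theorem~\ref{upgrade} by encoding strict differentiability of $f$ at $a$ as the continuity at $(a,0)$ of the two-variable difference quotient of $f$.

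First I would set up domains. Fix an open sub-ball $U$ of $B$ and let $B_0 \ni 0$ be a ball whose valuative radius is at least that of $B$; then $x + h \in B$ whenever $x \in U$ and $h \in B_0$, by the ultrametric inequality. Since $f$ is differentiable, $f' : B \to \Mm$ is a definable function, so we may define a definable function $G : U \times B_0 \to \Mm$ by
\[
  G(x,h) = \frac{f(x+h) - f(x)}{h} \text{ for } h \neq 0, \qquad G(x,0) = f'(x).
\]
Differentiability of $f$ at the points of $U$ says exactly that $\lim_{h \to 0} G(x,h) = f'(x) = G(x,0)$ for every $x \in U$. Hence Theorem~\ref{upgrade} applies to $G$ and shows that $G$ is continuous at $(a,0)$ for all but finitely many $a \in U$.

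The key step is to check that continuity of $G$ at $(a,0)$ forces $\lim_{(x,y) \to (a,a)} \frac{f(x)-f(y)}{x-y} = f'(a)$. Given $\epsilon \in \Gamma$, continuity provides balls $B_1 \ni a$ and $B_2 \ni 0$ with $v(G(x,h) - f'(a)) > \epsilon$ for $(x,h) \in B_1 \times B_2$. Pick $\delta$ large enough that $v(x - a) > \delta$ implies $x \in B_1$ and $v(h) > \delta$ implies $h \in B_2$. If $x \neq y$ with $v(x-a) > \delta$ and $v(y-a) > \delta$, then $x \in B_1$, while $v(y - x) \geq \min(v(x-a), v(y-a)) > \delta$, so $y - x \in B_2$; therefore $v\big(\tfrac{f(x)-f(y)}{x-y} - f'(a)\big) > \epsilon$, using $\tfrac{f(x)-f(y)}{x-y} = G(x,y-x)$. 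Letting $\epsilon$ grow gives the claimed limit, i.e. $f$ is strictly differentiable at $a$. This short manipulation — using the ultrametric inequality to bound $v(x-y)$ from below once $x$ and $y$ are both close to $a$, thereby reducing the symmetric difference quotient to a one-sided one — is the crux of the argument, and has no o-minimal counterpart.

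Finally I would globalize. The set $S$ of points of $B$ at which $f$ is not strictly differentiable is definable. If $S$ were infinite, then by $C$-minimality (which eliminates $\exists^\infty$) it would contain a ball, hence an open sub-ball $U \subseteq S$. But the previous two paragraphs, applied with this $U$ and with $B_0$ of radius at least that of $B$ (so that $U + B_0 \subseteq B$), show $f$ is strictly differentiable at all but finitely many points of $U$, contradicting $U \subseteq S$ and the fact that $U$ is infinite. Hence $S$ is finite. I do not expect any serious obstacle here beyond the bookkeeping needed to ensure $G$ is defined on a genuine product $U \times B_0$; the mathematical content lies entirely in the observation that strict differentiability at $a$ is continuity of $G$ at $(a,0)$, together with Theorem~\ref{upgrade}.
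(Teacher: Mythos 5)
Your proposal is correct and follows essentially the same route as the paper: both apply Theorem~\ref{upgrade} to the definable difference quotient $G(x,h)=\frac{f(x+h)-f(x)}{h}$ extended by $G(x,0)=f'(x)$, and then convert continuity at $(a,0)$ into strict differentiability by the change of variables $h=y-x$ together with the ultrametric inequality. The paper handles the domain issue by translating so that $B$ is an additive subgroup (making $G$ defined on $B\times B$ directly), whereas you work on a product $U\times B_0$ and then globalize via $C$-minimality; this is only a difference in bookkeeping, not in the argument.
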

\begin{proof}
  Translating, we may assume that $0 \in B$.  Then $B$ is a definable
  subgroup of $(\Mm,+)$, which simplifies some of the notation.  Consider the definable function
  \begin{gather*}
    g : B \times B \to \Mm \\
    g(a,y) = 
    \begin{cases}
      \frac{f(a+y)-f(a)}{y} & \text{ if } y \ne 0 \\
      f'(a) & \text{ if } y = 0.
    \end{cases}
  \end{gather*}
  Then $\lim_{y \to 0} g(a,y) = f'(a) = g(a,0)$.
  Theorem~\ref{upgrade} shows that for almost all $a \in B$,
  \begin{equation*}
    \lim_{(x,y) \to (a,0)} g(x,y) = g(a,0) = f'(a).
  \end{equation*}
  Restricting to $y \ne 0$, this implies
  \begin{equation*}
    \lim_{(x,y) \to (a,0)} \frac{f(x+y)-f(y)}{y} = f'(a).
  \end{equation*}
  Modulo a change of variables, this is what we want.
\end{proof}

\section{Generic differentiability}

\begin{theorem}\label{diff-thm}
  Suppose $\characteristic(\Mm) = 0$ and $\Mm$ is definably complete.
  If $B \subseteq \Mm$ is a ball and $f : B \to \Mm$ is definable,
  then there is a smaller ball $B' \subseteq B$ on which $f$ is
  differentiable.

  Consequently, $f$ is differentiable at all but finitely many points
  of $B$.
\end{theorem}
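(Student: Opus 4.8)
The plan is to reduce differentiability of $f$ at a point to the behaviour of a single definable endomorphism of $\Gamma$, handle all but one case directly, and exclude the last case using $\characteristic(\Mm) = 0$. Translating, assume $0 \in B$; using Corollary~\ref{inf-cor} on the partition of $B \setminus \{0\}$ by whether $f(x) \in \Oo$, shrink $B$ so that $f(B) \subseteq \Oo$ or $f(B) \cap \Oo = \varnothing$, and in the latter case replace $f$ by $1/f$ (this does not affect differentiability on a sub-ball). So assume $f : B \to \Oo$. It suffices to find one sub-ball $B' \subseteq B$ on which $f$ is differentiable: the ``consequently'' clause then follows, since the non-differentiability locus is definable, contains no ball by the first assertion, and is therefore finite by $C$-minimality.

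By Theorem~\ref{copy-2}, after shrinking $B$ there is a definable $h : \Gamma \to \Gamma_\infty$ with $v(f(s) - f(t)) = h(v(s-t))$ for distinct $s, t \in B$. If $h(\gamma) = \infty$ for arbitrarily large $\gamma$ then $f$ is locally constant near every point of $B$, hence differentiable; so shrink $B$ and assume $h$ is finite on the relevant range, i.e.\ $f$ is injective on $B$. Applying Lemma~\ref{asymptotics} to $x \mapsto \rv(f(x+a) - f(a))$ for a fixed $a$ and comparing with $h$, we obtain (after a final shrink) a definable endomorphism $G : \Gamma \to \Gamma$ and a constant $c$ with $h(\gamma) = G(\gamma) + c$ for all $\gamma$ above the valuative radius of $B$. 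Since $\Gamma$ is o-minimal, $G$ is $0$ or an automorphism. It cannot be $0$: otherwise $f(B)$ is an infinite definable subset of $\Mm$ with all pairs at distance exactly $c$, but any infinite definable subset of $\Mm$ contains a ball, which has pairs at distance strictly above its radius. It cannot be order-reversing: then $G(\gamma) \to -\infty$ as $\gamma \to +\infty$, contradicting $h(\gamma) = v(f(s)-f(t)) \ge 0$. So $G$ is an order-preserving automorphism.

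For any $a \in B$ one has $v\bigl((f(a+x) - f(a))/x\bigr) = (G - \id)(v(x)) + c$. If $G - \id$ is $0$ or order-preserving, this stays $\ge c$ as $v(x) \to +\infty$, so by Theorem~\ref{limits-exist} the difference quotient has a limit in $\Mm$ (it cannot be $\infty$), and $f$ is differentiable at $a$; since $a$ was arbitrary, $B' = B$ works.

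\textbf{The remaining case, $G$ order-preserving with $G - \id$ order-reversing --- so that the difference quotient tends to $\infty$ at every point of $B$ --- is the main obstacle, and this is where $\characteristic(\Mm) = 0$ must enter.} To exclude it, apply Theorem~\ref{twist} to $x \mapsto f(x+a) - f(a)$: this produces a definable multiplicative endomorphism $g_* : \Mm^\times \to \Mm^\times$ with $v(g_*(x)) = G(v(x))$, independent of $a$ by the uniqueness in Corollary~\ref{napeequa}; then $x \mapsto g_*(x)/x$ is again a definable multiplicative endomorphism, with $\Gamma$-part $G - \id$. By Theorem~\ref{chikamin}, in characteristic $0$ a definable multiplicative endomorphism of $\Mm^\times$ whose effect on $\Gamma$ is scalar has integer scalar; so once one knows $G$ (hence also $G-\id$) is scalar, the two scalars differ by $1$ yet have opposite signs, a contradiction. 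The delicate point, which I expect to be the real content of the proof, is precisely that $G$ need not a priori be a scalar: ruling out an exotic order-preserving $G$ with $G - \id$ order-reversing should require feeding $g_*$, or the endomorphism it induces on $\RV$, back into the analysis of $\RV \times \RV$ from Section~\ref{sec-heart}.
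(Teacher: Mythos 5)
There is a genuine gap, and you have located it yourself: the case of an order-preserving but possibly exotic $G$ with $G - \id$ order-reversing is left unproved, and your suggestion of ``feeding $g_*$ back into the analysis of $\RV \times \RV$'' is not how the difficulty is resolved and would likely not suffice on its own. The results of Section~\ref{sec-heart} permit line segments of exotic slope whenever both $\kx$ and $\Gamma$ carry exotic automorphisms, so no contradiction can be extracted at the $\RV$ level; Theorem~\ref{diff-thm} makes no exotic-automorphism hypothesis, so its proof must work precisely in that regime. (Your preliminary reductions, the use of Lemma~\ref{asymptotics} to write $v(f(x)-f(y)) = G(v(x-y)) + c$, and the scalar sub-case via Theorems~\ref{twist}, \ref{chikamin} and Corollary~\ref{napeequa} all match the paper; but the scalar sub-case is not where the content lies.)

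The paper's mechanism for the hard case is different and avoids any case split on $G$. From Theorem~\ref{twist} and Corollary~\ref{napeequa} one gets a single definable multiplicative endomorphism $h$ of $\Mm^\times$ with
\begin{equation*}
  \lim_{x \to a} \frac{f(x)-f(a)}{h(x-a)} = c_a \in \Mm^\times \quad \text{for all } a \in B,
\end{equation*}
and Theorem~\ref{strong-diff} (really Theorem~\ref{upgrade}) upgrades this to the two-variable limit $\lim_{(x,y) \to (a,a)} \frac{f(x)-f(y)}{h(x-y)} = c_a$ at almost every $a$. Fixing such an $a$ and evaluating along $x = a+uy$, $y' = a+vy$, multiplicativity of $h$ yields $\lim_{y \to 0} \frac{f(a+uy)-f(a+vy)}{h(y)} = h(u-v)c_a$, and summing two such limits gives $h(u+v) = h(u) + h(v)$. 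Thus $h$ (extended by $h(0)=0$) is additive and multiplicative, i.e.\ a definable field embedding $\Mm \to \Mm$. Its fixed field $F$ is definable and, since $\characteristic(\Mm) = 0$, contains $\Qq$, hence is infinite, hence has dp-rank $1$; as $\dpr(\Mm) = 1$ this forces $F = \Mm$, so $h = \id$ and the displayed limit says $f$ is differentiable on a ball. This is where characteristic $0$ and definable completeness actually enter, and it simultaneously disposes of the case you could not close (it shows $G = \id$, so your ``remaining case'' simply cannot occur). Without this additivity/fixed-field argument, or some substitute for it, your proposal does not prove the theorem.
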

\begin{proof}
  By the proof of Lemma~\ref{shuksan}, we may shrink $B$ and assume that $f$ is continuous and that
  \begin{equation*}
    v(f(x)-f(y)) = g(v(x-y)) + c \qquad\qquad\text{ for } x,y \in B
  \end{equation*}
  for some increasing endomorphism $g : \Gamma \to \Gamma$ and constant
  $c$.
  For any fixed $a \in B$, Theorem~\ref{twist} gives a definable
  endomorphism $h_a : \Mm^\times \to \Mm^\times$ and constant $c_a \in
  \Mm^\times$ such that
  \begin{equation*}
    \lim_{x \to a} \frac{f(x)-f(a)}{h_a(x-a)} = c_a.
  \end{equation*}
  Theorem~\ref{chikamin} gives a definable endomorphism $g_a : \Gamma
  \to \Gamma$ such that $v(h_a(x)) = g_a(v(x))$ for sufficiently small
  $x$.  Then for $x$ sufficiently close to $a$, we have
  \begin{equation*}
    v(f(x)-f(a)) = v(c_a) + v(h_a(x-a)) = v(c_a) + g_a(v(x-a))
  \end{equation*}
  but also
  \begin{equation*}
    v(f(x)-f(a)) = c + g(v(x-a)).
  \end{equation*}
  Therefore $g = g_a$ and $c = v(c_a)$.  In particular, $g_a$ doesn't
  depend on $a$.  By Corollary~\ref{napeequa}, $h_a$ doesn't depend on
  $a$.  Therefore, there is a fixed definable endomorphism $h :
  \Mm^\times \to \Mm^\times$ such that $h_a = h$ and
  \begin{equation*}
    \lim_{x \to a} \frac{f(x)-f(a)}{h(x-a)} = c_a \in \Mm^\times \tag{$\ast$}
  \end{equation*}
  for all $a \in B$.  By the proof of Theorem~\ref{strong-diff}, we
  even get
  \begin{equation*}
    \lim_{(x,y) \to (a,a)} \frac{f(x)-f(y)}{h(x-y)} = c_a \in
    \Mm^\times \tag{$\dag$}
  \end{equation*}
  for almost all $a \in B$.  Fix such an $a$.
  \begin{claim}
    If we extend $h$ to a function $\Mm \to \Mm$ by setting $h(0)=0$,
    then $h(u+v) = h(u)+h(v)$ for $u,v \in \Mm$.
  \end{claim}
  \begin{proof}
    If $u,v \in \Mm$ are distinct, then
    \begin{equation*}
    \lim_{y \to 0} \frac{f(a+uy)-f(a+vy)}{h(y)} = h(u-v) \lim_{y \to 0} \frac{f(a+uy)-f(a+vy)}{h((a+uy)-(a+vy))} = h(u-v)c_a \tag{\ddag}
  \end{equation*}
  by ($\dag$), because $h((a+uy)-(a+vy)) = h((u-v)y) = h(u-v)h(y)$.
  Equation ($\ddag$) also holds when $u=v$, since then both sides are
  zero.

  Now letting $u$ and $v$ be arbitrary, we have
  \begin{align*}
    h(u)c_a + h(v)c_a &= \lim_{y \to 0} \frac{f(a+(u+v)y)-f(a+vy)}{h(y)} + \lim_{y \to 0} \frac{f(a+vy)-f(a)}{h(y)} \\
    &= \lim_{y \to 0} \frac{f(a+(u+v)y)-f(a)}{h(y)} = h(u+v)c_a.
  \end{align*}
  Cancelling a factor of $c_a$, we see that $h(u+v) = h(u)+h(v)$.
  \end{proof}
  By the claim, it follows that $h$ is a field embedding $\Mm \to
  \Mm$.  Let $F = \{x \in \Mm : h(x) = x\}$.  Then $F$ is a definable
  subfield of $\Mm$.  Because $\characteristic(F) =
  \characteristic(\Mm) = 0$, $F$ must be infinite.  Then $\dpr(F) =
  1$.  So $[\Mm : F] = \dpr(\Mm)/\dpr(F) = 1/1 = 1$, and $F = \Mm$,
  which means that $h(x) = x$ for all $x \in \Mm$.  Then $h =
  \id_\Mm$, and finally ($\ast$) says that $f$ is differentiable.
\end{proof}
By Theorem~\ref{strong-diff}, we also get strict differentiability at
almost all points.

\subsection{Multi-variable generic differentiability and the inverse function theorem} \label{sec:for-assaf}
Using techniques from \cite[Sections~3 and 5]{wj-P-minimal}, we can
strengthen Theorem~\ref{diff-thm} to yield generic differentiability
of functions in several variables, as well as the inverse function
theorem.

First, we need an analogue of \cite[Lemma~3.1]{wj-P-minimal}:
\begin{fact} \label{baire}
  Let $\{D_a\}_{a \in X}$ be a definable chain of subsets of $\Mm^k$, meaning that $\{D_a\}_{a \in X}$ is a definable family, $D_a \subseteq \Mm^k$ for any $a \in X$, and $D_a \subseteq D_b$ or $D_b \subseteq D_a$ for any $a,b \in X$.  If $\dim(D_a) < k$ for all $a \in X$, then $\dim\left(\bigcup_{a \in X} D_a\right) < k$.
\end{fact}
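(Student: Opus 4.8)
The plan is to prove the statement by induction on $k$. The case $k=0$ is trivial, so suppose $k\ge 1$. Since the inductive step will only invoke the statement for index set $\Mm^{1}$ and for $\Mm^{k-1}$, it suffices to treat $k=1$ directly and then run an induction for $k\ge 2$.

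\textbf{The base case $k=1$.} Here $\dim(D_a)<1$ just means $D_a$ is finite, since a definable subset of $\Mm$ is finite or contains a ball. Suppose $D:=\bigcup_{a\in X}D_a$ were infinite, and choose countably many distinct points $x_0,x_1,\dots\in D$. For each $n$ there is some $a\in X$ with $\{x_0,\dots,x_n\}\subseteq D_a$: each $x_i$ lies in some $D_{a_i}$, and the finite chain $D_{a_0},\dots,D_{a_n}$ has a largest member. Hence the partial type, in the variable $a$ over the countably many parameters $x_i$, asserting $x_i\in D_a$ for all $i\in\Nn$ is finitely satisfiable, so by saturation of $\Mm$ it is realized by some $a^\ast\in X$. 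Then $D_{a^\ast}\supseteq\{x_i:i\in\Nn\}$ is infinite, contradicting $\dim(D_{a^\ast})<1$.

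\textbf{The inductive step.} Let $k\ge 2$ and suppose toward a contradiction that $\dim\big(\bigcup_a D_a\big)=k$. By cell decomposition in $C$-minimal fields, $\bigcup_a D_a$ has nonempty interior, so it contains a product $B'\times B''$ with $B'\subseteq\Mm^{k-1}$ and $B''\subseteq\Mm$ nonempty open. For $S\subseteq\Mm^k$ and $x\in\Mm^{k-1}$ write $S_x=\{y\in\Mm:(x,y)\in S\}$. Put $X_a=\{x\in B':\dim\big((D_a)_x\big)=1\}$; this is definable uniformly in $a$ since dimension is definable in families, and $\{X_a\}_{a\in X}$ is again a chain, because $D_a\subseteq D_b$ forces $(D_a)_x\subseteq(D_b)_x$ and hence $X_a\subseteq X_b$. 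For every $x\in B'$ the fiber $\big(\bigcup_aD_a\big)_x=\bigcup_a(D_a)_x$ contains $B''$, hence is infinite, so applying the case $k=1$ to the chain $\{(D_a)_x\}_{a\in X}$ we get some $a$ with $(D_a)_x$ infinite, i.e.\ $x\in X_a$. Therefore $\bigcup_aX_a=B'$, a set of dimension $k-1$. By the inductive hypothesis applied to the chain $\{X_a\}$ in $\Mm^{k-1}$, it is impossible that $\dim(X_a)<k-1$ for every $a$; fix $a_0$ with $\dim(X_{a_0})=k-1$. Then the set of $x\in\Mm^{k-1}$ over which $D_{a_0}$ has a one-dimensional fiber has dimension $k-1$, so by additivity of dimension $\dim(D_{a_0})\ge(k-1)+1=k$, contradicting $\dim(D_{a_0})<k$.

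\textbf{Anticipated difficulties.} The argument leans on the dimension theory of $C$-minimal fields — cell decomposition (so that a full-dimensional definable set has nonempty interior), definability of dimension in definable families, and additivity of dimension along projections — all of which are available, additivity and geometricity following from Theorem~\ref{big-thm} and cell decomposition from Haskell--Macpherson. The only genuinely delicate point is the saturation argument in the base case; the rest of the induction is bookkeeping, the key observations being that passing to fibers and then to the sets $X_a$ preserves the chain condition.
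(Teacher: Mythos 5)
Your proof is correct, but it takes a genuinely different route from the paper. The paper does not argue from scratch at all: it simply observes that the proof of Lemma~3.1 of \cite{wj-P-minimal} (a dp-rank argument, resting on a fact about dp-minimal theories) goes through almost verbatim, or alternatively cites the general visceral-theory fact \cite[Theorem~2.52(9)]{wj-visceral} that dimension commutes with definable directed unions. You instead give a direct induction on $k$: the base case by a compactness/saturation argument using the chain condition, and the inductive step by ``full dimension implies nonempty interior,'' fibering over $\Mm^{k-1}$, and additivity of dimension. This is legitimate in context --- Fact~\ref{baire} is only invoked after Theorem~\ref{big-thm}, so you may freely use the geometric dimension theory (exchange, elimination of $\exists^\infty$, additivity over projections) --- but note that your argument is therefore heavier on prerequisites than the paper's: the cited dp-rank proof needs neither the exchange property nor additivity, which is partly why the analogous statement is available in other dp-minimal settings. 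Two small points of hygiene: the step ``$\dim=k$ implies nonempty interior'' is better attributed to the tame-topology results for dp-minimal/visceral structures (e.g.\ Simon--Walsberg, Dolich--Goodrick, or \cite{wj-visceral}) than to cell decomposition per se, though the fact itself is true and available here; and your base case can be done without saturation at all, since elimination of $\exists^\infty$ gives a uniform bound $N$ with $|D_a|\le N$ for all $a$, and every finite subset of $\bigcup_a D_a$ lies in a single $D_a$ by the chain condition, so $\bigl|\bigcup_a D_a\bigr|\le N$. With those cosmetic adjustments your argument stands as a self-contained alternative to the paper's citation.
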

The proof of \cite[Lemma~3.1]{wj-P-minimal} applies almost verbatim,
using Fact~3.2 about dp-minimal theories in \cite{wj-P-minimal}.
Alternatively, Fact~\ref{baire} is an instance of a general fact about
visceral theories \cite[Theorem~2.52(9)]{wj-visceral}: the dimension
of a definable filtered union $\bigcup_{a \in X} D_a$ equals the
maximum $\max_{a \in X} \dim(D_a)$.

Fact~\ref{baire} then allows one to upgrade Theorem~\ref{upgrade} to a
statement in several variables:
\begin{theorem} \label{upgrade2}
  Let $B \subseteq \Mm$ be a ball containing 0.  Let $U \subseteq
  \Mm^k$ be definable and open.  Let $f : U \times B \to \Mm$ be a
  definable function . Suppose that $\lim_{y \to 0} f(a,y) = f(a,0)$
  for every $a \in U$.  Then
  \begin{equation*}
    \dim \{a \in U : f \text{ is discontinuous at } (a,0)\} < k.
  \end{equation*}
\end{theorem}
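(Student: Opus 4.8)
The plan is to run the proof of Theorem~\ref{upgrade} essentially verbatim, replacing each appeal to saturation and $C$-minimality there (which produced an infinite set, hence a ball) by an appeal to Fact~\ref{baire} (which produces a full-dimensional set, hence an open set). Throughout, I use that a definable subset of $\Mm^k$ of dimension $k$ has non-empty interior, and that definable functions on open subsets of $\Mm^k$ are generically continuous; both hold for $C$-minimal fields.

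Set $D = \{a \in U : f \text{ is discontinuous at }(a,0)\}$ and suppose for contradiction that $\dim D = k$. First I would introduce, exactly as in the proof of Theorem~\ref{upgrade}, the notions of $\epsilon$-good and $\epsilon$-bad point for $\epsilon \in \Gamma$, and for each $\epsilon$ put $D_\epsilon = \{a \in U : a \text{ is } \epsilon\text{-bad}\}$. One checks that $\{D_\epsilon\}_{\epsilon \in \Gamma}$ is a definable chain (a point that is $\epsilon$-bad is $\epsilon'$-bad for every $\epsilon' \ge \epsilon$) and that $\bigcup_{\epsilon} D_\epsilon = D$ (a point of $U$ is a discontinuity point of $f$ at $(a,0)$ iff it is $\epsilon$-bad for some $\epsilon$). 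Since $\dim D = k$, Fact~\ref{baire} forces $\dim D_\epsilon = k$ for some $\epsilon$; fix such an $\epsilon$. Replacing $U$ by the interior of $D_\epsilon$, we may assume $U$ is open and every point of $U$ is $\epsilon$-bad. By generic continuity we may shrink $U$ further so that $a \mapsto f(a,0)$ is continuous on $U$; this keeps every point $\epsilon$-bad, since the new $U$ is contained in the old one.

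Next I would repeat the ``compatible with $\delta$'' construction: for $\delta \in \Gamma$ let $E_\delta = \{a \in U : v(y) > \delta \implies v(f(a,y) - f(a,0)) > \epsilon\}$. Again $\{E_\delta\}_{\delta \in \Gamma}$ is a definable chain, and $\bigcup_\delta E_\delta = U$ because the hypothesis $\lim_{y \to 0} f(a,y) = f(a,0)$ says each $a \in U$ is compatible with all sufficiently large $\delta$. By Fact~\ref{baire}, some $E_\delta$ is full-dimensional; fixing such a $\delta$ and replacing $U$ by the interior of $E_\delta$, we arrange that every point of $U$ is simultaneously $\epsilon$-bad, $\delta$-compatible, and a continuity point of $f(\cdot,0)$. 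The final step is then identical to the end of the proof of Theorem~\ref{upgrade}: pick any $a_0 \in U$, use continuity of $f(\cdot,0)$ at $a_0$ to find a neighborhood $B' \subseteq U$ of $a_0$ with $v(f(a,0) - f(a_0,0)) > \epsilon$ on $B'$, and combine this with $\delta$-compatibility and the ultrametric inequality to get $v(f(a,y) - f(a_0,0)) > \epsilon$ whenever $a \in B'$ and $v(y) > \delta$. This says $a_0$ is $\epsilon$-good, contradicting that every point of $U$ is $\epsilon$-bad.

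The only genuinely new ingredient compared to Theorem~\ref{upgrade} is Fact~\ref{baire}, so the point that needs care is checking that $\{D_\epsilon\}$ and $\{E_\delta\}$ really are definable chains in the required sense (monotone in the parameter, with union equal to $D$ and to $U$ respectively), and that ``$\dim = k$'' can be upgraded to ``has non-empty interior'' so that the successive shrinkings of $U$ stay open and full-dimensional. Everything else — the ultrametric bookkeeping, generic continuity, the choice of $a_0$ — transfers without change from the one-variable argument, so there is no substantial obstacle beyond this translation.
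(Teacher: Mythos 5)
Your proposal is correct and follows essentially the same route as the paper: it reruns the one-variable argument of Theorem~\ref{upgrade}, replacing the saturation/$C$-minimality steps by Fact~\ref{baire} applied to the definable chains of $\epsilon$-bad and $\delta$-compatible points, together with the fact that a full-dimensional definable set has non-empty interior and generic continuity of $a \mapsto f(a,0)$. The minor reordering (applying Fact~\ref{baire} directly to the chain with union the discontinuity set, rather than first shrinking $U$ into that set) makes no difference.
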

(Compare with Theorem~\ref{upgrade} above, or
\cite[Lemma~3.4]{wj-P-minimal}.)
\begin{proof}
  We may assume $U$ is non-empty, so $\dim(U) = k$.  As in the proof
  of Theorem~\ref{upgrade}, we can assume that $f$ is discontinuous at
  $(a,0)$ for every $a \in U$, but $f(x,0)$ is continuous on $U$.  Say
  that $a \in U$ is \emph{$\epsilon$-good} if $v(f(x,y) - f(a,0)) >
  \epsilon$ for all $(x,y)$ in a neighborhood of $(a,0)$, and
  \emph{$\epsilon$-bad} otherwise.  As in the proof of Theorem~\ref{upgrade}, every $a \in U$ is $\epsilon$-bad
  for sufficiently large $\epsilon$.  Then
  \begin{equation*}
    U = \bigcup_{\epsilon} \{a \in U : a \text{ is $\epsilon$-bad}\},
  \end{equation*}
  so Fact~\ref{baire} gives an $\epsilon$ such that
  \begin{equation*}
    \dim \{a \in U : a \text{ is $\epsilon$-bad}\} = k.
  \end{equation*}
  Then the set $\{a \in U : a \text{ is $\epsilon$-bad}\}$ has
  non-empty interior.  Shrinking $U$, we may assume that every point
  in $U$ is $\epsilon$-bad.\footnote{This is exactly the argument used
  to prove Lemma~3.1(2) in \cite{wj-visceral}.  In fact, we could
  directly apply Lemma~3.1(2) here.}

  Say that $a \in U$ is \emph{compatible with} $\delta$ if
  \begin{equation*}
    v(y) > \delta \implies v(f(a,y) - f(a,0)) > \epsilon.
  \end{equation*}
  As in the proof of Theorem~\ref{upgrade}, every $a \in U$ is
  compatible with sufficiently large $\delta$.  By a similar argument
  to the above\footnote{Or another application of
  \cite[Lemma~3.1(2)]{wj-visceral}.}, we can shrink $U$ and arrange
  for every $a \in U$ to be compatible with some fixed $\delta \in
  \Gamma$.

  The remainder of the proof of Theorem~\ref{upgrade} now applies
  verbatim, to give a contradiction.
\end{proof}
\begin{theorem} \label{diff-thm2}
  Suppose $\characteristic(\Mm) = 0$ and $\Mm$ is definably complete.
  If $U \subseteq \Mm^n$ is definable and open and $f : U \to \Mm^m$
  is definable, then there is a definable open subset $U_0 \subseteq
  U$ such that $f$ is strictly differentiable on $U_0$, and $\dim(U
  \setminus U_0) < n$.
\end{theorem}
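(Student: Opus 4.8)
The plan is to reduce to the case $m = 1$, read off the candidate derivative from the one-variable theory applied to coordinate slices, and then upgrade partial differentiability to genuine (strict) differentiability by a telescoping argument driven by Theorem~\ref{upgrade2}; no induction on $n$ is needed. For the reductions: a map into $\Mm^m$ is strictly differentiable at a point precisely when each of its $m$ coordinate functions is, so by intersecting $m$ large open sets we may assume $m = 1$. Recall also that $T$ is geometric, so dimension is additive over definable maps, and --- as in the dimension theory of visceral structures used for Fact~\ref{baire} --- a definable subset of $\Mm^n$ with empty interior has dimension $< n$. Consequently it will suffice to find a \emph{definable} set $V \subseteq U$ with $\dim(U \setminus V) < n$ on which $f$ is strictly differentiable, and then take $U_0 = \inter(V)$: the complement $U \setminus U_0 = (U \setminus V) \cup (V \setminus \inter V)$ is then a union of two sets of dimension $< n$.

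Next I would produce the partial derivatives generically. For each $i \le n$, apply Theorem~\ref{diff-thm} to the slice $s \mapsto f(w_1, \dots, w_{i-1}, s, w_{i+1}, \dots, w_n)$, which is a definable function on an open subset of $\Mm$ and hence differentiable off a finite set. Thus $\partial_i f$ is defined off a definable set whose fibres over the other $n-1$ coordinates are all finite; by additivity of dimension this set has dimension $< n$. Discarding it for each $i$ and passing to the interior, we may assume $U$ is a box on which $\partial_1 f, \dots, \partial_n f$ are everywhere defined, chosen small enough that $f$ is defined on $U + B e_i$ for every $i$, for some fixed ball $B \ni 0$.

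Now for the upgrade. For each $i$ define the definable function $\Phi_i : U \times B \to \Mm$ by $\Phi_i(w, h) = (f(w + h e_i) - f(w))/h$ for $h \ne 0$ and $\Phi_i(w, 0) = \partial_i f(w)$. By definition of $\partial_i f$ we have $\lim_{h \to 0} \Phi_i(w, h) = \Phi_i(w, 0)$ for \emph{every} $w \in U$, so Theorem~\ref{upgrade2} produces a definable $E_i \subseteq U$ with $\dim E_i < n$ such that $\Phi_i$ is continuous at $(w, 0)$ for all $w \in U \setminus E_i$ (in particular $\partial_i f$ is then continuous at such $w$, so generic continuity of definable functions need not be invoked separately). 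Put $V = U \setminus \bigcup_i E_i$, so $\dim(U \setminus V) < n$. To see that $f$ is strictly differentiable on $V$, fix $a \in V$ and set $L(u) = \sum_i \partial_i f(a)\, u_i$. For $x, y$ near $a$, telescope along the path $z^0 = y$, $z^i = z^{i-1} + (x_i - y_i) e_i$ (so $z^n = x$) to get
\[
  f(x) - f(y) - L(x-y) \;=\; \sum_{i=1}^n \bigl(\Phi_i(z^{i-1}, x_i - y_i) - \partial_i f(a)\bigr)\,(x_i - y_i).
\]
As $(x, y) \to (a, a)$ we have $z^{i-1} \to a$ and $x_i - y_i \to 0$; since $\Phi_i$ is continuous at $(a, 0)$ with value $\partial_i f(a)$, for any prescribed $N$ and $(x,y)$ sufficiently close to $(a,a)$ each factor $\Phi_i(z^{i-1}, x_i - y_i) - \partial_i f(a)$ has valuation $\ge N$, whence $v(f(x) - f(y) - L(x-y)) \ge N + \min_i v(x_i - y_i)$. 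This is exactly strict differentiability of $f$ at $a$ with differential $L$, so $U_0 = \inter(V)$ works.

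The substantive ingredient is Theorem~\ref{upgrade2}: the key point is that its conclusion --- joint continuity of $\Phi_i$ at $(w, 0)$ for all but a dimension-$<n$ set of $w$ --- is precisely strong enough to run the telescoping estimate. The main places needing care are routine but not entirely trivial: arranging the bookkeeping (shrinking $U$ to a box, choosing $B$, keeping all the intermediate points $z^{i-1}$ inside $U$) so that each $\Phi_i$ is a genuine definable function $U \times B \to \Mm$, and the passage from a set with complement of dimension $< n$ to an \emph{open} such set, which relies on the fact that definable subsets of $\Mm^n$ with empty interior have dimension $< n$.
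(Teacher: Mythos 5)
Your argument is correct and is essentially the same proof the paper gives by citation: the paper proves Theorem~\ref{diff-thm2} by invoking the several-variable argument of \cite{wj-P-minimal} (Theorem~5.14) with exactly your two ingredients substituted in---Theorem~\ref{diff-thm} for one-variable generic differentiability and Theorem~\ref{upgrade2} for strengthening limits---and your partial-derivatives-plus-telescoping scheme is that argument written out. The one bookkeeping point you flag (making each $\Phi_i$ a genuine definable function on $U \times B$ without shrinking $U$ to a single box, which by itself would lose the global bound $\dim(U \setminus U_0) < n$) is routine to repair, e.g.\ by setting $\Phi_i(w,h) = \partial_i f(w)$ whenever $w + h e_i \notin U$ so that the hypothesis of Theorem~\ref{upgrade2} still holds at every $w$, or by noting that the strict-differentiability locus is definable, so the empty-interior criterion reduces the dimension bound to the box-local statement you prove.
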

\begin{proof}
  The proof of \cite[Theorem~5.14]{wj-P-minimal} applies verbatim, with the following changes:
  \begin{itemize}
  \item Use Theorem~\ref{diff-thm} instead of
    \cite[Proposition~5.12]{wj-P-minimal}, for single variable generic
    differentiability.
  \item Use Theorem~\ref{upgrade2} rather than
    \cite[Lemma~3.4]{wj-P-minimal} for strengthening limits.  \qedhere
  \end{itemize}
\end{proof}
Next, we turn to the implicit function theorem.  We need a replacement
for the definable compactness used in \cite{wj-P-minimal}.  To state
it, we need some notation.  If $\bc = (c_1,\ldots,c_n) \in \Mm^n$ is a
tuple, let $v(\bc)$ denote $\min_{1 \le i \le n} v(c_i)$.  In $\Mm^n$,
a \emph{closed ball of radius $\gamma$} is a set of the form $\{x \in
\Mm^n : v(x-a) \ge \gamma\}$.  Equivalently, a closed ball of radius
$\gamma$ in $\Mm^n$ is a set of the form $\prod_{i=1}^n B_i$ with
$B_i$ a closed ball of radius $\gamma$ in $\Mm$.  If $\Mm$ is
definably complete, then so is $\Mm^n$, in the sense that any
definable chain of closed balls in $\Mm^n$ with radii tending to
$+\infty$ has non-empty intersection.
\begin{lemma} \label{silly-completeness}
  Suppose $\Mm$ is definably complete.  Let $\{D_\gamma\}_{\gamma \in
    \Gamma}$ be a definable family of non-empty closed subsets of
  $\Mm^n$, such that
  \begin{enumerate}
  \item $D_\gamma \supseteq D_{\gamma'}$ for $\gamma' >
    \gamma$.
  \item If $a, b \in D_\gamma$, then $v(a-b) \ge \gamma$.
  \end{enumerate}
  Then $\bigcap_{\gamma \in \Gamma} D_{\gamma} \ne
  \varnothing$.  
\end{lemma}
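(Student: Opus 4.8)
The plan is to convert the family $\{D_\gamma\}$ into a definable chain of closed balls of $\Mm^n$ and then invoke the definable completeness of $\Mm^n$ recalled just above the statement. Fix $\gamma \in \Gamma$. Since $D_\gamma \ne \varnothing$ and, by hypothesis (2), any two points of $D_\gamma$ have valuative distance $\ge \gamma$, all of $D_\gamma$ lies in a single closed ball of radius $\gamma$: for any $y_0 \in D_\gamma$ we have $D_\gamma \subseteq \{x \in \Mm^n : v(x - y_0) \ge \gamma\}$, and these radius-$\gamma$ balls coincide as $y_0$ ranges over $D_\gamma$. Concretely, set $B_\gamma = \{x \in \Mm^n : \exists y\,(y \in D_\gamma \wedge v(x-y) \ge \gamma)\}$; this is a closed ball of radius $\gamma$ containing $D_\gamma$, and $\{B_\gamma\}_{\gamma \in \Gamma}$ is a definable family (note the explicit formula avoids any appeal to definable choice, which may not be available).

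Next I would check that $\{B_\gamma\}_{\gamma \in \Gamma}$ is a chain. Given $\gamma' > \gamma$, hypothesis (1) gives $\varnothing \ne D_{\gamma'} \subseteq D_\gamma$, hence $\varnothing \ne D_{\gamma'} \subseteq B_{\gamma'} \cap B_\gamma$; since two closed balls that meet are nested and $B_{\gamma'}$ has the larger radius (hence is the smaller ball), we get $B_{\gamma'} \subseteq B_\gamma$. So $\{B_\gamma\}_{\gamma \in \Gamma}$ is a definable chain of closed balls whose radii are cofinal in $\Gamma$, i.e.\ tend to $+\infty$. By definable completeness of $\Mm^n$, the intersection $\bigcap_{\gamma \in \Gamma} B_\gamma$ is non-empty; fix a point $c$ in it.

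It remains to show $c \in D_{\gamma_0}$ for every $\gamma_0 \in \Gamma$, which yields $c \in \bigcap_\gamma D_\gamma$ as desired. Suppose $c \notin D_{\gamma_0}$. As $D_{\gamma_0}$ is closed and $\Mm^n$ carries the valuation topology, there is $\mu \in \Gamma$ with $\{x \in \Mm^n : v(x-c) > \mu\} \cap D_{\gamma_0} = \varnothing$. Choose $\gamma_1 \in \Gamma$ with $\gamma_1 > \max(\gamma_0,\mu)$ and pick $a \in D_{\gamma_1}$ (non-empty). By hypothesis (1), $a \in D_{\gamma_1} \subseteq D_{\gamma_0}$; on the other hand $a \in B_{\gamma_1}$ and $c \in B_{\gamma_1}$, and $B_{\gamma_1}$ has radius $\gamma_1$, so $v(c-a) \ge \gamma_1 > \mu$, i.e.\ $a \in \{x : v(x-c) > \mu\}$. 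This contradicts disjointness from $D_{\gamma_0}$, so $c \in D_{\gamma_0}$.

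The only mildly delicate points are realizing $\{B_\gamma\}$ as an honestly definable chain without definable Skolem functions (handled by the explicit formula for $B_\gamma$) and the ultrametric fact that overlapping closed balls are nested; everything else is bookkeeping. I do not anticipate a genuine obstacle here — the content is really just ``diameter $\to 0$ shrinking sets force convergence'' translated into the definable-completeness framework, exactly as needed for the implicit function theorem in Section~\ref{sec:for-assaf}.
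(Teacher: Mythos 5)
Your proof is correct and follows essentially the same route as the paper: enclose each $D_\gamma$ in the unique closed ball $B_\gamma$ of radius $\gamma$ (which exists by hypothesis (2)), apply definable completeness of $\Mm^n$ to the chain $\{B_\gamma\}$, and then use closedness of $D_{\gamma_0}$ together with the radius bound to rule out the limit point missing some $D_{\gamma_0}$. The extra care you take with the explicit definable formula for $B_\gamma$ and the nestedness check is fine but not a substantive difference.
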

\begin{proof}
  Let $B_\gamma \subseteq \Mm^n$ be the unique closed ball of radius
  $\gamma$ containing $D_\gamma$.  By definable completeness, there is
  $a \in \bigcap_\gamma B_\gamma$.  If $a \in \bigcap_\gamma
  D_\gamma$, we are done.  Otherwise, $a \notin D_{\gamma_0}$ for some
  $\gamma_0$.  Since $D_{\gamma_0}$ is closed, there is $\gamma \ge
  \gamma_0$ such that
  \begin{equation*}
    b \in D_{\gamma_0} \implies v(a - b) < \gamma. \tag{$\dag$}
  \end{equation*}
  Take $b \in D_\gamma \subseteq B_\gamma \cap D_{\gamma_0}$.  Then
  $a,b \in B_\gamma$, so $v(a-b) \ge \gamma$, contradicting ($\dag$).
\end{proof}
Using this, we get an analogue of \cite[Lemma~5.15]{wj-P-minimal}.
\begin{lemma} \label{contract}
  Let $\gamma_0 > 0$ be a positive element of $\Gamma$.  Let $B
  \subseteq \Mm^n$ be a ball around 0.  Let $f : B \to B$ be a
  function such that
  \begin{equation*}
    v(f(x) - f(y)) \ge \gamma_0 + v(x-y) \text{ for } x,y \in B.
  \end{equation*}
  \begin{enumerate}
  \item There is a unique $x \in B$ such that $f(x) = x$.
  \item For any $c \in B$, there is a unique $x \in B$ such that $f(x)
    = x - c$.
  \end{enumerate}
\end{lemma}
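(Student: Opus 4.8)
The plan is to reduce part (2) to part (1) and then prove part (1) by an ultrametric contraction-mapping argument, using Lemma~\ref{silly-completeness} to extract a fixed point in place of taking an actual limit of iterates.

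For the reduction, observe that since $B$ is a ball around $0$ in $\Mm^n$ it is a subgroup of $(\Mm^n,+)$, so for fixed $c \in B$ the map $g(x) := f(x)+c$ still sends $B$ into $B$ and satisfies $v(g(x)-g(y)) \ge \gamma_0 + v(x-y)$; a fixed point of $g$ is exactly a solution of $f(x)=x-c$. Hence it suffices to prove part (1). Uniqueness there is immediate: if $f(x)=x$ and $f(y)=y$ then $v(x-y)=v(f(x)-f(y)) \ge \gamma_0+v(x-y)$, and since $\gamma_0>0$ this forces $x=y$.

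For existence, I would put $D_\gamma := \{x \in B : v(f(x)-x) \ge \gamma\}$ for $\gamma \in \Gamma$ and verify the hypotheses of Lemma~\ref{silly-completeness}. The contraction estimate makes $f$ continuous, so each $D_\gamma$ is closed; the family is visibly nested decreasing; and if $a,b \in D_\gamma$, writing $a-b = (a-f(a))+(f(a)-f(b))+(f(b)-b)$ and bounding the middle term below by $\gamma_0+v(a-b)$ gives $v(a-b) \ge \min\{\gamma,\gamma_0+v(a-b)\}$, which forces $v(a-b) \ge \gamma$ because $\gamma_0>0$. The contraction is used crucially for non-emptiness: one readily sees $D_\gamma = B$ for sufficiently small $\gamma$, and if $x \in D_\gamma$ then $f(x) \in D_{\gamma+\gamma_0}$ since $v(f(f(x))-f(x)) \ge \gamma_0+v(f(x)-x)$. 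Granting that every $D_\gamma$ is non-empty, Lemma~\ref{silly-completeness} produces a point in $\bigcap_{\gamma}D_\gamma$, which satisfies $v(f(x)-x) \ge \gamma$ for all $\gamma$ and is therefore a fixed point.

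The main obstacle is upgrading non-emptiness of $D_\gamma$ from the set $\{\gamma_1+n\gamma_0 : n \in \Nn\}$, which is all the iteration directly delivers, to all of $\Gamma$ — a genuine issue since $\Gamma$ need not be archimedean. I would argue via $S := \{\gamma \in \Gamma : D_\gamma \ne \varnothing\}$, which is definable, downward closed (the $D_\gamma$ being nested), and closed under adding $\gamma_0$ (by the iteration step above). By o-minimality of $\Gamma$ (Fact~\ref{gamma-induce}, using stable embeddedness to locate parameters in $\Gamma$), $S$ is a finite union of points and intervals, hence a downward-closed interval; if $S \ne \Gamma$ it would have a supremum $\beta \in \Gamma$, but choosing $\gamma \in S$ with $\gamma > \beta-\gamma_0$ gives $\gamma+\gamma_0 \in S$ with $\gamma+\gamma_0 > \beta$, contradicting $\beta = \sup S$. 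Hence $S = \Gamma$ and the argument closes.
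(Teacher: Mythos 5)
Your proof is correct and follows essentially the same route as the paper: define the nested definable sets $D_\gamma = \{x \in B : v(f(x)-x) \ge \gamma\}$, use the $\gamma_0$-shift together with o-minimality of $\Gamma$ (via a supremum that cannot be finite) to show every $D_\gamma$ is non-empty, check closedness and the $\gamma$-separation condition, and apply Lemma~\ref{silly-completeness} to produce the fixed point. The only cosmetic differences are that you run the supremum argument on the set $S = \{\gamma : D_\gamma \ne \varnothing\}$ rather than on the image of $x \mapsto v(f(x)-x)$, and you spell out the reduction of (2) to (1) and the uniqueness, which the paper delegates to the cited earlier work.
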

The assumption here is slightly stronger than
\cite[Lemma~5.15]{wj-P-minimal}, which merely required $v(f(x) - f(y))
> v(x-y)$.
\begin{proof}
  As in the proof of \cite[Lemma~5.15]{wj-P-minimal}, we only need to
  prove the existence part of (1).  Let $g : B \to \Gamma$ be the
  function $g(x) = v(f(x) - x)$.  Like in the proof of
  \cite[Lemma~5.15]{wj-P-minimal}, we get
  \begin{equation*}
    g(f(x)) \ge \gamma_0 + g(x). \tag{$\ast$}
  \end{equation*}
  By o-minimality of $\Gamma$, the set $\{g(x) : x \in B\} \subseteq
  \Gamma$ has a supremum $\gamma \in \Gamma \cup \{+\infty\}$.  Then
  ($\ast$) implies $\gamma + \gamma_0 = \gamma$, and so $\gamma =
  +\infty$.  Therefore, for any $\epsilon \in \Gamma$, there is $x \in
  B$ with $g(x) = v(f(x) - x) \ge \epsilon$.

  Let $D_\epsilon = \{x \in B : v(f(x) - x) \ge \epsilon\}$.  Then
  $D_\epsilon$ is non-empty and definable.  The function $f$ is
  continuous because it is contracting, so $D_\epsilon$ is closed.
  Finally,
  \begin{equation*}
    x,y \in D_\epsilon \implies v(x-y) \ge \epsilon.
  \end{equation*}
  Otherwise, the three conditions
  \begin{gather*}
    v(f(x) - x) \ge \epsilon \\
    v(f(y) - y) \ge \epsilon \\
    v(x-y) < \epsilon
  \end{gather*}
  would imply $v(f(x)-f(y)) = v(x-y)$ by the ultrametric inequality,
  contradicting the fact $f$ is contracting.  Then
  Lemma~\ref{silly-completeness} gives $a \in \bigcap_{\epsilon \in
    \Gamma} D_\epsilon$.  So $a \in B$ and $v(f(a) - a) \ge \epsilon$
  for all $\epsilon$.  This implies $f(a) = a$.
\end{proof}
\begin{theorem}[Inverse function theorem] \label{inverse}
  Suppose $\Mm$ is definably complete.  Let $U \subseteq \Mm^n$ be a
  definable open set, and let $f : U \to \Mm^n$ be a strictly
  differentiable definable function.  Let $a \in U$ be a point such
  that the strict derivative $Df(a)$ (an $n \times n$ matrix) is
  invertible.  Then there are open neighborhoods $U_0 \ni a$ and $V_0
  \ni f(a)$ such that $f$ restricts to a homeomorphism $U_0 \to V_0$,
  and the inverse map $f^{-1} : V_0 \to U_0$ is strictly
  differentiable.
\end{theorem}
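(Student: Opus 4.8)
\emph{Overview and set-up.} The plan is to run the classical argument: normalize so that $f$ becomes a small perturbation of the identity, invert it with the contraction lemma (Lemma~\ref{contract}), and then deduce strict differentiability of the inverse by renormalizing at each point. Throughout, $v$ on $\Mm^n$ denotes the coordinatewise-minimum valuation, for which the ultrametric inequality (with equality when the two valuations differ) still holds. First I would replace $f$ by $\tilde f(x) := Df(a)^{-1}\bigl(f(x+a)-f(a)\bigr)$; by the chain rule for strict derivatives and the fact that affine maps are strictly differentiable (see \cite[Section~5]{wj-P-minimal}), this reduces the statement to the case $a = 0$, $f(0) = 0$, $Df(0) = \id$, since an inverse of $\tilde f$ near $0$ yields an inverse of $f$ near $a$ with the required properties. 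Next, set $g(x) := x - f(x)$, which is strictly differentiable near $0$ with $g(0) = 0$ and $Dg(0) = 0$. By the definition of strict differentiability with vanishing derivative, for any prescribed $\gamma_0 > 0$ there is a closed ball $B \ni 0$ with
\[
  v\bigl(g(x)-g(y)\bigr)\ \ge\ \gamma_0 + v(x-y)\qquad\text{for all }x,y\in B.
\]
Since $g(0) = 0$ this forces $v(g(x)) \ge \gamma_0 + v(x) > v(x)$, so $g(B)\subseteq B$; and the ultrametric inequality gives $v\bigl(f(x)-f(y)\bigr) = v\bigl((x-y)-(g(x)-g(y))\bigr) = v(x-y)$, so $f$ is an isometry on $B$, hence injective, continuous and open.

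\emph{Inversion.} I would then apply Lemma~\ref{contract}(2) to the contracting self-map $g$ of $B$: for every $c \in B$ there is a unique $x \in B$ with $g(x) = x - c$, i.e.\ with $f(x) = c$. Thus $f|_B \colon B \to B$ is a bijective isometry, hence a homeomorphism. Taking $U_0$ to be a small open ball around $0$ inside $B$ and $V_0 := f(U_0)$---again an open ball, since $f$ is an isometry---I obtain a homeomorphism $f\colon U_0 \to V_0$ between open neighborhoods of $0$; undoing the normalization turns these into the desired neighborhoods of $a$ and $f(a)$.

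\emph{Strict differentiability of the inverse.} Strictly differentiable functions have continuous strict derivative (again \cite[Section~5]{wj-P-minimal}), so $\{x \in U : Df(x)\text{ invertible}\}$ is open and contains $a$; I would shrink $U_0$ into it and replace $V_0$ by $f(U_0)$. To check strict differentiability of $f^{-1}$ at a point $b_0 = f(x_0) \in V_0$ (a local statement), I would renormalize once more at $x_0$---legitimate because $Df(x_0)$ is invertible---reducing to $x_0 = 0 = f(x_0)$, $Df(0) = \id$. With $g = \id - f$ and $B$ as above, for $x,y \in B$ the quantity $v(g(x)-g(y)) - v(x-y)$ tends to $+\infty$ as $(x,y)\to(0,0)$ (this is strict differentiability of $g$ at $0$). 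Substituting $x = f^{-1}(b)$, $y = f^{-1}(b')$, using $g(x) - g(y) = (f^{-1}(b)-f^{-1}(b')) - (b-b')$ together with the isometry identity $v(x-y) = v(b-b')$, I would conclude
\[
  v\bigl(f^{-1}(b)-f^{-1}(b')-(b-b')\bigr) - v(b-b')\ \longrightarrow\ +\infty\qquad\text{as }(b,b')\to(0,0),
\]
which is exactly strict differentiability of $f^{-1}$ at $0$ with derivative $\id$. Undoing the two renormalizations gives strict differentiability of $f^{-1}$ at $b_0$ with derivative $Df(x_0)^{-1}$, and since $b_0 \in V_0$ was arbitrary, $f^{-1}$ is strictly differentiable on $V_0$.

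\emph{Where the work is.} The genuinely non-formal inputs are Lemma~\ref{contract} (which packages the use of definable completeness, via Lemma~\ref{silly-completeness}) and the continuity of the strict derivative; everything else is ultrametric bookkeeping. The one place I expect to need care is the interaction of the two normalizations---verifying that the renormalized map at $x_0$ still satisfies the hypotheses of the contraction step on some ball around $0$---but this should follow routinely from the chain rule for strict derivatives and the openness of the locus where $Df$ is invertible.
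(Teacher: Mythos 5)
Your proof is correct and follows essentially the same route as the paper: the paper simply cites the arguments of \cite[Lemma~5.16, Corollary~5.17, Theorem~5.18]{wj-P-minimal} with Lemma~\ref{contract}(2) (and the strengthened contraction constant $\gamma_0>0$) substituted for the contraction step, which is exactly the classical normalize--contract--invert--renormalize argument you write out in full. Your identification of the two genuine inputs --- Lemma~\ref{contract}(2), where definable completeness enters via Lemma~\ref{silly-completeness}, and the strict-differentiability bookkeeping giving the isometry and the invertibility of $Df$ near $a$ --- matches the paper's intent precisely.
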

\begin{proof}
  Fix some $\gamma_0 > 0$.  Then the proofs of Lemma~5.16,
  Corollary~5.17, and Theorem~5.18 in \cite{wj-P-minimal} carry
  through verbatim with the following changes to the proof of
  Lemma~5.16:
  \begin{itemize}
  \item When applying \cite[Fact~5.3]{wj-P-minimal} in the proof of
    Lemma~5.16, take $\gamma = \gamma_0 > 0$ rather than $\gamma = 0$,
    to get the stronger contraction assumption needed in
    Lemma~\ref{contract}.
  \item Apply Lemma~\ref{contract}(2) in place of
    \cite[Lemma~5.15(2)]{wj-P-minimal}.  \qedhere
  \end{itemize}
\end{proof}
One can deduce a version of the implicit function theorem as a
corollary.  We leave the details as an exercise to the reader.

\section{Future directions}
In Section~\ref{sec-heart}, we proved \emph{some} facts about the
induced structure on $\RV$, but we did not fully classify definable
sets $D \subseteq \RV^2$, let alone definable sets $D \subseteq
\RV^n$.  It would be nice to see how far these arguments can be
pushed, hopefully making the ideas of Section~\ref{sec-vague} precise.

In light of the main results of this paper, an obvious question is
\begin{question}
  Is there a $C$-minimal field such that $\kx$ has exotic automorphisms?
\end{question}
The induced structure on $k$ can be an arbitrary strongly minimal
expansion of ACF, so the question is really the following:
\begin{question} \label{qw}
  Is there a strongly minimal expansion of ACF in which $k^\times$ has
  an exotic definable automorphism?  Equivalently, is there a strongly
  minimal structure $(k,+,\cdot,\sigma)$ where $\sigma$ is a
  non-algebraic multiplicative automorphism?
\end{question}
Question~\ref{qw} is a variant of a well-known open problem:
\begin{question} \label{qhard}
  Is there a strongly minimal expansion of ACF in which $k$ has an
  exotic definable field automorphism?  Equivalently, is there a
  strongly minimal structure $(k,+,\cdot,\sigma)$ where $\sigma$ is a
  non-algebraic field automorphism?
\end{question}
Question~\ref{qhard} is only interesting in positive characteristic;
it is easy to see that there are no exotic field automorphisms in
characteristic 0.  On the other hand, Question~\ref{qw} remains open
in characteristic 0.  I suspect that in characteristic 0, it may be
possible to construct exotic multiplicative automorphisms using
techniques similar to the ones used to produce bad fields in
\cite{bose} (but I could be wrong---I am far from an expert in this
subject).

Another natural question is whether we can generalize the proof of the
exchange property to other settings, such as weakly o-minimal fields,
or more generally, dp-minimal fields.  Some aspects of the proof
generalize naturally, such as the dp-rank calculations.  However, in
other parts of the proof we use o-minimality and strong minimality.
It's not clear what the analogue would be in other more general
settings.

Lastly, it would be worthwhile to compare the proof of generic
differentiability here to the proofs of generic differentiability in
V-minimal fields \cite[Corollary~5.17]{hk} and Hensel minimal fields
\cite[Lemma~5.3.5]{hens-min1}, and look for a common generalization.
Perhaps the assumptions of Hensel minimality can be weakened.

\begin{acknowledgment}
  The author was supported by the National Natural Science Foundation
  of China (Grant No.\@ 12101131) and the Ministry of Education of
  China (Grant No.\@ 22JJD110002).  The author would like to thank
  Fran\c{c}oise Delon, Frank Wagner, Tom Scanlon, and Silvain
  Rideau-Kikuchi for helpful information.  This paper grew out of a
  report given at the 17th Asian Logic Conference in Tianjin; the
  author would like to thank the organizers of the conference.
  Lastly, the author would like to thank Assaf Hasson for suggesting
  adding Subsections~\ref{other-assaf} and \ref{sec:for-assaf}.
\end{acknowledgment}

\bibliographystyle{plain} \bibliography{mybib}{}

\end{document}